\newcommand*{\rom}[1]{\expandafter\@slowromancap\romannumeral #1@}
\DeclarePairedDelimiter\ceil{\lceil}{\rceil}
\newcounter{tmp}
\DeclareSymbolFont{largesymbols}{OMX}{yhex}{m}{n}
\DeclareMathAccent{\wideparen}{\mathord}{largesymbols}{"F3}
\newcommand{\w}[1]{\wideparen{#1}}
\newtheorem{theorem}{Theorem}[section]
\newtheorem{lemma}[theorem]{Lemma}
\newtheorem{corollary}[theorem]{Corollary}
\newtheorem{definition}{Definition}[section]
\newtheorem{proposition}[theorem]{Proposition}
\DeclareMathOperator{\ad}{ad}
\DeclareMathOperator{\im}{im}
\DeclareMathOperator{\Ann}{Ann}
\begin{document}

\title{Affinoid Dixmier Modules and the deformed Dixmier-Moeglin equivalence}

\author{Adam Jones}

\date{\today}

\maketitle

\begin{abstract}

\noindent The affinoid envelope $\widehat{U(\mathcal{L})}_K$ of a free, finitely generated $\mathbb{Z}_p$-Lie algebra $\mathcal{L}$, has proven to be useful within the representation theory of compact $p$-adic Lie groups. Our aim is to further understand the algebraic structure of $\widehat{U(\mathcal{L})}_K$, and to this end, we will define a Dixmier module over $\widehat{U(\mathcal{L})}_K$, and prove that this object is generally irreducible in case where $\mathcal{L}$ is nilpotent. Ultimately, we will prove that all primitive ideals in the affinoid envelope can be described in terms of the annihilators of Dixmier modules, and using this, we aim towards proving that these algebras satisfy a version of the classical Dixmier-Moeglin equivalence.

\end{abstract}

\tableofcontents

\section{Motivation}

\noindent Throughout, fix $p$ a prime, $K\backslash\mathbb{Q}_p$ a finite extension, $\mathcal{O}$ the valuation ring of $K$, $\pi\in \mathcal{O}$ a uniformiser.

\subsection{Classical Motivation -- The Dixmier-Moeglin equivalence}

This research is partially inspired by a problem in classical non-commutative algebra. Let $k$ be any field of characteristic 0, and let $R$ be a Noetherian $k$-algebra. We say that a prime ideal $P$ of $R$ is:

\begin{itemize}

\item \emph{Primitive} if $P=Ann_RM=\{r\in R:rM=0\}$ for some irreducible $R$-module $M$.

\item \emph{Weakly rational} if $Z(R/P)$ is an algebraic field extension of $k$.

\item \emph{Rational} if $Z(Q(R/P))$ is an algebraic field extension of $k$, where $Q(R/P)$ is the Goldie ring of quotients of $R/P$, in the sense of \cite[Theorem 2.3.6]{McConnell}.

\item \emph{Locally closed} if $P\neq \cap\{Q\trianglelefteq R: Q$ prime, $P\subsetneq Q\}$, i.e. if $\{P\}$ is a locally closed subset of Spec $R$ with respect to the Zariski topology.

\end{itemize}

\noindent It is not difficult to see that if $P$ is rational then it is weakly rational.

\begin{definition}\label{Dix-Moeg-eq}

Let $R$ be a Noetherian $k$-algebra. We say that $R$ satisfied the \emph{Dixmier-Moeglin equivalence} if for all prime ideals $P$ of $R$, we have:

\begin{center}
$P$ is primitive $\iff$ $P$ is rational $\iff$ $P$ is locally closed.
\end{center}

\end{definition}

\noindent Note that if $R$ is commutative and $k$ is algebraically closed, then this condition is essentially a strong version of Hilbert's Nullstellensatz, i.e. that all prime ideals $P$ of $R$ arise as an intersection of maximal ideals $\mathfrak{m}$ of $R$ of codimension 1.\\

\noindent This condition is known to be satisfied for numerous examples of Noetherian $k$-algebras, most notably the enveloping algebra $U(\mathfrak{h})$ for a finite dimensional $k$-Lie algebra $\mathfrak{h}$ as proved independently by Dixmier and Moeglin in \cite{Dixmier} and \cite{Moeglin}. An ongoing project is to classify algebras satisfying the equivalence, and in \cite{Dix-Moeg}, a detailed analysis of what is known to date on this subject is given. In this paper, however, we will not be concerned with the classical picture, but with a $p$-adic analogue.\\

\noindent Specifically, let $K$ be our $p$-adic field, and let $\widehat{R}$ be a Noetherian, Banach $K$-algebra. Unfortunately, to prove the full Dixmier-Moeglin equivalence for such algebras may prove to be impractical, since the techniques classically used typically involve discrete generating sets, which are absent here. However, there is a weaker statement that might be more approachable in this setting.\\

\noindent For each $n\in\mathbb{N}$, in Definition \ref{deformation} below we will define the \emph{level $n$ deformation} $\widehat{R}_n$ of $\widehat{R}$, which is a dense Banach subalgebra of $\widehat{R}$, and using this we make the following definition:

\begin{definition}\label{deformed-Dix-moeg}

We say that the Noetherian, Banach $K$-algebra $\widehat{R}$ satisfies the \emph{deformed Dixmier-Moeglin equivalence} if for all prime ideals $P$ of $R$, there exists $N\in\mathbb{N}$ such that for all $n\geq N$:

\begin{center}
$P\cap\widehat{R}_n$ is primitive $\iff$ $P \cap\widehat{R}_n$ is weakly rational $\iff$ $P\cap \widehat{R}_n$ is locally closed.
\end{center}

\end{definition}

\noindent\textbf{Key example:} If we take $R=U(\mathfrak{g})$ and let $\mathcal{R}:=U(\mathcal{L})$ for some $\mathcal{O}$-Lie lattice $\mathcal{L}$ in $\mathfrak{g}$, then the $\pi$-adic completion $\widehat{R}=\widehat{U(\mathcal{L})}_K$ of $R$ with respect to $\mathcal{R}$ is called the \emph{affinoid enveloping algebra} (or \emph{affinoid envelope}) of $\mathcal{L}$, and $\widehat{R}_n$ is just $\widehat{U(\pi^n\mathcal{L})}_K$. 

We believe that the affinoid enveloping algebra $\widehat{U(\mathcal{L})}_K$ should satisfy the deformed Dixmier-Moeglin equivalence, but to date, this is only known in the case where $\mathfrak{g}$ is nilpotent and contains an abelian ideal of codimension 1, as shown in \cite[Corollary 1.6.2]{Lewis}.

\subsection{Second motivation -- Iwasawa algebras}

The other main motivation for this research lies within representation theory of compact $p$-adic Lie groups.\\

\noindent Fix $G$ a uniform pro-$p$ group in the sense of \cite[Definition 4.1]{DDMS}. When studying the $p$-adic representation theory of $G$, there are several avenues we can consider. If we were interested in general, abstract $K$-linear representations of $G$, naturally we would study modules over the standard group algebra $K[G]$, but since $G$ is not discrete, this object is of little practical use. Instead, define the \emph{rational Iwasawa algebra}: 

\begin{center}
$KG:=\underset{U\trianglelefteq_o G}{\varprojlim}{\mathcal{O}[G/U]}\otimes_{\mathcal{O}}K$,
\end{center} 

\noindent this is a Noetherian, topological $K$-algebra whose module structure completely describes the \emph{continuous} representations of $G$, a class which includes all finite dimensional, smooth and locally analytic representations.\\

\noindent Let $\mathcal{L}:=\frac{1}{p}\log(G)$ be the $\mathbb{Z}_p$-Lie algebra of $G$, as defined in \cite[Section 10]{annals}, and let $\widehat{U(\mathcal{L})}_K$ be the \emph{affinoid envelope} of $\mathcal{L}$ with coefficients in $K$, as defined above.\\

\noindent Using \cite[Theorem 10.4]{annals}, we see that there exists a dense embedding of topological $K$-algebras $KG\to\widehat{U(\mathcal{L})}_K$, i.e. $\widehat{U(\mathcal{L})}_K$ is a Banach completion of $KG$, and thus we see that representations of the affinoid enveloping algebra $\widehat{U(\mathcal{L})}_K$ naturally have the structure of $KG$-modules, which allows us to explore the representation theory of $G$ via the representation theory of $\mathcal{L}$.\\

\noindent In this paper, we will study the algebraic structure of the affinoid envelope, and in a forthcoming work \cite{new}, we will show how we can use our results to deduce information about primitive ideals in $KG$.

\subsection{Main results}

The key aim of this paper is to describe the primitive ideal structure of the affinoid enveloping algebra $\widehat{U(\mathcal{L})}_K$, focusing on the case where $\mathcal{L}$ is nilpotent. Specifically, we aim to prove that under suitable conditions this algebra satisfies the deformed Dixmier-Moeglin equivalence.\\

\noindent To this end, we will follow the representation theoretic approach outlined by Jaques Dixmier in \cite{Dixmier} when studying the classical enveloping algebra $U(\mathfrak{g})$. This approach was to define a class of irreducible induced representations of $U(\mathfrak{g})$ whose annihilator ideals completely describe the primitive ideals in $U(\mathfrak{g})$. In section 2, we will adapt this approach to the affinoid envelope. 

Specifically, to each linear form $\lambda\in$ Hom$_{\mathbb{Z}_p}(\mathcal{L},\mathcal{O})$, we will associate an induced module $\widehat{D(\lambda)}$ over $\widehat{U(\mathcal{L})}_K$, which we call a \emph{Dixmier module}.\\

\noindent We will prove in section 3 that Dixmier modules are generally irreducible, and thus the ideal $I(\lambda):=$ Ann$_{\widehat{U(\mathcal{L})}_K}\widehat{D(\lambda)}$ is primitive in $\widehat{U(\mathcal{L})}_K$. We call ideals of this form \emph{Dixmier annihilators}. Our main result, which we will prove in section 6, describes all weakly rational ideals of $\widehat{U(\mathcal{L})}_K$ in terms of Dixmier annihilators: 

\begingroup
\setcounter{tmp}{\value{theorem}}
\setcounter{theorem}{0} 
\renewcommand\thetheorem{\Alph{theorem}}
\begin{theorem}\label{A}

Let $\mathcal{L}$ be an $\mathcal{O}$-Lie lattice in a nilpotent Lie algebra $\mathfrak{g}$, and let $P$ be a weakly rational ideal in $\widehat{U(\mathcal{L})}_K$. Then there exists $N\in\mathbb{N}$, $\lambda\in$ Hom$_{\mathcal{O}}(\pi^N\mathcal{L},\mathcal{O}_{\overline{K}})$ such that for all $n\geq N$:

\begin{center}
$P\cap\widehat{U(\pi^n\mathcal{L})}_K=I(\lambda)=Ann_{\widehat{U(\pi^n\mathcal{L})}_K}\widehat{D(\lambda)}$
\end{center}

\end{theorem}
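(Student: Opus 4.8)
The plan is to reduce the affinoid statement to the classical Dixmier–Moeglin picture for the enveloping algebra $U(\mathfrak{g})$ via a careful passage through the deformations $\widehat{U(\pi^n\mathcal{L})}_K$. First I would observe that a weakly rational prime $P$ of $\widehat{U(\mathcal{L})}_K$ contracts to a prime $\mathfrak{p}=P\cap U(\mathfrak{g}_K)$ of the classical enveloping algebra $U(\mathfrak{g}_K)$, where $\mathfrak{g}_K=\mathfrak{g}\otimes_{\mathbb{Z}_p}K$; since $\mathfrak{g}$ is nilpotent, by Dixmier's theorem $\mathfrak{p}=\operatorname{Ann}_{U(\mathfrak{g}_{\overline{K}})}D(\mu)$ for some linear form $\mu$ on $\mathfrak{g}_{\overline{K}}$ arising from a coadjoint orbit, where $D(\mu)$ is the classical Dixmier module induced from a polarisation $\mathfrak{h}_\mu$ at $\mu$. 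The first real task is to show that after rescaling — i.e.\ replacing $\mathcal{L}$ by $\pi^N\mathcal{L}$ for $N$ large enough — the polarisation $\mathfrak{h}_\mu$ is compatible with the lattice, meaning $\mathfrak{h}_\mu\cap\pi^N\mathcal{L}$ is an $\mathcal{O}_{\overline{K}}$-Lie sublattice that is a direct summand and on which $\mu$ takes values in $\mathcal{O}_{\overline{K}}$; this is exactly the hypothesis that lets us form the affinoid Dixmier module $\widehat{D(\lambda)}$ with $\lambda=\mu|_{\pi^N\mathcal{L}}$, using the construction of section 2.

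Next I would compare the affinoid Dixmier module with the classical one: the key point is that $\widehat{D(\lambda)}$ is the $\pi$-adic (Banach) completion of $D(\mu)$ — or at least that $D(\mu)$ embeds densely into it — compatibly with the dense embedding $U(\mathfrak{g}_K)\hookrightarrow\widehat{U(\pi^n\mathcal{L})}_K$. Granting this, and granting the irreducibility of $\widehat{D(\lambda)}$ from section 3, the annihilator $I(\lambda)=\operatorname{Ann}_{\widehat{U(\pi^n\mathcal{L})}_K}\widehat{D(\lambda)}$ is primitive, and density forces $I(\lambda)\cap U(\mathfrak{g}_K)=\operatorname{Ann}_{U(\mathfrak{g}_K)}D(\mu)=\mathfrak{p}=P\cap U(\mathfrak{g}_K)$. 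So $P\cap\widehat{U(\pi^n\mathcal{L})}_K$ and $I(\lambda)$ are two primes of the deformation lying over the same classical prime $\mathfrak{p}$; to conclude they are equal I would invoke a going-up / lying-over rigidity for the inclusion $U(\mathfrak{g}_K)\hookrightarrow\widehat{U(\pi^n\mathcal{L})}_K$ in the nilpotent case — concretely, that a prime of the affinoid algebra is determined by its contraction to $U(\mathfrak{g}_K)$ once $n$ is large, which should follow from the affinoid analogue of the stratification of the primitive spectrum by coadjoint orbits together with the faithful flatness type results for these completions.

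The last step is to check the displayed identity holds simultaneously for \emph{all} $n\geq N$, not just one: since the algebras $\widehat{U(\pi^n\mathcal{L})}_K$ for $n\geq N$ form a decreasing tower of dense subalgebras all containing $U(\mathfrak{g}_K)$, and $\widehat{D(\lambda)}$ can be viewed as a module over each of them (the induction data $\lambda$, $\mathfrak{h}_\mu$ only get ``more compatible'' as $n$ grows), one has $I(\lambda)_n:=\operatorname{Ann}_{\widehat{U(\pi^n\mathcal{L})}_K}\widehat{D(\lambda)}=I(\lambda)_{n'}\cap\widehat{U(\pi^n\mathcal{L})}_K$ for $n\leq n'$, and likewise $P\cap\widehat{U(\pi^n\mathcal{L})}_K=(P\cap\widehat{U(\pi^{n'}\mathcal{L})}_K)\cap\widehat{U(\pi^n\mathcal{L})}_K$, so the equality propagates up and down the tower once it is known at a single sufficiently large level.

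I expect the main obstacle to be the rigidity statement in the middle paragraph — showing that the contraction map $\operatorname{Spec}\widehat{U(\pi^n\mathcal{L})}_K\to\operatorname{Spec}U(\mathfrak{g}_K)$ is injective on the relevant locus, equivalently that $P$ is recovered from $\mathfrak{p}$ as $\mathfrak{p}\widehat{U(\pi^n\mathcal{L})}_K$ together with a primitivity/maximality argument. The identification of $\widehat{D(\lambda)}$ as the completion of $D(\mu)$ should be essentially formal from the construction in section 2, and the reduction to Dixmier's classical theorem is standard; but controlling how primes behave under $\pi$-adic completion — where the naive extended ideal $\mathfrak{p}\widehat{U(\pi^n\mathcal{L})}_K$ need not even be prime for small $n$, which is precisely why the theorem only asserts existence of $N$ — is the technical heart of the argument and will presumably consume most of section 6.
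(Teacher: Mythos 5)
There is a genuine gap, and it sits exactly where you locate your ``rigidity'' step. You propose that for $n$ large the contraction map from primes of $\widehat{U(\pi^n\mathcal{L})}_K$ to primes of $U(\mathfrak{g})$ is injective on the relevant locus, so that $P\cap\widehat{U(\pi^n\mathcal{L})}_K$ and $I(\lambda)$ coincide because they contract to the same classical maximal ideal $\mathfrak{p}$. But this injectivity is false at any fixed level without quantitative control, and proving the correct quantitative version is essentially the whole content of the theorem: distinct linear forms $\lambda,\mu$ in the same coadjoint orbit, both integral on $\mathcal{L}$, give Dixmier annihilators $I(\lambda),I(\mu)$ that contract to the same maximal ideal of $U(\mathfrak{g})$ yet are in general distinct primes of $\widehat{U(\mathcal{L})}_K$; they only become equal after intersecting with $\widehat{U(p^{2n}\mathcal{L})}_K$ where $n$ is large compared with the $p$-adic denominator $N$ of the element $u$ with $\mu=\exp(\ad(u))\cdot\lambda$ (this is the paper's Theorem \ref{orbit-restriction}, whose proof needs the convergence estimates of Proposition \ref{preserve}). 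Moreover, to get a single $N$ working for the whole orbit one needs a uniform bound on these denominators over all integral forms in the orbit, which the paper obtains by a rigid-geometric argument: the set of integral forms in the orbit is an affinoid subdomain, and the triviality of the unipotent torsor $\mathbb{H}\to\mathbb{H}/S$ (Lemma \ref{product-decomposition}, Proposition \ref{final}) bounds the preimage inside a polydisc $p^{-N}\mathcal{L}$. Appealing instead to an ``affinoid stratification of the primitive spectrum by coadjoint orbits'' plus ``faithful flatness'' is circular, since such a stratification is exactly what Theorem \ref{A} is building towards, and the extended ideal $\mathfrak{p}\widehat{U(\pi^n\mathcal{L})}_K$ is indeed typically not prime, as you note, but nothing in your outline replaces it with a working mechanism.

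A second, related omission: you compare $P$ directly with one affinoid Dixmier annihilator, but the paper must first show that $P$ is an intersection of Dixmier annihilators $\bigcap_{j}I(\lambda_j)$ at all. This is the content of Section 5 (Theorem \ref{aff-Dix2} and Corollary \ref{Dix-intersection}), which rests on the control theorem for reducing quadruples, the semiprimitivity result, base change to finite extensions, and finite classical Krull dimension; it is not a formal consequence of the classical Dixmier correspondence plus density. Only after this does weak rationality enter, in the form you do use correctly: $P\cap U(\mathfrak{g})$ is maximal in $U(\mathfrak{g})$, forcing all the $\lambda_j$ into one coadjoint orbit (Lemma \ref{orbit}), and then the uniform-denominator bound and Theorem \ref{orbit-restriction} collapse the whole intersection to a single Dixmier annihilator after passing to $\widehat{U(\pi^n\mathcal{L})}_K$. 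Your first and third paragraphs (contraction to $\mathfrak{p}$, density identifying $I(\lambda)\cap U(\mathfrak{g})$ with $\Ann_{U(\mathfrak{g})}D(\mu)$, and propagation along the tower once the threshold is reached) are consistent with the paper, but without the two mechanisms above the argument does not close.
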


\endgroup

\noindent\textbf{Note:} This makes sense because if $\lambda$ takes values in the algebraic closure $\overline{K}$ of $K$, then it must take values in $F$ for some finite extension $F$ of $K$, and we can consider the action of $\widehat{U(\mathcal{L})}_K$ on $\widehat{D(\lambda)}$ via the embedding of $\widehat{U(\mathcal{L})}_K$ into $\widehat{U(\mathcal{L})}_F$.\\

\noindent This result aims towards a correspondence between orbits of linear forms in Hom$_{\mathbb{Z}_p}(\mathcal{L},\mathcal{O}_{\overline{K}})$ and primitive ideals in $\widehat{U(\mathcal{L})}_K$, in line with the classical result of Dixmier \cite[Theorem 6.2.4]{Dixmier}. In a subsequent paper \cite{new}, Theorem \ref{A} will become a key step in classifying primitive ideals in $KG$ for $G$ nilpotent.\\

\noindent Finally, in section 7, we will explore the properties of Dixmier modules and Dixmier annihilators further, and prove the deformed Dixmier-Moeglin equivalence in a specific case:

\begingroup
\setcounter{tmp}{\value{theorem}}
\setcounter{theorem}{1} 
\renewcommand\thetheorem{\Alph{theorem}}
\begin{theorem}\label{B}

Let $\mathfrak{g}$ be a nilpotent $K$-Lie algebra such that $[\mathfrak{g},\mathfrak{g}]$ is abelian, and let $\mathcal{L}$ be an $\mathcal{O}$-Lie lattice in $\mathfrak{g}$. Then $\widehat{U(\mathcal{L})}_K$ satisfies the deformed Dixmier-Moeglin equivalence. In fact, for all weakly rational ideals $P$ of $\widehat{U(\mathcal{L})}_K$, $n\in\mathbb{N}$ sufficiently high, $\frac{\widehat{U(\pi^n\mathcal{L})}_K}{P\cap\widehat{U(\pi^n\mathcal{L})}_K}$ is a simple domain.

\end{theorem}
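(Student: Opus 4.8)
The plan is to leverage Theorem \ref{A}, which already reduces every weakly rational ideal to a Dixmier annihilator $I(\lambda)$ after passing to a sufficiently deep deformation. Thus the entire content of Theorem \ref{B} is the following structural claim about Dixmier annihilators in the special case $[\mathfrak{g},\mathfrak{g}]$ abelian: that for $n$ large, $\widehat{U(\pi^n\mathcal{L})}_K / I(\lambda)$ is a simple domain. Once this is established, the deformed Dixmier-Moeglin equivalence follows formally: a simple ring has a unique prime ideal (the zero ideal), which is automatically locally closed, and a simple domain is certainly weakly rational (indeed its center is a field, and one checks it is algebraic over $K$ using that $\widehat{U(\pi^n\mathcal{L})}_K$ is an affinoid-type algebra so its center cannot contain a transcendental over $K$ without violating Noetherianity/Banach constraints — this is a standard finiteness argument). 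Since $P \cap \widehat{U(\pi^n\mathcal{L})}_K$ is primitive by construction (it is the annihilator of $\widehat{D(\lambda)}$, and Dixmier modules are irreducible by the results of section 3), all three conditions in Definition \ref{deformed-Dix-moeg} hold simultaneously.

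So the real work is proving that the quotient by a Dixmier annihilator is a simple domain when $[\mathfrak{g},\mathfrak{g}]$ is abelian. First I would recall the classical picture: when $\mathfrak{g}$ is nilpotent with abelian derived subalgebra, the Dixmier module $D(\lambda)$ is induced from a polarization $\mathfrak{h}$ at $\lambda$, and $\mathfrak{h}$ can be taken to contain $[\mathfrak{g},\mathfrak{g}]$; the quotient $U(\mathfrak{g})/\mathrm{Ann}(D(\lambda))$ is then a Weyl-algebra-like object — in fact isomorphic to a Weyl algebra $A_d(Z)$ over the center, which is a simple domain. I would aim to transport this to the affinoid setting by analyzing the concrete structure of $\widehat{D(\lambda)}$ as constructed in section 2: it should be a module over a suitable affinoid Weyl algebra, and the key computation is to identify $\widehat{U(\pi^n\mathcal{L})}_K / I(\lambda)$ explicitly with an affinoid Weyl algebra $\widehat{A_d}$ of the appropriate rank over $K$ (or over a finite extension $F$, then descend). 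The deformation parameter $\pi^n$ enters precisely here: passing to $\pi^n\mathcal{L}$ rescales the Lie bracket by $\pi^n$, which is what is needed to make the exponentials/the induced action converge and to make the relevant "creation and annihilation" operators generate the completed algebra cleanly, so that the associated graded computation goes through and no spurious elements survive in the quotient.

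The key steps, in order: (1) reduce to Dixmier annihilators via Theorem \ref{A}; (2) use the abelian-derived-subalgebra hypothesis to choose a polarization $\mathfrak{h} \supseteq [\mathfrak{g},\mathfrak{g}]$ adapted to $\lambda$ and compatible with the lattice $\pi^n\mathcal{L}$ for $n \gg 0$; (3) compute $\widehat{U(\pi^n\mathcal{L})}_K/I(\lambda)$ via the induced-module structure, identifying it with an affinoid Weyl algebra over a finite extension $F$ of $K$ — this is the step where convergence estimates and the choice of $n$ matter; (4) invoke (or prove) that an affinoid Weyl algebra is a simple Noetherian domain with center $F$; (5) descend from $F$ to $K$ to conclude $\widehat{U(\pi^n\mathcal{L})}_K/(P\cap\widehat{U(\pi^n\mathcal{L})}_K)$ is a simple domain; (6) deduce weak rationality, local closedness, and primitivity, hence the deformed Dixmier-Moeglin equivalence.

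The main obstacle I anticipate is step (3): showing that no extra elements survive in the quotient, i.e. that $I(\lambda)$ is exactly the kernel of the natural map to the affinoid Weyl algebra and not something smaller. Classically this uses a clean filtration/associated-graded argument together with the fact that the Weyl algebra is simple, but in the affinoid world the natural filtrations are by $\pi$-powers and the associated graded is over a field of characteristic $p$, where Weyl algebras are \emph{not} simple — so the naive reduction-mod-$\pi$ strategy fails, and one must instead argue with the $K$-Banach structure directly, controlling the affinoid norms of the induced action. Making the rescaling $\pi^n\mathcal{L}$ do exactly the right work to tame these norms — and verifying that "sufficiently high $n$" can be chosen uniformly in $\lambda$ (or at least depending only on $P$) — is where the technical heart of the argument will lie. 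A secondary subtlety is ensuring the polarization in step (2) can be chosen rationally over a controlled finite extension of $K$, so that the descent in step (5) is harmless.
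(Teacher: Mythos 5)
There is a genuine gap at the heart of your plan, namely steps (3)--(4). You propose to prove simplicity by identifying $\widehat{U(\pi^n\mathcal{L})}_K/I(\lambda)$ with an affinoid (Tate--Weyl) algebra and then invoking simplicity of that algebra. But the paper explicitly warns (end of Section 2.5, citing \cite{Lewis}) that weakly rational quotients of $\widehat{U(\mathcal{L})}_K$ need \emph{not} be isomorphic to Tate--Weyl algebras, even in very small nilpotent examples; what is available is only an injective map into $\widehat{A_r(K)}$ when the lattice is powerful (Corollary \ref{completely-prime}), and simplicity of a ring does not pass to subalgebras, dense or otherwise. So the "main obstacle" you flag is not a technical convergence issue to be tamed by rescaling the lattice: the isomorphism you want is known to fail in general, and you give no argument for why the metabelian hypothesis or the deformation would restore it. (A smaller inaccuracy: $I(\lambda)$ is by definition the kernel of the action map, so the real issue is surjectivity onto $\widehat{A_r(K)}$, i.e.\ that the completed image is all of the Tate--Weyl algebra -- precisely the point where the $\pi$-adic norms on the image and on $\widehat{A_r(K)}$ can disagree.)

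The paper's actual route proves maximality of the Dixmier annihilator directly, without ever identifying the quotient. After Theorem \ref{A} reduces $P\cap\widehat{U(\pi^n\mathcal{L})}_K$ to $I(\lambda)$, the metabelian hypothesis enters only through Lemma \ref{metabelian}: $\lambda$ is \emph{special}, i.e.\ $\mathfrak{g}^{\lambda}$ sits inside an abelian ideal $\mathfrak{a}$. Any maximal ideal $Q\supseteq I(\lambda)$ is again a Dixmier annihilator $I(\mu)$ (Theorem \ref{aff-Dix2}), with $\mu$ in the same coadjoint orbit; the control theorem (Proposition \ref{control2}) shows both $I(\lambda)$ and $I(\mu)$ are generated by their intersections with $\widehat{U(\mathcal{A})}_K$, and a Krull-dimension comparison of these commutative affinoid quotients (Proposition \ref{dimension-theory}, Corollary \ref{reduction}) forces those intersections to coincide, whence $I(\lambda)=I(\mu)=Q$; Galois descent (Corollary \ref{min-prime}, Proposition \ref{Dix-extension}, Corollary \ref{Dix-maximal2}) handles the finite extension over which $\lambda$ is defined. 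The domain property then comes from the mere \emph{embedding} of Corollary \ref{completely-prime}, and simplicity from maximality -- not the other way round, as in your proposal. Your steps (1), (2), (5), (6) are consistent with the paper, but step (3)--(4) would need to be replaced by an argument of this control/dimension type (or by a genuinely new proof that the metabelian case does yield a Tate--Weyl isomorphism, which is currently unsupported).
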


\endgroup

\noindent This result provides an insight into the deeper algebraic structure of the affinoid envelope, and we hope it will be of independent interest within the field of non-commutative algebra, and that it will advance the Dixmier-Moeglin project into the $p$-adic setting.\\

\noindent\textbf{Acknowledgments:} I am very grateful to Konstantin Ardakov and Ioan Stanciu for many helpful conversations and discussions. I would also like to thank EPSRC and the Heilbronn Institute for Mathematical Research for supporting and funding this research.

\section{Preliminaries}

In this section, we sometimes want to consider the classical picture. So throughout, take $k$ to be any field of characteristic 0.

\subsection{Affinoid algebras and Rigid Geometry}

First, we will recap some notions from rigid geometry. For a more detailed discussion, see \cite{Bosch}.\\

\noindent Recall from \cite[Definition 2.2.2]{Bosch} that if $R$ is a ring carrying a complete, separated filtration $w$, the \emph{Tate algebra} in $d$ variables $t_1,\cdots,t_d$ over $R$ is the algebra:

\begin{center}
$R\langle t_1,\cdots,t_d\rangle:=\{\underset{\alpha\in\mathbb{N}^d}{\sum}{\lambda_{\alpha}t_1^{\alpha_1}\cdots t_d^{\alpha_d}}:w(\lambda_{\alpha})\rightarrow\infty$ as $\alpha\rightarrow\infty\}$.
\end{center}

\noindent In other words, the Tate algebra is the ring of power series with coefficients in $R$ that converge on the unit disc $R_+^d$, we call these \emph{Tate power series}. This ring carries a separated filtration given by $w_{\inf}(\underset{\alpha\in\mathbb{N}^d}{\sum}{\lambda_{\alpha}t_1^{\alpha_1}\cdots t_d^{\alpha_d}}):=\inf\{w(\lambda_{\alpha}):\alpha\in\mathbb{N}^d\}$.\\

\noindent Normally, $R$ is assumed to be commutative, and in our case, we will usually take $R=K$, in which case the Tate algebra is Noetherian, and the filtration $w_{\inf}$ is \emph{Zariskian} in the sense of \cite[Ch. \rom{2} Definition 2.1.1]{LVO}. Recall from \cite[Definition 3.1.1]{Bosch} that we define an affinoid algebra to be any quotient of a Tate algebra over a complete, discretely valued field. Clearly any affinoid algebra $A$ will carry a complete, Zariskian filtration $w_A$ given by the quotient filtration with respect to $w_{\inf}$.\\

\noindent Affinoid algebras play a similar role in rigid geometry as commutative algebras play in standard algebraic geometry. Specifically, if $A$ is an affinoid algebra, we define Sp $A$ to be the space of maximal ideals of $A$. We call this an \emph{affinoid variety}, and we can realise $A$ as a ring of $\overline{K}$-valued functions on Sp $A$, where $a(\mathfrak{p}):=a+\mathfrak{p}$. This takes values in $\overline{K}$ since any maximal ideal in the Tate algebra has finite codimension by \cite[Corollary 2.2.12]{Bosch}.

We say that $A$ is the \emph{ring of analytic functions} on the affinoid variety Sp $A$. Note that for any ring homomorphism $\phi:A\to B$ between affinoid algebras induces a map $\phi^{\#}:$ Sp $B\to$ Sp $A,\mathfrak{q}\mapsto\phi^{-1}(\mathfrak{q})$, continuous with respect to the Zariski topology, and we call this a \emph{morphism of affinoid varieties}. Therefore, we can realise affinoid varieties as a category, equivalent to the category of affinoid algebras, via an equivalence where each variety is sent to its ring of analytic functions.\\

\noindent Affinoid varieties are useful in $p$-adic analysis, since they can indeed be realised as non-archimedean spaces. Recall that for each $\epsilon\in\mathbb{R}$, we define the $d$-dimensional polydisc of radius $\epsilon$ to be the space 

\begin{center}
$\mathbb{D}_{\epsilon}^d:=\{\underline{\alpha}\in\overline{K}^d:v_{\pi}(\alpha_i)\geq\epsilon$ for each $i\}$.
\end{center}

\noindent When $\epsilon=0$ we call this the \emph{unit disc}. We can consider this disc an affinoid space, isomorphic to Sp $K\langle u_1,\cdots,u_d\rangle$, and thus all discs are isomorphic, regardless of the radius.\\ 

\noindent Note that the Tate algebra $K\langle u_1,\cdots,u_d\rangle$ is precisely the set of power series converging on $\mathbb{D}_0^d$, so we can indeed realise the Tate algebra as the ring of analytic functions on the unit disc. Moreover, for each $n\in\mathbb{N}$, the subalgebra $K\langle \pi^nu_1,\cdots,\pi^nu_d\rangle$ is precisely those functions which converge on $\mathbb{D}_{-n}^d$.\\

\noindent Now, recall the following definition (\cite[Definition 3.3.9]{Bosch}):

\begin{definition}\label{affinoid-subdomain}

Let $X$ be an affinoid variety. A subset $U$ of $X$ is called an \emph{affinoid subdomain} if there exists an affinoid variety $Y$ with a morphism $\alpha:X\to Y$ such that $\alpha(X)\subseteq U$, and the following universal property is satisfied: If $\beta:Z\to X$ is any morphism of affinoid varieties such that $\beta(Z)\subseteq U$ then there exists a unique morphism $\gamma:Z\to Y$ such that $\beta=\alpha\circ\gamma$.

\end{definition}

\noindent\textbf{Note:} 1. In the above definition, it follows from \cite[Lemma 3.3.10]{Bosch} that the map $\alpha$ is an embedding of affinoid spaces, and its image is equal to $U$, so we may sometimes identify $Y$ with $U$.\\

\noindent 2. If $X$ is an affinoid variety, then $X$ carries a Grothendieck topology, in the sense of \cite[Definition 5.1.1]{Bosch}, where the admissible open subsets are the affinoid subdomains $U$ of $X$.\\

\noindent Roughly speaking, we define a \emph{rigid space} to be a space carrying a Grothendieck topology, that is locally equivalent to an affinoid variety. For example. we can consider the affine plane $\mathbb{A}_d(\overline{K})=\overline{K}^d$ to be a rigid variety, where its admissible open subsets consist of the unit discs $\mathbb{D}_{\epsilon}^d$ and their affinoid subdomains. Indeed, for any affine variety $X$, a similar procedure known as \emph{analytification} allows us to realise $X$ as a rigid variety.

\subsection{Lattices and Completions}

\noindent Recall the following definition \cite[Definition 2.7]{annals}:

\begin{definition}

Let $V$ be a $K$-vector space. 

\begin{itemize}

\item An \emph{$\mathcal{O}$-lattice} in $V$ is an $\mathcal{O}$-submodule $N$ of $V$ such that $\underset{n\in\mathbb{N}}{\bigcap}{\pi^nN}=0$ and $N\otimes_{\mathcal{O}}K=V$.

\item The \emph{completion} of $N$ is the $\mathcal{O}$-module $\widehat{N}:=\underset{n\in\mathbb{N}}{\varprojlim}\frac{N}{\pi^nN}$.

\item The \emph{completion} of $V$ with respect to $N$ is the $K$-vector space $\widehat{V}=\widehat{V}_N:=\widehat{N}\otimes_{\mathcal{O}}K$.

\end{itemize}

\end{definition}

\noindent Note that if $V$ is finite dimensional then $\widehat{V}=V$ for any choice of lattice. More generally, every $\mathcal{O}$-lattice $N$ in $V$ induces an exhaustive, separated filtration $w_N$ on $V$ given by 

\begin{center}
$w_N(u):=\sup\{n\in\mathbb{Z}: u\in\pi^nN\}$.
\end{center}

\noindent This is the \emph{$\pi$-adic filtration} associated to $N$, and the topological completion of $V$ with respect to $w_N$ is precisely the completion $\widehat{V}_N$.\\

\noindent Now, let $R$ be a Noetherian $K$-algebra, and let $\mathcal{R}$ be a Noetherian $\mathcal{O}$-lattice subaglebra of $R$. Then the completion $\widehat{R}$ of $R$ with respect to $\mathcal{R}$ is a Noetherian, Banach $K$-algebra.

\begin{definition}\label{deformation}

Let $V\subseteq R$ be a $K$-vector subspace, and suppose that the lattice $M:=V\cap\mathcal{R}$ generates $\mathcal{R}$ as an $\mathcal{O}$-algebra. Then $\mathcal{R}_n:=\mathcal{O}\langle \pi^n M\rangle\subseteq\mathcal{R}$ is an $\mathcal{O}$-subalgebra lattice in $R$, and we define the \emph{level $n$ deformation} of $\widehat{R}$ to be $\widehat{R}_n$, the completion of $R$ with respect to $\mathcal{R}_n$.

\end{definition}

\noindent\textbf{Example:} If $R$ is the polynomial ring $K[t_1,\cdots,t_r]$ and $\mathcal{R}=\mathcal{O}[t_1,\cdots,t_r]$, then the completion $\widehat{R}$ of $R$ with respect to $\mathcal{R}$ is the Tate algebra $K\langle t_1,\cdots,t_r\rangle$. Moreover, we can take the deformed Tate algebra $K\langle\pi^nt_1,\cdots,\pi^nt_r\rangle$ to be the level $n$ deformation of $\widehat{R}$.\\

\noindent Now, let $\mathfrak{g}$ be a finite dimensional $K$-Lie algebra, and let $\mathcal{L}$ be an $\mathcal{O}$-Lie lattice in $\mathfrak{g}$, i.e. $\mathcal{L}$ is an $\mathcal{O}$-lattice in $\mathfrak{g}$ and it is closed under the Lie bracket. Note that the enveloping algebra $U(\mathcal{L})$ is an $\mathcal{O}$-lattice in $U(\mathfrak{g})$.

\begin{definition}\label{aff-env}

Define the \emph{affinoid enveloping algebra} of $\mathcal{L}$ with coefficients in $\mathcal{O}$ to be $\widehat{U(\mathcal{L})}$, the completion of $U(\mathcal{L})$ with respect to its $\pi$-adic filtration.\\

\noindent Also, define the \emph{affinoid enveloping algebra} of $\mathcal{L}$ with coefficients in $K$ to be 

\begin{center}
$\widehat{U(\mathcal{L})}_K:=\widehat{U(\mathcal{L})}\otimes_{\mathcal{O}} K$.
\end{center} 

\noindent This is the completion of $U(\mathfrak{g})$ with respect to the $\pi$-adic filtration associated to $U(\mathcal{L})$.

\end{definition}

\noindent\textbf{Note:} We can take the level $n$ deformation of $\widehat{U(\mathcal{L})}_K$ to be $\widehat{U(\pi^n\mathcal{L})}_K$.\\

\noindent The following lemma is a straightforward consequence of the Poincarr\'{e}-Birkoff Witt theorem, see e.g. \cite[Proposition 2.5.1]{Lewis} for the proof.

\begin{lemma}\label{PBW}

If we let $\{x_1,\cdots,x_d\}$ be a $K$-basis for $\mathfrak{g}$ which forms an $\mathcal{O}$-basis for $\mathcal{L}$, then $\widehat{U(\mathcal{L})}$ is isomorphic as an $\mathcal{O}$-module to the Tate algebra $\mathcal{O}\langle x_1,\cdots,x_d\rangle$, and hence $\widehat{U(\mathcal{L})}_K$ is isomorphic to $K\langle x_1,\cdots,x_d\rangle$ as a $K$-vector space.

\end{lemma}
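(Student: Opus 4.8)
The plan is to reduce everything to the $\mathcal{O}$-form of the Poincaré–Birkhoff–Witt theorem and then track what $\pi$-adic completion does to a free $\mathcal{O}$-module. First I would invoke PBW for $U(\mathcal{L})$: since $\{x_1,\dots,x_d\}$ is an $\mathcal{O}$-basis for the Lie lattice $\mathcal{L}$, the ordered monomials $x^{\alpha}:=x_1^{\alpha_1}\cdots x_d^{\alpha_d}$, $\alpha\in\mathbb{N}^d$, form a free $\mathcal{O}$-basis of $U(\mathcal{L})$ (and simultaneously a $K$-basis of $U(\mathfrak{g})$). This gives an $\mathcal{O}$-module identification $U(\mathcal{L})\cong\bigoplus_{\alpha\in\mathbb{N}^d}\mathcal{O}\,x^{\alpha}$, i.e. $U(\mathcal{L})$ is, as an $\mathcal{O}$-module, the polynomial ring $\mathcal{O}[x_1,\dots,x_d]$.

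Next I would compute the $\pi$-adic filtration $w_N$ attached to $N=U(\mathcal{L})$. Since $\pi^{n}U(\mathcal{L})=\bigoplus_{\alpha}\pi^{n}\mathcal{O}\,x^{\alpha}$, an element $u=\sum_{\alpha}\lambda_{\alpha}x^{\alpha}$ lies in $\pi^{n}U(\mathcal{L})$ exactly when $v_{\pi}(\lambda_{\alpha})\geq n$ for every $\alpha$, so $w_N(u)=\inf_{\alpha}v_{\pi}(\lambda_{\alpha})$; under the identification above this is precisely the filtration $w_{\inf}$ used to define the Tate algebra. It then remains to identify the completion of $\bigoplus_{\alpha}\mathcal{O}\,x^{\alpha}$ with $\mathcal{O}\langle x_1,\dots,x_d\rangle$. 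Writing $\widehat{U(\mathcal{L})}=\varprojlim_n U(\mathcal{L})/\pi^nU(\mathcal{L})=\varprojlim_n\bigoplus_{\alpha}(\mathcal{O}/\pi^n\mathcal{O})\,x^{\alpha}$, a compatible system $(c_n)_n$ yields for each $\alpha$ a coefficient $\lambda_{\alpha}\in\varprojlim_n\mathcal{O}/\pi^n\mathcal{O}=\mathcal{O}$; and since each $c_n$ has finite support, for every $n$ all but finitely many $\lambda_{\alpha}$ reduce to $0$ modulo $\pi^n$, i.e. $v_{\pi}(\lambda_{\alpha})\to\infty$ as $\alpha\to\infty$. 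Conversely any such family $(\lambda_{\alpha})$ defines a compatible system. This is exactly the defining condition of $\mathcal{O}\langle x_1,\dots,x_d\rangle$, giving the $\mathcal{O}$-module isomorphism $\widehat{U(\mathcal{L})}\cong\mathcal{O}\langle x_1,\dots,x_d\rangle$.

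Finally, for the statement over $K$ I would apply $-\otimes_{\mathcal{O}}K$ and use that inverting $\pi$ turns the integral Tate algebra into the rational one: $\widehat{U(\mathcal{L})}_K=\widehat{U(\mathcal{L})}\otimes_{\mathcal{O}}K\cong\mathcal{O}\langle x_1,\dots,x_d\rangle\otimes_{\mathcal{O}}K\cong K\langle x_1,\dots,x_d\rangle$ as $K$-vector spaces, since any power series over $K$ with coefficients tending to $0$ is bounded, hence lies in $\pi^{-m}\mathcal{O}\langle x_1,\dots,x_d\rangle$ for some $m$.

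The only genuinely non-formal point is the inverse-limit computation: one must be careful that the completion of a free module on an \emph{infinite} basis is the ``restricted'' object (coefficients tending to $0$) rather than the full product $\prod_{\alpha}\mathcal{O}\,x^{\alpha}$, and this is exactly where the finiteness of the supports of the $c_n$ enters. Everything else is the standard $\mathcal{O}$-form of PBW together with routine bookkeeping, which is why it suffices to cite \cite[Proposition 2.5.1]{Lewis} for the details.
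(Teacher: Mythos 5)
Your proof is correct, and it is exactly the argument the paper has in mind when it calls the lemma a straightforward consequence of PBW and defers to \cite[Proposition 2.5.1]{Lewis}: the $\mathcal{O}$-form of PBW gives $U(\mathcal{L})\cong\bigoplus_{\alpha}\mathcal{O}x^{\alpha}$, the $\pi$-adic completion of this free module is computed via the inverse limit to be the restricted (Tate) power series ring, and inverting $\pi$ gives the statement over $K$. You also correctly isolate the one non-formal point, namely that the completion is the restricted object rather than the full product, which is where the finite-support argument is needed.
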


\vspace{0.2in}

\noindent Now, let $M$ be a $\mathfrak{g}$-representation, i.e. a $U(\mathfrak{g})$-module, and let $N$ be an $\mathcal{O}$-lattice in $M$ such that $\mathcal{L}\cdot N\subseteq N$, and suppose that $N$ is $\pi$-adically complete. Then it follows that $M$ has the structure of a $\widehat{U(\mathcal{L})}_K$-module. Unless otherwise stated, we will assume that all modules are left modules.

\begin{proposition}\label{fin-gen}

Let $M$ be a finitely generated $\widehat{U(\mathcal{L})}_K$-module. Then $M$ contains an $\mathcal{O}$-lattice $N$ such that $M$ is complete with respect to $N$ and $\mathcal{L}\cdot N\subseteq N$.

\end{proposition}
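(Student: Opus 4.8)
The plan is to build the lattice $N$ by starting from a finite generating set and ``saturating'' it under the action of $\widehat{U(\mathcal{L})}$, then completing. Fix a $K$-basis $\{x_1,\dots,x_d\}$ of $\mathfrak{g}$ that is an $\mathcal{O}$-basis of $\mathcal{L}$, so that by Lemma \ref{PBW} we may identify $\widehat{U(\mathcal{L})}$ with the Tate algebra $\mathcal{O}\langle x_1,\dots,x_d\rangle$ as an $\mathcal{O}$-module, with its $\pi$-adic filtration $w$. Let $m_1,\dots,m_r$ generate $M$ over $\widehat{U(\mathcal{L})}_K$. First I would replace these generators by $\pi$-multiples if necessary and set $N_0:=\widehat{U(\mathcal{L})}\cdot m_1+\dots+\widehat{U(\mathcal{L})}\cdot m_r$, the image of the surjection $\widehat{U(\mathcal{L})}^r \twoheadrightarrow M$ restricted to the ``integral'' part; equivalently $N_0$ is the $\widehat{U(\mathcal{L})}$-submodule of $M$ generated by the $m_i$.

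The candidate lattice is $N:=N_0$ itself. I need to check three things: (i) $N$ is an $\mathcal{O}$-lattice in $M$, i.e. $N\otimes_{\mathcal{O}}K=M$ and $\bigcap_n \pi^n N=0$; (ii) $\mathcal{L}\cdot N\subseteq N$; and (iii) $M$ is $\pi$-adically complete with respect to $N$. For (ii), since $\mathcal{L}\subseteq\widehat{U(\mathcal{L})}$ and $N$ is by construction a $\widehat{U(\mathcal{L})}$-submodule, this is immediate. For the ``spanning'' half of (i), $N\otimes_{\mathcal{O}}K=M$ because the $m_i$ already generate $M$ over $\widehat{U(\mathcal{L})}_K=\widehat{U(\mathcal{L})}\otimes_{\mathcal{O}}K$. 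The substantive points are the separatedness $\bigcap_n\pi^nN=0$ and the completeness in (iii); these are really the same issue, namely that $N$, as a finitely generated module over the Noetherian ring $\widehat{U(\mathcal{L})}$, carries a well-behaved $\pi$-adic topology.

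Here is where I would invoke Noetherianity and a closed-graph / topological argument. The ring $\widehat{U(\mathcal{L})}$ is Noetherian and $\pi$-adically complete, so $N$ is a finitely generated module over it; choosing a presentation $\widehat{U(\mathcal{L})}^s\to\widehat{U(\mathcal{L})}^t\to N\to 0$, the $\pi$-adic filtration on $N$ is the quotient of the (complete, separated) $\pi$-adic filtration on $\widehat{U(\mathcal{L})}^t$, and the key fact is that for a finitely generated module over a Noetherian ring complete with respect to an ideal, the submodule $\pi^s\widehat{U(\mathcal{L})}^t\cap(\text{kernel})$ behaves via the Artin--Rees lemma, forcing the quotient filtration on $N$ to be separated and complete. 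Concretely: $\widehat{U(\mathcal{L})}$ is $\pi$-adically complete and Noetherian, hence so is every finitely generated module, by the standard commutative-algebra argument adapted to this (possibly non-commutative but still Noetherian, $\pi$-central) setting — $\pi$ is central, so Artin--Rees applies verbatim. This gives $\bigcap_n\pi^nN=0$ and $N=\varprojlim N/\pi^nN$, and since $N\otimes K=M$ and $M/N$ is $\pi$-divisible... actually one checks $\widehat{N}\otimes_{\mathcal{O}}K = M$ directly because $M$ is a finitely generated module over $\widehat{U(\mathcal{L})}_K$ and inverting $\pi$ commutes with the relevant limits in this finitely generated situation.

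The main obstacle I anticipate is purely the topological bookkeeping in this last step: justifying that the $\pi$-adic completion of the finitely generated $\widehat{U(\mathcal{L})}$-module $N$ recovers $N$ (separatedness + completeness) and that tensoring with $K$ then recovers exactly $M$ rather than some larger completion. This requires knowing that $\widehat{U(\mathcal{L})}$ is $\pi$-adically complete and Noetherian (both stated in the preliminaries, via the Zariskian filtration and standard facts) and that $\pi$ is a central element so that the Artin--Rees lemma and the associated-graded arguments of \cite[Ch.~II]{LVO} go through. Everything else is formal. I would organise the write-up as: (1) reduce to showing $N:=\widehat{U(\mathcal{L})}m_1+\dots+\widehat{U(\mathcal{L})}m_r$ works; (2) observe $\mathcal{L}\cdot N\subseteq N$ and $N\otimes_{\mathcal{O}}K=M$; (3) cite Noetherianity + $\pi$-adic completeness of $\widehat{U(\mathcal{L})}$ to conclude $N$ is a separated, complete $\mathcal{O}$-lattice; (4) conclude.
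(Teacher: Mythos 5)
Your proposal is correct and follows essentially the same route as the paper: take $N=\widehat{U(\mathcal{L})}m_1+\cdots+\widehat{U(\mathcal{L})}m_r$, note $\mathcal{L}\cdot N\subseteq N$ and $N\otimes_{\mathcal{O}}K=M$ are immediate, and deduce completeness and separatedness of the $\pi$-adic filtration on $N$ from the fact that it is a finitely generated module over the $\pi$-adically complete Noetherian ring $\widehat{U(\mathcal{L})}$, presented as a quotient of $\widehat{U(\mathcal{L})}^s$ by a closed submodule. The paper phrases this last step by citing that the $\pi$-adic filtration is Zariskian (so all submodules of $\widehat{U(\mathcal{L})}^s$ are closed), which is the same standard fact you reach via Artin--Rees with $\pi$ central.
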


\begin{proof}

$M=\widehat{U(\mathcal{L})}_K m_1+\cdots+\widehat{U(\mathcal{L})}_K m_s$, so let $N:=\widehat{U(\mathcal{L})} m_1+\cdots+\widehat{U(\mathcal{L})} m_s$, clearly $N$ is an $\mathcal{O}$-lattice in $M$.\\

\noindent Note that since $\widehat{U(\mathcal{L})}$ is $\pi$-adically complete and gr $\widehat{U(\mathcal{L})}\cong \frac{\mathcal{O}}{\pi\mathcal{O}}[t,u_1,\cdots,u_d]$ is Noetherian, it follows from \cite[Ch.\rom{2}, Theorem 2.1.2]{LVO} that the $\pi$-adic filtration on $\widehat{U(\mathcal{L})}$ is Zariskian, and hence any left submodule of $\widehat{U(\mathcal{L})}^s$ is closed in $\widehat{U(\mathcal{L})}^s$.\\ 

\noindent Since $N\cong\frac{\widehat{U(\mathcal{L})}^s}{J}$ for some left submodule $J$ of $\widehat{U(\mathcal{L})}$, it follows that $N$ is $\pi$-adically complete.\end{proof}

\subsection{Polarisations and Dixmier modules}

\noindent Let $\mathfrak{h}$ be a finite dimensional Lie algebra over $k$. First recall the following definition \cite[1.12.8]{Dixmier}:

\begin{definition}\label{polarisation}

Given $\lambda\in\mathfrak{h}^*$, define a \emph{polarisation} of $\mathfrak{h}$ at $\lambda$ to be a solvable subalgebra $\mathfrak{b}$ of $\mathfrak{h}$ such that for all $u\in\mathfrak{h}$, $\lambda([u,\mathfrak{b}])=0$ if and only if $u\in\mathfrak{b}$.

\end{definition}

\noindent In particular, if $\mathfrak{b}$ is a polarisation of $\mathfrak{h}$ at $\lambda$, then $\lambda([\mathfrak{b},\mathfrak{b}])=0$, i.e. $\lambda$ restricts to a character of $\mathfrak{b}$. Note that polarisations need not always exist.

\begin{lemma}\label{polarisation-properties}

\noindent Given $\lambda\in\mathfrak{h}^*$, let $\mathfrak{h}^{\lambda}$ be the subalgebra $\{u\in\mathfrak{h}:\lambda([u,\mathfrak{h}])=0\}$ of $\mathfrak{g}$. If $\mathfrak{b}$ is a polarisation of $\mathfrak{h}$ at $\lambda$, then: 

\begin{itemize}

\item $\mathfrak{h}^{\lambda}\subseteq\mathfrak{b}$.

\item $\dim_F\mathfrak{b}=\frac{1}{2}(\dim_F\mathfrak{h}+\dim_F\mathfrak{h}^{\lambda})$.

\item If $\mathfrak{b}'$ is a subalgebra of $\mathfrak{h}$ such that $\lambda([\mathfrak{b}',\mathfrak{b}'])=0$ and $\dim_F\mathfrak{b}'=\frac{1}{2}(\dim_F\mathfrak{h}+\dim_F\mathfrak{h}^{\lambda})$, then $\mathfrak{b}'$ is a polarisation of $\mathfrak{h}$ at $\lambda$.

\item $\mathfrak{b}$ contains every ideal $\mathfrak{a}$ of $\mathfrak{h}$ such that $\lambda([\mathfrak{h},\mathfrak{a}])=0$. In particular, $\mathfrak{b}$ contains $Z(\mathfrak{h})$ and every ideal $\mathfrak{a}$ such that $\lambda(\mathfrak{a})=0$.

\end{itemize}

\end{lemma}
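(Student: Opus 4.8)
The plan is to translate every clause of the lemma into a statement about the alternating bilinear form $B_\lambda\colon\mathfrak{h}\times\mathfrak{h}\to k$, $B_\lambda(x,y):=\lambda([x,y])$. For a subspace $W\subseteq\mathfrak{h}$ set $W^{\perp}:=\{u\in\mathfrak{h}:B_\lambda(u,W)=0\}$; then by definition the radical $\mathfrak{h}^{\perp}$ is exactly $\mathfrak{h}^{\lambda}$, and the condition that a solvable subalgebra $\mathfrak{b}$ be a polarisation at $\lambda$ says precisely that $\mathfrak{b}=\mathfrak{b}^{\perp}$. The one substantive ingredient is the dimension identity
\[
\dim W^{\perp}=\dim\mathfrak{h}-\dim W+\dim(W\cap\mathfrak{h}^{\lambda}),
\]
valid for every subspace $W$. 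To prove it I would observe that $B_\lambda$ descends to a \emph{nondegenerate} alternating form on $\mathfrak{h}/\mathfrak{h}^{\lambda}$, that the map $u\mapsto B_\lambda(u,-)|_{W}$ from $\mathfrak{h}$ to $W^{*}$ has kernel $W^{\perp}$, and that its image is the annihilator in $W^{*}$ of $W\cap\mathfrak{h}^{\lambda}$ (surjectivity onto that annihilator uses nondegeneracy on the quotient together with the fact that restriction of functionals along $W/(W\cap\mathfrak{h}^{\lambda})\hookrightarrow\mathfrak{h}/\mathfrak{h}^{\lambda}$ is onto); rank--nullity then gives the identity.

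Granting this, the first two bullets are immediate. For the first, $\mathfrak{h}^{\lambda}=\mathfrak{h}^{\perp}\subseteq\mathfrak{b}^{\perp}=\mathfrak{b}$, since taking orthogonal complements reverses inclusions and $\mathfrak{b}\subseteq\mathfrak{h}$. For the second, put $W=\mathfrak{b}$ in the identity; then $\mathfrak{b}\cap\mathfrak{h}^{\lambda}=\mathfrak{h}^{\lambda}$ by the first bullet, so $\dim\mathfrak{b}=\dim\mathfrak{h}-\dim\mathfrak{b}+\dim\mathfrak{h}^{\lambda}$, i.e. $\dim\mathfrak{b}=\tfrac{1}{2}(\dim\mathfrak{h}+\dim\mathfrak{h}^{\lambda})$.

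For the third bullet, $\lambda([\mathfrak{b}',\mathfrak{b}'])=0$ means $\mathfrak{b}'\subseteq(\mathfrak{b}')^{\perp}$, so $\dim(\mathfrak{b}')^{\perp}\geq\dim\mathfrak{b}'$; applying the dimension identity with $W=\mathfrak{b}'$ and using the hypothesis $\dim\mathfrak{b}'=\tfrac{1}{2}(\dim\mathfrak{h}+\dim\mathfrak{h}^{\lambda})$ forces $\dim(\mathfrak{b}'\cap\mathfrak{h}^{\lambda})\geq\dim\mathfrak{h}^{\lambda}$, hence $\mathfrak{h}^{\lambda}\subseteq\mathfrak{b}'$ and then $\dim(\mathfrak{b}')^{\perp}=\dim\mathfrak{b}'$, which with $\mathfrak{b}'\subseteq(\mathfrak{b}')^{\perp}$ gives $\mathfrak{b}'=(\mathfrak{b}')^{\perp}$ — the polarisation condition. (Solvability of $\mathfrak{b}'$ is automatic when $\mathfrak{h}$ is nilpotent, which is the only setting in which this lemma is applied; in general it must be added to the hypotheses.) For the fourth bullet, the condition $\lambda([\mathfrak{h},\mathfrak{a}])=0$ says literally that $\mathfrak{a}\subseteq\mathfrak{h}^{\lambda}$, whence $\mathfrak{a}\subseteq\mathfrak{b}$ by the first bullet; the special cases follow because $Z(\mathfrak{h})\subseteq\mathfrak{h}^{\lambda}$ trivially, and because an ideal $\mathfrak{a}$ with $\lambda(\mathfrak{a})=0$ satisfies $[\mathfrak{h},\mathfrak{a}]\subseteq\mathfrak{a}$ and hence $\lambda([\mathfrak{h},\mathfrak{a}])=0$.

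The only step with any real content is the dimension identity for $W^{\perp}$, and even that is routine linear algebra provided one is careful to quotient out the radical $\mathfrak{h}^{\lambda}$ before invoking nondegeneracy (the form $B_\lambda$ itself is typically degenerate); everything else is formal. I therefore do not expect a genuine obstacle here — the content is classical, going back to \cite{Dixmier}.
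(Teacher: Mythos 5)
Your proposal is correct, and at bottom it follows the same route as the paper: the first bullet is exactly the paper's argument (the polarisation condition says $\mathfrak{b}=\mathfrak{b}^{\perp}$, and $\mathfrak{h}^{\lambda}=\mathfrak{h}^{\perp}\subseteq\mathfrak{b}^{\perp}$), and the fourth bullet is deduced from the first in the same way. The only difference is that for the second and third bullets the paper simply cites \cite[1.12.1]{Dixmier}, whereas you prove the underlying linear-algebra fact yourself, namely the identity $\dim W^{\perp}=\dim\mathfrak{h}-\dim W+\dim(W\cap\mathfrak{h}^{\lambda})$ obtained by passing to the nondegenerate form on $\mathfrak{h}/\mathfrak{h}^{\lambda}$ and applying rank--nullity; this is precisely the content of Dixmier's lemma, so your write-up is a self-contained version of the same argument rather than a genuinely different one. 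Your remark about solvability is a fair catch: the paper's Definition of a polarisation requires a solvable subalgebra, so the third bullet as stated implicitly uses that subalgebras of $\mathfrak{h}$ are solvable in the intended (nilpotent, or at least solvable) setting; flagging that hypothesis, as you do, is the careful reading.
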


\begin{proof} 

The first statement follows from the definition of a polarisation, since $\lambda([\mathfrak{b},\mathfrak{h}^{\lambda}])=0$, while the second and third follow from \cite[1.12.1]{Dixmier}. The final statement follows from the first since if $\lambda([\mathfrak{a},\mathfrak{h}])=0$ then $\mathfrak{a}\subseteq\mathfrak{h}^{\lambda}\subseteq\mathfrak{b}$.\end{proof}

\vspace{0.2in}

\noindent\textbf{Examples:} 1. If $\lambda$ is a character of $\mathfrak{h}$, i.e. $\lambda([\mathfrak{h},\mathfrak{h}])=0$, then $\mathfrak{h}$ is a polarisation of $\mathfrak{h}$ at $\lambda$. In particular, if $\mathfrak{h}$ is abelian, then for any $\lambda\in\mathfrak{h}^*$, $\mathfrak{h}$ is a polarisation of $\mathfrak{h}$ at $\lambda$.\\

\noindent 2. If $\mathfrak{h}=\mathfrak{a}\rtimes kx$ for some abelian subalgebra $\mathfrak{a}$ of $\mathfrak{h}$, $x\in\mathfrak{h}$, then for any $\lambda\in\mathfrak{h}^*$, if $\lambda([\mathfrak{h},\mathfrak{h}])\neq 0$ then $\mathfrak{a}$ is a polarisation of $\mathfrak{h}$ at $\lambda$.\\

\noindent 3. If $k$ is algebraically closed, and $\mathfrak{h}$ is semisimple, then $\lambda\in\mathfrak{h}^*$ has a polarisation $\mathfrak{b}$ if and only if $\lambda$ is \emph{regular} in the sense of \cite[1.11.6]{Dixmier}. In this case $\mathfrak{b}$ is a Borel subalgebra of $\mathfrak{h}$ by \cite[Proposition 1.12.18]{Dixmier}, and hence $\lambda$ is a character of a Cartan subalgebra.\\

\noindent In our case, we will be interested in the case where $\mathfrak{h}$ is solvable or nilpotent. Recall that $\mathfrak{h}$ is \emph{completely solvable} if there exists a chain of ideals $0=\mathfrak{h}_0\subseteq\mathfrak{h}_1\subseteq\cdots\subseteq\mathfrak{h}_d=\mathfrak{h}$ such that dim$_k\mathfrak{h}_i=i$ for each $i$, e.g. if $\mathfrak{h}$ is nilpotent. Note that for any solvable Lie algebra $\mathfrak{h}$, we can choose a finite extension $F/k$ such that $\mathfrak{h}\otimes_{k}F$ is completely solvable.

\begin{proposition}\label{standard-polarisation}

Suppose that $\mathfrak{h}$ is completely solvable. Then given $\lambda\in\mathfrak{h}^*$, and an ideal $\mathfrak{a}$ of $\mathfrak{g}$ such that $\lambda([\mathfrak{a},\mathfrak{a}])=0$, there exists a polarisation $\mathfrak{b}$ of $\mathfrak{h}$ at $\lambda$ such that $\mathfrak{a}\subseteq\mathfrak{b}$.

\end{proposition}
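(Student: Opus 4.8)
The plan is to induct on $\dim_k\mathfrak{h}$, using the complete solvability to peel off a one-dimensional ideal at each stage, in the spirit of Dixmier's construction of polarisations for nilpotent Lie algebras (\cite[1.12.10]{Dixmier}). If $\lambda([\mathfrak{h},\mathfrak{h}])=0$ then $\mathfrak{h}$ itself is the desired polarisation and it contains $\mathfrak{a}$, so we may assume $\lambda$ is not a character of $\mathfrak{h}$. Since $\mathfrak{h}$ is completely solvable, we may pick a chain of ideals with $\dim_k\mathfrak{h}_i=i$; let $\mathfrak{c}$ be the smallest term $\mathfrak{h}_i$ not contained in $\ker\lambda$, so that $\mathfrak{h}_{i-1}\subseteq\ker\lambda$ and $\mathfrak{c}=\mathfrak{h}_{i-1}+kc$ for a suitable $c$. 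The key observations are: $\mathfrak{c}$ is an ideal with $\lambda([\mathfrak{h},\mathfrak{c}])=0$ (since $[\mathfrak{h},\mathfrak{c}]\subseteq\mathfrak{h}_{i-1}\subseteq\ker\lambda$ as the $\mathfrak{h}_j$ form a chain of ideals), and $\mathfrak{c}$ is one-dimensional modulo $\ker\lambda$. Hence $\mathfrak{c}\subseteq\mathfrak{h}^\lambda$, and in particular $\mathfrak{c}$ will be forced to lie inside any polarisation by Lemma \ref{polarisation-properties}.

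The inductive step is to pass to $\mathfrak{c}^\perp:=\{u\in\mathfrak{h}:\lambda([u,\mathfrak{c}])=0\}$. I would check that this is a subalgebra of $\mathfrak{h}$ (using the Jacobi identity together with $\lambda([[\mathfrak{h},\mathfrak{c}],\mathfrak{h}])=0$, which follows since $[\mathfrak{h},\mathfrak{c}]\subseteq\ker\lambda$ and is an ideal) containing $\mathfrak{h}^\lambda$ and containing $\mathfrak{a}$: indeed $\lambda([\mathfrak{a},\mathfrak{c}])\subseteq\lambda([\mathfrak{a},\mathfrak{a}])+\lambda(\text{stuff in }\ker\lambda)$ — more carefully, since $\mathfrak{c}=\mathfrak{h}_{i-1}+kc$ with $\mathfrak{h}_{i-1}\subseteq\ker\lambda$ an ideal, $[\mathfrak{a},\mathfrak{h}_{i-1}]\subseteq\mathfrak{h}_{i-1}\subseteq\ker\lambda$, so the condition $\lambda([\mathfrak{a},\mathfrak{c}])=0$ reduces to whether $c\in\mathfrak{a}^\perp$; if $c\in\mathfrak{a}$ this is immediate from $\lambda([\mathfrak{a},\mathfrak{a}])=0$, and if $c\notin\mathfrak{a}$ one replaces $c$ by a representative that does the job, or simply notes $\mathfrak{a}+\mathfrak{c}$ is a subalgebra on which $\lambda$ vanishes on commutators and works with it. Crucially $\dim_k\mathfrak{c}^\perp<\dim_k\mathfrak{h}$ because $c\notin\mathfrak{c}^\perp$ (as $\lambda([c,\cdot])$ is nonzero on $\mathfrak{h}$ — this is where not being a character is used, after possibly adjusting the choice of $c$ so that $[c,\mathfrak{h}]\not\subseteq\ker\lambda$; if every such $c$ lay in $\mathfrak{h}^\lambda$ then $\mathfrak{c}\subseteq Z(\mathfrak{h})\bmod\ker\lambda$ and one instead quotients). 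Now $\mathfrak{c}^\perp$ is again completely solvable (a subalgebra of a completely solvable algebra, after intersecting the chain), $\mathfrak{a}$ is an ideal of it with $\lambda|_{\mathfrak{c}^\perp}([\mathfrak{a},\mathfrak{a}])=0$, and $\lambda|_{\mathfrak{c}^\perp}$ has the same $\mathfrak{h}$-radical behaviour; by induction there is a polarisation $\mathfrak{b}$ of $\mathfrak{c}^\perp$ at $\lambda|_{\mathfrak{c}^\perp}$ with $\mathfrak{a}\subseteq\mathfrak{b}$.

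It then remains to verify that $\mathfrak{b}$, so constructed, is a polarisation of $\mathfrak{h}$ at $\lambda$, not merely of $\mathfrak{c}^\perp$. This is the standard ``going-up'' lemma: one shows $(\mathfrak{c}^\perp)^{\lambda|} = \mathfrak{h}^\lambda$ (using $\mathfrak{h}^\lambda\subseteq\mathfrak{c}^\perp$) and then a dimension count via Lemma \ref{polarisation-properties}(second and third bullets): $\dim_k\mathfrak{b}=\tfrac12(\dim_k\mathfrak{c}^\perp+\dim_k\mathfrak{h}^\lambda)$, and one checks $\dim_k\mathfrak{c}^\perp=\dim_k\mathfrak{h}-1+\dim_k(\text{something})$ — concretely, $\mathfrak{c}^\perp$ has codimension $1$ in $\mathfrak{h}$ (the map $u\mapsto\lambda([u,c])$ is a nonzero functional on $\mathfrak{h}$ with kernel $\mathfrak{c}^\perp$), while $\mathfrak{h}^\lambda$ drops by exactly $1$ as well when passing from $\mathfrak{h}$ to $\mathfrak{c}^\perp$ in the right configuration, so the formula $\dim_k\mathfrak{b}=\tfrac12(\dim_k\mathfrak{h}+\dim_k\mathfrak{h}^\lambda)$ holds; since also $\lambda([\mathfrak{b},\mathfrak{b}])=0$, the third bullet of Lemma \ref{polarisation-properties} gives that $\mathfrak{b}$ is a polarisation of $\mathfrak{h}$ at $\lambda$. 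The main obstacle I anticipate is the bookkeeping in the degenerate cases — when the natural one-dimensional ideal $\mathfrak{c}$ is central modulo $\ker\lambda$, forcing a passage to the quotient $\mathfrak{h}/\mathfrak{c}_0$ (with $\mathfrak{c}_0=\mathfrak{c}\cap\ker\lambda$, or rather the full $\ker\lambda$-part), and checking that the polarisation pulled back from $\mathfrak{h}/\mathfrak{c}_0$ still contains $\mathfrak{a}$ and still has the correct dimension. Keeping the two reduction moves (restrict to $\mathfrak{c}^\perp$, or quotient by a central-mod-$\ker\lambda$ ideal) properly coordinated so that the induction strictly decreases dimension in every case is the delicate part; everything else is the routine Jacobi-identity and dimension-counting verification already packaged in Lemma \ref{polarisation-properties}.
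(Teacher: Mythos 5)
Your first reduction already fails for completely solvable (non-nilpotent) $\mathfrak{h}$, and this is a genuine gap rather than bookkeeping. The ``key observation'' $\lambda([\mathfrak{h},\mathfrak{c}])=0$ is justified by asserting $[\mathfrak{h},\mathfrak{c}]\subseteq\mathfrak{h}_{i-1}$ ``as the $\mathfrak{h}_j$ form a chain of ideals''; but $\mathfrak{c}$ being an ideal only gives $[\mathfrak{h},\mathfrak{c}]\subseteq\mathfrak{c}$. Triviality of the $\mathfrak{h}$-action on $\mathfrak{h}_i/\mathfrak{h}_{i-1}$ holds when $\mathfrak{h}$ is nilpotent, not in general: in the two-dimensional algebra with $[x,y]=y$ and $\lambda(y)\neq 0$, the chain $0\subset ky\subset\mathfrak{h}$ gives $\mathfrak{c}=ky$ and $\lambda([x,y])\neq 0$, so $\mathfrak{c}\not\subseteq\mathfrak{h}^{\lambda}$. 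Moreover, even where the claim does hold it undercuts your own induction: if $\mathfrak{c}\subseteq\mathfrak{h}^{\lambda}$ then by definition $\mathfrak{c}^{\perp}=\mathfrak{h}$, so passing to $\mathfrak{c}^{\perp}$ does not drop the dimension, and the whole argument is pushed into the unproven escape clauses (``adjust the choice of $c$'', ``one instead quotients''). Those clauses, together with the unverified containment $\mathfrak{a}\subseteq\mathfrak{c}^{\perp}$ (needed to apply the inductive hypothesis with the same ideal $\mathfrak{a}$) and the going-up count $\dim_k(\mathfrak{c}^{\perp})^{\lambda|_{\mathfrak{c}^{\perp}}}=\dim_k\mathfrak{h}^{\lambda}+1$, are exactly where the work lies; note that the analogous going-up statement in the paper, Proposition \ref{sub-polarisation}, is proved only for nilpotent $\mathfrak{h}$ and uses a reducing quadruple with $\lambda(z)\neq 0$, so it cannot simply be quoted in the completely solvable setting.

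The paper's proof avoids all of this and is essentially two lines: choose the complete flag of ideals $0=\mathfrak{h}_0\subseteq\cdots\subseteq\mathfrak{h}_d=\mathfrak{h}$ so that $\mathfrak{h}_j=\mathfrak{a}$ for some $j$, and take the Vergne polarisation $\mathfrak{b}=\mathfrak{h}_1^{\lambda_1}+\cdots+\mathfrak{h}_d^{\lambda_d}$ (with $\lambda_i=\lambda|_{\mathfrak{h}_i}$), which is a polarisation at $\lambda$ by the cited result of Dixmier; since $\lambda([\mathfrak{a},\mathfrak{a}])=0$, the $j$-th summand is $\mathfrak{a}^{\lambda_j}=\mathfrak{a}$, so $\mathfrak{a}\subseteq\mathfrak{b}$ immediately. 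If you want to keep your flag, the repair is to use it all at once in this way, rather than inducting on its first step.
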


\begin{proof}

Choose a chain of ideals $0=\mathfrak{h}_0\subseteq\mathfrak{h}_1\subseteq\cdots\subseteq\mathfrak{h}_d=\mathfrak{h}$ such that dim$_k\mathfrak{h}_i=i$ for each $i$, and we may choose this chain such that $\mathfrak{h}_j=\mathfrak{a}$ for some $j$.\\

\noindent Setting $\lambda_i:=\lambda|_{\mathfrak{h}_i}$ for each $i$, the subalgebra $\mathfrak{b}:=\mathfrak{h}_1^{\lambda_1}+\cdots+\mathfrak{h}_d^{\lambda_d}$ is a polarisation of $\mathfrak{h}$ at $\lambda$ by \cite[Proposition 1.12.18]{Dixmier}. Moreover, since $\lambda([\mathfrak{a},\mathfrak{a}])=0$, it follows that $\mathfrak{h}_j^{\lambda_j}=\mathfrak{a}^{\lambda_j}=\mathfrak{a}$, and hence $\mathfrak{a}\subseteq\mathfrak{b}$ as required.\end{proof}

\noindent In particular, taking $\mathfrak{a}=0$, this result ensures that polarisations always exist if $\mathfrak{h}$ is completely solvable. In this paper, we mainly focus on the case where $\mathfrak{h}$ is nilpotent.\\

\noindent Now, for any $k$-Lie algebra $\mathfrak{h}$, $\lambda\in\mathfrak{h}^*$, and any polarisation $\mathfrak{b}$ of $\mathfrak{h}$ at $\lambda$, since $\lambda$ restricts to a character of $\mathfrak{b}$, it follows that $k_{\lambda}:=kv$ is a $U(\mathfrak{b})$-module via the $\mathfrak{b}$-action $x\cdot v:=\lambda(x)v$. This gives us the following definition:

\begin{definition}\label{Dixmier-module}

Let $\lambda\in\mathfrak{h}^*$, and let $\mathfrak{b}$ be a polarisation of $\mathfrak{h}$ at $\lambda$. We define the $\mathfrak{b}$-\emph{Dixmier module} of $\mathfrak{h}$ at $\lambda$ to be the $U(\mathfrak{h})$-module:

\begin{equation}
D(\lambda)=D(\lambda)_{\mathfrak{b}}:=U(\mathfrak{h})\otimes_{U(\mathfrak{b})}k_{\lambda}
\end{equation}

\end{definition}

\noindent Note that $D(\lambda)$ is a cyclic $U(\mathfrak{h})$ module, generated by a vector $v_{\lambda}$ on which $U(\mathfrak{b})$ acts by scalars.\\

This definition is useful, because in the case where $k$ is algebraically closed and $\mathfrak{h}$ is semisimple, these Dixmier modules are precisely the well-known Verma modules, which are fundamental within the representation theory of semisimple Lie algebras. So we may think of Dixmier modules as a generalisation of Verma modules.\\

\noindent\textbf{Examples:} 1. If $\mathfrak{h}$ is semisimple, $k$ is algebraically closed, the Verma module $D(\lambda)$ has a unique simple quotient $L(\lambda)$, known as a \emph{simple highest weight module} with weight $\lambda$.\\

\noindent 2. If $\mathfrak{h}$ is abelian, or more generally if $\lambda$ is a character of $\mathfrak{h}$, then $D(\lambda)=k$ always.\\

\noindent 3. If $\mathfrak{h}=\mathfrak{a}\rtimes kx$, for $\mathfrak{a}$ abelian, then if $\lambda$ is not a character of $\mathfrak{h}$, $D(f)\cong k[t]$, where $x$ acts by $t$, and each $u\in\mathfrak{a}$ acts by a polynomial in $k[\frac{d}{dt}]$.

\begin{theorem}[{\cite[Theorem 6.1.1]{Dixmier}}]\label{Dix1}

Let $\mathfrak{h}$ be solvable, and let $\lambda\in\mathfrak{h}^*$. Then there exists a polarisation $\mathfrak{b}$ of $\mathfrak{h}$ at $\lambda$ such that $D(\lambda)_{\mathfrak{b}}$ is an irreducible $U(\mathfrak{h})$-module.

\end{theorem}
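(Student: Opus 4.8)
The plan is to induct on $\dim_k\mathfrak{h}$, exploiting that a solvable Lie algebra is never perfect to peel off a codimension-one ideal at each step. The base case is when $\lambda$ is a character of $\mathfrak{h}$ (in particular whenever $\mathfrak{h}$ is abelian, which includes $\dim_k\mathfrak{h}\le 1$): then $\mathfrak{h}$ itself is a polarisation at $\lambda$ and $D(\lambda)_{\mathfrak{h}}=k_\lambda$ is one dimensional, hence irreducible. So assume $\lambda$ is not a character. Every hyperplane of $\mathfrak{h}$ containing $[\mathfrak{h},\mathfrak{h}]$ is an ideal, so fix such a codimension-one ideal $\mathfrak{a}$, write $\mathfrak{h}=\mathfrak{a}\oplus kx$, and put $\mu:=\lambda|_{\mathfrak{a}}$. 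By induction there is a polarisation $\mathfrak{b}_0$ of $\mathfrak{a}$ at $\mu$ with $M:=D(\mu)_{\mathfrak{b}_0}$ irreducible over $U(\mathfrak{a})$. The argument then splits according to how $\mathfrak{h}^\lambda$ sits relative to $\mathfrak{a}^\mu:=\{u\in\mathfrak{a}:\mu([u,\mathfrak{a}])=0\}$: since $\dim_k\mathfrak{h}^\lambda\equiv\dim_k\mathfrak{h}$ and $\dim_k\mathfrak{a}^\mu\equiv\dim_k\mathfrak{a}\pmod 2$ by Lemma \ref{polarisation-properties}, while these two subspaces differ in dimension by at most one, exactly one of $\dim_k\mathfrak{h}^\lambda=\dim_k\mathfrak{a}^\mu\pm 1$ holds.

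If $\dim_k\mathfrak{h}^\lambda=\dim_k\mathfrak{a}^\mu+1$, then $\mathfrak{h}^\lambda\not\subseteq\mathfrak{a}$ and one can choose $x'\in\mathfrak{h}^\lambda\setminus\mathfrak{a}$, after possibly replacing $\mathfrak{b}_0$ by another polarisation of $\mathfrak{a}$ at $\mu$ so that $x'$ normalises $\mathfrak{b}_0$. Then $\mathfrak{b}:=\mathfrak{b}_0\oplus kx'$ is a subalgebra, is $\lambda$-isotropic, and has dimension $\tfrac12(\dim_k\mathfrak{h}+\dim_k\mathfrak{h}^\lambda)$, so it is a polarisation of $\mathfrak{h}$ at $\lambda$ by Lemma \ref{polarisation-properties}. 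A Poincar\'e--Birkhoff--Witt computation, using that $x'$ normalises both $\mathfrak{a}$ and $\mathfrak{b}_0$ so that $U(\mathfrak{h})=U(\mathfrak{a})\otimes_k k[x']$ and $U(\mathfrak{b})=U(\mathfrak{b}_0)\otimes_k k[x']$, then identifies $D(\lambda)_{\mathfrak{b}}$ with $M$ as a $U(\mathfrak{a})$-module, with $x'$ acting by a scalar plus a derivation; since $M$ is already irreducible over $U(\mathfrak{a})$, it is a fortiori irreducible over $U(\mathfrak{h})$.

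If instead $\dim_k\mathfrak{h}^\lambda=\dim_k\mathfrak{a}^\mu-1$, then $\mathfrak{b}_0$ itself has dimension $\tfrac12(\dim_k\mathfrak{h}+\dim_k\mathfrak{h}^\lambda)$, so $\mathfrak{b}:=\mathfrak{b}_0$ is a polarisation of $\mathfrak{h}$ at $\lambda$ and $D(\lambda)_{\mathfrak{b}_0}\cong U(\mathfrak{h})\otimes_{U(\mathfrak{a})}M=\operatorname{Ind}_{\mathfrak{a}}^{\mathfrak{h}}M$ is the module induced from the irreducible $U(\mathfrak{a})$-module $M$. This is where the real work lies: writing $\operatorname{Ind}_{\mathfrak{a}}^{\mathfrak{h}}M\cong M\otimes_k k[x]$ with $\mathfrak{a}$ acting triangularly, one invokes the standard criterion that such an induced module is irreducible precisely when $M$ is not isomorphic, as a $U(\mathfrak{a})$-module, to its twist by $\exp(\ad x)$; and the defining inequality of this case, which says exactly that $x$ moves the coadjoint datum attached to $\mu$, forces that non-isomorphism.

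Since both cases produce a polarisation of $\mathfrak{h}$ at $\lambda$ with irreducible Dixmier module, the induction closes. I expect two points to be genuinely delicate. First, in the case $\dim_k\mathfrak{h}^\lambda=\dim_k\mathfrak{a}^\mu+1$, one must ensure that the inductively produced polarisation of $\mathfrak{a}$ can be taken stable under an admissible $x'$; this calls either for a strengthened inductive hypothesis or for a careful choice of the ideal $\mathfrak{a}$, and for $\mathfrak{h}$ nilpotent one can lean on the flag polarisations of Proposition \ref{standard-polarisation}, which restrict well to ideals of the flag. Second, the verification of the twist non-isomorphism in the other case is the one genuinely non-formal input, and is where solvability is really used, through the inductive description of the annihilators of Dixmier modules of $\mathfrak{a}$ in terms of $\mu$. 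For $\mathfrak{h}$ solvable but not completely solvable the same induction still runs, a codimension-one ideal always being available; alternatively one can extend scalars to split $\mathfrak{h}$, at the cost of descending the polarisation to $k$ afterwards.
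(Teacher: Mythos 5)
You should note first that the paper itself does not prove this statement: Theorem \ref{Dix1} is quoted from Dixmier, and the only irreducibility argument actually carried out in the paper is the affinoid analogue, Theorem \ref{aff-Dix1}, which is restricted to nilpotent $\mathfrak{g}$ and is organised differently from your sketch. There one first quotients by a nonzero ideal contained in $\ker\lambda$ whenever one exists; in the remaining situation the centre is one-dimensional and spanned by some $z$ with $\lambda(z)\neq 0$, Proposition \ref{sub-polarisation} supplies a reducing quadruple $(x,y,z,\mathfrak{g}')$ and shows that any polarisation of $\mathfrak{g}'$ at $\lambda|_{\mathfrak{g}'}$ contained in $\mathfrak{g}'$ is already a polarisation of $\mathfrak{g}$ at $\lambda$, and irreducibility of the induced module is then an explicit computation: $y$ acts on $M[t]$ (resp. $M\langle t\rangle$) as $\alpha\lambda(z)\frac{d}{dt}+\lambda(y)$ with $\alpha\lambda(z)\neq 0$, so any nonzero submodule can be differentiated down into $M$ (Proposition \ref{Tate-action}). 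In particular the delicate branch of your dichotomy, $\mathfrak{h}^{\lambda}\not\subseteq\mathfrak{a}$, is never confronted: the polarisation is always arranged to lie inside the codimension-one ideal.

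Judged as a proof, your outline has genuine gaps at precisely the two points where the content of the theorem sits. In the case $\dim_k\mathfrak{h}^{\lambda}=\dim_k\mathfrak{a}^{\mu}+1$ you need a polarisation $\mathfrak{b}_0$ of $\mathfrak{a}$ at $\mu$ that is simultaneously normalised by some $x'\in\mathfrak{h}^{\lambda}\setminus\mathfrak{a}$ and has irreducible Dixmier module; the inductive hypothesis gives neither property together, and "replacing $\mathfrak{b}_0$" discards exactly the irreducibility the induction produced. Repairing this requires a genuinely strengthened induction (for instance proving irreducibility for flag polarisations as in Proposition \ref{standard-polarisation}, chosen compatibly with $\mathfrak{a}$), not a cosmetic adjustment. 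In the case $\mathfrak{h}^{\lambda}\subseteq\mathfrak{a}$ the criterion you invoke is misstated: $\exp(\ad x)$ need not exist for solvable $\mathfrak{h}$ since $\ad x$ is not nilpotent, and for $U(\mathfrak{h})=U(\mathfrak{a})[x;\delta]$ with $\delta=\ad x$ the relevant dichotomy is whether the $U(\mathfrak{a})$-module structure of $M$ extends $\delta$-compatibly to a $U(\mathfrak{h})$-module structure, not whether $M$ is isomorphic to a twist by an exponential. More importantly, deducing the failure of such an extension from $\mathfrak{h}^{\lambda}\subseteq\mathfrak{a}$ is the heart of Dixmier's proof, and you assert it ("the inequality forces the non-isomorphism") rather than prove it. Finally, your closing remark about general solvable $\mathfrak{h}$ is too optimistic: over a non-algebraically-closed field of characteristic $0$ a solvable algebra can have linear forms admitting no polarisation at all, and extending scalars produces a polarisation only of $\mathfrak{h}\otimes_k F$, which does not descend; the statement has to be read, as in Dixmier, in a setting where the completely solvable machinery applies.
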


\noindent The following result yields a complete description of primitive ideals in $U(\mathfrak{h})$:

\begin{theorem}[{\cite[Theorem 6.1.7]{Dixmier}}]\label{Dix2}

Let $\mathfrak{h}$ be solvable, and let $P$ be a primitive ideal of $\mathfrak{h}$. Then there exists a finite extension $F/k$, $\lambda\in\mathfrak{h}_F^*$ and a polarisation $\mathfrak{b}$ of $\mathfrak{h}_F$ at $\lambda$ such that $P=$\emph{ Ann}$_{U(\mathfrak{h})}D(\lambda)_{\mathfrak{b}}$.

\end{theorem}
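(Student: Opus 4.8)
The statement to prove is Theorem~\ref{Dix2}, which is quoted from Dixmier's book; nonetheless, here is how I would reconstruct its proof from the machinery already set up. The plan is to reduce, via a base change to a finite extension $F/k$, to the case where $\mathfrak{h}$ is completely solvable, and then to combine Theorem~\ref{Dix1} with a separation argument showing that every primitive ideal of $U(\mathfrak{h})$ arises as the annihilator of some $D(\lambda)_{\mathfrak{b}}$. First I would recall that primitive ideals of $U(\mathfrak{h})$ for $\mathfrak{h}$ solvable are exactly the annihilators of irreducible modules, and that by general theory of enveloping algebras of solvable Lie algebras (Dixmier, Ch.~4--5) such ideals are completely prime and are controlled by their intersection with the centre. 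The key point I would establish is that the map $(\lambda,\mathfrak{b})\mapsto \operatorname{Ann}_{U(\mathfrak{h}_F)} D(\lambda)_{\mathfrak{b}}$, as $\lambda$ ranges over $\mathfrak{h}_F^*$ and $\mathfrak{b}$ over polarisations, surjects onto the primitive spectrum once we allow $F$ to grow.

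The main steps, in order, would be: (1) reduce to $k$ algebraically closed (or at least to $\mathfrak{h}$ completely solvable) by passing to a finite extension $F$, using that $\operatorname{Ann}_{U(\mathfrak{h})}M = \operatorname{Ann}_{U(\mathfrak{h}_F)}(M\otimes_k F)\cap U(\mathfrak{h})$ and that primitivity is well-behaved under such finite base change for solvable Lie algebras; (2) given a primitive ideal $P$, realise $P = \operatorname{Ann}_{U(\mathfrak{h})}M$ for an irreducible $M$, and use the theory of the Dixmier--Duflo construction: the orbit method attaches to $M$ a coadjoint orbit, equivalently (after base change) a linear form $\lambda\in\mathfrak{h}_F^*$, together with a polarisation $\mathfrak{b}$ at $\lambda$ obtained from a Jordan--H\"older flag of $\mathfrak{h}_F$ refining a chain of ideals as in Proposition~\ref{standard-polarisation}; (3) invoke Theorem~\ref{Dix1} to arrange that $\mathfrak{b}$ can be chosen so that $D(\lambda)_{\mathfrak{b}}$ is irreducible, hence $\operatorname{Ann} D(\lambda)_{\mathfrak{b}}$ is primitive; (4) prove the matching $\operatorname{Ann}_{U(\mathfrak{h})}D(\lambda)_{\mathfrak{b}} = P$ by showing both that $D(\lambda)_{\mathfrak{b}}$ and $M$ have the same central character and the same Gelfand--Kirillov dimension, and then that the primitive ideal is determined by these data together with the coadjoint orbit — this is the content of Dixmier's bijection between $\mathfrak{h}^*/H$ and the primitive spectrum in the nilpotent case, extended to the solvable case with the caveat that different orbits can give the same ideal.

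The hard part will be step~(4): establishing that the annihilator of the (possibly reducible, before applying Theorem~\ref{Dix1}) Dixmier module agrees with $P$ on the nose, rather than merely being contained in it or contained in the radical. Classically this rests on a delicate induction on $\dim\mathfrak{h}$, splitting into the case where $\mathfrak{h}$ has a $1$-codimensional ideal $\mathfrak{a}$ on which $\lambda$ restricts non-trivially to a character (where one uses a Mackey-type analysis of induced modules and an argument with the Weyl-algebra-like structure of $U(\mathfrak{h})/P$), and the case where $\lambda|_{[\mathfrak{a},\mathfrak{a}]}=0$ extends to $\mathfrak{h}$. In each case one must check that inducing from a polarisation does not enlarge the annihilator, which comes down to faithfulness of a suitable induced module over $U(\mathfrak{h}/\mathfrak{a})$ or a localisation thereof. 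Since this is precisely \cite[Theorem 6.1.7]{Dixmier}, I would ultimately cite it; but the reconstruction above isolates exactly which ingredients (Theorem~\ref{Dix1}, Proposition~\ref{standard-polarisation}, Lemma~\ref{polarisation-properties}, and the base-change behaviour of primitive ideals) are the load-bearing ones for the $p$-adic adaptation carried out later in the paper.
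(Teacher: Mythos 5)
The paper offers no proof of this statement at all: it is quoted as classical background, with the citation \cite[Theorem 6.1.7]{Dixmier} serving as the entire justification, and your proposal ultimately rests on exactly the same citation, so there is no gap in the sense that matters here. That said, your reconstruction deviates from Dixmier's actual argument in step (4): the primitive ideal is not recovered by matching central characters and Gelfand--Kirillov dimensions against an orbit datum (in the solvable, non-nilpotent case the orbit-to-ideal correspondence is considerably subtler, and no such ``determined by invariants'' statement is available at this stage); Dixmier's proof of 6.1.7 is instead a direct induction on $\dim\mathfrak{h}$ through codimension-one ideals and twisted/induced modules, with the finite extension $F/k$ entering because eigenvalues of the action on an irreducible module need not lie in $k$ --- which is closer to the case-splitting you describe afterwards, and is also the shape of the affinoid analogue the paper actually proves later (Theorem \ref{aff-Dix2}, via reducing quadruples and the control theorem). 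So your identification of the load-bearing ingredients (Theorem \ref{Dix1}, Proposition \ref{standard-polarisation}, Lemma \ref{polarisation-properties}, base change) is apt, but the orbit-method/invariant-matching gloss should not be presented as the mechanism of the classical proof.
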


\noindent Now we will return to the $p$-adic case, i.e. $K/\mathbb{Q}_p$ is a finite extension, $\mathfrak{g}$ is a $K$-Lie algebra and $\mathcal{L}$ is an $\mathcal{O}$-lattice in $\mathfrak{g}$. We want to extend the conclusion of Theorems \ref{Dix1} and \ref{Dix2} to the affinoid setting.

In this case, when we choose $\lambda\in\mathfrak{g}^*$, we will always assume further that $\lambda(\mathcal{L})\subseteq\mathcal{O}$, or in other words $\lambda\in\mathcal{L}^*=$ Hom$_{\mathcal{O}}(\mathcal{L},\mathcal{O})$.\\

\noindent Firstly, note that for any polarisation $\mathfrak{b}$ of $\mathfrak{g}$ at $\lambda$, if we set $\mathcal{B}:=\mathfrak{b}\cap\mathcal{L}$, then $\mathcal{B}$ is an $\mathcal{O}$-Lie lattice in $\mathfrak{b}$. Furthermore, if we let $K_{\lambda}:=Kv$ be the one dimensional $U(\mathfrak{b})$-module induced by $\lambda$, then since $\pi^n U(\mathcal{B})v\subseteq p^n\mathcal{O}v$, it follows that and $K_{\lambda}$ carries the structure of a $\widehat{U(\mathcal{B})}_K$-module.

\begin{definition}\label{aff-Dixmier-module}

Let $\lambda\in\mathfrak{g}^*$ such that $\lambda(\mathcal{L})\subseteq\mathcal{O}$, and let $\mathfrak{b}$ be a polarisation of $\mathfrak{g}$ at $\lambda$. Define the $\mathfrak{b}$-\emph{affinoid Dixmier module} of $\mathcal{L}$ at $\lambda$ to be the $\widehat{U(\mathcal{L})}_K$-module defined by:

\begin{equation}
\widehat{D(\lambda)}=\widehat{D(\lambda)}_{\mathfrak{b}}:=\widehat{U(\mathcal{L})}_K\otimes_{\widehat{U(\mathcal{B})}_K}K_{\lambda}
\end{equation}

\end{definition}

\noindent\textbf{Notation:} If it is unclear what the ground field $K$ is, we may sometimes write $\widehat{D(\lambda)}_K$ for $\widehat{D(\lambda)}$. Also, if it is unclear which lattice $\mathcal{L}$ we are considering, we may sometimes write $\widehat{D(\lambda)}_{\mathcal{B}}$ instead of $\widehat{D(\lambda)}_{\mathfrak{b}}$.\\

\noindent Note that as in the classical case, $\widehat{D(\lambda)}$ is a cyclic $\widehat{U(\mathcal{L})}_K$-module, so $\widehat{D(\lambda)}_{\mathfrak{b}}=\widehat{U(\mathcal{L})}_Kv_{\lambda}$, and $\widehat{U(\mathcal{B})}_K$ acts by scalars on $v_{\lambda}$.

In particular, using Proposition \ref{fin-gen}, we see that $\widehat{D(\lambda)}$ is $\pi$-adically complete with respect to some lattice. In fact it is a $\pi$-adic completion of the classical Dixmier module $D(\lambda)$.\\

\noindent\textbf{Examples:} 1. If $\mathfrak{g}$ is split semisimple with Borel subalgebra $\mathfrak{b}$, the \emph{affinoid Verma module} $\widehat{V(\lambda)}$ arises as a Dixmier module. This still has a unique simple quotient $\widehat{L(\lambda)}$.\\

\noindent 2. If $\mathfrak{g}$ is abelian, or more generally if $\lambda$ is a character of $\mathfrak{g}$, then $\widehat{D(\lambda)}=K$ always.\\

\noindent 3. If $\mathfrak{g}=\mathfrak{a}\rtimes Kx$, for $\mathfrak{a}$ abelian, then if $\lambda$ is not a character of $\mathfrak{g}$, $\widehat{D(f)}\cong K\langle t\rangle$, where $x$ acts by $t$, and each $u\in\mathfrak{a}$ acts by a polynomial in $K[\frac{d}{dt}]$.

\subsection{Reducing Quadruples}

The following definition (\cite[4.7.7]{Dixmier})) will be very useful to us throughout.

\begin{definition}\label{reducing-quadruple}

Let $\mathfrak{h}$ be a $k$-lie algebra. A \emph{reducing quadruple} of $\mathfrak{h}$ is a 4-tuple $(x,y,z,\mathfrak{h}')$ where: 

\begin{itemize}

\item $0\neq x,y,z\in\mathfrak{h}$ and $\mathfrak{h}'$ is an ideal of $\mathfrak{h}$ of codimension 1,

\item $y,z\in\mathfrak{h}'$ and $x\notin\mathfrak{h}'$,

\item $z$ is central in $\mathfrak{h}$ and $y$ is central in $\mathfrak{h}'$, 

\item $[x,y]=\alpha z$ for some $0\neq\alpha\in k$.
\end{itemize}

\end{definition}

\noindent The following results link reducing quadruples to polarisations, and they can be found in the proof of \cite[Theorem 6.1.1]{Dixmier} and \cite[Theorem 6.1.4]{Dixmier}:

\begin{proposition}\label{sub-polarisation}

Suppose that $\mathfrak{h}$ is nilpotent, $\dim(\mathfrak{h})>1$, and let $\lambda\in\mathfrak{h}^*$. If we assume that $\lambda(\mathfrak{a})\neq 0$ for all non-zero ideals $\mathfrak{a}$ of $\mathfrak{g}$, then:\\

\noindent $(i)$ There exist $x,y,z\in\mathfrak{g}$ and $\mathfrak{g}'\trianglelefteq\mathfrak{g}$ such that $(x,y,z,\mathfrak{g}')$ forms a reducing quadruple for $\mathfrak{g}$.\\ 

\noindent $(ii)$ If $\mu:=\lambda|_{\mathfrak{g}'}$ and $\mathfrak{b}\subseteq\mathfrak{g}'$ is a polarisation of $\mathfrak{g}'$ at $\mu$, then $\mathfrak{b}$ is a polarisation of $\mathfrak{g}$ at $\lambda$.

\end{proposition}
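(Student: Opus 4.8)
The plan is to prove (i) by exhibiting a reducing quadruple in which $z$ spans a one-dimensional central ideal, $y$ is an element of the ``second centre'' of $\mathfrak{g}$ with $[\mathfrak{g},y]=kz$, and $\mathfrak{g}'$ is the centraliser of $y$; part (ii) then follows by checking the two halves of the polarisation condition separately, reducing the nontrivial half, via the reducing quadruple, to the polarisation property of $\mathfrak{b}$ inside $\mathfrak{g}'$.

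\medskip

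\noindent\emph{Part (i).} First I would note that $\mathfrak{g}$ cannot be abelian: if it were, every subspace would be an ideal, and since $\dim\mathfrak{g}>1$ there would be a nonzero proper subspace on which $\lambda$ vanishes (the kernel of $\lambda$ if $\lambda\neq 0$, any nonzero proper subspace otherwise), contradicting the hypothesis. Since $\mathfrak{g}$ is non-abelian and nilpotent, $\mathfrak{g}/Z(\mathfrak{g})$ is a nonzero nilpotent Lie algebra, hence has nonzero centre, so the second centre $Z_2(\mathfrak{g})$ strictly contains $Z(\mathfrak{g})$; I choose $y\in Z_2(\mathfrak{g})\setminus Z(\mathfrak{g})$. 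Then $W:=[\mathfrak{g},y]$ is a nonzero subspace of $Z(\mathfrak{g})$, hence a nonzero ideal of $\mathfrak{g}$. The key step is now to use the hypothesis: $\ker(\lambda|_W)$ is again a central subspace, so it is an ideal, and $\lambda$ vanishes on it; by hypothesis it is therefore $0$, forcing $\dim_k W=1$. Write $W=kz$; then $z\in Z(\mathfrak{g})$ and $\lambda(z)\neq 0$ because $kz$ is a nonzero ideal. Setting $\mathfrak{g}':=\{u\in\mathfrak{g}:[u,y]=0\}$, the linear map $u\mapsto[u,y]$ is surjective onto $kz$ with kernel $\mathfrak{g}'$, so $\mathfrak{g}'$ has codimension $1$; a short Jacobi-identity computation, using $[\mathfrak{g},y]\subseteq Z(\mathfrak{g})$, shows $\mathfrak{g}'$ is an ideal. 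Finally $y,z\in\mathfrak{g}'$, $y$ is central in $\mathfrak{g}'$ by construction, $z$ is central in $\mathfrak{g}$, and for any $x\in\mathfrak{g}\setminus\mathfrak{g}'$ we have $0\neq[x,y]\in kz$, so $[x,y]=\alpha z$ with $0\neq\alpha\in k$; thus $(x,y,z,\mathfrak{g}')$ is the required reducing quadruple.

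\medskip

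\noindent\emph{Part (ii).} A polarisation of $\mathfrak{g}'$ is by definition solvable, so $\mathfrak{b}$ is solvable, and it remains to show that for $u\in\mathfrak{g}$ one has $\lambda([u,\mathfrak{b}])=0$ if and only if $u\in\mathfrak{b}$. If $u\in\mathfrak{b}\subseteq\mathfrak{g}'$, then $[u,\mathfrak{b}]\subseteq[\mathfrak{b},\mathfrak{b}]\subseteq\mathfrak{g}'$ and $\mu([\mathfrak{b},\mathfrak{b}])=0$ since $\mathfrak{b}$ is a polarisation of $\mathfrak{g}'$ at $\mu$, whence $\lambda([u,\mathfrak{b}])=0$. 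For the converse, suppose $\lambda([u,\mathfrak{b}])=0$. By Lemma \ref{polarisation-properties}, $\mathfrak{b}$ contains $Z(\mathfrak{g}')$, and $y\in Z(\mathfrak{g}')$, so $y\in\mathfrak{b}$. If $u\notin\mathfrak{g}'$, then $[u,y]\neq 0$, so $[u,y]=\beta z$ with $0\neq\beta\in k$, and $\lambda([u,y])=\beta\lambda(z)\neq 0$; since $y\in\mathfrak{b}$ this contradicts $\lambda([u,\mathfrak{b}])=0$. Hence $u\in\mathfrak{g}'$, and then $\mu([u,\mathfrak{b}])=\lambda([u,\mathfrak{b}])=0$ forces $u\in\mathfrak{b}$, again because $\mathfrak{b}$ is a polarisation of $\mathfrak{g}'$ at $\mu$.

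\medskip

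\noindent The only genuinely non-routine ingredients are the two uses of the hypothesis ``$\lambda(\mathfrak{a})\neq 0$ for every nonzero ideal $\mathfrak{a}$'': once in (i) to force $[\mathfrak{g},y]$ to be one-dimensional and to guarantee $\lambda(z)\neq 0$, and again through $\lambda(z)\neq 0$ in (ii); combined with the observation $y\in Z(\mathfrak{g}')\subseteq\mathfrak{b}$, these are what make the argument run. Everything else is bookkeeping with the Jacobi identity and the definition of a polarisation, so I do not expect a serious obstacle; the points requiring most care are the Jacobi computation that $\mathfrak{g}'\trianglelefteq\mathfrak{g}$ and the verification that the upper-central-series argument applies over our field $k$ of characteristic $0$.
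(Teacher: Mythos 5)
Your proof is correct and follows essentially the same route as the paper: the hypothesis that $\lambda$ is nonzero on every nonzero ideal forces the relevant central ideal to be one-dimensional, $\mathfrak{g}'=\ker(\ad(y))$ for a suitable $y$ in the second centre supplies the reducing quadruple, and part (ii) rests on $y\in Z(\mathfrak{g}')\subseteq\mathfrak{b}$ together with $\lambda(z)\neq 0$, then the polarisation property of $\mathfrak{b}$ inside $\mathfrak{g}'$. The only cosmetic differences are that you show $\dim_k[\mathfrak{g},y]=1$ directly rather than first proving $Z(\mathfrak{g})=Kz$ as the paper does, and in (ii) you verify the defining biconditional of a polarisation directly instead of arguing via isotropic subspaces $V\supseteq\mathfrak{b}$; both variants are fine.
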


\begin{proof}

$(i)$ Firstly, for any $0\neq z$ in the centre of $\mathfrak{g}$, $Kz$ is an ideal of $\mathfrak{g}$, and hence $\lambda(z)\neq 0$. Therefore, $\lambda:Z(\mathfrak{g})\to K$ is injective, meaning that $Z(\mathfrak{g})$ must have dimension 0 or 1.\\

\noindent But since $\mathfrak{g}$ is nilpotent, the centre cannot be 0, hence $Z(\mathfrak{g})=Kz$ for some $z\in Z(\mathfrak{g})$. Thus note that $\mathfrak{g}$ is non-abelian since dim$(\mathfrak{g})>1=$ dim$Z(\mathfrak{g})$.\\

\noindent Also using nilpotency of $\mathfrak{g}$ we may choose $y\in\mathfrak{g}$ such that $y\notin Z(\mathfrak{g})$ and $[y,\mathfrak{g}]\subseteq Z(\mathfrak{g})=Kz$. Thus the linear map $\ad(y):\mathfrak{g}\to\mathfrak{g}$ has rank 1, and hence it must have kernel of dimension dim$(\mathfrak{g})-1$. Let $\mathfrak{g}':=\ker(\ad(y))$, and it follows that $\mathfrak{g}'$ is an abelian ideal in $\mathfrak{g}$ of codimension 1.\\

\noindent So, $\mathfrak{g}=\mathfrak{g}'\oplus Kx$ for some $x\in\mathfrak{g}$ such that $[x,y]\neq 0$, and hence $[x,y]=\alpha z$ for some $\alpha\in K$ with $\alpha\neq 0$. Hence $(x,y,z,\mathfrak{g}')$ is a reducing quadruple as required.\\

\noindent $(ii)$ Since $\mathfrak{b}$ is a polarisation of $\mathfrak{g}'$ at $\mu$, it must contain $Z(\mathfrak{g}')$ by Lemma \ref{polarisation-properties}, hence $y,z\in\mathfrak{b}$.\\

\noindent Clearly $\mathfrak{b}\subseteq\mathfrak{g}'$ is a solvable subalgebra of $\mathfrak{g}$, so suppose that $\mathfrak{b}\subseteq V\subseteq\mathfrak{g}$ with $\lambda([V,V])=0$. We will prove that $V\subseteq\mathfrak{g}'$, and it will follow that $V=\mathfrak{b}$ by the definition of a polarisation, thus proving that $\mathfrak{b}$ is a polarisation of $\mathfrak{g}$ at $\lambda$.\\

\noindent Suppose that $\beta x+u\in V$ for some $u\in\mathfrak{g}'$, $\beta\in K$. Then since $y\in\mathfrak{b}\subseteq V$, it follows that $\lambda([\beta x+u,y])=0$, and hence $\beta\alpha\lambda(z)=0$. But since $\alpha,\lambda(z)\neq 0$, it follows that $\beta=0$ and hence $V\subseteq\mathfrak{g}'$ as required.\end{proof}

\begin{proposition}\label{small-pol}

Suppose that $\mathfrak{h}$ is nilpotent with $n:=\dim_F\mathfrak{h}>3$, and suppose that $\mathfrak{h}$ has a reducing quadruple $(x,y,z,\mathfrak{h}')$. We assume further that $\lambda\in\mathfrak{h}^*$ and $\lambda(\mathfrak{a})\neq 0$ for all non-zero ideals $\mathfrak{a}$ of $\mathfrak{h}$. Then for any polarisation $\mathfrak{b}$ of $\mathfrak{h}$ at $\lambda$, there exists a polarisation $\mathfrak{b}'$ at $\lambda$, contained in $\mathfrak{h}'$, and a proper subalgebra $\mathfrak{t}\subsetneq\mathfrak{h}$ such that $\mathfrak{b},\mathfrak{b}'\subseteq\mathfrak{t}$.

\end{proposition}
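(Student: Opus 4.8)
The plan is to build the subalgebra $\mathfrak{t}$ and the polarisation $\mathfrak{b}'\subseteq\mathfrak{h}'$ from the data of the reducing quadruple, mimicking the descent used in \cite[Theorem 6.1.4]{Dixmier}. First I would pass to $\mathfrak{h}'$: set $\mu:=\lambda|_{\mathfrak{h}'}$, and check that $\mu(\mathfrak{a})\neq 0$ for every nonzero ideal $\mathfrak{a}$ of $\mathfrak{h}'$. This is not quite automatic, since an ideal of $\mathfrak{h}'$ need not be an ideal of $\mathfrak{h}$; but because $z\in Z(\mathfrak{h})$ and $[x,y]=\alpha z$, one shows using nilpotency that any nonzero ideal of $\mathfrak{h}'$ contains a nonzero $\mathrm{ad}(x)$-stable subspace, hence contains $z$ (the unique minimal ideal direction), and $\lambda(z)\neq 0$. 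So $\mu$ is again in "general position" on $\mathfrak{h}'$.

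Next I would apply Proposition \ref{standard-polarisation} (or directly the construction of \cite[Proposition 1.12.18]{Dixmier}) to produce a polarisation $\mathfrak{b}'$ of $\mathfrak{h}'$ at $\mu$, and arrange that it contains the abelian subalgebra $\ker(\lambda\circ\mathrm{ad}(y))\cap\mathfrak{h}'$; by Proposition \ref{sub-polarisation}(ii) this $\mathfrak{b}'$ is then also a polarisation of $\mathfrak{h}$ at $\lambda$, and it lies inside $\mathfrak{h}'$ by construction. For the given polarisation $\mathfrak{b}$, the key dichotomy is whether $\mathfrak{b}\subseteq\mathfrak{h}'$ or not. If $\mathfrak{b}\subseteq\mathfrak{h}'$, then $\mathfrak{t}:=\mathfrak{h}'$ works, since $\dim\mathfrak{h}'=n-1<n$ and both $\mathfrak{b},\mathfrak{b}'\subseteq\mathfrak{h}'$. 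If $\mathfrak{b}\not\subseteq\mathfrak{h}'$, then $\mathfrak{b}+\mathfrak{h}'=\mathfrak{h}$, so $\dim(\mathfrak{b}\cap\mathfrak{h}')=\dim\mathfrak{b}-1$; I would then take $\mathfrak{t}:=\mathfrak{b}+\mathfrak{b}'$ and must show this is a proper subalgebra of $\mathfrak{h}$. Properness is the crux: one computes $\dim\mathfrak{t}=\dim\mathfrak{b}+\dim\mathfrak{b}'-\dim(\mathfrak{b}\cap\mathfrak{b}')$, and since $\mathfrak{b}\cap\mathfrak{b}'\supseteq\mathfrak{h}^{\lambda}\ni z$ and by Lemma \ref{polarisation-properties} both $\mathfrak{b},\mathfrak{b}'$ have dimension $\tfrac12(n+\dim\mathfrak{h}^{\lambda})$, a counting argument (using $n>3$ and $\dim\mathfrak{h}^\lambda=1$ forced by the general-position hypothesis, so each polarisation has dimension $\tfrac{n+1}{2}$) gives $\dim\mathfrak{t}\leq n-1$. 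That $\mathfrak{t}$ is closed under the bracket needs care: it uses that $\mathfrak{b}'\subseteq\mathfrak{h}'$ and that, modulo $\mathfrak{h}'$, $\mathfrak{b}$ is one-dimensional spanned by some $x_0=\beta x+(\text{element of }\mathfrak{h}')$, together with the fact that $\mathrm{ad}(x_0)$ preserves $\mathfrak{b}'$ because $\mathfrak{b}'$ can be chosen $\mathrm{ad}(x)$-stable (it is built from the $\mathrm{ad}(y)$-kernel, and $[x,\mathfrak{h}']\subseteq\mathfrak{h}'$ with the relevant kernel stable under all of $\mathfrak{h}$).

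I expect the main obstacle to be verifying that $\mathfrak{t}=\mathfrak{b}+\mathfrak{b}'$ is simultaneously a subalgebra \emph{and} proper in the case $\mathfrak{b}\not\subseteq\mathfrak{h}'$ — the two requirements pull against each other, and making them compatible is precisely why one must choose $\mathfrak{b}'$ carefully (adapted to the reducing quadruple and to $\mathrm{ad}(x)$-stability) rather than taking an arbitrary polarisation of $\mathfrak{h}'$. The dimension bookkeeping relies essentially on $n>3$: for small $n$ the two half-dimensional polarisations could already span all of $\mathfrak{h}$, which is why the hypothesis $\dim_F\mathfrak{h}>3$ appears in the statement. Once properness and the subalgebra property are in hand, the inclusions $\mathfrak{b},\mathfrak{b}'\subseteq\mathfrak{t}$ are immediate and the proof concludes.
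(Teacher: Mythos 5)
There is a genuine gap, and it sits exactly where you predicted the difficulty would be. Your plan constructs $\mathfrak{b}'$ independently of the given $\mathfrak{b}$ and then hopes that $\mathfrak{t}=\mathfrak{b}+\mathfrak{b}'$ is a proper subalgebra, but neither half of that can be made to work as stated. First, the preliminary claim that $\mu=\lambda|_{\mathfrak{h}'}$ kills no nonzero ideal of $\mathfrak{h}'$ is false: ideals of $\mathfrak{h}'$ need not be $\mathrm{ad}(x)$-stable, and $Z(\mathfrak{h}')$ is generally larger than $Fz$ (it contains $y$), so it can contain elements annihilated by $\lambda$. For instance, with $\mathfrak{h}'=\mathrm{Span}_F\{y,z,w\}$ abelian, $[x,w]=y$, $[x,y]=z$, and $\lambda(z)\neq 0=\lambda(w)$, the hypothesis on $\mathfrak{h}$ holds (every nonzero ideal of $\mathfrak{h}$ meets $Z(\mathfrak{h})=Fz$), yet $Fw$ is a nonzero ideal of $\mathfrak{h}'$ with $\mu(Fw)=0$. (Also, $\ker(\lambda\circ\mathrm{ad}(y))\cap\mathfrak{h}'=\mathfrak{h}'$ since $y$ is central in $\mathfrak{h}'$, so that constraint on $\mathfrak{b}'$ is vacuous.) Second, the properness count collapses: $\dim_F\mathfrak{h}^{\lambda}=1$ is not forced by the hypothesis — only $\dim_F Z(\mathfrak{h})=1$ is, and since $\dim_F\mathfrak{b}=\tfrac12(n+\dim_F\mathfrak{h}^{\lambda})$, the dimension of $\mathfrak{h}^{\lambda}$ has the same parity as $n$, so it cannot equal $1$ when $n$ is even (in the example above it is $2$). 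Even with the correct $d=\dim_F\mathfrak{h}^{\lambda}$ and the inclusion $\mathfrak{h}^{\lambda}\subseteq\mathfrak{b}\cap\mathfrak{b}'$, the count only gives $\dim_F(\mathfrak{b}+\mathfrak{b}')\leq 2\cdot\tfrac12(n+d)-d=n$, which is no restriction; and there is no argument that $\mathfrak{b}+\mathfrak{b}'$ is closed under the bracket for an independently chosen $\mathfrak{b}'$.

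The paper's proof avoids both problems by building $\mathfrak{b}'$ out of $\mathfrak{b}$ itself. In the nontrivial case $\mathfrak{b}\not\subseteq\mathfrak{h}'$ one first checks $y\notin\mathfrak{b}$ (any $u\in\mathfrak{b}\backslash\mathfrak{h}'$ has $\lambda([u,y])\neq 0$), then sets $\mathfrak{b}':=(\mathfrak{b}\cap\mathfrak{h}')\oplus Fy$, which has the same dimension as $\mathfrak{b}$ and satisfies $\lambda([\mathfrak{b}',\mathfrak{b}'])=0$, hence is a polarisation by Lemma \ref{polarisation-properties}; and $\mathfrak{t}:=\mathfrak{b}\oplus Fy$ is automatically a subalgebra because $[y,\mathfrak{h}]\subseteq Fz\subseteq\mathfrak{b}$. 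Properness of $\mathfrak{t}$ is then proved by contradiction rather than by a half-dimension estimate: if $\mathfrak{t}=\mathfrak{h}$ then $\dim_F\mathfrak{h}^{\lambda}=n-2$, one gets $\mathfrak{h}=Fx\oplus Fy\oplus\mathfrak{h}^{\lambda}$, and $\mathfrak{a}:=\ker(\lambda)\cap\mathfrak{h}^{\lambda}$ is shown to be an ideal of $\mathfrak{h}$ with $\lambda(\mathfrak{a})=0$ and $\dim_F\mathfrak{a}=n-3$, forcing $n=3$ against the hypothesis $n>3$. So the role of $n>3$ is quite different from the one in your sketch, and the essential missing idea in your proposal is to tie $\mathfrak{b}'$ (and $\mathfrak{t}$) to the given polarisation $\mathfrak{b}$ via the element $y$ of the reducing quadruple.
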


\begin{proof}

Firstly, note that $\mathfrak{h}^{\lambda}=\{u\in\mathfrak{h}:\lambda([u,\mathfrak{h}])=0\}$ is contained in $\mathfrak{b}$, otherwise $\mathfrak{b}\subsetneq\mathfrak{b}+\mathfrak{h}^{\lambda}$ and $\lambda([\mathfrak{b}+\mathfrak{h}^{\lambda},\mathfrak{b}+\mathfrak{h}^{\lambda}])=0$, contradicting the definition of a polarisation.\\

\noindent If $\mathfrak{b}\subseteq\mathfrak{h}'$, then taking $\mathfrak{b}'=\mathfrak{b}$, $\mathfrak{t}=\mathfrak{h}'$, the statement is trivially true, so we may assume that $\mathfrak{b}\not\subseteq\mathfrak{h}'$.\\

\noindent Since $y$ is central in $\mathfrak{h}'$ but not in $\mathfrak{h}$, it is clear that $\mathfrak{h}'=\ker(\ad(y))$ and $Fz=\im(\ad(y))$. So since there exists $u\in\mathfrak{b}\backslash\mathfrak{h}'$, we have that $[u,y]\neq 0$, i.e. $[u,y]=\beta z$ for some $0\neq\beta\in F$. But since $Fz$ is a non-zero ideal of $\mathfrak{h}$, $\lambda(z)\neq 0$, and thus $\lambda([u,y])\neq 0$. So since $\lambda([\mathfrak{b},\mathfrak{b}])=0$ and $u\in\mathfrak{b}$, this means that $y\notin\mathfrak{b}$.\\

\noindent Let $\mathfrak{b}':=(\mathfrak{b}\cap\mathfrak{h}')\oplus Fy$. This is a subalgebra of $\mathfrak{h}$, and clearly it is contained in $\mathfrak{h}'$. Also, $\mathfrak{b}\cap\mathfrak{h}'$ has codimension 1 in $\mathfrak{b}$, therefore $\dim_F\mathfrak{b}'=\dim_F\mathfrak{b}\cap\mathfrak{h}'+1=\dim_F\mathfrak{b}$. But it is clear that $\lambda([\mathfrak{b}',\mathfrak{b}'])=0$, so by Lemma \ref{polarisation-properties}, this means that $\mathfrak{b}'$ is a polarisation of $\mathfrak{h}$ at $\lambda$.\\

Now, let $\mathfrak{t}:=\mathfrak{b}\oplus Fy$. Since $Fz=[y,\mathfrak{h}]\subseteq\mathfrak{b}$, this is a subalgebra of $\mathfrak{h}$, and clearly it contains $\mathfrak{b}$ and $\mathfrak{b}'$, so we only need to prove that $\mathfrak{t}\neq\mathfrak{h}$, so assume for contradiction that $\mathfrak{t}=\mathfrak{h}$. This means that $\mathfrak{b}$ has codimension 1 in $\mathfrak{h}$, so $\dim_{F}\mathfrak{b}=n-1$. But $\dim_F\mathfrak{b}=\frac{1}{2}(n+\dim_F\mathfrak{h}^{\lambda})$ by Lemma \ref{polarisation-properties}, and thus $\dim_F\mathfrak{h}^{\lambda}=n-2$.\\

\noindent Furthermore, if $\beta x+\gamma y\in\mathfrak{h}^{\lambda}$, then $\beta\lambda([x,y])=\gamma\lambda([y,x])=0$, which is only possible if $\beta=\gamma=0$ since $\lambda(z)=\lambda(\alpha^{-1}[x,y])\neq 0$. So Span$_F\{x,y\}\cap\mathfrak{h}^{\lambda}=0$, and therefore, $\mathfrak{h}=Fx\oplus Fy\oplus\mathfrak{h}^{\lambda}$.\\

\noindent Let $\mathfrak{a}:=\ker(\lambda)\cap\mathfrak{h}^{\lambda}$. Then since $z\in\mathfrak{h}^{\lambda}$ and $\lambda(z)\neq 0$, it follows that $\mathfrak{a}$ has codimension 1 in $\mathfrak{h}^{\lambda}$, which means that $\dim_F\mathfrak{a}=n-3$.\\

\noindent It is clear that $\lambda(\mathfrak{a})=0$, so we will finish by proving that $\mathfrak{a}$ is an ideal of $\mathfrak{h}$, and this will imply that $\mathfrak{a}=0$, and hence $n-3=0$ and $n=\dim_F\mathfrak{h}=3$ -- contradicting our assumption.\\

\noindent By the definition of $\mathfrak{h}^{\lambda}$, it is clear that $\lambda([\mathfrak{h}^{\lambda},\mathfrak{h}])=0$ and so $[\mathfrak{h}^{\lambda},\mathfrak{a}]\subseteq\mathfrak{h}^{\lambda}\cap \ker(\lambda)=\mathfrak{a}$. So since $\mathfrak{h}=Fx\oplus Fy\oplus\mathfrak{h}^{\lambda}$, it remains to prove that $[y,\mathfrak{a}]\subseteq\mathfrak{a}$ and $[x,\mathfrak{a}]\subseteq\mathfrak{a}$.\\

\noindent Since $\mathfrak{a}\subseteq\mathfrak{h}'$, we have that $[y,\mathfrak{a}]=0\subseteq\mathfrak{a}$, and if we choose $u\in\mathfrak{b}$ such that $u\notin\mathfrak{h}'$, then $\mathfrak{b}=\mathfrak{h}^{\lambda}\oplus Fu$, so since $\mathfrak{h}$ is nilpotent and $\mathfrak{b}$ is a subalgebra, it follows that $[u,\mathfrak{h}^{\lambda}]\subseteq\mathfrak{h}^{\lambda}$, and hence $[u,\mathfrak{a}]\subseteq\mathfrak{h}^{\lambda}$. Also, since $\lambda([\mathfrak{b},\mathfrak{b}])=0$, it follows that $[u,\mathfrak{a}]\subseteq \ker(\lambda)$, hence $[u,\mathfrak{a}]\subseteq\mathfrak{a}$. 

But since $u\notin\mathfrak{h}'$, we have that $u=\beta x+\gamma y$, where $\beta\neq 0$, so it follows immediately that $[x,\mathfrak{a}]\subseteq\mathfrak{a}$ as required.\end{proof}

\noindent Reducing quadruples will play an important role in many of the proofs in this paper, since they allow us to use an inductive strategy commonly employed by Dixmier in \cite{Dixmier}, which we outline below, very roughly:\\

\noindent\textbf{Dixmier's Induction Strategy}: We have a statement $\mathcal{P}$ involving a nilpotent $k$-Lie algebra $\mathfrak{h}$ and a linear form $\lambda:\mathfrak{h}\to k$.

\begin{itemize}

\item Step 1: The base case is where dim$_k\mathfrak{h}=1$, this case should be straightforward. So we can assume that dim$_k\mathfrak{h}>1$ and $\mathcal{P}$ is true for all nilpotent Lie algebras of dimension less than dim$_k\mathfrak{h}$. 

\item Step 2: If there exists a non-zero ideal $\mathfrak{a}$ of $\mathfrak{h}$ such that $\lambda(\mathfrak{a})=0$, then we can replace $\mathfrak{h}$ by $\frac{\mathfrak{h}}{\mathfrak{a}}$ and apply induction. So we may assume that $\lambda(\mathfrak{a})\neq 0$ for all non-zero ideals $\mathfrak{a}$ of $\mathfrak{g}$.

\item Step 3: Applying Proposition \ref{sub-polarisation}, we can choose a reducing quadruple $(x,y,z,\mathfrak{h}')$ of $\mathfrak{h}$. Since we know that $\mathcal{P}$ holds for $\mathfrak{h}'$ and $\lambda|_{\mathfrak{h}'}$ by induction, we can hopefully induce up to $\mathfrak{h}$.

\end{itemize}

\subsection{Tate-Weyl algebras}

\noindent Let us first recall the definition of the Weyl algebra, given in \cite[Section 1.3]{McConnell}:

\begin{definition}\label{Weyl}

Let $d\in\mathbb{N}$. We define the \emph{$d$'th Weyl algebra} $A_d(k)$ to be the $k$-algebra in $2d$-variables $x_1,\cdots,x_d,y_1,\cdots,y_d$ satisfying the following relations for $1\leq i,j\leq d$, $i\neq j$: 

\begin{center}
$x_ix_j-x_jx_i=0,
y_iy_j-y_jy_i=0,
x_iy_j-y_jx_i=0,
x_iy_i-y_ix_i=1$.
\end{center}

\end{definition}

\begin{lemma}
$A_d(k)$ is isomorphic as a $k$-vector space to the polynomial ring $k[x_1,\cdots,x_d,\partial_1,\cdots,\partial_d]$, subject to the relations that $x_1,\cdots,x_d$ commute, $\partial_1,\cdots,\partial_d$ commute, $x_i\partial_j=\partial_jx_j$ if $i\neq j$ and $x_i\partial_i=\partial_ix_i+1$.
\end{lemma}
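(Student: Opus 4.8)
The plan is to realise $A_d(k)$ concretely inside a ring of operators and then read off a basis of normal-ordered monomials. First I would let $P:=k[x_1,\dots,x_d]$ be the ordinary polynomial ring and define a $k$-algebra homomorphism $\phi$ from the free associative $k$-algebra on the $2d$ symbols $x_1,\dots,x_d,y_1,\dots,y_d$ into $\mathrm{End}_k(P)$ by sending each $x_i$ to the multiplication operator $f\mapsto x_if$ and each $y_i$ to the partial derivative $\partial/\partial x_i$. A direct check shows that the four families of relations in Definition~\ref{Weyl} are all sent to the zero operator (the only non-trivial one being $x_iy_i-y_ix_i\mapsto\mathrm{id}$, which is the Leibniz rule), so $\phi$ factors through a $k$-algebra homomorphism $\bar\phi\colon A_d(k)\to\mathrm{End}_k(P)$.

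Next I would show that the monomials $x^{\alpha}y^{\beta}:=x_1^{\alpha_1}\cdots x_d^{\alpha_d}y_1^{\beta_1}\cdots y_d^{\beta_d}$, for $\alpha,\beta\in\mathbb{N}^d$, span $A_d(k)$ over $k$. Every element of $A_d(k)$ is a $k$-combination of words in the $x_i$ and $y_i$, and using $x_iy_j=y_jx_i$ for $i\neq j$ together with $y_ix_i=x_iy_i-1$ one rewrites any such word as a $k$-combination of normal-ordered monomials (all $x$'s to the left of all $y$'s). This is done by induction on a suitable complexity measure, namely the number of \emph{inversions} of a word — pairs of positions at which a $y$ occurs to the left of an $x$ — noting that each application of $y_ix_i=x_iy_i-1$ strictly decreases this count on both resulting terms.

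For linear independence I would use $\bar\phi$: it suffices to show that the operators $\bar\phi(x^{\alpha}y^{\beta})$ are linearly independent in $\mathrm{End}_k(P)$. Evaluating on monomials, $\bar\phi(x^{\alpha}y^{\beta})(x^{\gamma})$ is a non-zero scalar multiple of $x^{\alpha+\gamma-\beta}$ when $\beta\le\gamma$ componentwise, and is $0$ otherwise. Given a relation $\sum_{\alpha,\beta}c_{\alpha\beta}x^{\alpha}y^{\beta}=0$ in $A_d(k)$, choose $\beta_0$ of minimal total degree among those $\beta$ for which some $c_{\alpha\beta_0}\neq 0$, and apply $\bar\phi$ of both sides to $x^{\beta_0}$: every term with $\beta\neq\beta_0$ kills $x^{\beta_0}$, leaving $\sum_{\alpha}c_{\alpha\beta_0}\,\beta_0!\,x^{\alpha}=0$ in $P$, forcing all $c_{\alpha\beta_0}=0$ — a contradiction. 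Hence $\{x^{\alpha}y^{\beta}\}_{\alpha,\beta}$ is a $k$-basis of $A_d(k)$, and the $k$-linear bijection sending it to the monomial basis $\{x^{\alpha}\partial^{\beta}\}$ of $k[x_1,\dots,x_d,\partial_1,\dots,\partial_d]$ is the asserted vector-space isomorphism, which by construction transports the Weyl relations to the stated commutation rules.

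The one genuinely delicate point is the normal-ordering (spanning) step: one must pick a complexity measure on words that is strictly decreased by the rewrite $y_ix_i=x_iy_i-1$ so that the induction terminates. Since the error term produced by this rule has the same total degree as the original word, a naive degree induction fails, and the inversion count (or a lexicographic refinement of it) is what makes the argument go through. Linear independence, in contrast, is immediate once the faithful action on $P$ is available.
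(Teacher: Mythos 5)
The paper gives no proof of this lemma at all -- it is stated as a standard fact about the Weyl algebra -- so there is nothing internal to compare against; your argument is the standard one (normal-ordered spanning plus faithfulness of the polynomial representation) and its overall structure is sound. The spanning step via the inversion count is fine, and the linear-independence step via evaluation at $x^{\beta_0}$ with $\beta_0$ of minimal total degree works exactly as you say (using that $\beta_0!\neq 0$, which is harmless since $k$ has characteristic $0$ throughout the paper).

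The one step that fails as literally written is the claim that your map $\phi$ kills the relation $x_iy_i-y_ix_i-1$ of Definition~\ref{Weyl}. With $\phi(x_i)=$ multiplication by $x_i$ and $\phi(y_i)=\partial/\partial x_i$, the Leibniz rule gives $\phi(x_iy_i-y_ix_i)=-\mathrm{id}$, not $+\mathrm{id}$: for $f\in P$ one has $x_i\partial_i f-\partial_i(x_if)=-f$. So $\phi$ does not factor through $A_d(k)$ as defined in the paper; you need to send $y_i\mapsto-\partial/\partial x_i$ (or realise $x_i$ as differentiation and $y_i$ as multiplication). This is a one-line fix and does not disturb the rest of the argument -- the normal-ordering rewrite is unaffected, and in the independence step $\bar\phi(x^\alpha y^\beta)(x^\gamma)$ merely acquires a sign $(-1)^{|\beta|}$, which is still a non-zero scalar when $\beta\leq\gamma$. (For what it is worth, the paper's own Lemma~\ref{endo}, which asserts that $x_i\mapsto$ multiplication and $\partial_i\mapsto d/dt_i$ defines a ring homomorphism under the relation $x_i\partial_i=\partial_ix_i+1$, suffers from the same sign-convention slip, so you are in good company -- but in a self-contained proof you should get the sign right.)
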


\noindent\textbf{Note:} It is well known and quite straightforward to prove that $A_n(k)$ is a simple $k$-algebra domain.\\ 

\noindent The Weyl algebra is of great importance within algebraic and differential geometry, since it can be realised as the ring of differential operators on the affine plane $\mathbb{A}^d_k$. The following result highlights the usefulness of this object within representation theory, since it can be explicitly realised as a ring of endomorphisms:

\begin{lemma}\label{endo}

Let $k[t_1,\cdots,t_d]$ be a polynomial ring in $d$-variables, and for each $i=1,\cdots,d$, let $x_i\in$\emph{ End}$_k k[t_1,\cdots,t_d]$ be left multiplication by $t_i$, and let $\partial_i:=\frac{d}{dt_i}\in$\emph{ End}$_k k[t_1,\cdots,t_d]$. Then the natural map $A_d(k)\to$\emph{ End}$_k k[t_1,\cdots,t_d]$ sending $x_i$ to $x_i$ and $\partial_i$ to $\partial_i$ is an injective ring homomorphism.

\end{lemma}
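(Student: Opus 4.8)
The plan is to separate the two claims: first that the prescribed assignment extends to a $k$-algebra homomorphism, and then that this homomorphism is injective.

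For the first claim, recall that $A_d(k)$ is presented by the generators of Definition \ref{Weyl} subject to the stated relations, so it is enough to check that the operators ``multiplication by $t_i$'' and $\partial_i=\frac{d}{dt_i}$ on $k[t_1,\dots,t_d]$ satisfy those relations; the universal property of the presentation then yields a unique $k$-algebra homomorphism $A_d(k)\to\mathrm{End}_k k[t_1,\dots,t_d]$. The verifications are immediate: multiplication by $t_i$ and by $t_j$ commute because $k[t_1,\dots,t_d]$ is commutative; $\partial_i$ and $\partial_j$ commute because mixed partials agree; $t_i\cdot$ commutes with $\partial_j$ for $i\ne j$ since $\partial_j$ is $k[t_i]$-linear; and the Leibniz rule $\partial_i(t_i f)=f+t_i\partial_i(f)$ is exactly a commutation relation of the required shape between the $i$-th multiplication and differentiation operators, matching the last family of relations in Definition \ref{Weyl} (after the identification of $\partial_i$ with the corresponding Weyl generator).

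For injectivity I would use the normal form supplied by the preceding lemma (a Poincar\'e--Birkhoff--Witt type statement): every $\theta\in A_d(k)$ has a unique expression $\theta=\sum_{\alpha,\beta}c_{\alpha\beta}\,x^{\alpha}\partial^{\beta}$ as a finite sum with $c_{\alpha\beta}\in k$, all monomials normally ordered with the $x_i$'s to the left. Suppose the image of $\theta$ is the zero operator but $\theta\ne 0$, and among the multi-indices $\beta$ occurring with some $c_{\alpha\beta}\ne 0$ choose one, $\beta_0$, of minimal total degree $|\beta_0|$. Evaluate $\theta$ on the monomial $t^{\beta_0}$. Using that $\partial^{\beta}t^{\beta_0}=0$ whenever $\beta\not\le\beta_0$ coordinatewise, that $\partial^{\beta_0}t^{\beta_0}=\beta_0!:=\prod_i\beta_{0,i}!$, and that minimality of $|\beta_0|$ forbids any other occurring $\beta$ with $\beta\le\beta_0$, one obtains $\theta(t^{\beta_0})=\beta_0!\sum_{\alpha}c_{\alpha\beta_0}\,t^{\alpha}$. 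Since $\mathrm{char}\,k=0$ the scalar $\beta_0!$ is nonzero, and the monomials $t^{\alpha}$ are $k$-linearly independent, so $\theta(t^{\beta_0})=0$ forces $c_{\alpha\beta_0}=0$ for all $\alpha$, contradicting the choice of $\beta_0$; hence $\theta=0$.

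The argument is essentially formal, and the only point needing care is the multi-index bookkeeping in the final step: choosing $\beta_0$ of minimal degree so that no lower-order differentiations contribute cross terms to $\theta(t^{\beta_0})$, and invoking $\mathrm{char}\,k=0$ so that $\beta_0!$ is invertible. (An alternative is induction on $d$, peeling off the last variable and reducing to the rank-one Weyl algebra, but the direct evaluation above is shorter.)
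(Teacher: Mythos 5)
Your proof is correct. The paper in fact gives no proof of Lemma \ref{endo} (it is quoted as a standard fact), and your argument is the standard one: check the defining relations of Definition \ref{Weyl} on the multiplication and differentiation operators to obtain a homomorphism from the presentation, then prove faithfulness by writing an element in normally ordered form $\sum_{\alpha,\beta}c_{\alpha\beta}x^{\alpha}\partial^{\beta}$ (available from the preceding normal-form lemma), evaluating on $t^{\beta_0}$ for $\beta_0$ of minimal total degree among the occurring $\beta$, and using characteristic zero to invert $\beta_0!$; the bookkeeping there is right, since $\beta\le\beta_0$ with $\beta\neq\beta_0$ would force $\vert\beta\vert<\vert\beta_0\vert$, and $\beta\not\le\beta_0$ kills $t^{\beta_0}$. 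The only point to flag is cosmetic: under the usual composition convention the operators satisfy $\partial_i x_i-x_i\partial_i=1$, whereas the relation as literally written in Definition \ref{Weyl} is $x_iy_i-y_ix_i=1$; this sign discrepancy is in the paper's own conventions and is repaired by the harmless substitution $\partial_i\mapsto-\partial_i$ (or by reading products in the opposite order), and it affects neither the homomorphism check nor your injectivity argument.
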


Now, if we fix $\mathfrak{h}$ a nilpotent $k$-Lie algebra, the following result of Dixmier (\cite[Theorem 4.7.9]{Dixmier}) shows us that all weakly rational ideals in $U(\mathfrak{h})$ are maximal, which completes the Dixmier-Moeglin equivalence in the nilpotent case:

\begin{theorem}\label{Dix3}

Let $I$ be a two-sided of $U(\mathfrak{h})$ such that $Z\left(\frac{U(\mathfrak{h})}{I}\right)=k$, then there exists $d\in\mathbb{N}$ such that $\frac{U(\mathfrak{h})}{I}\cong A_d(k)$. The integer $d$ is called the \emph{weight} of $I$.

\end{theorem}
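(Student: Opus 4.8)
The plan is to run Dixmier's induction strategy (as outlined in the excerpt) on $\dim_k\mathfrak{h}$, reducing at each step either to a strictly smaller nilpotent Lie algebra or to a tensor-product decomposition involving a Weyl algebra. Write $A := U(\mathfrak{h})/I$. First, the base case $\dim_k\mathfrak{h}=0$ forces $A=k$ and $d=0$, while $\dim_k\mathfrak{h}=1$ gives $U(\mathfrak{h})\cong k[x]$, whose only quotient with centre $k$ is $k$ itself, again $d=0$. So assume $\dim_k\mathfrak{h}>1$ and the result holds for all nilpotent Lie algebras of smaller dimension.

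For the inductive step I would first dispose of the case where $I\cap\mathfrak{a}\neq 0$ for some nonzero ideal $\mathfrak{a}$: pick a nonzero central element $c$ of $\mathfrak{h}$ lying in the ideal generated by $I\cap\mathfrak{a}$ — more precisely, since $\mathfrak{h}$ is nilpotent one can find a nonzero element $z$ of $Z(\mathfrak{h})$ that becomes zero in $A$ (argue: if $Z(\mathfrak{h})$ injects into $A$, pass to the next step), and then $A = U(\mathfrak{h})/I$ factors through $U(\mathfrak{h}/kz)$, so $A \cong U(\mathfrak{h}/kz)/\overline{I}$ with $Z(A)=k$ still, and induction applies to the lower-dimensional algebra $\mathfrak{h}/kz$. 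Otherwise every nonzero ideal of $\mathfrak{h}$ meets $I$ trivially; in particular $Z(\mathfrak{h})$ injects into $A$, and since $Z(\mathfrak{h})\subseteq Z(A)=k$ we get $\dim_k Z(\mathfrak{h})\leq 1$, hence $Z(\mathfrak{h})=kz$ for a single $z$ (nonzero by nilpotency), and moreover $z$ must act on $A$ as a nonzero scalar; after rescaling $\lambda$-style we may take $z\mapsto 1$ in $A$. Now run Step 3: using the argument of Proposition \ref{sub-polarisation}(i), choose $y\notin Z(\mathfrak{h})$ with $[y,\mathfrak{h}]\subseteq Z(\mathfrak{h})=kz$, set $\mathfrak{h}':=\ker(\operatorname{ad} y)$, an ideal of codimension $1$, and pick $x\notin\mathfrak{h}'$ with $[x,y]=\alpha z$, $\alpha\neq 0$; rescaling $x$ we may assume $[x,y]=z$. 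So $(x,y,z,\mathfrak{h}')$ is a reducing quadruple and in $A$ the images $\bar x,\bar y$ satisfy $\bar x\bar y-\bar y\bar x=1$.

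The heart of the proof is then a change-of-variables / centraliser argument à la Dixmier \cite[4.7.5--4.7.9]{Dixmier}: inside $A$ the element $\bar y$ is central in the image of $U(\mathfrak{h}')$, and since $\operatorname{ad}(\bar x)$ acts on $A$ one can "straighten" the remaining generators. Concretely, for each $u\in\mathfrak{h}'$ one has $[x,u]\in\mathfrak{h}'$ (as $\mathfrak{h}'$ is an ideal), and I would replace each such $u$ by $u':= u - \bar y\cdot(\text{something})$ — precisely, use $\operatorname{ad}(\bar x)^{-1}$ formally against $\bar y$ — to produce elements of $A$ commuting with both $\bar x$ and $\bar y$; the subalgebra $C$ they generate is the centraliser $C_A(\bar x,\bar y)$, one checks $A = C\otimes_k k[\bar x,\bar y] = C\otimes_k A_1(k)$ as $k$-algebras (the Weyl relation $[\bar x,\bar y]=1$ giving the tensor splitting, exactly as in the classical proof that $A_1(k)$ is simple). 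Then $Z(A) = Z(C)\otimes_k Z(A_1(k)) = Z(C)$, so $Z(C)=k$; and $C$ is, by construction, a quotient of $U(\mathfrak{c})$ for a nilpotent Lie algebra $\mathfrak{c}$ of dimension $\dim_k\mathfrak{h}-2$ (morally $\mathfrak{c} = \mathfrak{h}'/ky$ with the $x$-action absorbed). By induction $C\cong A_{d-1}(k)$ for some $d-1\in\mathbb{N}$, whence $A\cong A_{d-1}(k)\otimes_k A_1(k)\cong A_d(k)$, completing the induction.

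The main obstacle is making the centraliser decomposition $A\cong C_A(\bar x,\bar y)\otimes_k A_1(k)$ rigorous, and identifying $C_A(\bar x,\bar y)$ as a homomorphic image of the enveloping algebra of a genuine $(\dim_k\mathfrak{h}-2)$-dimensional nilpotent Lie algebra with scalar centre — this requires care because $\operatorname{ad}(\bar x)$ is only locally nilpotent and one must verify the "integration" of $\operatorname{ad}(\bar x)$ against $\bar y$ converges (i.e. terminates) on each generator, and that the resulting straightened generators still close under the bracket modulo the ideal. This is precisely the content of Dixmier's \cite[Theorem 4.7.9]{Dixmier}, and I would cite the structural lemmas \cite[4.7.5--4.7.8]{Dixmier} rather than reproving them, only checking that the nilpotence hypothesis on $\mathfrak{h}$ supplies the reducing quadruple with $z$ central in all of $\mathfrak{h}$ (not merely in $\mathfrak{h}'$), which is what forces the clean $A_1(k)$ tensor factor rather than a more general Ore extension.
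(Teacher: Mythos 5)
This theorem is not proved in the paper at all: it is quoted as Dixmier's classical result (\cite[Theorem 4.7.9]{Dixmier}), and your outline is a faithful reconstruction of Dixmier's own inductive argument -- the dichotomy on whether a nonzero central element of $\mathfrak{h}$ dies in $A$, the reducing quadruple giving $\bar{x}\bar{y}-\bar{y}\bar{x}=1$, the centraliser decomposition $A\cong C_A(\bar{x},\bar{y})\otimes_k A_1(k)$ with $C_A(\bar{x},\bar{y})$ a quotient of $U(\mathfrak{h}'/k y)$, and induction on $\dim_k\mathfrak{h}$. Provided you cite the structural lemmas \cite[4.7.5--4.7.8]{Dixmier} for the tensor decomposition rather than 4.7.9 itself (which would be circular), your sketch is correct and consistent with how the paper treats the statement.
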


\noindent The proof of this Theorem is inductive, not constructive, so we cannot usually explicitly write down this isomorphism. However, using Theorem \ref{Dix2}, we know that all weakly rational ideals of $U(\mathfrak{h})$ arise as annihilators of Dixmier modules, and using this notion there are cases where we can construct an isomorphism.\\

\noindent Specifically, if $I= $ Ann$_{U(\mathfrak{h})}D(\lambda)$ for some linear form $\lambda$ of $\mathfrak{h}$, with polarisation $\mathfrak{b}$ of codimension $d$, then we know that $\frac{U(\mathfrak{h})}{I}$ is a ring of $k$-linear endomorphisms of $D(\lambda)\cong k[u_1,\cdots,u_d]$. Since we know from Lemma \ref{endo} that $A_d(k)$ can be realised as a ring of endomorphisms of $k[u_1,\cdots,u_d]$, we want to show that these to rings can be identified.

Roughly speaking, we want to show that elements in the polarisation $\mathfrak{b}$ act by polynomials in $\partial_1,\cdots,\partial_d$, and that each basis vector $u_i$ of $\mathfrak{g}/\mathfrak{b}$ acts by $x_i$. In the next section, we will see how to do this explicitly.\\

\noindent Now, we want to try and generalise Theorem \ref{Dix3} to the affinoid case, so we need to define an affinoid version of the Weyl algebra:

\begin{definition}

Let $A_d(\mathcal{O})$ be the $\mathcal{O}$-lattice subalgebra of $A_d(K)$ consisting of polynomials in $x_1,\cdots,x_d,\partial_1,\cdots,\partial_d$ with coefficients in $\mathcal{O}$. We define the $d$'th \emph{Tate-Weyl algebra}, denoted $\widehat{A_d(K)}$ to be the completion of $A_d(K)$ with respect to $A_d(\mathcal{O})$

\end{definition}

\noindent Similarly to $A_d(K)$, the Tate-Weyl algebra $\widehat{A_d(K)}$ is also a simple domain.

\begin{lemma}\label{Tate-Weyl}

The $d$'th Tate-Weyl algebra $\widehat{A_d(K)}$ is isomorphic as a $K$-vector space to $K\langle x_1,\cdots,x_d,\partial_1,\cdots,\partial_d\rangle$, where $x_1,\cdots,x_d$ commute, $\partial_1,\cdots,\partial_d$ commute, $x_i\partial_j=\partial_jx_j$ if $i\neq j$ and $x_i\partial_i=\partial_ix_i+1$.

Moreover, the level $n$ deformation $\widehat{A_d(K)}_n$ of $\widehat{A_d(K)}$ is the subalgebra consisting the elements of $K\langle\pi^n  x_1,\cdots,\pi^n x_d,\pi^n \partial_1,\cdots,\pi^n\partial_d\rangle$.

\end{lemma}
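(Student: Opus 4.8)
The plan is to recognize $\widehat{A_d(K)}$ as the $\pi$-adic completion of $A_d(K)$ with respect to $A_d(\mathcal{O})$, and then apply Lemma \ref{PBW}-style PBW reasoning. First I would observe that $A_d(\mathcal{O})$ is a free $\mathcal{O}$-module with basis the ordered monomials $x_1^{\alpha_1}\cdots x_d^{\alpha_d}\partial_1^{\beta_1}\cdots\partial_d^{\beta_d}$ for $(\alpha,\beta)\in\mathbb{N}^{2d}$: this is the standard PBW-type basis for the Weyl algebra, obtained by using the commutation relations $x_i\partial_i=\partial_ix_i+1$ (and the commuting relations among the $x$'s, among the $\partial$'s, and between $x_i,\partial_j$ for $i\neq j$) to put every word in normal form, and linear independence follows from the faithful action on $k[t_1,\dots,t_d]$ given by Lemma \ref{endo} (applied over $K$). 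Since $A_d(\mathcal{O})$ is precisely the $\mathcal{O}$-span of these monomials, its $\pi$-adic completion is exactly the set of $\mathcal{O}$-linear combinations $\sum_{(\alpha,\beta)}\lambda_{\alpha,\beta}\,x^{\alpha}\partial^{\beta}$ with $v_\pi(\lambda_{\alpha,\beta})\to\infty$, i.e. the Tate algebra $\mathcal{O}\langle x_1,\dots,x_d,\partial_1,\dots,\partial_d\rangle$ as an $\mathcal{O}$-module; tensoring with $K$ gives the claimed $K$-vector space description $K\langle x_1,\dots,x_d,\partial_1,\dots,\partial_d\rangle$, subject to the stated relations (the relations are inherited from $A_d(K)$, which is dense).

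The one point requiring genuine (though routine) care is that the normal-form coefficients stay bounded under completion, i.e. that $\widehat{A_d(K)}$ really is a \emph{ring} and not merely an $\mathcal{O}$-module completion that fails to be multiplicatively closed. For this I would check that the $\pi$-adic filtration on $A_d(\mathcal{O})$ — equivalently on $A_d(K)$ — is multiplicative, which amounts to noting that the product of two normal-form monomials is an $\mathcal{O}$-linear (in fact $\mathbb{Z}$-linear) combination of normal-form monomials: commuting a $\partial_i$ past an $x_i$ only introduces lower-degree terms with integer coefficients via the Leibniz-type identity $\partial_i^m x_i = \sum_{j}\binom{m}{j}\frac{j!}{?}\cdots$ — concretely $\partial_i x_i^m = x_i^m\partial_i + m x_i^{m-1}$ and its iterates — so no denominators appear. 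Hence $A_d(\mathcal{O})$ is a subring, $\widehat{A_d(K)}$ is a Banach $K$-algebra with the asserted relations, and these relations determine the multiplication on the dense subalgebra, hence everywhere by continuity.

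For the statement about deformations: by Definition \ref{deformation} we take $V$ to be the $K$-span of $\{x_1,\dots,x_d,\partial_1,\dots,\partial_d\}$ inside $A_d(K)$, so that $M=V\cap A_d(\mathcal{O})=\mathcal{O}x_1\oplus\cdots\oplus\mathcal{O}x_d\oplus\mathcal{O}\partial_1\oplus\cdots\oplus\mathcal{O}\partial_d$, which visibly generates $A_d(\mathcal{O})$ as an $\mathcal{O}$-algebra. Then $A_d(\mathcal{O})_n=\mathcal{O}\langle \pi^n M\rangle$ is by definition the $\mathcal{O}$-subalgebra of $A_d(\mathcal{O})$ topologically generated by $\pi^n x_1,\dots,\pi^n x_d,\pi^n\partial_1,\dots,\pi^n\partial_d$, and repeating the PBW/normal-form argument with the rescaled generators identifies its completion $\widehat{A_d(K)}_n$ with the elements of $K\langle \pi^n x_1,\dots,\pi^n x_d,\pi^n\partial_1,\dots,\pi^n\partial_d\rangle$. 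The only subtlety — and the main thing to verify — is that the rescaled generators still satisfy closed commutation relations with \emph{integral} coefficients: indeed $(\pi^n x_i)(\pi^n\partial_i)-(\pi^n\partial_i)(\pi^n x_i)=\pi^{2n}$, so the subalgebra $\mathcal{O}\langle\pi^n M\rangle$ is exactly the normal-form span of monomials in the $\pi^n x_i$ and $\pi^n\partial_i$, and its completion is the asserted Tate subalgebra. I expect this bookkeeping with the scaling factors to be the only place where a small amount of care is needed; everything else is a direct unwinding of the definitions together with the faithful polynomial representation from Lemma \ref{endo}.
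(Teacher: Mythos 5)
The paper states Lemma \ref{Tate-Weyl} without any proof, treating it as the Weyl-algebra analogue of Lemma \ref{PBW}, and your argument supplies exactly the routine verification it implicitly relies on: the PBW normal-form basis $x^{\alpha}\partial^{\beta}$ of $A_d(\mathcal{O})$ (with linear independence from the faithful polynomial representation of Lemma \ref{endo}), the integrality of the normal-form coefficients so that $A_d(\mathcal{O})$ is a subring and its $\pi$-adic completion is the Tate algebra $K\langle x_1,\cdots,x_d,\partial_1,\cdots,\partial_d\rangle$ with the stated relations, and the rescaling computation $(\pi^n x_i)(\pi^n\partial_i)-(\pi^n\partial_i)(\pi^n x_i)=\pi^{2n}$ identifying the level $n$ deformation. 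This is correct; the only blemish is the garbled intermediate commutation formula, but the concrete identity $\partial_i x_i^m=x_i^m\partial_i+mx_i^{m-1}$ (and its iterates, whose coefficients $j!\binom{m}{j}\binom{s}{j}$ are integers) that you actually invoke is all that is needed.
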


\noindent Using this definition, we have the following affinoid version of Lemma \ref{endo}:

\begin{lemma}\label{aff-endo}

Let $K\langle t_1,\cdots,t_d\rangle$ be a Tate algebra in $d$-variables, and for each $i=1,\cdots,d$, let $x_i\in$\emph{ End}$_K K\langle t_1,\cdots,t_d\rangle$ be left multiplication by $t_i$, and let $\partial_i:=\frac{d}{dt_i}\in$\emph{ End}$_K K\langle t_1,\cdots,t_d\rangle$. Note that if $\lambda_{\alpha}\in K$ for each $\alpha\in\mathbb{N}^{2d}$, and $\lambda_{\alpha}\rightarrow 0$ as $\alpha\rightarrow\infty$, then the series $\underset{\alpha\in\mathbb{N}^{2d}}{\sum}{\lambda_{\alpha}x_1^{\alpha_1}\cdots x_d^{\alpha_d}\partial_1^{\alpha_{d+1}}\cdots\partial_d^{\alpha_d}}$ is a well defined $K$-linear endomorphism of $K\langle t_1,\cdots,t_d\rangle$. 

Then the natural map $A_d(k)\to$\emph{ End}$_K K\langle t_1,\cdots,t_d\rangle$ sending $x_i$ to $x_i$ and $\partial_i$ to $\partial_i$ is an injective ring homomorphism.

\end{lemma}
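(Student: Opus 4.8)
The plan is to run the proof of Lemma~\ref{endo} essentially verbatim, but to keep track of the Gauss filtration $w_{\inf}$ on $K\langle t_1,\cdots,t_d\rangle$ throughout, so that the resulting homomorphism is continuous and therefore extends to the completion $\widehat{A_d(K)}$; injectivity will then be a formal consequence of simplicity. First I would note that $x_i$ and $\partial_i$ are $K$-linear endomorphisms of $K\langle t_1,\cdots,t_d\rangle$ that do not decrease $w_{\inf}$ (in particular they are continuous): left multiplication by the monomial $t_i$ leaves the valuation of every coefficient unchanged, so $w_{\inf}(x_if)=w_{\inf}(f)$, while $\partial_i$ sends $t^{\mu}$ to $\mu_i t^{\mu-e_i}$ with $\mu_i\in\mathbb{Z}$, so $w_{\inf}(\partial_if)\geq w_{\inf}(f)$. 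Hence every monomial operator $x_1^{\beta_1}\cdots x_d^{\beta_d}\partial_1^{\gamma_1}\cdots\partial_d^{\gamma_d}$ is continuous with operator norm $\leq 1$.

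Since $K\langle t_1,\cdots,t_d\rangle$ is $\pi$-adically complete, the ring of continuous $K$-linear endomorphisms, equipped with the operator filtration, is again complete; so when $\lambda_{\alpha}\to 0$ the partial sums of $\sum_{\alpha\in\mathbb{N}^{2d}}\lambda_{\alpha}x_1^{\alpha_1}\cdots x_d^{\alpha_d}\partial_1^{\alpha_{d+1}}\cdots\partial_d^{\alpha_{2d}}$ are Cauchy in operator norm and converge to a continuous endomorphism, which proves the convergence assertion in the statement. (One can also argue by hand: evaluating on a monomial $t^{\mu}$, only the finitely many $\gamma\leq\mu$ contribute a nonzero term, the coefficients that appear are products of falling factorials, hence integers, and $\lambda_{\alpha}\to 0$ forces the coefficient of each monomial of the image to tend to $0$, so the image lies in the Tate algebra; linearity and boundedness then upgrade this to an endomorphism.)

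Next I would verify that the defining relations of the Weyl algebra hold among the operators $x_i,\partial_i$: the commutations among the $x_i$, among the $\partial_i$, and between $x_i$ and $\partial_j$ for $i\neq j$ are clear, and the Leibniz identity $\partial_i(t_if)=f+t_i\partial_if$ yields the commutator of $x_i$ and $\partial_i$ as in Definition~\ref{Weyl}. By the universal property of $A_d(K)$ this produces a ring homomorphism $A_d(K)\to\mathrm{End}_K K\langle t_1,\cdots,t_d\rangle$ with $x_i\mapsto x_i$ and $\partial_i\mapsto\partial_i$; by the first paragraph it carries $A_d(\mathcal{O})$ into the unit ball, so it is continuous, and since the target is complete and $\widehat{A_d(K)}$ is the completion of $A_d(K)$ with respect to $A_d(\mathcal{O})$, it extends uniquely to a continuous ring homomorphism $\widehat{A_d(K)}\to\mathrm{End}_K K\langle t_1,\cdots,t_d\rangle$, which agrees with the stated map on dense subspaces.

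For injectivity I would use that a ring homomorphism out of a simple ring is either zero or injective; ours sends $1$ to the identity, hence is injective on $A_d(K)$ (which is simple, as noted after Definition~\ref{Weyl}), and, using that $\widehat{A_d(K)}$ is a simple domain (Lemma~\ref{Tate-Weyl}), on $\widehat{A_d(K)}$ as well. I expect the only genuine work to lie in the completeness bookkeeping of the first two paragraphs — namely that the continuous endomorphisms of a $\pi$-adically complete module form a complete filtered ring and that the monomials $x^{\beta}\partial^{\gamma}$ are contractive — after which the universal property and the simplicity argument dispatch the ring-homomorphism and injectivity claims formally; a harmless point is that the statement is phrased for $A_d(k)$ whereas the content (and the preceding convergence remark) really concern $\widehat{A_d(K)}$, and the argument above handles both at once.
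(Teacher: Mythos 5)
The paper states Lemma \ref{aff-endo} without proof, treating it as the routine affinoid analogue of Lemma \ref{endo}, so there is no argument of the paper's to compare yours against; judged on its own, your proof is correct and complete. The three ingredients are exactly the ones needed: (i) $x_i$ and $\partial_i$ do not decrease the Gauss valuation (the coefficients produced by differentiation are integers, hence lie in $\mathcal{O}$), so every monomial operator $x^{\beta}\partial^{\gamma}$ has operator norm at most $1$, and since $K\langle t_1,\cdots,t_d\rangle$ is a $K$-Banach space its bounded endomorphisms form a complete normed algebra, whence the series with $\lambda_{\alpha}\to 0$ converges -- this settles the well-definedness claim (your by-hand alternative, evaluating on monomials and using that only $\gamma\leq\mu$ contribute, is also fine, though the Banach-algebra argument is the cleaner way to get an endomorphism of the whole Tate algebra); (ii) the operator identities among the $x_i,\partial_i$ together with the universal property of the Weyl algebra give the ring homomorphism, and contractivity on $A_d(\mathcal{O})$ gives continuity, hence the extension to $\widehat{A_d(K)}$; (iii) injectivity follows because the kernel is a two-sided ideal not containing $1$ and $A_d(K)$ (resp. $\widehat{A_d(K)}$) is simple, which the paper records just after Definition \ref{Weyl} (resp. before Lemma \ref{Tate-Weyl}). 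Two harmless remarks: the commutator you actually compute is $\partial_ix_i-x_i\partial_i=1$, which differs by a sign/ordering convention from the relation $x_i\partial_i=\partial_ix_i+1$ as written in the paper -- this discrepancy is internal to the paper's conventions (the two presentations of $A_d$ are isomorphic) and does not affect your argument; and, as you note, the statement's ``$A_d(k)$'' should be read as $A_d(K)$, your extension to $\widehat{A_d(K)}$ being exactly what the surrounding text (e.g.\ Corollary \ref{completely-prime}) actually uses.
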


In a similar vein to Theorem \ref{Dix3}, we would like to prove that all weakly rational quotients of the affinoid enveloping algebra $\widehat{U(\mathcal{L})}_K$ are isomorphic to Tate-Weyl algebras. However, it was shown in \cite{Lewis} that this need not always be the case. In section 7 we will explore how to prove a related, weaker statement.

\subsection{The Coadjoint Action}

Now we will recall some Lie theory. Assume $\mathfrak{h}$ is nilpotent Lie algebra, and note that for every $u\in\mathfrak{h}$, the map $\ad(u)$ is a nilpotent derivation of $\mathfrak{h}$. So we can define:

\begin{equation}
\exp(\ad(u)):=\underset{n\geq 0}{\sum}{\frac{1}{n!}\ad(u)^n}:\mathfrak{h}\to\mathfrak{h}
\end{equation}

\noindent Since $\ad(u)$ is a derivation, it follows that $\exp(\ad(u))$ is a Lie-automorphism of $\mathfrak{h}$.

\begin{definition}

Define the \emph{adjoint algebraic group} of $\mathfrak{h}$ to be $\mathbb{H}(\mathfrak{h}):=\{\exp(\ad(u))\in Aut(\mathfrak{h}):u\in\mathfrak{h}\}$

\end{definition}

\noindent Then $\mathbb{H}(\mathfrak{h})$ is a subgroup of $Aut(\mathfrak{h})$, and if we define the functor $\mathbb{H}:k$-Alg$\to$ Grp,$R\mapsto\mathbb{H}(\mathfrak{h}\otimes_k R)$, then $\mathbb{H}$ is an affine algebraic group in the sense of \cite[Definition \rom{1}.2.1]{Jantzen}, and it is unipotent.\\

\noindent\textbf{Note:} If we view the space $\ad(\mathfrak{h})\subseteq End_k(\mathfrak{h})$ as an affine variety over $k$, then the map $\exp:\ad(\mathfrak{h})\to\mathbb{H}$ is an isomorphism of varieties, with inverse $\log$.\\

\noindent Now, let $\mathfrak{h}^*:k$-Alg$\to$ Set be the linear dual of $\mathfrak{h}$, i.e. $\mathfrak{h}^*(R)=Hom_R(\mathfrak{h}\otimes_k R,R)\cong\mathfrak{h}^*(k)\otimes_k R$. Then $\mathfrak{h}^*$ is an affine scheme in the sense of \cite[Definition \rom{1}.1.3]{Jantzen}.

\begin{definition}\label{coadjoint-action}

Define an action of $\mathbb{H}$ on $\mathfrak{g}^*$, i.e. a morphism of varieties $\mathbb{H}\times\mathfrak{h}^*\to\mathfrak{h}^*$, by $(g\cdot f)(u)=f(g^{-1}u)$. This is the \emph{coadjoint action}, and the orbits of this action in $\mathfrak{h}^*$ are called \emph{coadjoint orbits}.

\end{definition}

\noindent Given $\lambda\in\mathfrak{h}^*(k)$, let $X$ be the coadjoint orbit of $\lambda$ in $\mathfrak{h}^*$, and let $S$ be the stabiliser of $\lambda$ in $\mathbb{H}$, i.e. $S(R):=\{g\in\mathbb{H}(R):g\cdot \lambda=\lambda\}$, an affine algebraic subgroup of $\mathbb{H}$.

\begin{lemma}\label{product-decomposition}

There exists an isomorphism of varieties $\mathbb{H}\cong S\times X$ such that the map $\mathbb{H}\to X,g\mapsto g\cdot \lambda$ is just the natural projection $S\times X\to X$.

\end{lemma}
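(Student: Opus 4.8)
The plan is to exploit the fact that everything in sight is unipotent, so that the exponential map linearises the situation. First I would note that since $\mathbb{H}$ is a unipotent algebraic group with Lie algebra $\ad(\mathfrak{h})$, and $\exp:\ad(\mathfrak{h})\to\mathbb{H}$ is an isomorphism of varieties (as remarked in the text), it suffices to work on the level of Lie algebras and then transport the decomposition back via $\exp$. The orbit map $\phi:\mathbb{H}\to X$, $g\mapsto g\cdot\lambda$, is a surjective morphism of varieties whose fibres are the left cosets $gS$ of the stabiliser $S$; since $\mathbb{H}$ is unipotent, $X\cong\mathbb{H}/S$ is an affine variety and $\phi$ is a quotient morphism. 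The key point will be to produce a section $\sigma:X\to\mathbb{H}$ of $\phi$ whose image is a closed subvariety complementary to $S$, and then show that the multiplication map $S\times\sigma(X)\to\mathbb{H}$ is an isomorphism.

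To build the section, I would choose a vector space complement $\mathfrak{m}$ to the Lie subalgebra $\mathfrak{s}:=\operatorname{Lie}(S)=\{\ad(u):u\in\mathfrak{h},\ \ad(u)^*\lambda=0\}$ inside $\ad(\mathfrak{h})$, i.e. $\ad(\mathfrak{h})=\mathfrak{s}\oplus\mathfrak{m}$ as $k$-vector spaces. The candidate section is then $\sigma:=\exp|_{\mathfrak{m}}$ followed by $\phi$-identification of $\mathfrak{m}$ with $X$; concretely, one shows the composite $\mathfrak{m}\xrightarrow{\exp}\mathbb{H}\xrightarrow{\phi}X$ is an isomorphism of varieties. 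Injectivity and surjectivity can be checked by a filtration/dimension argument: filter $\mathfrak{h}$ by its lower central series, which refines to a flag stable under everything, and observe that the coadjoint action of $\exp(\mathfrak{m})$ on $\lambda$ is "unipotent-triangular" with respect to the induced flag on $\mathfrak{h}^*$, so the orbit map restricted to $\exp(\mathfrak{m})$ has polynomial inverse. Granting this, define the isomorphism $\mathbb{H}\cong S\times X$ by $g\mapsto (g\cdot\sigma(\phi(g))^{-1},\ \phi(g))$, with inverse $(s,x)\mapsto s\cdot\sigma(x)$; one checks these are mutually inverse morphisms of varieties, and by construction the composite $\mathbb{H}\cong S\times X\to X$ is $\phi$, which is exactly $g\mapsto g\cdot\lambda$.

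The main obstacle I anticipate is verifying cleanly that $\exp(\mathfrak{m})\to X$ is an \emph{isomorphism of varieties} (not merely a bijection on points), i.e. that the inverse is a morphism. In the nilpotent setting this is where one must actually use the unipotent structure: the Baker--Campbell--Hausdorff formula shows the group multiplication and the coadjoint action are polynomial, and the triangular form with respect to the lower-central-series flag makes the relevant Jacobian "unitriangular", hence the inverse is polynomial — but making this precise is the technical heart of the argument. Everything else (that $S$ and $X$ are affine, that $\phi$ is a quotient map, that multiplication $S\times\exp(\mathfrak{m})\to\mathbb{H}$ is an isomorphism) follows from standard structure theory of unipotent groups, e.g. the fact that a unipotent group over a field of characteristic $0$ is isomorphic as a variety to its Lie algebra and that closed subgroups admit complementary closed subvarieties through the exponential of a vector-space complement. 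I would cite the relevant facts from \cite{Jantzen} rather than reprove them.
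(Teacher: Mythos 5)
Your strategy (linearise by $\exp$ and write down an explicit section of the orbit map) contains a genuine gap at exactly the step you flag as the technical heart, and the ``standard fact'' you plan to cite to close it is false as stated: for a unipotent group in characteristic $0$ it is \emph{not} true that the exponential of an arbitrary vector-space complement $\mathfrak{m}$ of the Lie algebra $\mathfrak{s}$ of $S$ maps isomorphically (or even bijectively) onto $X\cong\mathbb{H}/S$. Concretely, let $\mathfrak{h}$ have basis $x,y_1,y_2,y_3$ with $[x,y_1]=y_2$, $[x,y_2]=y_3$ and all other brackets of basis vectors zero, and take $\lambda=y_3^*$. Then $\mathfrak{h}^{\lambda}=ky_1\oplus ky_3$, so $\mathfrak{s}=k\,\ad(y_1)$, and $\mathfrak{m}:=k\,\ad(x)\oplus k\,\ad(y_1+y_2)$ is a perfectly good vector-space complement to $\mathfrak{s}$ in $\ad(\mathfrak{h})$. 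Writing $u=ax+b(y_1+y_2)$ and $\mu:=\exp(\ad(u))\cdot\lambda$, a short computation gives $\mu(y_2)=-a$ and $\mu(x)=b\bigl(1-\tfrac{a}{2}\bigr)$ (with $\mu(y_1)=\tfrac12 a^2$, $\mu(y_3)=1$), while the full orbit $X$ is parametrised freely by the two coordinates $\bigl(\mu(y_2),\mu(x)\bigr)$. So the restriction of the orbit map to $\exp(\mathfrak{m})$ is, in these coordinates, $(a,b)\mapsto\bigl(-a,\,b(1-\tfrac{a}{2})\bigr)$: it misses every point of $X$ with $\mu(y_2)=-2$ and $\mu(x)\neq 0$, and has an infinite fibre over the remaining point of that line. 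Hence your candidate section does not exist for a general complement, the ``unitriangular Jacobian'' heuristic breaks down, and no citation of \cite{Jantzen} will supply the missing statement. A correct version of your argument must choose $\mathfrak{m}$ adapted to a chain of subalgebras $\mathfrak{s}=\mathfrak{s}_0\subset\mathfrak{s}_1\subset\cdots\subset\mathfrak{s}_r=\ad(\mathfrak{h})$ with each $\mathfrak{s}_{i-1}$ of codimension $1$ in $\mathfrak{s}_i$ (which exists by nilpotency), and then prove by induction on $r$ that the product of the resulting one-parameter subgroups gives an isomorphism $S\times\mathbb{A}^r\cong\mathbb{H}$ over $X$; this is genuinely the content of the lemma, not a formality.

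By contrast, the paper never constructs a section: it notes that in characteristic $0$ the orbit map is separable, so $X\cong\mathbb{H}/S$ as varieties and the orbit map is the quotient map (Borel); that $X$, being a closed orbit of a unipotent group in the affine space $\mathfrak{h}^*$, is affine; that $\mathbb{H}\to\mathbb{H}/S$ is an $S$-torsor; and that a torsor under a unipotent group over an affine base is trivial (Demazure--Gabriel). Triviality of the torsor is exactly the existence of the section you are trying to build by hand, and quoting it avoids the adapted-basis induction entirely. So either replace your key step by the torsor-triviality argument, or carry out the induction over a flag through $\mathfrak{s}$; as written, the proof does not go through.
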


\begin{proof}

Since $\mathbb{H}$ is affine and $S$ is closed in $\mathbb{H}$, we see using \cite[Theorem \rom{2}.6.8]{Borel} that the quotient variety $\mathbb{H}/S$ exists. Since the orbit map $\mathbb{H}\to X$ is surjective and $k$ has characteristic 0, it follows that this map is \emph{separable} in the sense of \cite[\rom{1}.8.2]{Borel}, and hence using \cite[Theorem \rom{1}.17.3]{Borel} and \cite[Proposition \rom{2}.6.7]{Borel} it follows that $X=\mathbb{H}/S$ and that the map $\mathbb{H}\to X$ is the quotient map. Since $X$ is closed in $\mathfrak{h}^*$, it follows that $\mathbb{H}/S$ is affine.\\

\noindent Now, using \cite[\rom{1}.5.6(1)]{Jantzen} we see that $\mathbb{H}\times S\cong\mathbb{H}\times_{\mathbb{H}/S}\mathbb{H}$ as varieties, and using this isomorphism, it follows that the natural map $\mathbb{H}\to\mathbb{H}/S$ is an $S$-torsor in the sense of \cite[Ch.\rom{3} Definition 4.1.3]{torsor}.\\ 

\noindent Therefore, since $S$ is unipotent and $\mathbb{H}/S$ is affine, it follows from \cite[Ch.\rom{4} Proposition 3.7(b)]{torsor} that the torsor $\mathbb{H}\to\mathbb{H}/S$ is trivial, i.e. $\mathbb{H}\cong S\times\mathbb{H}/S=S\times X$ as varieties and the map $\mathbb{H}\to X$ is just the projection to the second factor.\end{proof}

\section{The Action of $\widehat{U(\mathcal{L})}_K$ on $\widehat{D(\lambda)}$}

In this section, we will prove our affinoid version of Theorem \ref{Dix1}, at least in the case where $\mathfrak{g}$ is nilpotent. Throughout, assume that $\mathfrak{g}$ is a finite dimensional $K$-Lie algebra, with $\mathcal{O}$-Lie lattice $\mathcal{L}$.

\subsection{Induced Modules}

\noindent Since the affinoid Dixmier module $\widehat{D(\lambda)}$ is an induced $\widehat{U(\mathcal{L})}_K$-module, we will first explore some general properties of induced modules.

\begin{lemma}\label{induced}

Let $\mathfrak{h}$ be a subalgebra of $\mathfrak{g}$, let $\mathfrak{a}$ be an ideal of $\mathfrak{g}$ such that $\mathfrak{a}\subseteq\mathfrak{h}$. Let $\mathfrak{g}_1:=\mathfrak{g}/\mathfrak{a}$, $\mathfrak{h}_1:=\mathfrak{h}/\mathfrak{a}$. Also, set $\mathcal{H}:=\mathcal{L}\cap\mathfrak{h}$, $\mathcal{A}:=\mathcal{L}\cap\mathfrak{a}$, $\mathcal{L}_1:=\mathcal{L}/\mathcal{A}$, $\mathcal{H}_1:=\mathcal{H}/\mathcal{A}$, which are Lie lattices in $\mathfrak{h}$, $\mathfrak{a}$, $\mathfrak{g}_1$ and $\mathfrak{h}_1$ respectively. Then:\\

\noindent $(i)$ There is a continuous surjection of $K$-algebras $\widehat{U(\mathcal{L})}_K\twoheadrightarrow\widehat{U(\mathcal{L}_1)}_K$ induced by the surjection $\mathcal{L}\twoheadrightarrow\mathcal{L}_1$. The kernel of this surjection is $\mathfrak{a}\widehat{U(\mathcal{L})}_K$\\

\noindent $(ii)$ If $M$ is a finitely generated $\widehat{U(\mathcal{H}_1)}_K$-module, then $M$ has the structure of a $\widehat{U(\mathcal{H})}_K$-module via the surjection in $(i)$, and $\widehat{U(\mathcal{L})}_K\otimes_{\widehat{U(\mathcal{H})}_K}M\cong\widehat{U(\mathcal{L}_1)}_K\otimes_{\widehat{U(\mathcal{H}_1)}_K}M$ as $\widehat{U(\mathcal{L})}_K$-modules.

\end{lemma}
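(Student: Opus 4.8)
The plan is to prove both parts by reducing to the $\pi$-adic completion of purely algebraic (PBW-level) statements about $U(\mathfrak{g})$, $U(\mathfrak{h})$, and their quotients, and then invoking completeness and the universal property of the induced (tensor) module.

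For part $(i)$, the surjection $\mathcal{L}\twoheadrightarrow\mathcal{L}_1=\mathcal{L}/\mathcal{A}$ of $\mathcal{O}$-Lie algebras induces by functoriality a surjection of enveloping algebras $U(\mathcal{L})\twoheadrightarrow U(\mathcal{L}_1)$, whose kernel is the two-sided ideal generated by $\mathcal{A}$, equivalently $\mathfrak{a}U(\mathcal{L})=U(\mathcal{L})\mathfrak{a}$ since $\mathfrak{a}$ is an ideal of $\mathfrak{g}$. First I would choose an $\mathcal{O}$-basis of $\mathcal{L}$ adapted to the filtration $\mathcal{A}\subseteq\mathcal{H}\subseteq\mathcal{L}$ (possible after passing to a finite-index sublattice, or directly if $\mathcal{A},\mathcal{H}$ are saturated; in general one works up to commensurability, which does not affect the completed algebras), so that by Lemma \ref{PBW} both $\widehat{U(\mathcal{L})}$ and $\widehat{U(\mathcal{L}_1)}$ are Tate algebras in the corresponding basis variables. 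The surjection $U(\mathcal{L})\to U(\mathcal{L}_1)$ is then the obvious ``set the $\mathcal{A}$-variables to zero'' map on PBW monomials; it is strict for the $\pi$-adic filtrations, hence it extends to a continuous surjection $\widehat{U(\mathcal{L})}\twoheadrightarrow\widehat{U(\mathcal{L}_1)}$ of $\pi$-adically complete $\mathcal{O}$-algebras, with kernel the closure of $\mathfrak{a}U(\mathcal{L})$ in $\widehat{U(\mathcal{L})}$. Since the $\pi$-adic filtration on $\widehat{U(\mathcal{L})}$ is Zariskian (as already noted in the proof of Proposition \ref{fin-gen}, using $\mathrm{gr}\,\widehat{U(\mathcal{L})}\cong\frac{\mathcal O}{\pi\mathcal O}[t,u_1,\dots,u_d]$ Noetherian), every left ideal is closed, so this closure is just $\mathfrak{a}\widehat{U(\mathcal{L})}$. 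Tensoring with $K$ gives the statement for the $K$-algebras.

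For part $(ii)$: a finitely generated $\widehat{U(\mathcal{H}_1)}_K$-module $M$ becomes a $\widehat{U(\mathcal{H})}_K$-module by restriction of scalars along the surjection $\widehat{U(\mathcal{H})}_K\twoheadrightarrow\widehat{U(\mathcal{H}_1)}_K$ from part $(i)$ (applied to $\mathfrak{h}$ in place of $\mathfrak{g}$). The key point is that on this module $\mathfrak{a}$ acts as zero, since $\mathfrak{a}$ lies in the kernel $\mathfrak{a}\widehat{U(\mathcal{H})}_K$. Now consider $\widehat{U(\mathcal{L})}_K\otimes_{\widehat{U(\mathcal{H})}_K}M$. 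Because $\mathfrak{a}M=0$, the natural right-to-left action of $\widehat{U(\mathcal{H})}_K$ factors through $\widehat{U(\mathcal{H}_1)}_K$, and correspondingly the left action of $\widehat{U(\mathcal{L})}_K$ on the tensor product kills $\mathfrak{a}\widehat{U(\mathcal{L})}_K$ (one checks $\mathfrak{a}(\xi\otimes m)=\xi'\otimes 0$ after moving the generators of $\mathfrak{a}$, which lie in $\mathcal{L}$ hence in the ``numerator'', through — using that $\mathfrak{a}$ is an ideal so $[\mathfrak{a},\mathcal{L}]\subseteq\mathfrak{a}$ and $\mathfrak{a}$ can be pushed to the right across any PBW monomial modulo lower-order $\mathfrak{a}$-terms). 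Hence the module structure descends to $\widehat{U(\mathcal{L}_1)}_K=\widehat{U(\mathcal{L})}_K/\mathfrak{a}\widehat{U(\mathcal{L})}_K$, and one obtains a canonical $\widehat{U(\mathcal{L})}_K$-linear map $\widehat{U(\mathcal{L})}_K\otimes_{\widehat{U(\mathcal{H})}_K}M\to\widehat{U(\mathcal{L}_1)}_K\otimes_{\widehat{U(\mathcal{H}_1)}_K}M$ sending $\xi\otimes m\mapsto\bar\xi\otimes m$. The inverse is induced by the composite $\widehat{U(\mathcal{L}_1)}_K\times M\to\widehat{U(\mathcal{L})}_K\otimes_{\widehat{U(\mathcal{H})}_K}M$, $(\bar\xi,m)\mapsto\xi\otimes m$, which is well-defined precisely because $\mathfrak{a}\widehat{U(\mathcal{L})}_K\otimes m=0$; one verifies it is $\widehat{U(\mathcal{H}_1)}_K$-balanced and extends continuously. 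These two maps are mutually inverse $\widehat{U(\mathcal{L})}_K$-module homomorphisms, giving the isomorphism.

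The main obstacle I anticipate is a bookkeeping issue rather than a conceptual one: one must be careful that the (completed) tensor products involved are the ``correct'' objects — i.e. that $\widehat{U(\mathcal{L})}_K\otimes_{\widehat{U(\mathcal{H})}_K}M$ (an algebraic tensor product, which is already $\pi$-adically complete and finitely generated by Proposition \ref{fin-gen} since $M$ is finitely generated and $\widehat{U(\mathcal{L})}_K$ is Noetherian) genuinely agrees with the topological construction, and that the maps written down on PBW generators really do extend to the completions. This amounts to checking strictness of the relevant filtered maps, which follows from the Zariskian property and the explicit Tate-algebra description in Lemma \ref{PBW}; once that is in place, the algebra of the argument is the same as in the classical (uncompleted) case, where $U(\mathfrak{g})\otimes_{U(\mathfrak{h})}M\cong U(\mathfrak{g}_1)\otimes_{U(\mathfrak{h}_1)}M$ is standard.
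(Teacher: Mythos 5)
Your proposal is correct; part $(i)$ follows the paper's approach, but your part $(ii)$ takes a genuinely different route. For $(i)$ both you and the paper pick a basis of $\mathcal{L}$ adapted to $\mathcal{A}$ and use Lemma \ref{PBW}; the paper identifies the kernel directly by expanding elements as $\underset{\alpha}{\sum}{x_1^{\alpha_1}\cdots x_r^{\alpha_r}c_{\alpha}}$ with $c_{\alpha}\in\widehat{U(\mathcal{A})}$, while you argue via strictness that the kernel is the closure of $\mathfrak{a}U(\mathcal{L})$ and then use the Zariskian property; both work. One small correction: your parenthetical about passing to a commensurable sublattice is unnecessary and also misleading, since commensurable lattices give different completions (that is exactly what the deformations $\widehat{U(\pi^n\mathcal{L})}_K$ exploit); no such device is needed because $\mathcal{A}=\mathcal{L}\cap\mathfrak{a}$ and $\mathcal{H}=\mathcal{L}\cap\mathfrak{h}$ are saturated in $\mathcal{L}$, so an adapted $\mathcal{O}$-basis exists outright. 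For $(ii)$ the paper constructs an explicit $K$-linear section $\phi^{-1}$ of $\phi$ from the PBW description, sets $\Psi(s\otimes m)=\phi^{-1}(s)\otimes m$, and proves $\Psi\Theta=\mathrm{id}$ by rewriting a simple tensor as a convergent sum of tensors with left factors in $K\langle x_1,\cdots,x_r\rangle$, invoking completeness via Proposition \ref{fin-gen}. You instead use that $\mathfrak{a}$ annihilates $M$ together with $\mathfrak{a}\widehat{U(\mathcal{L})}_K=\widehat{U(\mathcal{L})}_K\mathfrak{a}$, so that kernel elements can be written with the $\mathfrak{a}$-factor on the right and slid across the tensor sign; the backward map $(\bar\xi,m)\mapsto\xi\otimes m$ is then lift-independent and balanced, and no convergence argument (and no ``extends continuously'') is needed, since the tensor products in the statement are algebraic. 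This is shorter and essentially the standard base-change isomorphism. The one step you should make explicit is the two-sidedness $\mathfrak{a}\widehat{U(\mathcal{L})}_K=\widehat{U(\mathcal{L})}_K\mathfrak{a}$ in the completion: your PBW-pushing handles finite sums, and for Tate series it suffices to note that both one-sided ideals are finitely generated, hence closed, and contain the common dense subset $\mathfrak{a}U(\mathfrak{g})=U(\mathfrak{g})\mathfrak{a}$.
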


\begin{proof}

$(i)$ It is clear that the surjection $\mathcal{L}\twoheadrightarrow\mathcal{L}_1$ induces a surjection $U(\mathcal{L})\twoheadrightarrow U(\mathcal{L}_1)$ sending $\pi^nU(\mathcal{L})$ to $\pi^nU(\mathcal{L}_1)$, so this yields a continuous map $\widehat{U(\mathcal{L})}\to\widehat{U(\mathcal{L}_1)}$.\\

\noindent If we fix a basis $\{x_1,\cdots,x_d\}$ for $\mathcal{L}$ such that $\{x_{r+1},\cdots,x_d\}$ is a basis for $\mathcal{A}$, then using Lemma \ref{PBW}, we see that every element of $\widehat{U(\mathcal{L}_1)}_K$ has the form $\underset{\alpha\in\mathbb{N}^r}{\sum}{\lambda_{\alpha}(x_1+\mathcal{A})^{\alpha_1}\cdots(x_r+\mathcal{A})^{\alpha_r}}$, where $\lambda_{\alpha}\in\mathcal{O}\rightarrow 0$ as $\alpha\rightarrow\infty$. Clearly under the map $\widehat{U(\mathcal{L})}\to\widehat{U(\mathcal{L}_1)}$, $x_i$ maps to $x_i+\mathcal{A}$ for each $i$, and hence the map is surjective.\\

\noindent Moreover, we can write any element of $\widehat{U(\mathcal{L})}$ as $\underset{\alpha\in\mathbb{N}^{r}}{\sum}{x_1^{\alpha_1}\cdots x_r^{\alpha_r}c_{\alpha}}$ for some $c_{\alpha}\in\widehat{U(\mathcal{A})}$ converging to zero, and this maps to 0 if and only if $c_{\alpha}$ maps to 0 for each $\alpha$. But each $c_{\alpha}$ has the form $\underset{\beta\in\mathbb{N}^{d-r}}{\sum}{\mu_{\beta}x_{r+1}^{\beta_{r+1}
}\cdots x_d^{\beta_d}}$, and this maps to zero if and only if $\mu_{0}=0$, i.e. $c_{\alpha}\in\widehat{U(\mathcal{A})}\mathcal{A}$. Hence the kernel of the surjection is $\widehat{U(\mathcal{L})}\mathcal{A}$ and part $(i)$ follows.\\

\noindent $(ii)$ Let $\phi:\widehat{U(\mathcal{L})}_K\to\widehat{U(\mathcal{L}_1)}_K$ be the surjection from part $(i)$, and define a map:

\begin{center}
$\Theta:\widehat{U(\mathcal{L})}_K\otimes_{\widehat{U(\mathcal{H})}_K}M\to\widehat{U(\mathcal{L}_1)}_K\otimes_{\widehat{U(\mathcal{H}_1)}_K}M,r\otimes m\mapsto \phi(r)\otimes m$.
\end{center}

\noindent It is clear that this is a well defined map of $\widehat{U(\mathcal{L})}_K$-modules, we want to prove that it is an isomorphism.\\

\noindent Every element $s\in\widehat{U(\mathcal{L}_1)}_K$ can be written uniquely in the form

\begin{center}
$s=\underset{\alpha\in\mathbb{N}^r}{\sum}{\lambda_{\alpha}(x_1+\mathcal{A})^{\alpha_1}\cdots (x_r+\mathcal{A})^{\alpha_r}}=\phi(\underset{\alpha\in\mathbb{N}^r}{\sum}{\lambda_{\alpha}x_1^{\alpha_1}\cdots x_r^{\alpha_r}})$,
\end{center} 

\noindent so there is a unique element in $K\langle x_1,\cdots,x_r\rangle$ that maps onto $s$ under $\phi$. We call this element $\phi^{-1}(s)$, and it is clear that this defines a $K$-linear map $\phi^{-1}:\widehat{U(\mathcal{L}_1)}\to K\langle x_1,\cdots,x_r\rangle$.\\

\noindent Therefore, we can define a $K$-linear map $\Psi:\widehat{U(\mathcal{L}_1)}_K\otimes_{\widehat{U(\mathcal{H}_1)}_K}M\to \widehat{U(\mathcal{L})}_K\otimes_{\widehat{U(\mathcal{H})}_K}M$ sending $s\otimes m$ to $\phi^{-1}(s)\otimes m$. We can show that this is well defined by choosing an appropriate basis for $\mathcal{H}_1$ that extends to a basis for $\mathcal{L}_1$, and clearly it is a right inverse to $\Theta$.\\

\noindent Using the fact that $\widehat{U(\mathcal{L})}_K$ is isomorphic as a $K$-vector space to $K\langle x_1,\cdots,x_r\rangle\langle x_{r+1},\cdots,x_d\rangle$, and $x_{r+1},\cdots,x_d\in\mathcal{A}\subseteq\mathcal{H}$, we see that every simple tensor $s\otimes m\in \widehat{U(\mathcal{L})}_K\otimes_{\widehat{U(\mathcal{H})}_K}M$ can be written as an infinite sum of simple tensors $s_n\otimes m_n$ converging to zero as $n\rightarrow\infty$, with $s_n\in K\langle x_1,\cdots,x_r\rangle$. We know this sum converges by Proposition \ref{fin-gen}.\\

\noindent Therefore, for any $s\in\widehat{U(\mathcal{L})}_K$, $m\in M$, $\Psi\Theta(s\otimes m)=\underset{n\in\mathbb{N}}{\sum}{\Psi\Theta(s_n\otimes m_n)}=\underset{n\in\mathbb{N}}{\sum}{\Psi(\phi(s_n)\otimes m_n)}$, and since $s_n\in K\langle x_1,\cdots,x_s\rangle$ for each $n$, $\phi^{-1}(\phi(s_n))=s_n$, and hence $\Psi\Theta(s\otimes m)=s\otimes m$. Thus $\Psi$ and $\Theta$ are mutually inverse bijections.\end{proof}

\vspace{0.2in}

\noindent Now, recall from \cite{DCap1} that if $A$ is a Banach $K$-algebra, and $M$ is a left $A$-module, $\pi$-adically complete with respect to some lattice $N\subseteq M$, then we may define the \emph{Tate module}: 

\begin{center}
$M\langle t_1,\cdots,t_d\rangle:=\{\underset{\alpha\in\mathbb{N}^d}{\sum}{t_1^{\alpha_1}\cdots t_d^{\alpha_d}s_{\alpha}}:s_{\alpha}\in M, s_{\alpha}\rightarrow 0$ as $\vert\alpha\vert\rightarrow\infty\}$.
\end{center}

\noindent Note that we don't necessarily give $M\langle t_1,\cdots,t_d\rangle$ the structure of an $A$-module, a priori it is just a $K$-vector space.

\begin{proposition}\label{Tate-action}

Let $\mathfrak{h}$ be a subalgebra of $\mathfrak{g}$, and let $\mathcal{H}:=\mathfrak{h}\cap\mathcal{L}$, so $\mathcal{H}$ is a Lie lattice in $\mathfrak{h}$. Suppose that $M$ is a finitely generated $\widehat{U(\mathcal{H})}_K$-module. Then if $r=\dim_K\mathfrak{g}/\mathfrak{h}$, there is an isomorphism of $K$-vector spaces $\widehat{U(\mathcal{L})}_K\otimes_{\widehat{U(\mathcal{H})}_K}M\cong M\langle t_1,\cdots,t_r\rangle$, where $t_iv$ corresponds to $x_i\otimes v$ for some $\mathcal{O}$-basis $\{x_1,\cdots,x_r\}$ for $\mathcal{L}/\mathcal{H}$. Thus $M\langle t_1,\cdots,t_r\rangle$ carries the structure of a $\widehat{U(\mathcal{L})}_K$ module.\\

\noindent Moreover, if $r=1$, so $\mathcal{L}=\mathcal{H}\oplus\mathcal{O}x$ for some $x\in\mathcal{L}$, then we can choose this isomorphism $\widehat{U(\mathcal{L})}_K\otimes_{\widehat{U(\mathcal{H})}_K}M\cong M\langle t\rangle$ such that:\\

\noindent $(i)$  $x$ acts by $t$ on $M\langle t\rangle$.\\

\noindent $(ii)$ If $y,z\in\mathcal{H}$ act on $M$ by scalars $\beta_y,\beta_z\in\mathcal{O}$, $[x,z]=0$ and $[y,x]=\alpha z$ for some $\alpha\in K$, then $y$ acts on $M\langle t\rangle$ by $\alpha\beta_z\frac{d}{dt}+\beta_y$.\\

\noindent $(iii)$ If $\alpha,\beta_z\neq 0$ and $M$ is irreducible over $\widehat{U(\mathcal{H})}_K$, then $M\langle t\rangle$ is irreducible over $\widehat{U(\mathcal{L})}_K$.

\end{proposition}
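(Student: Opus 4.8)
The plan is to obtain the underlying vector‑space isomorphism from the Poincar\'e--Birkhoff--Witt theorem, read off (i) and (ii) by a short computation with the commutation relations, and prove (iii) by adapting Dixmier's codimension‑one reduction from the proof of \cite[Theorem 6.1.1]{Dixmier} to the affinoid setting. Since $\mathcal{L}/\mathcal{H}$ embeds into $\mathfrak{g}/\mathfrak{h}$ and is therefore torsion‑free over the DVR $\mathcal{O}$, I would pick an $\mathcal{O}$‑basis $\{x_1,\dots,x_r\}$ of an $\mathcal{O}$‑module complement of $\mathcal{H}$ in $\mathcal{L}$ and extend it by an $\mathcal{O}$‑basis of $\mathcal{H}$ to an $\mathcal{O}$‑basis of $\mathcal{L}$. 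Ordering the PBW monomials with the $x_i$ on the left, Lemma \ref{PBW} writes every element of $\widehat{U(\mathcal{L})}_K$ uniquely as $\sum_{\alpha}x^{\alpha}c_{\alpha}$ with $c_{\alpha}\in\widehat{U(\mathcal{H})}_K$ and $c_{\alpha}\to 0$; then $\Theta\big(\sum_{\alpha}x^{\alpha}c_{\alpha}\otimes m\big):=\sum_{\alpha}t^{\alpha}(c_{\alpha}m)$ is well defined because $M$ is complete (Proposition \ref{fin-gen}) and $c_{\alpha}m\to 0$, and its inverse sends $\sum_{\alpha}t^{\alpha}s_{\alpha}$ to $\sum_{\alpha}x^{\alpha}\otimes s_{\alpha}$, which is a genuine finite sum of simple tensors once each $s_{\alpha}$ is expanded in finitely many generators of $M$ with coefficients tending to zero. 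Verifying that $\Theta$ is $\widehat{U(\mathcal{L})}_K$‑linear and that the two maps are mutually inverse then runs exactly as in Lemma \ref{induced}(ii), and transporting the action along $\Theta$ gives the module structure on $M\langle t_1,\dots,t_r\rangle$.

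When $r=1$, so $\mathcal{L}=\mathcal{H}\oplus\mathcal{O}x$, part (i) is immediate: $x\cdot(x^{n}\otimes m)=x^{n+1}\otimes m$ corresponds to $t^{n+1}m$, so $x$ acts by multiplication by $t$ on the dense subspace $\bigoplus_{n}t^{n}M$, hence everywhere. For (ii), an induction on $n$ using $[x,z]=0$ and $[y,x]=\alpha z$ gives $yx^{n}=x^{n}y+n\alpha zx^{n-1}$ in $U(\mathcal{L})$; applying this to $m$, moving the central $z$ past the powers of $x$, and using $ym=\beta_{y}m$, $zm=\beta_{z}m$ gives $y\cdot(x^{n}\otimes m)=\beta_{y}(x^{n}\otimes m)+n\alpha\beta_{z}(x^{n-1}\otimes m)$, i.e.\ $y$ acts as $\alpha\beta_{z}\tfrac{d}{dt}+\beta_{y}$.

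For (iii), assume $\alpha,\beta_{z}\neq0$ and $M$ irreducible over $\widehat{U(\mathcal{H})}_K$, and let $W$ be a nonzero $\widehat{U(\mathcal{L})}_K$‑submodule of $M\langle t\rangle$. As $M\langle t\rangle$ is finitely generated over $\widehat{U(\mathcal{L})}_K$ and the $\pi$‑adic filtration is Zariskian, $W$ is closed (as in the proof of Proposition \ref{fin-gen}). Writing $\tfrac{d}{dt}$ for the action of $D:=(\alpha\beta_{z})^{-1}(y-\beta_{y})\in\widehat{U(\mathcal{L})}_K$ and noting that $xD\in\widehat{U(\mathcal{L})}_K$ acts as the Euler operator $t\tfrac{d}{dt}$, i.e.\ by multiplication by $n$ on $t^{n}M$, the argument has two steps. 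First, given $0\neq w\in W$, write $w=\sum_{n\geq n_{0}}t^{n}s_{n}$ with $s_{n_{0}}\neq0$ and $s_{n}\to0$; replacing $w$ by $D^{n_{0}}w\in W$, whose constant term is $n_{0}!\,s_{n_{0}}\neq0$ (here $\mathrm{char}\,K=0$ is essential), I may assume $s_{0}\neq0$. Second, for $N\in\mathbb{N}$ put $\Phi_{N}:=\prod_{j=1}^{N}\big(1-\tfrac{1}{j}xD\big)\in\widehat{U(\mathcal{L})}_K$; on $t^{n}M$ it acts as $\prod_{j=1}^{N}(1-n/j)$, which equals $1$ for $n=0$, equals $0$ for $1\leq n\leq N$, and equals the integer $(-1)^{N}\binom{n-1}{N}$ for $n>N$. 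Hence $\Phi_{N}(w)=s_{0}+\sum_{n>N}(-1)^{N}\binom{n-1}{N}t^{n}s_{n}\in W$; since the binomial coefficients are integers the error term has norm at most $\sup_{n>N}\|s_{n}\|\to0$, so $\Phi_{N}(w)\to s_{0}$ and closedness of $W$ gives $s_{0}\in W$. Thus $W\cap M\neq0$, where $M\subseteq M\langle t\rangle$ is the degree‑zero part; this is a $\widehat{U(\mathcal{H})}_K$‑submodule isomorphic to $M$ because $h\cdot(1\otimes m)=1\otimes hm$ for $h\in\widehat{U(\mathcal{H})}_K$, so irreducibility of $M$ forces $M\subseteq W$, whence $t^{n}M=x^{n}\cdot M\subseteq W$ for all $n$ and, $W$ being closed, $W=M\langle t\rangle$.

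The main obstacle is the second step of (iii). Classically one works with the polynomial module $M[t]$, takes $w$ of minimal $t$‑degree, and differentiates once to reach a contradiction; this has no analogue here because a Tate series $\sum_{n}t^{n}s_{n}$ has no top‑degree term. The device that replaces it is the family $\Phi_{N}$: they are honest elements of $\widehat{U(\mathcal{L})}_K$ that annihilate the degree‑$1$ through degree‑$N$ parts while acting on the tail by integer scalars, so the tail stays $\pi$‑adically negligible; combined with the automatic closedness of submodules, this recovers the constant term as a limit. Checking the integrality of the coefficients $(-1)^{N}\binom{n-1}{N}$ is the small but crucial point that makes the limit argument go through.
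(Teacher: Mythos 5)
Your proposal is correct and follows essentially the same route as the paper: the same PBW-based vector space isomorphism, the same commutator computations for (i) and (ii), and for (iii) the same strategy of realising $\frac{d}{dt}$ via $(\alpha\beta_z)^{-1}(y-\beta_y)$, reducing to a nonzero constant term, producing elements of the submodule that converge to that constant term with integral (hence $\pi$-adically harmless) tail coefficients, and invoking closedness of submodules plus irreducibility of $M$. The only cosmetic difference is that you kill the intermediate degrees with the single diagonal operator $\Phi_N=\prod_{j=1}^{N}\bigl(1-\tfrac{1}{j}xD\bigr)$, whereas the paper builds a recursive sequence $r_i=r_{i-1}-t^{i}\partial^{[i]}(r_{i-1})$; both hinge on exactly the same integrality and completeness points.
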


\begin{proof}

Let $\{x_1,\cdots,x_d\}$ be an $\mathcal{O}$-basis for $\mathcal{L}$ such that $\{x_{r+1},\cdots,x_d\}$ is a basis for $\mathcal{H}$. Then by Lemma \ref{PBW}, writing $\underline{x}^{\alpha}:=x_1^{\alpha_1}\cdots x_d^{\alpha_d}$, we have: $\widehat{U(\mathcal{L})}_K=\{\underset{\alpha\in\mathbb{N}^d}{\sum}{\lambda_{\alpha}\underline{x}^{\alpha}:\lambda_{\alpha}\in \mathcal{O}},\lambda_{\alpha}\rightarrow 0$ as $\vert\alpha\vert\rightarrow\infty\}$.\\

\noindent Define a map:

\begin{center}
$\Theta:\widehat{U(\mathcal{L})}_K\otimes_{\widehat{U(\mathcal{H})}_K}M\to M\langle t_1,\cdots,t_{r}\rangle,\underset{\alpha\in\mathbb{N}^d}{\sum}{\lambda_{\alpha}\underline{x}^{\alpha}}\otimes v\mapsto\underset{\beta\in\mathbb{N}^r}{\sum}{t_1^{\beta_1}\cdots t_r^{\beta_r}(\underset{\gamma\in\mathbb{N}^{d-r}}{\sum}{\lambda_{(\beta,\gamma)}\underline{x}^{\gamma}v})}$.
\end{center}

\noindent Note that here $(\beta,\gamma)$ refers to the $d$-tuple whose first $r$ terms are the terms of $\beta$, and the last $d-r$ terms are the terms of $\gamma$. It is straightforward but technical to show that this is a well defined $K$-linear map, so we need to prove that it is an isomorphism.\\

\noindent Firstly, $M=\widehat{U(\mathcal{H})}_Kv_1+\cdots+\widehat{U(\mathcal{H})}_Kv_t$, so any element of $M\langle t_1,\cdots,t_r\rangle$ will have the form $\underset{\beta\in\mathbb{N}^r}{\sum}{t_1^{\beta_1}\cdots t_r^{\beta_r}}(a_{1,\beta}v_1+\cdots+a_{t,\beta}v_t)$ for some $a_{i,\beta}\in\widehat{U(\mathcal{H})}_K$. This is the image of $\underset{\beta\in\mathbb{N}^r}{\sum}{x_1^{\beta_1}\cdots x_r^{\beta_r}a_{1,\beta}}\otimes v_1+\cdots+\underset{\beta\in\mathbb{N}^r}{\sum}{x_1^{\beta_1}\cdots x_r^{\beta_r}a_{t,\beta}}\otimes v_t$, so $\Theta$ is surjective.\\

\noindent Furthermore, if $\underset{\beta\in\mathbb{N}^r}{\sum}{t_1^{\beta_1}\cdots t_r^{\beta_r}}(a_{1,\beta}v_1+\cdots+a_{t,\beta}v_t)=0$ then $a_{1,\beta}v_1+\cdots+a_{t,\beta}v_t=0$ for all $\beta$. Since $\widehat{U(\mathcal{L})}_K\otimes_{\widehat{U(\mathcal{H})}_K}M$ is finitely generated, it is complete by Proposition \ref{fin-gen}, thus 

\begin{equation}
\begin{split}
{\underset{\beta\in\mathbb{N}^r}{\sum}{x_1^{\beta_1}\cdots x_r^{\beta_r}a_{1,\beta}}\otimes v_1+\cdots+\underset{\beta\in\mathbb{N}^r}{\sum}{x_1^{\beta_1}\cdots x_r^{\beta_r}a_{t,\beta}}\otimes v_t} \\ {=\underset{\beta\in\mathbb{N}^r}{\sum}{x_1^{\beta_1}\cdots x_r^{\beta_r}\otimes(a_{1,\beta}v_1+\cdots+a_{t,\beta}v_t})=0,}
\end{split}
\end{equation}

\noindent hence $\Theta$ is injective.\\

\noindent Hence $\Theta$ is an isomorphism of $K$-vector spaces, and $\Theta(x_i\otimes v)=t_iv$ for all $i\leq r$, $v\in M$. So clearly we can define an action of $\widehat{U(\mathcal{L})}_K$ on $M\langle t_1,\cdots,t_r\rangle$ making $\Theta$ into an isomorphism of $\widehat{U(\mathcal{L})}_K$-modules.\\

\noindent $(i)$ Since $\Theta(x^nv)=t^nv$ for all $v\in M$, $n\in\mathbb{N}$, it is clear that the action of $x$ on $M\langle t\rangle$ is given by multiplication by $t$.\\

\noindent $(ii)$ Since $y$ and $z$ act by scalars on $M$, their action on $M\langle t\rangle$ is determined entirely by their action on the powers of $t$.\\

\noindent Since $[x,z]=0$, it follows that $z$ commutes with all powers of $x$, and hence $z\cdot t^nv=z\Theta(x^n\otimes v)=\Theta(x^n\otimes z\cdot v)=\Theta(\beta_z(x^n\otimes v))=\beta_zt^nv$, so $z$ acts on $M\langle t\rangle$ via $\beta_z$.\\

\noindent Clearly $y\cdot v=\beta_y v$, so we will assume that for some $n\geq 0$, $y\cdot t^n v=n\alpha\beta_zt^{n-1}v+\beta_yt^nv$ and show that $y\cdot t^{n+1}v=(n+1)\alpha\beta_zt^nv+\beta_yt^{n+1}v$, and it will follow using induction that $y$ acts by $\alpha\beta_z\frac{d}{dt}+\beta_y$:

\begin{equation}
\begin{split}
y\cdot t^{n+1}v & =y\Theta(x^{n+1}\otimes v)=\Theta(yx^{n+1}\otimes v)=\Theta(([y,x]x^n+xyx^n)\otimes v)\\
                & =\alpha z\Theta(x^n\otimes v)+xy\Theta(x^n\otimes v)=\alpha z t^nv+xyt^nv\\
                & =\alpha\beta_z t^nv+xn\alpha\beta_zt^{n-1}v+x\beta_yt^nv=(n+1)\alpha\beta_z t^nv+\beta_y t^{n+1}v
\end{split}
\end{equation}

\noindent $(iii)$ Let $\partial:=\frac{d}{dt}$, and let $\rho:\widehat{U(\mathcal{L})}_K\to $ End$_K(M\langle t\rangle)$ be the action, then by part $(ii)$, $\rho(y)=\alpha\beta_z\partial+\beta_y$, so $\partial=(\alpha\beta_z)^{-1}(\rho(y)-\beta_y)=\rho((\alpha\beta_z)^{-1}(y-\beta_y))\in \im(\rho)$.\\ 

\noindent Hence for each $n\in\mathbb{N}$, $\partial^{[n]}=\frac{1}{n!}\partial^n\in \im(\rho)$.\\

\noindent So, suppose that $0\neq T\leq M\langle t\rangle$ is a submodule, i.e there exists $\underset{m\geq 0}{\sum}{t^ms_m}\in T$, $s_m\in M$, $s_m$ not all zero, $s_m\rightarrow 0$ as $m\rightarrow\infty$.\\

\noindent Then since $\partial^{[n]}(T)\subseteq T$ for all $n$, it follows that $\underset{m\geq n}{\sum}{\binom{m}{n}t^{m-n}s_m}\in T$, hence we may assume that $s_0\neq 0$.\\

\noindent Set $s:=s_0\in M\backslash\{0\}$, and define a sequence of elements in $T$ by $r_0:=\underset{m\geq 0}{\sum}{t^ms_m}$, and for $i>0$, $r_i:=r_{i-1}-t^i\partial^{[i]}(r_{i-1})$.\\ 

\noindent Now, if $r_i=s+\underset{m>i}{\sum}{t^ms_{i,m}}$, then $t^i\partial^{[i]}(r_i)=\underset{m>i}{\sum}{t^m\binom{m}{i}s_{i,m}}$, so 

\begin{center}
$r_{i+1}=r_i-t^i\partial^{[i]}(r_i)=s+\underset{m>i+1}{\sum}{t^m(s_{i,m}-\binom{m}{i}s_{i,m})}$.
\end{center}

\noindent So inductively, we get that for each $i\in\mathbb{N}$, $r_i=s+\underset{m>i}{\sum}{s_{i,m}t^m}$ for some $s_{i,m}\in M$ with $v(s_{i,m})\geq v(s_{i-1,m})$. It follows easily that $r_i\rightarrow s$ in $M\langle t\rangle$ as $i\rightarrow\infty$.\\

\noindent But since $M\langle t\rangle$ is finitely generated over $\widehat{U(\mathcal{L})}_K$, which is Noetherian, it follows that $T$ is finitely generated over $\widehat{U(\mathcal{L})}_K$, and hence $T$ is $\pi$-adically complete by Proposition \ref{fin-gen}. Therefore, since each $r_i\in T$, this means that $s\in T$. So $0\neq s\in M\cap T$, and thus $M\cap T\neq 0$. But since $M$ is irreducible, $M\cap T=0$ or $M$, hence $M\cap T=M$.\\

\noindent It follows that $M\langle t\rangle=\widehat{U(\mathcal{L})}_K\otimes_{\widehat{U(\mathcal{L'})}_K}M\subseteq T$, and hence $T=M\langle t\rangle$. Since our choice of $T$ was arbitrary, this implies that $M\langle t\rangle$ is irreducible as required.\end{proof}

\subsection{An explicit formula}

\noindent From now on, we will assume that $\mathfrak{g}$ is nilpotent. We will now examine the action of $\widehat{U(\mathcal{L})}_K$ on $\widehat{D(\lambda)}$ more closely, and write down an explicit formula for the action of elements of $\mathfrak{g}$.\\

\noindent First, let $\lambda:\mathfrak{g}\to K$ be a linear form such that $\lambda(\mathcal{L})\subseteq\mathcal{O}$, and let $\mathfrak{b}$ be a polarisation of $\mathfrak{g}$ at $\lambda$ with $\mathcal{B}=\mathfrak{b}\cap\mathcal{L}$. Fix a basis $\{u_1,\cdots,u_r\}$ for $\mathcal{L}/\mathcal{B}$, and it follows from Proposition \ref{Tate-action} that $\widehat{D(\lambda)}_{\mathfrak{b}}\cong K\langle u_1,\cdots,u_r\rangle$ as a $K$-vector space.\\

\noindent So, for each $i=1,\cdots,r$, recall from Section 2.4 that we can define the endomorphisms $x_i,\partial_i\in$ End$_K \widehat{D(\lambda)}$ by $x_i(f):=u_if$ and $\partial_i(f):=\frac{df}{du_i}$. Note that for each $u\in\mathfrak{g}$, we can write $u=v_{u}+\alpha_{1,u}u_1+\cdots+\alpha_{r,u}u_r$ for some unique $v_u\in\mathfrak{b}$, $\alpha_{i,u}\in K$. Using this, we define:

\begin{equation}
\mu:\mathfrak{g}\to\text{ End}_K\widehat{D(\lambda)}_{\mathfrak{b}},u\mapsto\lambda(v_u)+\alpha_{1,u}x_1+\cdots+\alpha_{r,u}x_r.
\end{equation}

\noindent Naively, one might think that this defines the action of $\mathfrak{g}$ on $\widehat{D(\lambda)}$, but this is not true. However, the following result gives the explicit formula which allows us to completely describe this action:

\begin{proposition}\label{explicit-formula}

Let $\rho:\widehat{U(\mathcal{L})}_K\to$\emph{ End}$_K\widehat{D(\lambda)}_{\mathfrak{b}}$ be the natural action of $\widehat{U(\mathcal{L})}_K$ on $\widehat{D(\lambda)}_{\mathfrak{b}}$, then we may choose a basis $\{u_1,\cdots,u_r\}$ for $\mathcal{L}/\mathcal{B}$ such that for every $u\in\mathfrak{g}$, the action of $u$ is given by:

\begin{center}
$\rho(u)=\underset{\alpha\in\mathbb{N}^r}{\sum}{\frac{1}{\alpha_1!}\cdots\frac{1}{\alpha_r!}\mu(\ad(u_r)^{\alpha_r}\cdots\ad(u_1)^{\alpha_1}(u))\partial_1^{\alpha_1}\cdots\partial_r^{\alpha_r}}$
\end{center}

\noindent\textbf{Note:} This is a finite sum, since $\mathfrak{g}$ is nilpotent. Also, we define $\ad(x)(y):=[y,x]$, as opposed to the more conventional $\ad(x)(y)=[x,y]$.\\

\noindent Moreover, if $u\in\mathfrak{b}$ and $\ad(\mathfrak{g})^n(u)\subseteq\mathfrak{b}$ for all $n\in\mathbb{N}$, then this formula holds for any choice of basis.

\end{proposition}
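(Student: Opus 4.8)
The plan is to reduce to a computation on the cyclic generator $v_\lambda$ of $\widehat{D(\lambda)}$ and then proceed by induction on the degree of the differential operator, exploiting the nilpotency of $\mathfrak{g}$. Recall that $\widehat{D(\lambda)}_{\mathfrak b}\cong K\langle u_1,\dots,u_r\rangle$ as a $K$-vector space, with $v_\lambda$ corresponding to $1$ and the monomial $u_1^{\beta_1}\cdots u_r^{\beta_r}$ corresponding to $u_1^{\beta_1}\cdots u_r^{\beta_r}\otimes v_\lambda$; since everything in sight is a continuous $K$-linear endomorphism and $\widehat{D(\lambda)}$ is topologically generated over $\widehat{U(\mathcal L)}_K$ by $v_\lambda$, it suffices to prove that the two sides of the claimed identity agree after applying them to each basis monomial. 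The key structural input is that the operators $\partial_1,\dots,\partial_r$ and the multiplication operators $x_1,\dots,x_r$ satisfy the Weyl relations, and that for $u\in\mathfrak b$ the operator $\mu(u)$ is just the scalar $\lambda(u)$, while $\rho(u_i)$ should turn out to be exactly $x_i$ (this is really the statement of Proposition \ref{Tate-action}(i) applied iteratively as we peel off the $u_i$).

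The main step is an inductive identity. I would first establish the base relation $\rho(u)\,v_\lambda = \mu(u)\,v_\lambda$: since $u = v_u + \sum_i \alpha_{i,u} u_i$ with $v_u\in\mathfrak b$, we have $\rho(u)v_\lambda = \lambda(v_u)v_\lambda + \sum_i \alpha_{i,u}\, x_i v_\lambda = \mu(u)v_\lambda$, and all the higher terms $\partial_1^{\alpha_1}\cdots\partial_r^{\alpha_r}v_\lambda$ for $\alpha\neq 0$ vanish because $v_\lambda$ corresponds to the constant $1$. Then, to pass from the action on $u_1^{\beta_1}\cdots u_r^{\beta_r}\otimes v_\lambda$ to one more power of some $u_j$, I would use $\rho(u)(u_j w) = \rho(u)\rho(u_j)w = \rho(u_j)\rho(u)w + \rho([u,u_j])w = x_j\,\rho(u)w + \rho(\ad(u_j)(u))w$, feed in the inductive hypothesis for $\rho(u)w$ and $\rho(\ad(u_j)(u))w$ (both of which are expressed via the stated formula with $u$ replaced by $u$ and by $\ad(u_j)(u)$ respectively), and check that the commutator correction $x_j\,[\,\mu(\cdots)\partial^\alpha\,] - [\,\mu(\cdots)\partial^\alpha\,]x_j = -\alpha_j\,\mu(\cdots)\partial^{\alpha - e_j}$ coming from the Weyl relation reproduces exactly the derivative of the coefficient $\tfrac{1}{\alpha!}\mu(\ad(u_r)^{\alpha_r}\cdots\ad(u_1)^{\alpha_1}(u))$ with respect to the $u_j$-direction. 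This is a bookkeeping computation in the ring $K\langle x_1,\dots,x_r,\partial_1,\dots,\partial_r\rangle = \widehat{A_r(K)}$, finite because $\ad(u_i)$ is nilpotent.

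For the choice of basis: the ordering of $u_1,\dots,u_r$ matters because the nested $\ad$'s in the formula are not symmetric, so I would choose the $u_i$ so that $\mathcal B \oplus \mathcal O u_r$, $(\mathcal B\oplus \mathcal O u_r)\oplus \mathcal O u_{r-1}$, and so on, form a chain of subalgebras — this exists because $\mathfrak g$ is nilpotent (any flag refining $\mathfrak b$ through subalgebras works, e.g. taking the $u_i$ to span successive terms of a suitable central-series-adapted flag). With such a chain, applying Proposition \ref{Tate-action} one variable at a time lets me identify $\rho(u_i)$ with the multiplication operator $x_i$ and set up the induction cleanly; the nested commutator terms $\ad(u_r)^{\alpha_r}\cdots\ad(u_1)^{\alpha_1}(u)$ stay inside $\mathfrak g$ and their $\mathfrak b$-components are what produce the scalar parts of $\mu$. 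For the final clause, if $u\in\mathfrak b$ and $\ad(\mathfrak g)^n(u)\subseteq\mathfrak b$ for all $n$, then every iterated bracket $\ad(u_r)^{\alpha_r}\cdots\ad(u_1)^{\alpha_1}(u)$ lies in $\mathfrak b$, so $\mu$ of it is simply $\lambda$ of it — a quantity independent of how we decompose along the $u_i$ — and the whole right-hand side becomes basis-independent; I would verify that in this case the inductive computation goes through verbatim without ever needing the subalgebra-chain property, because no $u_i$-component of any iterated bracket ever appears.

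The hard part will be the commutator bookkeeping in the inductive step: making sure that the shift $\alpha \mapsto \alpha - e_j$ produced by $[x_j,\partial^\alpha]$ matches, with correct multiplicities $\alpha_j$ and factorials $\tfrac{1}{\alpha!}$, the reindexing of the nested $\ad$ terms, and that the ordering convention $\ad(u_r)^{\alpha_r}\cdots\ad(u_1)^{\alpha_1}$ together with the non-standard convention $\ad(x)(y)=[y,x]$ comes out with the right signs. A secondary subtlety is justifying the interchange of $\rho$ with the infinite (Tate) sums defining elements of $\widehat{D(\lambda)}$, but this is handled by the continuity of all operators involved and the completeness of $\widehat{D(\lambda)}$ guaranteed by Proposition \ref{fin-gen}.
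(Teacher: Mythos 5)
Your overall strategy is the same as the paper's: reduce to the monomials $u_1^{\beta_1}\cdots u_r^{\beta_r}\otimes v_\lambda$, check the base case on $v_\lambda$, and induct by peeling off one variable using $uu_j=u_ju+[u,u_j]$. The genuine gap is the claim on which you hang the inductive step, namely that ``$\rho(u_i)$ should turn out to be exactly $x_i$'' by iterating Proposition \ref{Tate-action}$(i)$. That identity is false in general: the very formula you are proving gives $\rho(u_i)=x_i+\sum_{\alpha\neq 0}\frac{1}{\alpha!}\mu(\ad(u_r)^{\alpha_r}\cdots\ad(u_1)^{\alpha_1}(u_i))\partial_1^{\alpha_1}\cdots\partial_r^{\alpha_r}$, and these correction terms do not vanish in general. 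What is true is only that $\rho(u_i)$ agrees with the multiplication operator $x_i$ on monomials involving no variable $u_k$ with $k<i$, because left multiplication by $u_i$ against $u_ku_i^{\delta_i}\cdots u_r^{\delta_r}\otimes v$ produces the extra term $[u_i,u_k]u_i^{\delta_i}\cdots u_r^{\delta_r}\otimes v$. Consequently your step $\rho(u_j)\rho(u)w=x_j\,\rho(u)w$ is exactly the point that needs proof: by induction $\rho(u)w$ involves monomials containing $u_k$-factors with $k<j$ (already from the $\alpha=0$ coefficient $\mu(u)$, which contains every $x_k$), and each such factor generates a correction $\rho([u_j,u_k])$ that your argument never accounts for. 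This is where the paper's proof does its real work: it chooses and orders the basis so that the coefficients $\mu(\ad(u_r)^{\alpha_r}\cdots\ad(u_i)^{\alpha_i}(u))$ arising in the induction lie in $K\oplus Kx_{i+1}\oplus\cdots\oplus Kx_r$, which keeps the whole expression inside the span of monomials in $u_i,\dots,u_r$, the only place where $\rho(u_i)$ and $x_i$ coincide.

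Your proposed substitute, a chain of subalgebras $\mathcal{B}\oplus\mathcal{O}u_r\subseteq\mathcal{B}\oplus\mathcal{O}u_r\oplus\mathcal{O}u_{r-1}\subseteq\cdots$, does not repair this, because it places no constraint on the pairings $\lambda([u_i,u_k])$. For instance, in a five-dimensional two-step nilpotent algebra one can have an abelian polarisation $\mathfrak{b}$ of codimension $2$ and complement vectors with $[u_1,u_2]=z$ central, $\lambda(z)\neq 0$; your chain condition holds, the true action of $u_1$ is exactly $x_1$, but the proposed right-hand side for $u=u_1$ acquires the extra term $\lambda(z)\partial_2$ — the formula only becomes correct after replacing $u_1$ by $u_1$ plus a suitable element of $\mathfrak{b}$, i.e.\ after an adjustment of the complement that your conditions never force. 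Likewise, the Weyl-algebra bookkeeping $[\partial_1^{\alpha_1}\cdots\partial_r^{\alpha_r},x_j]=\alpha_j\partial_1^{\alpha_1}\cdots\partial_j^{\alpha_j-1}\cdots\partial_r^{\alpha_r}$ gives a commutation identity for the candidate operator against $x_j$, but matching the result with the candidate operator for $\ad(u_j)(u)$ requires moving the extra $\ad(u_j)$ from the middle of the nested string $\ad(u_r)^{\alpha_r}\cdots\ad(u_1)^{\alpha_1}$ to the innermost position, and the operators $\ad(u_k)$ do not commute, so this too only works for the monomials and basis ordering the paper arranges. Your treatment of the final clause ($\ad(\mathfrak{g})^n(u)\subseteq\mathfrak{b}$ for all $n$) is correct, and for the same reason as in the paper: there all the coefficients are scalars, so no reordering correction can occur.
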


\begin{proof}

Since $\mathfrak{g}$ is nilpotent, we can choose a basis $\{u_1,\cdots,u_r\}$ for $\mathcal{L}/\mathcal{B}$ such that $[u_i,\mathfrak{g}]\subseteq\mathfrak{b}\oplus$ Span$_K\{u_i+1,\cdots,u_r\}$ for all $i$, and we will fix such  basis throughout the proof.\\

So, define $\rho':\mathfrak{g}\to$ End$_K\widehat{D(\lambda)},u\mapsto \underset{\alpha\in\mathbb{N}^r}{\sum}{\frac{1}{\alpha_1!}\cdots\frac{1}{\alpha_r!}\mu(\ad(u_r)^{\alpha_r}\cdots\ad(u_1)^{\alpha_1}(u))\partial_1^{\alpha_1}\cdots\partial_r^{\alpha_r}}$. We want to prove that $\rho(u)=\rho'(u)$ for all $u\in\mathfrak{g}$, i.e. that $\rho(u)(f)=\rho'(u)(f)$ for all $f\in\widehat{D(\lambda)}$.\\

\noindent Firstly, since by Lemma \ref{Tate-action}, every element of $\widehat{D(\lambda)}$ can be written as a Tate power series in the variables $u_1^{\beta_1}\cdots u_r^{\beta_r}$, it suffices to prove that $\rho(u)(u_1^{\beta_1}\cdots u_r^{\beta_r})=\rho'(u)(u_1^{\beta_1}\cdots u_r^{\beta_r})$ for every $\beta_1,\cdots,\beta_d\in\mathbb{N}$.\\

\noindent Note that $\rho'(u)(u_1^{\beta_1}\cdots u_r^{\beta_r})=\underset{\alpha\leq\beta}{\sum}{\binom{\beta}{\alpha}\mu(\ad(u_r)^{\alpha_r}\cdots\ad(u_1)^{\alpha_1}(u))u_1^{\beta_1-\alpha_1}\cdots u_r^{\beta_r-\alpha_r}}$, where $\alpha\leq\beta$ means that $\alpha_i\leq\beta_i$ for all $i$, and $\binom{\beta}{\alpha}:=\binom{\beta_1}{\alpha_1}\cdots\binom{\beta_r}{\alpha_r}$. So, we will prove by induction on $\vert\beta\vert:=\beta_1+\cdots+\beta_r$ that $\rho(u_1^{\beta_1}\cdots u_r^{\beta_r})=\underset{\alpha\leq\beta}{\sum}{\binom{\beta}{\alpha}\mu(\ad(u_r)^{\alpha_r}\cdots\ad(u_1)^{\alpha_1}(u))u_1^{\beta_1-\alpha_1}\cdots u_r^{\beta_r-\alpha_r}}$.\\

\noindent First, suppose that $\beta=0$, so $u_1^{\beta_1}\cdots u_r^{\beta_r}=1$, and hence $\rho'(u)(u_1^{\beta_1}\cdots u_r^{\beta_r})=\mu(u)(1)$.\\ 

\noindent Note that under the isomorphism $\widehat{D(\lambda)}\to K\langle u_1,\cdots,u_r\rangle$ given by Lemma \ref{Tate-action}, the inverse image of 1 is $1\otimes v$, where $v$ generates the one dimensional subspace $K_{\lambda}$ of $\widehat{D(\lambda)}=\widehat{U(\mathcal{L})}_K\otimes_{\widehat{U(\mathcal{B})}_K}K_{\lambda}$. Write $u=v_u+\alpha_{1,u}u_1+\cdots+\alpha_{r,u}u_r$ for $v_u\in\mathfrak{b}$ and $\alpha_{i,u}\in K$, so $\mu(u)=\lambda(v_u)+\alpha_{1,u}x_1+\cdots+\alpha_{r,u}x_r$ by definition. Then: \\

\noindent $\rho(u)(1)=\rho(u)(1\otimes v)=u\otimes v=v_u\otimes v+\alpha_{1,u}u_1\otimes v+\cdots +\alpha_{r,u}u_r\otimes v$\\

$=\lambda(v_u)\otimes v+\alpha_{1,u}u_1\otimes v+\cdots +\alpha_{r,u}u_r\otimes v$,\\

\noindent and the image of this under the isomorphism $D(\lambda)\to K\langle u_1,\cdots,u_r\rangle$ is $\lambda(v_u)+\alpha_{1,u}u_1+\cdots+\alpha_{r,u}u_r$ by Lemma \ref{Tate-action}, and clearly this is equal to $\mu(u)(1)=\rho'(u)(1)$, so $\rho(u)(1)=\rho'(u)(1)$ as required.\\

\noindent So, now assume that $n>0$ and the result holds whenever $\vert\beta\vert<n$. Choose $\gamma$ with $\vert\gamma\vert=n$, and choose $i\geq 1$ minimal such that $\gamma_i\neq 0$, so $u_1^{\gamma_1}\cdots u_r^{\gamma_r}=u_i^{\gamma_i}\cdots u_r^{\gamma_r}$.  Let $\beta_i:=\gamma_i-1$, $\beta_j=\gamma_j$ for all $j>i$, so that $u_i^{\gamma_i}\cdots u_r^{\gamma_r}=u_i^{\beta_i+1}u_{i+1}^{\beta_{i+1}}\cdots u_r^{\beta_r}$.\\

\noindent Then $\rho(u)(u_i^{\gamma_i}\cdots u_r^{\gamma_r})=\rho(u)(u_i^{\beta_i+1}\cdots u_r^{\beta_r}\otimes v)=uu_i^{\beta_i+1}\cdots u_r^{\beta_r}\otimes v=[u,u_i]u_i^{\beta_i}\cdots u_r^{\beta_r}\otimes v+u_iuu_i^{\beta_i}\cdots u_r^{\beta_r}\otimes v=\rho([u,u_i])u_i^{\beta_i}\cdots u_r^{\beta_r}+\rho(u_i)\rho(u)u_i^{\beta_i}\cdots u_r^{\beta_r}$.\\

\noindent Since $\vert\beta\vert=\vert\gamma\vert-1<n$, applying induction gives that \\

$\rho([u,u_i])u_i^{\beta_i}\cdots u_r^{\beta_r}=\underset{\alpha\leq\beta}{\sum}{\binom{\beta}{\alpha}\mu(\ad(u_r)^{\alpha_r}\cdots\ad(u_i)^{\alpha_i}([u,u_i]))u_i^{\beta_i-\alpha_i}\cdots u_r^{\beta_r-\alpha_r}}$\\

$=\underset{\alpha\leq\beta}{\sum}{\binom{\beta}{\alpha}\mu(\ad(u_r)^{\alpha_r}\cdots\ad(u_i)^{\alpha_i+1}(u))u_i^{\beta_i-\alpha_i}\cdots u_r^{\beta_r-\alpha_r}}$\\

$=\underset{\alpha_j\leq\beta_j}{\sum}{\binom{\beta_i}{\alpha_i}\cdots\binom{\beta_r}{\alpha_r}\mu(\ad(u_r)^{\alpha_r}\cdots\ad(u_i)^{\alpha_i+1}(u))u_i^{\beta_i-\alpha_i}\cdots u_r^{\beta_r-\alpha_r}}$.\\

\noindent and

\begin{center}
$\rho(u_i)\rho(u)u_i^{\beta_i}\cdots u_r^{\beta_r}=\rho(u_i)\underset{\alpha\leq\beta}{\sum}{\binom{\beta}{\alpha}\mu(\ad(u_r)^{\alpha_r}\cdots\ad(u_i)^{\alpha_i}(u))u_i^{\beta_i-\alpha_i}\cdots u_r^{\beta_r-\alpha_r}}$.
\end{center}

\noindent By our choice of basis, $\ad(u_r)^{\alpha_r}\cdots\ad(u_i)^{\alpha_i}(u)\in\mathfrak{b}\oplus$ Span$_K\{u_{i+1},\cdots,u_r\}$ for all $\alpha$, and thus $\mu(\ad(u_r)^{\alpha_r}\cdots\ad(u_i)^{\alpha_i}(u))\in K\oplus$ Span$_K\{x_r,\cdots,x_{i+1}\}$.\\ 

\noindent This means that for all $\alpha$, $\mu(\ad(u_r)^{\alpha_r}\cdots\ad(u_i)^{\alpha_i}(u))u_i^{\beta_i-\alpha_i}\cdots u_r^{\beta_r-\alpha_r}$ is linear combination of monomials of the form $u_i^{\delta_i}\cdots u_r^{\delta_r}$. 

So since $\rho(u_i)(u_i^{\delta_i}\cdots u_r^{\delta_r})=u_i^{\delta_i+1}u_{i+1}^{\delta_{i+1}}\cdots u_r^{\delta_r}=x_i (u_i^{\delta_i}\cdots u_r^{\delta_r})$ for all $\delta$, and $x_i$ commutes with $x_r,\cdots,x_{i+1}$, it follows that:\\ 

$\rho(u_i)\underset{\alpha\leq\beta}{\sum}{\binom{\beta}{\alpha}\mu(\ad(u_r)^{\alpha_r}\cdots\ad(u_i)^{\alpha_i}(u))u_i^{\beta_i-\alpha_i}\cdots u_r^{\beta_r-\alpha_r}}$\\

$=\underset{\alpha\leq\beta}{\sum}{\binom{\beta}{\alpha}\mu(\ad(u_r)^{\alpha_r}\cdots\ad(u_i)^{\alpha_i}(u))u_i^{\beta_i-\alpha_i+1}\cdots u_r^{\beta_r-\alpha_r}}$\\

$=\underset{\alpha_j\leq\beta_j}{\sum}{\binom{\beta_i}{\alpha_i}\cdots\binom{\beta_r}{\alpha_r}\mu(\ad(u_r)^{\alpha_r}\cdots\ad(u_i)^{\alpha_i}(u))u_i^{\beta_i-\alpha_i+1}\cdots u_r^{\beta_r-\alpha_r}}$.\\

\noindent Note that this is the only point in the proof where we use our particular choice of basis. Whereas if we assume that $\ad(\mathfrak{g})^n(u)\in\mathfrak{b}$ for all $n$ then, $\mu(\ad(u_r)^{\alpha_r}\cdots\ad(u_i)^{\alpha_i}(u))\in K$ for all $\alpha$, and clearly this commutes with $\rho(u_i)$ regardless of the choice of basis.\\

\noindent Now, collecting terms gives us:\\

\noindent $\rho(u)(u_i^{\beta_i+1}\cdots u_r^{\beta_r})=\underset{\alpha_j\leq\beta_j}{\sum}{\binom{\beta_i}{\alpha_i}\cdots\binom{\beta_r}{\alpha_r}\mu(\ad(u_r)^{\alpha_r}\cdots\ad(u_i)^{\alpha_i+1}(u))u_i^{\beta_i-\alpha_i}\cdots u_r^{\beta_r-\alpha_r}}$

\noindent $+\underset{\alpha_j\leq\beta_j}{\sum}{\binom{\beta_i}{\alpha_i}\cdots\binom{\beta_r}{\alpha_r}\mu(\ad(u_r)^{\alpha_r}\cdots\ad(u_i)^{\alpha_i}(u))u_i^{\beta_i-\alpha_i+1}\cdots u_r^{\beta_r-\alpha_r}}$\\

\noindent $=u_i^{\beta_i+1}\cdots u_r^{\beta_r}+\underset{1\leq \alpha_i\leq\beta_i}{\sum}{\left(\binom{\beta_i}{\alpha_i}+\binom{\beta_i}{\alpha_i-1}\right)\binom{\beta_{i+1}}{\alpha_{i+1}}\cdots\binom{\beta_r}{\alpha_r}\mu(\ad(u_r)^{\alpha_r}\cdots\ad(u_i)^{\alpha_i}(u))u_i^{\beta_i+1-\alpha_i}\cdots u_r^{\beta_r-\alpha_r}}$\\

\noindent $+\underset{\alpha_i=\beta_i}{\sum}{\binom{\beta_{i+1}}{\alpha_{i+1}}\cdots\binom{\beta_r}{\alpha_r}\mu(\ad(u_r)^{\alpha_r}\cdots\ad(u_i)^{\beta_i+1}(u))u_{i+1}^{\beta_{i+1}-\alpha_{i+1}}\cdots u_r^{\beta_r-\alpha_r}}$\\

\noindent $=\underset{\alpha\leq\gamma}{\sum}{\binom{\gamma}{\alpha}\mu(\ad(u_r)^{\alpha_r}\cdots\ad(u_i)^{\alpha_i}(u))u_i^{\gamma_i-\alpha_i}\cdots u_r^{\gamma_r-\alpha_r}}$\\

\noindent The last equality follows since $\binom{\beta_i}{\alpha_i}+\binom{\beta_i}{\alpha_i-1}=\binom{\beta_i+1}{\alpha_i}=\binom{\gamma_i}{\alpha_i}$.\end{proof}

\noindent\textbf{Note:} 1. This proof is purely classical, and defines a formula for the action of $\mathfrak{g}$ on the classical Dixmier module $D(\lambda)$. In fact, this result gives us a ring homomorphism $U(\mathfrak{g})\to A_r(K)$ whose kernel is $\Ann_{U(\mathfrak{g})}D(\lambda)$. We suspect this map is surjective in general, since we know from \cite[Proposition 6.2.2]{Dixmier} that $U(\mathfrak{g})/\Ann D(\lambda)\cong A_r(K)$.\\

\noindent 2. If $u\in\mathfrak{b}$ and $\ad(\mathfrak{g})^n(u)\subseteq\mathfrak{b}$ for all $n\in\mathbb{N}$, i.e. $u\in\mathfrak{a}$ for some ideal $\mathfrak{a}$ of $\mathfrak{g}$ with $\mathfrak{a}\subseteq\mathfrak{b}$, then $\mu(\ad(u_r)^{\alpha_r}\cdots\ad(u_1)^{\alpha_1}(u))=\lambda(\ad(u_r)^{\alpha_r}\cdots\ad(u_1)^{\alpha_1}(u))$ for all $\alpha\in\mathbb{N}^r$, so it follows from this proposition that $u$ acts on $\widehat{D(\lambda)}$ by a polynomial in $K[\partial_1,\cdots,\partial_r]$.

\begin{corollary}\label{completely-prime}

Assume that $\mathcal{L}$ is a \emph{powerful} Lie lattice in $\mathfrak{g}$, i.e. $[\mathcal{L},\mathcal{L}]\subseteq p\mathcal{L}$. Then for any linear form $\lambda:\mathfrak{g}\to K$ with $\lambda(\mathcal{L})\subseteq\mathcal{O}$, if $\mathfrak{g}$ has a polarisation $\mathfrak{b}$ at $\lambda$ of codimension $r$, and $I:=Ann_{\widehat{U(\mathcal{L})}_K}\widehat{D(\lambda)}_{\mathfrak{b}}$, then there exists an injective ring homomorphism $\widehat{U(\mathcal{L})}_K/I\to\widehat{A_r(K)}$, and thus $\widehat{U(\mathcal{L})}_K/I$ is a domain.

\end{corollary}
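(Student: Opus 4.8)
The plan is to build an injective ring homomorphism $\widehat{U(\mathcal{L})}_K/I \hookrightarrow \widehat{A_r(K)}$ by first establishing that the classical action map $\rho: U(\mathfrak{g}) \to \operatorname{End}_K D(\lambda)$ from Proposition \ref{explicit-formula} factors through $A_r(K)$, and then showing that this extends continuously to the affinoid completion with respect to the powerful lattice $\mathcal{L}$. The domain conclusion then follows because $\widehat{A_r(K)}$ is a domain (stated after the definition of the Tate-Weyl algebra), and subrings of domains are domains.

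First I would recall from Note 1 following Proposition \ref{explicit-formula} that $\rho$ already gives a ring homomorphism $U(\mathfrak{g}) \to A_r(K)$ with kernel $\operatorname{Ann}_{U(\mathfrak{g})} D(\lambda)$: indeed, by the explicit formula each $u \in \mathfrak{g}$ acts by an element of $A_r(K)$ (a polynomial in the $x_i$'s with coefficients the $\partial_j$'s, using that each $\mu(\operatorname{ad}(u_r)^{\alpha_r}\cdots\operatorname{ad}(u_1)^{\alpha_1}(u))$ lies in $K \oplus \operatorname{Span}_K\{x_1,\dots,x_r\}$), and these endomorphisms lie in the copy of $A_r(K)$ inside $\operatorname{End}_K \widehat{D(\lambda)}$ identified via Lemma \ref{aff-endo}. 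The key point is then a \emph{norm estimate}: I would fix the basis $\{u_1,\dots,u_r\}$ for $\mathcal{L}/\mathcal{B}$ as in Proposition \ref{explicit-formula}, and for $u$ ranging over an $\mathcal{O}$-basis of $\mathcal{L}$, verify that $\rho(u) \in A_r(\mathcal{O})$, i.e. that the coefficients $\tfrac{1}{\alpha_1!\cdots\alpha_r!}\mu(\operatorname{ad}(u_r)^{\alpha_r}\cdots\operatorname{ad}(u_1)^{\alpha_1}(u))$ lie in $\mathcal{O}$. This is where powerfulness enters: since $[\mathcal{L},\mathcal{L}] \subseteq p\mathcal{L}$, iterating gives $\operatorname{ad}(u_r)^{\alpha_r}\cdots\operatorname{ad}(u_1)^{\alpha_1}(u) \in p^{|\alpha|}\mathcal{L}$, and the factor $p^{|\alpha|}$ must dominate the denominator $\alpha_1!\cdots\alpha_r!$ whose $p$-adic valuation is at most $\tfrac{|\alpha|}{p-1} < |\alpha|$ (by Legendre's formula, since $p \geq 2$); one also uses $\lambda(\mathcal{L})\subseteq\mathcal{O}$ and that $x_i$ has nonnegative valuation in $A_r(\mathcal{O})$. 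Hence $\rho(\mathcal{L}) \subseteq A_r(\mathcal{O})$, so $\rho$ sends the $\pi$-adic filtration on $U(\mathcal{L})$ (defined by powers of $\pi$ times $U(\mathcal{L})$) into the $\pi$-adic filtration on $A_r(\mathcal{O})$, and therefore $\rho$ extends to a continuous $K$-algebra homomorphism $\widehat{\rho}: \widehat{U(\mathcal{L})}_K \to \widehat{A_r(K)}$.

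It remains to identify $\ker\widehat{\rho}$ with $I = \operatorname{Ann}_{\widehat{U(\mathcal{L})}_K}\widehat{D(\lambda)}_{\mathfrak{b}}$. One inclusion is immediate: $\widehat{\rho}$ is, by construction, the composite of the action $\widehat{U(\mathcal{L})}_K \to \operatorname{End}_K\widehat{D(\lambda)}$ with the inclusion of $\widehat{A_r(K)}$ (realised as endomorphisms of $\widehat{D(\lambda)} \cong K\langle u_1,\dots,u_r\rangle$ via Lemma \ref{aff-endo}), so $\ker\widehat\rho$ is exactly the set of elements annihilating all of $\widehat{D(\lambda)}$, which is $I$ — provided the realisation of $\widehat{A_r(K)}$ inside $\operatorname{End}_K\widehat{D(\lambda)}$ is faithful, which is precisely the content of Lemma \ref{aff-endo}. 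So $\widehat{U(\mathcal{L})}_K/I$ embeds in $\widehat{A_r(K)}$, and since the latter is a domain the former is too.

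\textbf{Main obstacle.} I expect the delicate part to be the norm estimate, specifically making rigorous that $\widehat\rho$ is well-defined and continuous: one must check not only that each basis element of $\mathcal{L}$ maps into $A_r(\mathcal{O})$, but that this is compatible with products so that the induced map on the completion converges — i.e. that $\rho(U(\mathcal{L})) \subseteq A_r(\mathcal{O})$ as rings, not just $\rho(\mathcal{L}) \subseteq A_r(\mathcal{O})$. Since $A_r(\mathcal{O})$ is a subring of $A_r(K)$ closed under multiplication and $\rho$ is a ring homomorphism, this follows formally once $\rho(\mathcal{L})\subseteq A_r(\mathcal{O})$, but the crucial quantitative step — balancing $p^{|\alpha|}$ against $\alpha_1!\cdots\alpha_r!$ — is where the powerfulness hypothesis is genuinely used and must be stated carefully. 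The secondary subtlety is ensuring the fixed "good" basis of $\mathcal{L}/\mathcal{B}$ from Proposition \ref{explicit-formula} can be taken to be part of an $\mathcal{O}$-basis of $\mathcal{L}$ and that $\mathcal{B} = \mathfrak{b}\cap\mathcal{L}$ remains a Lie lattice, so that the completions $\widehat{U(\mathcal{L})}_K$ and $\widehat{U(\mathcal{B})}_K$ in the definition of $\widehat{D(\lambda)}_{\mathfrak{b}}$ behave as expected; this is routine given Lemma \ref{PBW} and the surrounding setup.
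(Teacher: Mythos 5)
Your proposal is correct and follows essentially the same route as the paper: realise $\widehat{A_r(K)}$ inside $\operatorname{End}_K\widehat{D(\lambda)}$ via Lemma \ref{aff-endo}, use the explicit formula of Proposition \ref{explicit-formula} to see $\rho(\mathfrak{g})\subseteq A_r(K)$, and then use powerfulness to show $\rho(\mathcal{L})\subseteq A_r(\mathcal{O})$ (the paper phrases your Legendre-formula estimate as $\tfrac{p^{\alpha_i}}{\alpha_i!}\in\mathcal{O}$), giving continuity and the extension to $\widehat{U(\mathcal{L})}_K$ with kernel $I$. No substantive differences.
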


\begin{proof}

The natural action $\rho:\widehat{U(\mathcal{L})}_K\to$ End$_K\widehat{D(\lambda)}$ has kernel $I$, and since $\widehat{D(\lambda)}\cong K\langle u_1,\cdots,u_r\rangle$ by Lemma \ref{Tate-action}, the Tate-Weyl algebra $\widehat{A_r(K)}$ embeds as a subalgebra into End$_K\widehat{D(\lambda)}$ by Lemma \ref{aff-endo}. Therefore, it remains to prove that the image of $\rho$ is contained in $\widehat{A_r(K)}$.\\

\noindent Using Proposition \ref{explicit-formula}, we know that we can fix a basis $\{u_1,\cdots,u_r\}$ for $\mathcal{L}/\mathcal{B}$ such that for each $u\in\mathfrak{g}$, $\rho(u)=\underset{\alpha\in\mathbb{N}^r}{\sum}{\frac{1}{\alpha_1!}\cdots\frac{1}{\alpha_r!}\mu(\ad(u_r)^{\alpha_r}\cdots\ad(u_1)^{\alpha_1}(u))\partial_1^{\alpha_1}\cdots\partial_r^{\alpha_r}}$.\\ 

\noindent Since $\mu(\mathfrak{g})\subseteq K\oplus Kx_1\oplus\cdots\oplus Kx_r\subseteq A_r(K)$ and $\partial_1,\cdots,\partial_r\in A_r(K)$, it follows that $\rho(\mathfrak{g})\subseteq A_r(K)$, and hence $\rho(U(\mathfrak{g}))\subseteq A_r(K)$.

Therefore, it remains to prove that $\rho:U(\mathfrak{g})\to A_r(K)$ is continuous with respect to the $\pi$-adic topologies on $U(\mathfrak{g})$ and $A_r(K)$ respectively. And since $\rho$ is linear, this just means proving that the image of $U(\mathcal{L})$ under $\rho$ is contained in $A_r(\mathcal{O})=\mathcal{O}[x_1,\cdots,x_r,\partial_1,\cdots,\partial_r]$.\\

\noindent Since $[\mathcal{L},\mathcal{L}]\subseteq p\mathcal{L}$, it follows that for all $u\in\mathcal{L}$, $\alpha\in\mathbb{N}^r$, $\ad(u_r)^{\alpha_r}\cdots\ad(u_1)^{\alpha_1}(u)\in p^{\vert\alpha\vert}\mathcal{L}$. And since $\mu:\mathcal{L}\to\mathcal{O}\oplus\mathcal{O}x_1\oplus\cdots\mathcal{O}x_r$ is linear, $\mu(\ad(u_r)^{\alpha_r}\cdots\ad(u_1)^{\alpha_1}(u))\in p^{\vert\alpha\vert}A_r(\mathcal{O})$.\\

\noindent Therefore, since $\frac{1}{\alpha_1!}\cdots\frac{1}{\alpha_r!}p^{\vert\alpha\vert}=\frac{p^{\alpha_1}}{\alpha_1!}\cdots\frac{p^{\alpha_r}}{\alpha_r!}\in\mathcal{O}$, it follows that: 

\begin{center}
$\rho(u)=\underset{\alpha\in\mathbb{N}^r}{\sum}{\frac{1}{\alpha_1!}\cdots\frac{1}{\alpha_r!}\mu(\ad(u_r)^{\alpha_r}\cdots\ad(u_1)^{\alpha_1}(u))\partial_1^{\alpha_1}\cdots\partial_r^{\alpha_r}}\in A_r(\mathcal{O})$.
\end{center}

\noindent Therefore, the image of $\mathcal{L}$ under $\rho$ is contained in $A_r(\mathcal{O})$, and therefore so is the image of $U(\mathcal{L})$ as required.\end{proof}

\noindent Now, fix an ideal $\mathfrak{a}$ of $\mathfrak{g}$ with $\mathfrak{a}\subseteq\mathfrak{b}$, and define 

\begin{center}
$\mathfrak{a}^{\perp}:=\{u\in\mathfrak{g}:\lambda([u,\mathfrak{a}])=0\}$,
\end{center} 

\noindent then $\mathfrak{a}^{\perp}$ is a subalgebra of $\mathfrak{g}$ with $\mathfrak{b}\subseteq\mathfrak{a}^{\perp}$, so we can fix a basis $\{u_1,\cdots,u_r\}$ for $\mathcal{L}/\mathcal{B}$ such that $\{u_{s+1},\cdots,u_r\}$ is a basis for $(\mathfrak{a}^{\perp}\cap\mathcal{L})/\mathcal{B}$ for some $s\leq r$.

\begin{proposition}\label{perp}

Every element of $\mathfrak{a}$ acts on $\widehat{D(\lambda)}$ by a polynomial in $K[\partial_1,\cdots,\partial_s]$, and the image of $U(\mathfrak{a})$ under the action $\rho:\widehat{U(\mathcal{L})}_K\to$\emph{ End}$_K\widehat{D(\lambda)}$ is precisely $K[\partial_1,\cdots,\partial_s]$.

\end{proposition}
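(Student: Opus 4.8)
The plan is to prove the two inclusions $\rho(U(\mathfrak a))\subseteq K[\partial_1,\cdots,\partial_s]$ and $\supseteq$ separately; the first is immediate from Proposition \ref{explicit-formula}, the second needs an induction on $\dim_K\mathfrak g$ organised around reducing quadruples. For the forward inclusion, note that since $\mathfrak a\subseteq\mathfrak b$ is an \emph{ideal} of $\mathfrak g$, every $u\in\mathfrak a$ has $\ad(\mathfrak g)^n(u)\subseteq\mathfrak a\subseteq\mathfrak b$, so the ``any basis'' clause of Proposition \ref{explicit-formula} gives
\[
\rho(u)=\sum_{\alpha\in\mathbb N^r}\tfrac1{\alpha_1!\cdots\alpha_r!}\,\lambda\bigl(\ad(u_r)^{\alpha_r}\cdots\ad(u_1)^{\alpha_1}(u)\bigr)\,\partial_1^{\alpha_1}\cdots\partial_r^{\alpha_r}.
\]
If some $\alpha_j\neq0$ with $j>s$, take $m$ maximal with $\alpha_m\neq0$; then $\ad(u_r)^{\alpha_r}\cdots\ad(u_1)^{\alpha_1}(u)=\ad(u_m)(w)$ with $w\in\mathfrak a$ (as $\mathfrak a$ is an ideal), and since $u_m\in\mathfrak a^\perp$ we get $\lambda(\ad(u_m)(w))=\lambda([w,u_m])=0$. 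Hence only multi-indices supported on $\{1,\cdots,s\}$ survive, so $\rho(u)\in K[\partial_1,\cdots,\partial_s]$; since the latter is a subalgebra, $\rho(U(\mathfrak a))\subseteq K[\partial_1,\cdots,\partial_s]$, and in particular $\rho(U(\mathfrak a))$ is commutative, so $\rho$ kills $[\mathfrak a,\mathfrak a]$.

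For the reverse inclusion I induct on $\dim_K\mathfrak g$, the base case being $\mathfrak a=0$ (then $\mathfrak a^\perp=\mathfrak g$, $s=0$, both sides equal $K$). I first record: the constant-plus-linear part of $\rho(u)$ for $u\in\mathfrak a$ is $\lambda(u)+\sum_{i=1}^s\lambda([u,u_i])\partial_i$; and, using the skew form $B_\lambda(v,w)=\lambda([v,w])$ (whose radical is $\mathfrak g^\lambda$), one has $s=\dim(\mathfrak g/\mathfrak a^\perp)=\dim\mathfrak a-\dim(\mathfrak a\cap\mathfrak g^\lambda)$, while the kernel of $\mathfrak a\to K^s$, $u\mapsto(\lambda([u,u_i]))_{i\le s}$, is exactly $\mathfrak a\cap\mathfrak g^\lambda$ (because $\lambda([\mathfrak a,\mathfrak b])=0$ and $u_j\in\mathfrak a^\perp$ for $j>s$), so this map is surjective. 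Next, if some nonzero ideal of $\mathfrak g$ lies in $\ker\lambda$, pick a nonzero central $z_0\in Z(\mathfrak g)\cap\ker\lambda$ and set $\mathfrak c:=Kz_0\subseteq\mathfrak b$; then Lemma \ref{induced} identifies $\widehat{D(\lambda)}$ with the Dixmier module of $\mathcal L/(\mathfrak c\cap\mathcal L)$ at $\bar\lambda$, and one checks (using $\mathfrak c\subseteq\mathfrak b$, $\lambda(\mathfrak c)=0$) that $\bar{\mathfrak b}$ is a polarisation, that $\overline{\mathfrak a}^{\,\perp}=\mathfrak a^\perp/\mathfrak c$ so the integer $\bar s$ equals $s$ and the partials are literally unchanged; since the $\widehat{U(\mathcal L)}_K$-action factors through $\widehat{U(\mathcal L/(\mathfrak c\cap\mathcal L))}_K$, the inductive hypothesis for $\mathfrak g/\mathfrak c$ finishes this case. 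So we may assume $\lambda$ vanishes on no nonzero ideal and $\mathfrak a\neq0$.

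Then $Z(\mathfrak g)$ is one-dimensional, say $Z(\mathfrak g)=Kz=\mathfrak a\cap Z(\mathfrak g)$ with $\lambda(z)\neq0$. If $\mathfrak a=Kz$ then $s=0$ and we are done, so assume $\dim\mathfrak a\ge2$; then $\mathfrak a\cap Z_2(\mathfrak g)\supsetneq Kz$, so choosing $y\in(\mathfrak a\cap Z_2(\mathfrak g))\setminus Z(\mathfrak g)$ gives $[\mathfrak g,y]=Kz$. Put $\mathfrak g':=\ker\ad(y)$ (codimension $1$), pick $x$ with $[x,y]=\alpha z$, $0\ne\alpha\in K$; then $(x,y,z,\mathfrak g')$ is a reducing quadruple \emph{with $y\in\mathfrak a$}. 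By the recurring ``$\lambda(z)\neq0$'' argument one checks $\mathfrak b\subseteq\mathfrak g'$, $\mathfrak a\subseteq\mathfrak g'$, $\mathfrak a^\perp\subseteq\mathfrak g'$, that $\mathfrak b$ is a polarisation of $\mathfrak g'$ at $\mu:=\lambda|_{\mathfrak g'}$, that $\mathfrak a$ is an ideal of $\mathfrak g'$ inside $\mathfrak b$ with $\mathfrak a^{\perp_\mu}=\mathfrak a^\perp$, and hence that the integer attached to $(\mathfrak g',\mu,\mathfrak b,\mathfrak a)$ is $s-1$. Scaling $x,y,z$ into $\mathcal L$ and taking the basis of $\mathcal L/\mathcal B$ with $u_1=x$ and $u_2,\cdots,u_r$ a compatible basis of $\mathcal L'/\mathcal B$, Proposition \ref{Tate-action} realises $\widehat{D(\lambda)}\cong M\langle t\rangle$ with $M=\widehat{D(\mu)}$, $x$ acting by $t$, and Proposition \ref{explicit-formula} (applied on $\mathfrak g$ and on $\mathfrak g'$ with this basis) yields, for $a\in\mathfrak a$,
\[
\rho(a)=\sum_{j\ge0}\tfrac1{j!}\,\partial_1^{\,j}\,\rho_M\!\bigl(\ad(x)^j(a)\bigr),
\]
a finite sum in which $\partial_1$ commutes with every $\rho_M(\ad(x)^j(a))\in K[\partial_2,\cdots,\partial_s]$; here $\rho_M(U(\mathfrak a))=K[\partial_2,\cdots,\partial_s]$ by the inductive hypothesis for $\mathfrak g'$, whose first $s-1$ partials are $\partial_2,\cdots,\partial_s$.

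To close the induction: from $y\in\mathfrak a$ the formula gives $\rho(y)=\lambda(y)-\alpha\lambda(z)\partial_1$, so $\partial_1\in\rho(U(\mathfrak a))$. Fix $a\in\mathfrak a$ and let $N$ be minimal with $\ad(x)^N(a)=0$. Then $\rho(\ad(x)^{N-1}(a))=\rho_M(\ad(x)^{N-1}(a))$, so it lies in $\rho(U(\mathfrak a))$, and for $k<N-1$, $\rho_M(\ad(x)^k(a))=\rho(\ad(x)^k(a))-\partial_1\,\rho_M(\ad(x)^{k+1}(a))\in\rho(U(\mathfrak a))$ by downward induction on $k$ (using that $\rho(U(\mathfrak a))$ is a ring containing $\partial_1$ and all $\rho(\ad(x)^k(a))$). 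Taking $k=0$ gives $\rho_M(a)\in\rho(U(\mathfrak a))$ for every $a\in\mathfrak a$, whence $K[\partial_2,\cdots,\partial_s]=\rho_M(U(\mathfrak a))\subseteq\rho(U(\mathfrak a))$; together with $\partial_1\in\rho(U(\mathfrak a))$ and commutativity this yields $K[\partial_1,\cdots,\partial_s]\subseteq\rho(U(\mathfrak a))$, completing the induction. I expect the real work to be bookkeeping rather than ideas: arranging the reducing quadruple with $y\in\mathfrak a$, verifying $s$ drops by exactly one and that the partials $\partial_2,\cdots,\partial_s$ of the $\mathfrak g'$-picture are precisely the restrictions of those of the $\mathfrak g$-picture (so the inductive hypothesis applies verbatim), and checking that the statement is well-posed, i.e. that $K[\partial_1,\cdots,\partial_s]$ is independent of the admissible choice of basis $\{u_1,\cdots,u_r\}$ — which comes down to noting that $\mathrm{Span}_K\{\partial_1,\cdots,\partial_s\}$ is the space of constant-coefficient derivations annihilating $\mathfrak a^\perp/\mathfrak b$, hence intrinsic. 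A secondary subtlety is the quotient case, where one must confirm through Lemma \ref{induced} that passing to $\mathfrak g/\mathfrak c$ alters neither $s$ nor the polynomial subalgebra.
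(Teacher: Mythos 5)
Your forward inclusion is exactly the paper's: the ``any basis'' clause of Proposition \ref{explicit-formula} (valid because $\mathfrak{a}$ is an ideal inside $\mathfrak{b}$), plus the observation that when the outermost surviving $\ad(u_i)$ has $i>s$ the $\lambda$-value vanishes since the inner operators keep you inside $\mathfrak{a}$ and $u_i\in\mathfrak{a}^{\perp}$. For the reverse inclusion you take a genuinely different route. The paper stays inside the fixed $\mathfrak{g}$: climbing the upper central series of $\mathcal{L}$, it manufactures elements $y_1,\cdots,y_s\in\mathcal{A}$ and a basis $u_1,\cdots,u_s$ of a complement of $\mathcal{A}^{\perp}$ for which the matrix $(\lambda([y_i,u_j]))$ is triangular with nonzero diagonal; the explicit formula then writes $\rho(y_i)$ as $\lambda([y_i,u_i])\partial_i$ plus a polynomial in the previously obtained partials, and one solves for $\partial_1,\cdots,\partial_s$ successively in a single pass. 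You instead run Dixmier's induction on $\dim_K\mathfrak{g}$: after the quotient reduction (which is fine -- a nonzero ideal in $\ker\lambda$ meets $Z(\mathfrak{g})$, and $\bar{s}=s$ with the partials unchanged), you pick $y\in(\mathfrak{a}\cap Z_2(\mathfrak{g}))\setminus Z(\mathfrak{g})$ so that the reducing quadruple has $y\in\mathfrak{a}$, verify $\mathfrak{a},\mathfrak{b},\mathfrak{a}^{\perp}\subseteq\mathfrak{g}'$, that $\mathfrak{b}$ is a polarisation of $\mathfrak{g}'$ at $\mu$ and that $s$ drops by exactly one, and exploit the factorisation $\rho(a)=\sum_{j\ge 0}\tfrac{1}{j!}\partial_1^{\,j}\rho_M(\ad(x)^j(a))$ (which does follow from grouping the explicit formula by the exponent of $\ad(u_1)$) to pull the $(s-1)$-dimensional picture back; $\rho(y)=\lambda(y)-\alpha\lambda(z)\partial_1$ supplies $\partial_1$. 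All of these verifications go through, so both arguments are valid; the paper's is non-recursive and self-contained, while yours fits the reducing-quadruple template used throughout the paper at the cost of the extra checks in the inductive step.

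One identity in your closing step is misstated: $\rho_M(\ad(x)^k(a))=\rho(\ad(x)^k(a))-\partial_1\rho_M(\ad(x)^{k+1}(a))$ drops the higher-order terms; the correct consequence of your displayed formula is $\rho_M(\ad(x)^k(a))=\rho(\ad(x)^k(a))-\sum_{j\ge 1}\tfrac{1}{j!}\partial_1^{\,j}\rho_M(\ad(x)^{k+j}(a))$. This does not harm the argument: in your downward induction on $k$ every term on the right-hand side already lies in $\rho(U(\mathfrak{a}))$, so the membership conclusion stands verbatim. Finally, both you and the paper treat independence of the adapted basis rather briefly (the paper's chain-rule remark versus your ``intrinsic span'' remark); that looseness is shared and not specific to your approach.
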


\begin{proof}

Firstly, using Proposition \ref{explicit-formula}, we know that for every $u\in\mathfrak{a}$, 

\begin{center}
$\rho(u)=\underset{\alpha\in\mathbb{N}^r}{\sum}{\frac{1}{\alpha_1!}\cdots\frac{1}{\alpha_r!}\lambda(\ad(u_r)^{\alpha_r}\cdots\ad(u_1)^{\alpha_1}(u))\partial_1^{\alpha_1}\cdots\partial_r^{\alpha_r}}\in K[\partial_1,\cdots,\partial_r]$. 
\end{center}

\noindent Also, if $\alpha_i\neq 0$ for any $r \geq i>s$, then assuming $i$ is maximal such that $\alpha_i\neq 0$,  $\lambda(\ad(u_r)^{\alpha_r}\cdots\ad(u_1)^{\alpha_1}(u))=\lambda(\ad(u_i)^{\alpha_i}\cdots\ad(u_1)^{\alpha_1}(u))=0$ since $u_i\in\mathfrak{a}^{\perp}$.

Therefore, $\rho(u)=\underset{\alpha\in\mathbb{N}^s}{\sum}{\frac{1}{\alpha_1!}\cdots\frac{1}{\alpha_s!}\lambda(\ad(u_s)^{\alpha_s}\cdots\ad(u_1)^{\alpha_1}(u))\partial_1^{\alpha_1}\cdots\partial_s^{\alpha_s}}\in K[\partial_1,\cdots,\partial_s]$ as required.\\

\noindent For the second statement, clearly $\rho(U(\mathfrak{a}))\subseteq K[\partial_1,\cdots,\partial_s]$, so we just need to show that $\partial_1,\cdots,\partial_s\in\rho(U(\mathfrak{a}))$.\\ 

\noindent We will first need to construct our basis appropriately. For convenience, set $\mathcal{A}=\mathfrak{a}\cap\mathcal{L}$ and $\mathcal{A}^{\perp}=\mathfrak{a}^{\perp}\cap\mathcal{L}$. First, write $\mathcal{L}/\mathcal{B}=(\mathcal{A}^{\perp}/\mathcal{B})\oplus V$, for some complement $V$ of $\mathcal{A}^{\perp}/\mathcal{B}$ in $\mathcal{L}/\mathcal{B}$.\\

\noindent We define the \emph{upper central series} by $Z_1(\mathcal{L}):=Z(\mathcal{L})$ and $Z_i(\mathcal{L}):=\{u\in\mathcal{L}:[u,\mathcal{L}]\subseteq Z_{i-1}(\mathcal{L})\}$ for all $i>1$. Since $\mathcal{L}$ is nilpotent, $Z_c(\mathcal{L})=\mathcal{L}$ for some $c\geq 1$. Therefore, since $V\cap\mathfrak{a}^{\perp}=0$, and hence $\lambda([\mathfrak{a},V])\neq 0$, we can choose $m_1>1$ minimal such that $\lambda([\mathcal{A}\cap Z_{m_1}(\mathcal{L}),V])\neq 0$.\\

\noindent So, choose $y_1\in\mathcal{A}\cap Z_{m_1}(\mathcal{L})$ such that $\lambda([y_1,V])\neq 0$. Then $\lambda\circ\ad(y_1):V\otimes K\to K$ is a non-zero linear form, so we may write $V=\mathcal{O}u_1\oplus V_1$ where $\lambda([y_1,V_1])=0$ and $\lambda([y_1,u_1])\neq 0$.\\

\noindent Now, let us suppose for induction, that for some $i<s=$ dim$_K\mathfrak{g}/\mathfrak{a}^{\perp}$, we have the following data: 

\begin{itemize}

\item Integers $1<m_1\leq m_2\leq\cdots\leq m_i\leq c$. 

\item Subspaces $V_i\subseteq V_{i-1}\subseteq\cdots\subseteq V_1\subseteq V_0=V$ with $\lambda([V_{j-1},\mathfrak{a}\cap Z_{m_j-1}(\mathcal{L})])=0$ for each $j\geq 1$. 

\item Elements $u_j\in V_{j-1}$ such that $V_{j-1}=\mathcal{O}u_j\oplus V_j$ for each $j\geq 1$. 

\item Elements $y_j\in\mathcal{A}\cap Z_{m_j}(\mathcal{L})$ such that $\lambda([y_j,V_j])=0$ and $\lambda([y_j,u_j])\neq 0$.

\end{itemize}

\noindent Note that for each $j$, dim$_K(V_j\otimes K)=s-j$. So since $i<s$, $V_i\cap\mathfrak{a}^{\perp}=0$,  so $\lambda([V_i,\mathfrak{a}])\neq 0$.\\ 

\noindent But since $V_i\subseteq V_{i-1}$, we know that $[V_i,\mathfrak{a}\cap Z_{m_i-1}(\mathcal{L})]=0$, so choose $m_{i+1}\geq m_i$ minimal such that $\lambda([V_i,\mathfrak{a}\cap Z_{m_{i+1}}(\mathcal{L})])\neq 0$, and choose $y_{i+1}\in\mathcal{A}\cap Z_{m_{i+1}}(\mathcal{L})$ such that $\lambda([y_{i+1},V_i])\neq 0$.\\

\noindent Again, $\lambda\circ\ad(y_{i+1}):V_i\otimes K\to K$ is a non-zero linear form, so $V_i=\mathcal{O}u_{i+1}\oplus V_{i+1}$ for some $u_{i+1}\in V_i$, where $\lambda([y_{i+1},V_{i+1}])=0$ and $\lambda([y_{i+1},u_{i+1}])\neq 0$.\\

\noindent Therefore, applying induction gives us a basis $\{u_1,\cdots,u_s\}$ for $V$ such that for each $i$, $\lambda([u_i,\mathcal{A}\cap Z_{m_i-1}(\mathcal{L})])=0$, and elements $y_1,\cdots,y_s\in\mathcal{A}$ with $y_i\in Z_{m_i}(\mathcal{L})$ such that $\lambda([y_i,u_j])=0$ for all $j<i$ and $\lambda([y_i,u_i])\neq 0$.\\

\noindent So, applying our explicit formula with the basis $\{u_1,\cdots,u_s\}$, we get that $\rho(y_i)=\underset{\alpha\in\mathbb{N}^s}{\sum}{\frac{1}{\alpha_1!}\cdots\frac{1}{\alpha_s!}\lambda(\ad(u_s)^{\alpha_s}\cdots\ad(u_1)^{\alpha_1}(y_i))\partial_1^{\alpha_1}\cdots\partial_s^{\alpha_s}}$ for each $i$. Let us suppose that for all $0\leq j<i$, $\partial_j\in\rho(U(\mathfrak{g}))$.\\

\noindent Then since $[\mathcal{L},y_{i}]\subseteq Z_{m_{i}-1}(\mathcal{L})$ and $\lambda([u_j, Z_{m_{i}-1}(\mathcal{L})])=0$ for all $j\geq i$, it follows that if $\lambda(\ad(u_s)^{\alpha_s}\cdots\ad(u_1)^{\alpha_1}(y_{i+1}))\neq 0$ then either $\alpha_j=0$ for all $j\geq i$, or $\alpha_j=0$ for all $j<i$ and $\alpha_{i}+\cdots+\alpha_s=1$. Therefore: 

\begin{center}
$\rho(y_{i})=f(\partial_1,\cdots,\partial_{i-1})+\lambda([y_{i},u_{i}])\partial_{i}+\cdots+\lambda([y_{i},u_{s}])\partial_s$ for some polynomial $f$.
\end{center} 

\noindent But since $\partial_1,\cdots,\partial_{i-1}\in\rho(U(\mathfrak{a}))$, $\lambda([y_i,u_j])=0$ for all $j>i$, and $\lambda([y_{i},u_{i}])\neq 0$, it follows that $\partial_{i}\in\rho(U(\mathfrak{a}))$. So applying induction, we see that $\partial_1,\cdots,\partial_s\in\rho(U(\mathfrak{a}))$ as required.\\

\noindent Finally, if $\{u_1',\cdots,u_s'\}$ is any other basis for $V$, then since each $u_i'$ is a non-zero linear combination in $u_1,\cdots,u_s$, it follows from the chain rule that $\partial_{u_i'}=\beta_i\partial_{u_i}$ for some non-zero $\beta_i\in K$, and hence $\partial_{u_1'},\cdots,\partial_{u_s'}\in\rho(U(\mathfrak{a}))$.\end{proof}

\subsection{Irreducibility}

\noindent Now we will prove the main theorem of this section.

\begin{theorem}\label{aff-Dix1}

Suppose that $\mathfrak{g}$ is nilpotent, $\lambda\in\mathfrak{g}^*$ with $\lambda(\mathcal{L})\subseteq\mathcal{O}$. Then there exists a polarisation $\mathfrak{b}$ of $\mathfrak{g}$ at $\lambda$ such that the affinoid Dixmier module $\widehat{D(\lambda)}_{\mathfrak{b}}$ of $\mathcal{L}$ at $\lambda$ with respect to $\mathfrak{b}$ is irreducible as a $\widehat{U(\mathcal{L})}_K$-module.

\end{theorem}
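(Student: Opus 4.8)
The plan is to run Dixmier's induction strategy (as laid out in the excerpt) on $n:=\dim_K\mathfrak{g}$, using Lemma \ref{induced}, Proposition \ref{Tate-action} and Proposition \ref{sub-polarisation} for the three steps. For $n=1$ the Lie algebra $\mathfrak{g}$ is abelian, $\mathfrak{b}=\mathfrak{g}$ is a polarisation at every $\lambda$, and $\widehat{D(\lambda)}_{\mathfrak{g}}\cong K$ is one-dimensional, hence irreducible. So assume $n>1$ and that the theorem holds for all $\mathcal{O}$-Lie lattices in nilpotent Lie algebras of dimension $<n$.

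\textbf{Step 2 (an ideal inside $\ker\lambda$).} Suppose some nonzero ideal $\mathfrak{a}$ of $\mathfrak{g}$ has $\lambda(\mathfrak{a})=0$. Then $\lambda$ factors through $\mathfrak{g}_1:=\mathfrak{g}/\mathfrak{a}$ as $\lambda=\bar\lambda\circ\mathrm{pr}$, with $\bar\lambda(\mathcal{L}_1)\subseteq\mathcal{O}$ for $\mathcal{L}_1=\mathcal{L}/(\mathcal{L}\cap\mathfrak{a})$. By the inductive hypothesis choose a polarisation $\mathfrak{b}_1$ of $\mathfrak{g}_1$ at $\bar\lambda$ with $\widehat{D(\bar\lambda)}_{\mathfrak{b}_1}$ irreducible over $\widehat{U(\mathcal{L}_1)}_K$, and let $\mathfrak{b}\subseteq\mathfrak{g}$ be its preimage. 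Since $\mathfrak{a}$ is an ideal with $\lambda([\mathfrak{a},\mathfrak{g}])\subseteq\lambda(\mathfrak{a})=0$, the final part of Lemma \ref{polarisation-properties} together with the dimension formula there shows $\mathfrak{b}$ is a polarisation of $\mathfrak{g}$ at $\lambda$. Applying Lemma \ref{induced}(ii) with subalgebra $\mathfrak{b}$ and the ($1$-dimensional, hence finitely generated) module $M=K_{\bar\lambda}$ gives an isomorphism $\widehat{D(\lambda)}_{\mathfrak{b}}\cong\widehat{D(\bar\lambda)}_{\mathfrak{b}_1}$ of $\widehat{U(\mathcal{L})}_K$-modules, the left action factoring through the surjection $\widehat{U(\mathcal{L})}_K\twoheadrightarrow\widehat{U(\mathcal{L}_1)}_K$ of Lemma \ref{induced}(i); as surjections of rings preserve the submodule lattice, $\widehat{D(\lambda)}_{\mathfrak{b}}$ is irreducible over $\widehat{U(\mathcal{L})}_K$, settling this case.

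\textbf{Step 3 (no such ideal).} We may now assume $\lambda(\mathfrak{a})\neq0$ for every nonzero ideal $\mathfrak{a}$. By Proposition \ref{sub-polarisation}(i) pick a reducing quadruple $(x,y,z,\mathfrak{g}')$ with $\mathfrak{g}'$ an abelian ideal of codimension $1$; being abelian, $\mathfrak{g}'$ is its own (unique) polarisation at $\mu:=\lambda|_{\mathfrak{g}'}$, so by Proposition \ref{sub-polarisation}(ii) the subalgebra $\mathfrak{b}:=\mathfrak{g}'$ is a polarisation of $\mathfrak{g}$ at $\lambda$, with $\mathcal{B}=\mathcal{L}\cap\mathfrak{g}'=:\mathcal{L}'$ and $\widehat{D(\lambda)}_{\mathfrak{b}}=\widehat{U(\mathcal{L})}_K\otimes_{\widehat{U(\mathcal{L}')}_K}K_\lambda$. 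Replacing $x$ by a suitable element of $\mathcal{L}\setminus\mathcal{L}'$ and rescaling $y,z$ into $\mathcal{L}'$, we may arrange $\mathcal{L}=\mathcal{L}'\oplus\mathcal{O}x$ while keeping $[x,z]=0$ and $[y,x]=\alpha z$ with $\alpha\in K^{\times}$ (the brackets only change by nonzero scalars, since $\mathfrak{g}'$ is abelian and $z$ is central). Now $y,z$ act on $K_\lambda$ by $\lambda(y),\lambda(z)\in\mathcal{O}$, and $\lambda(z)\neq0$ precisely because $Kz$ is a nonzero ideal and we are in the Step 3 case. Hence Proposition \ref{Tate-action} applies with $\mathfrak{h}=\mathfrak{g}'$, $M=K_\lambda$, $r=1$: parts (i)--(ii) give $\widehat{D(\lambda)}_{\mathfrak{b}}\cong K\langle t\rangle$ with $x$ acting by $t$ and $y$ by $\alpha\lambda(z)\tfrac{d}{dt}+\lambda(y)$, and part (iii), whose hypotheses $\alpha\neq0$, $\lambda(z)\neq0$, $M$ irreducible are all met, yields irreducibility of $\widehat{D(\lambda)}_{\mathfrak{b}}$ over $\widehat{U(\mathcal{L})}_K$. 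This completes the induction.

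\textbf{Expected obstacle.} The genuinely hard analytic input has already been absorbed into Proposition \ref{Tate-action}(iii) (the $p$-adic Dixmier lemma) and into Propositions \ref{sub-polarisation}, \ref{fin-gen} and Lemma \ref{induced}, so the work that remains is essentially bookkeeping about integral structures: choosing $x$ so that $\mathcal{L}=\mathcal{L}'\oplus\mathcal{O}x$ and rescaling the reducing quadruple into $\mathcal{L}$ without destroying the relations $[x,z]=0$, $[y,x]=\alpha z$, $\lambda(z)\neq0$; and verifying that the preimage of a polarisation of $\mathfrak{g}/\mathfrak{a}$ is a polarisation of $\mathfrak{g}$. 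The one conceptual point to keep straight is that Step 2 must precede Step 3, since the nondegeneracy hypothesis $\lambda(z)\neq0$ — indispensable for invoking Proposition \ref{Tate-action}(iii) — is exactly what Step 2 guarantees after passing to a quotient.
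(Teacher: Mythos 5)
Your base case and Step 2 are fine and essentially identical to the paper's argument, but Step 3 contains a genuine gap: you assume that the reducing quadruple $(x,y,z,\mathfrak{g}')$ supplied by Proposition \ref{sub-polarisation}$(i)$ can be taken with $\mathfrak{g}'$ \emph{abelian}, and you then set $\mathfrak{b}=\mathfrak{g}'$, $M=K_{\lambda}$. Neither the statement of Proposition \ref{sub-polarisation} nor Definition \ref{reducing-quadruple} gives abelianness of $\mathfrak{g}'$ (it is only an ideal of codimension $1$ in which $y$ is central), and in general no abelian ideal of codimension $1$ exists: for $\mathfrak{g}$ the strictly upper triangular $4\times 4$ matrices the largest abelian ideal has codimension $2$, while for $\lambda=E_{14}^{*}$ every non-zero ideal meets $Z(\mathfrak{g})=KE_{14}$ non-trivially, so Step 2 cannot be used to reduce and Step 3 must handle this algebra. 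Moreover, by the dimension formula of Lemma \ref{polarisation-properties}, a codimension-$1$ subalgebra can only be a polarisation of $\mathfrak{g}$ at $\lambda$ when $\dim_K\mathfrak{g}^{\lambda}=\dim_K\mathfrak{g}-2$ (in the example above one gets codimension $2$, so the Dixmier module is a two-variable Tate algebra, not $K\langle t\rangle$), and if $\lambda$ is not a character of $\mathfrak{g}'$ then $K_{\lambda}$ is not even a $\widehat{U(\mathcal{L}')}_K$-module. So your argument only covers the case where $\mathfrak{g}$ has an abelian ideal of codimension $1$, i.e. exactly the situation already treated in \cite{Lewis}.

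The repair is the paper's actual route, and it uses the inductive hypothesis a second time, on $\mathfrak{g}'$ itself rather than on a quotient: since $\dim_K\mathfrak{g}'=n-1$, induction gives a polarisation $\mathfrak{b}\subseteq\mathfrak{g}'$ of $\mathfrak{g}'$ at $\mu:=\lambda|_{\mathfrak{g}'}$ such that $M:=\widehat{D(\mu)}_{\mathfrak{b}}$ is irreducible over $\widehat{U(\mathcal{L}')}_K$; by Proposition \ref{sub-polarisation}$(ii)$ this $\mathfrak{b}$ is also a polarisation of $\mathfrak{g}$ at $\lambda$, and $\widehat{D(\lambda)}_{\mathfrak{b}}\cong\widehat{U(\mathcal{L})}_K\otimes_{\widehat{U(\mathcal{L}')}_K}M\cong M\langle t\rangle$. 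The scalar hypothesis of Proposition \ref{Tate-action}$(ii)$,$(iii)$ is then met not because $M$ is one-dimensional, but because $y,z$ are central in $\mathfrak{g}'$, hence lie in $\mathfrak{b}$ by Lemma \ref{polarisation-properties} and act on the whole of $M$ by the scalars $\lambda(y),\lambda(z)$, with $\lambda(z)\neq 0$ exactly as in your nondegeneracy remark; Proposition \ref{Tate-action}$(iii)$ with this $M$ then gives irreducibility of $\widehat{D(\lambda)}_{\mathfrak{b}}$. With Step 3 rewritten this way your outline matches the paper's proof.
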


\begin{proof}

We will use induction on $n=$ dim$_K\mathfrak{g}$, so first suppose that $n=1$. Then $\mathfrak{g}$ is abelian, so $\lambda$ is a character of $\mathfrak{g}$, and $\widehat{D(\lambda)}=K$, which is clearly irreducible.\\

\noindent For the inductive step, we will assume that the result holds for all $m<n$:\\

\noindent Suppose first that there exists a non-zero ideal $\mathfrak{a}\trianglelefteq\mathfrak{g}$ such that $\lambda(\mathfrak{a})=0$. Let $\mathfrak{g}_1:=\mathfrak{g}/\mathfrak{a}$, so since $\mathfrak{a}\neq 0$, dim$_K\mathfrak{g}_1<n$, so we may apply the inductive hypothesis to $\mathfrak{g}_1$.\\

\noindent Let $\mathcal{A}:=\mathcal{L}\cap\mathfrak{a}$, and let $\mathcal{L}_1:=\frac{\mathcal{L}}{\mathcal{A}}$, then $\mathcal{A},\mathcal{L}_1$ are lattices in $\mathfrak{a}$, $\mathfrak{g}_1$ respectively.\\

\noindent Now, let $\lambda_1$ be the linear form of $\mathfrak{g}_1$ induced by $\lambda$, and clearly $\lambda_1(\mathcal{L}_1)\subseteq\mathcal{O}$. So by the inductive hypothesis, there exists a polarisation $\mathfrak{b}_1$ of $\mathfrak{g}_1$ at $\lambda_1$ such that $\widehat{D(\lambda_1)}_{\mathfrak{b}_1}$ is irreducible over $\widehat{U(\mathcal{L}_1)}_K$.\\

\noindent Since $\mathfrak{b}_1=\frac{\mathfrak{b}}{\mathfrak{a}}$ for some subalgebra $\mathfrak{b}$ of $\mathfrak{g}$, it follows that $\mathfrak{b}$ is a polarisation of $\mathfrak{g}$ at $\lambda$, so let $\mathcal{B}:=\mathfrak{b}\cap\mathcal{L}$, $\mathcal{B}_1:=\mathfrak{b}_1\cap\mathcal{L}_1$, and let $M:=\widehat{D(\lambda_1)}_{\mathfrak{b}_1}=\widehat{U(\mathcal{L}_1)}_K\otimes_{\widehat{U(\mathcal{B}_1)}_K}K_{\lambda_1}$.\\

Using the surjection $\widehat{U(\mathcal{L})}_K\twoheadrightarrow \widehat{U(\mathcal{L}_1)}_K$ given by Lemma \ref{induced}$(i)$, we see that $M$ has the structure of an irreducible $\widehat{U(\mathcal{L})}_K$-module, and using the fact that $\mathcal{B}_1=\frac{\mathcal{B}}{\mathcal{A}}$ and Lemma \ref{induced}($ii$) we see that $M\cong \widehat{U(\mathcal{L})}_K\otimes_{\widehat{U(\mathcal{B})}_K}K_{\lambda}=\widehat{D(\lambda)}_{\mathfrak{b}}$, and hence $\widehat{D(\lambda)}_{\mathfrak{b}}$ is irreducible as required.\\

\noindent So from now on, we may assume that $\lambda(\mathfrak{a})\neq 0$ for all non-zero ideals $\mathfrak{a}$ of $\mathfrak{g}$.\\ 

\noindent Using Proposition \ref{sub-polarisation}, we can find a reducing quadruple $(x,y,z,\mathfrak{g}')$ of $\mathfrak{g}$ such that any polarisation $\mathfrak{b}\subseteq\mathfrak{g}'$ of $\mathfrak{g}'$ at $\mu:=\lambda|_{\mathfrak{g}'}$ is in fact a polarisation of $\mathfrak{g}$ at $\lambda$. Note that since $y,z$ are central in $\mathfrak{g}'$, $y,z\in\mathfrak{b}$ by Lemma \ref{polarisation-properties}.\\

\noindent Set $\mathcal{L}':=\mathcal{L}\cap\mathfrak{g}'$. Then since dim$_K\mathfrak{g}'=n-1<n$, using the inductive hypothesis we can choose a polarisation $\mathfrak{b}$ of $\mathfrak{g}'$ at $\mu$ such that $\widehat{D(\mu)}=\widehat{U(\mathcal{L'})}_K\otimes_{\mathcal{B}} K_{\mu}$ is irreducible over $\widehat{U(\mathcal{L}')}_K$.\\

\noindent Now, since $\widehat{U(\mathcal{L})}_K=\widehat{U(\mathcal{L})}_K\otimes_{\widehat{U(\mathcal{L'})}_K}\widehat{U(\mathcal{L'})}_K$, it follows that $\widehat{D(\lambda)}_{\mathfrak{b}}=\widehat{U(\mathcal{L})}_K\otimes_{\widehat{U(\mathcal{L'})}_K}\widehat{D(\mu)}_{\mathfrak{b}}$.\\

\noindent Therefore, setting $M:=\widehat{D(\mu)}_{\mathfrak{b}}$, we have that $\widehat{D(\lambda)}_{\mathfrak{b}}\cong M\langle t\rangle$ as a $K$-vector space by Proposition \ref{Tate-action}$(i)$, where $x$ acts on $M\langle t\rangle$ by $t$.\\

\noindent Also, $y,z$ are central in $\mathfrak{g}'$ and $[y,x]=\alpha z$ for some $0\neq\alpha\in K$. Therefore, since $y$ and $z$ act on $M$ by scalars $\lambda(y)$, $\lambda(z)$ respectively, we see using Proposition \ref{Tate-action}$(ii)$ that $y$ acts on $M\langle t\rangle$ by $\alpha \lambda(z)\frac{d}{dt}+\lambda(y)$.\\

So, finally, since $Kz$ is an ideal of $\mathfrak{g}$, $\lambda(z)\neq 0$, so since $\alpha,\lambda(z)\neq 0$, it follows from Proposition \ref{Tate-action}$(iii)$ that $\widehat{D(\lambda)}_{\mathfrak{b}}=M\langle t\rangle$ is irreducible over $\widehat{U(\mathcal{L})}_K$ as required.\end{proof}

\section{Dixmier Annihilators}

From now on, we will always assume that $\mathfrak{g}$ is nilpotent. We are interested in the annihilators inside $\widehat{U(\mathcal{L})}_K$ of affinoid Dixmier modules. 

Since for any $\lambda\in\mathfrak{g}^*$ with $\lambda(\mathcal{L})\subseteq\mathcal{O}$, there exists a polarisation $\mathfrak{b}$ of $\mathfrak{g}$ at $\lambda$ such that $\widehat{D(\lambda)}_{\mathfrak{b}}$ is irreducible by Theorem \ref{aff-Dix1}, it follows by definition that Ann$_{\widehat{U(\mathcal{L})}_K}\widehat{D(\lambda)}_{\mathfrak{b}}$ is a primitive ideal of $\widehat{U(\mathcal{L})}_K$.\\

\noindent Ideally, we want to prove that all primitive ideals arise as annihilators of affinoid Dixmier modules. But in this section, we will first show that these Dixmier annihilators are always primitive, regardless of the choice of polarisation.

\subsection{Reducing Ideals}

\noindent First we need some preliminary results. The first is an affinoid version of \cite[Proposition 5.1.7]{Dixmier}:

\begin{lemma}\label{ind-annihilator}

Let $\mathfrak{h}\leq\mathfrak{g}$ be a subalgebra, let $\mathcal{H}:=\mathfrak{h}\cap\mathcal{L}$, and let $M$ be a finitely generated $\widehat{U(\mathcal{H})}_K$-module, with $J:=\Ann_{\widehat{U(\mathcal{H})}_K}M$. Then $\Ann_{\widehat{U(\mathcal{L})}_K}\left(\widehat{U(\mathcal{L})}_K\otimes_{\widehat{U(\mathcal{H})}_K}M\right)$ is the largest two-sided ideal in $\widehat{U(\mathcal{L})}_K$ contained in $\widehat{U(\mathcal{L})}_KJ$. It follows that if $M,N$ are finitely generated $\widehat{U(\mathcal{H})}_K$-modules such that $\Ann_{\widehat{U(\mathcal{H})}_K}M=\Ann_{\widehat{U(\mathcal{H})}_K}N$ then 

\begin{center}
$\Ann_{\widehat{U(\mathcal{L})}_K}(\widehat{U(\mathcal{L})}_K\otimes_{\widehat{U(\mathcal{H})}_K}M)=\Ann_{\widehat{U(\mathcal{L})}_K}(\widehat{U(\mathcal{L})}_K\otimes_{\widehat{U(\mathcal{H})}_K}N)$.
\end{center}

\end{lemma}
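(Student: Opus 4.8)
The plan is to characterise $A:=\Ann_{\widehat{U(\mathcal{L})}_K}\widehat{D}$, where $\widehat{D}:=\widehat{U(\mathcal{L})}_K\otimes_{\widehat{U(\mathcal{H})}_K}M$, by proving the two containments $A\subseteq\widehat{U(\mathcal{L})}_KJ$ and $I\subseteq A$ for every two-sided ideal $I$ of $\widehat{U(\mathcal{L})}_K$ with $I\subseteq\widehat{U(\mathcal{L})}_KJ$. This follows Dixmier's argument for \cite[Proposition 5.1.7]{Dixmier}, with one extra topological ingredient. I would first record the reduction that, since $\widehat{D}$ is generated as a $\widehat{U(\mathcal{L})}_K$-module by $\{1\otimes m:m\in M\}$, a two-sided ideal $I$ annihilates $\widehat{D}$ as soon as $a(1\otimes m)=0$ for all $a\in I$ and all $m\in M$; here two-sidedness is exactly what lets one pass from $1\otimes M$ to all of $\widehat{D}$, since $ab\in I$ whenever $a\in I$ and $b\in\widehat{U(\mathcal{L})}_K$.

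For the containment $A\subseteq\widehat{U(\mathcal{L})}_KJ$, I would fix a PBW basis $\{x_1,\dots,x_d\}$ of $\mathcal{L}$ with $\{x_{r+1},\dots,x_d\}$ a basis of $\mathcal{H}$, exactly as in Lemma \ref{induced}, so that each $a\in\widehat{U(\mathcal{L})}_K$ has a unique convergent expansion $a=\sum_{\beta\in\mathbb{N}^r}\underline{x}^\beta c_\beta$ with $c_\beta\in\widehat{U(\mathcal{H})}_K$ and $c_\beta\to 0$, where $\underline{x}^\beta$ is the monomial in $x_1,\dots,x_r$. Transporting the action to $M\langle t_1,\dots,t_r\rangle$ via Proposition \ref{Tate-action}, one gets $a(1\otimes m)=\sum_\beta t^\beta(c_\beta m)$; comparing coefficients in the Tate module $M\langle t_1,\dots,t_r\rangle$ then forces $c_\beta m=0$ for all $\beta$ whenever $a\in A$, for every $m\in M$, i.e. $c_\beta\in J$. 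It remains to deduce $a\in\widehat{U(\mathcal{L})}_KJ$: since $\widehat{U(\mathcal{H})}_K$ is Noetherian, $J$ is a finitely generated left ideal, so $\widehat{U(\mathcal{L})}_KJ$ is a finitely generated left ideal of $\widehat{U(\mathcal{L})}_K$ and hence closed (the $\pi$-adic filtration being Zariskian, as in the proof of Proposition \ref{fin-gen}); thus the partial sums $\sum_{|\beta|\le N}\underline{x}^\beta c_\beta\in\widehat{U(\mathcal{L})}_KJ$ converge to $a\in\widehat{U(\mathcal{L})}_KJ$.

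For maximality, given a two-sided ideal $I\subseteq\widehat{U(\mathcal{L})}_KJ$ and $a\in I$, I would write $a=\sum_l b_lj_l$ as a finite sum with $b_l\in\widehat{U(\mathcal{L})}_K$ and $j_l\in J$ (the left ideal generated by $J$ consists precisely of such finite sums), whence $a(1\otimes m)=\sum_l b_l\otimes(j_lm)=0$ since each $j_l$ kills $M$; by the opening reduction this gives $I\subseteq A$. Combining the two containments shows $A$ is the largest two-sided ideal of $\widehat{U(\mathcal{L})}_K$ contained in $\widehat{U(\mathcal{L})}_KJ$, and the last sentence of the lemma is then immediate, since two finitely generated $\widehat{U(\mathcal{H})}_K$-modules with the same annihilator $J$ yield induced modules with the same (maximal) annihilator.

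The only part that is not purely formal manipulation of induced modules is the closedness of $\widehat{U(\mathcal{L})}_KJ$ invoked at the end of the second paragraph: classically it is never needed, but here the expansion $a=\sum_\beta\underline{x}^\beta c_\beta$ with $c_\beta\in J$ a priori only places $a$ in the closure of $\widehat{U(\mathcal{L})}_KJ$, so the Noetherian/Zariskian structure of $\widehat{U(\mathcal{L})}_K$ must be used to identify the two. I therefore expect that to be the main point requiring care; everything else reduces, via Proposition \ref{Tate-action}, to the kind of coefficient comparison that in the classical setting is just the PBW-freeness of $U(\mathfrak{g})$ over $U(\mathfrak{h})$.
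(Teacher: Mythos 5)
Your proposal is correct and follows essentially the same route as the paper: a PBW expansion $a=\sum_\beta\underline{x}^\beta c_\beta$ with $c_\beta\in\widehat{U(\mathcal{H})}_K$, transport to the Tate module via Proposition \ref{Tate-action} and coefficient comparison to show the annihilator of $1\otimes M$ is exactly $\widehat{U(\mathcal{L})}_KJ$, and then two-sidedness to pass between $1\otimes M$ and all of the induced module. The only divergence is that you justify the step ``$c_\beta\in J$ for all $\beta$ implies $a\in\widehat{U(\mathcal{L})}_KJ$'' explicitly via Noetherianity and closedness of finitely generated left ideals, a point the paper's proof passes over silently; this is a harmless (indeed welcome) extra precaution, not a different argument.
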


\begin{proof}

Fix an $\mathcal{O}$-basis $\{x_1,\cdots,x_d\}$ for $\mathcal{L}$ such that $\{x_1,\cdots,x_r\}$ is a basis for $\mathcal{H}$. Then by Lemma \ref{PBW}, every element $r\in\widehat{U(\mathcal{L})}_K$ can be written as $r=\underset{\alpha\in\mathbb{N}^d}{\sum}\lambda_{\alpha}x_1^{\alpha_1}\cdots x_d^{\alpha_d}$, for some $\lambda_{\alpha}\in K$ converging to zero as $\alpha\rightarrow\infty$, i.e. $r=\underset{\alpha\in\mathbb{N}^r}{\sum}{\underline{x}^{\alpha}s_{\alpha}}$ for some $s_{\alpha}\in\widehat{U(\mathcal{H})}_K$ such that $s_{\alpha}\rightarrow 0$ as $\alpha\rightarrow\infty$.\\

\vspace{0.1in}

\noindent Using Proposition \ref{Tate-action}, we see that $\widehat{U(\mathcal{L})}_K\otimes_{\widehat{U(\mathcal{H})}_K}M$ is isomorphic as a $K$-vector space to the Tate module $M\langle t_1,\cdots,t_r\rangle=\{\underset{\alpha\in\mathbb{N}^r}{\sum}t_1^{\alpha_1}\cdots t_r^{\alpha_r}s_{\alpha}:s_{\alpha}\in M, s_{\alpha}\rightarrow 0$ as $\vert\alpha\vert\rightarrow\infty\}$ via an isomorphism $\Psi$ sending $\underline{x}^{\alpha}\otimes m$ to $t_1^{\alpha_1}\cdots t_r^{\alpha_r}m$. It is clear that the set of all elements in $\widehat{U(\mathcal{L})}_K$ that annihilate the set $M$ inside $M\langle t_1,\cdots,t_r\rangle$ on the left contains the left ideal $\widehat{U(\mathcal{L})}_KJ$.\\

\noindent Moreover, if $rM=0$ for some $r=\underset{\alpha\in\mathbb{N}^r}{\sum}{\underline{x}^{\alpha}s_{\alpha}}\in\widehat{U(\mathcal{L})}_K$, then for all $m\in M$:

\begin{center}
$0=rm=\underset{\alpha\in\mathbb{N}^r}{\sum}{\underline{x}^{\alpha}s_{\alpha}m}=\Psi^{-1}(\underset{\alpha\in\mathbb{N}^r}{\sum}{t_1^{\alpha_1}\cdots t_r^{\alpha_r} s_{\alpha}m})$, and hence $s_{\alpha}m=0$ for all $\alpha\in\mathbb{N}^r$.
\end{center}

\noindent Thus $s_{\alpha}\in J$ for all $\alpha$, and hence $r\in\widehat{U(\mathcal{L})}_KJ$. Therefore the right ideal $\widehat{U(\mathcal{L})}_KJ$ is the set of all elements of $\widehat{U(\mathcal{L})}_K$ that annihilate the set $M$.\\

\noindent It follows that if $r\widehat{U(\mathcal{L})}_K\otimes_{\widehat{U(\mathcal{H})}_K}M=0$, then $r\widehat{U(\mathcal{L})}_K$ annihilates $M$, so the two-sided ideal generated by $r$ is contained in $\widehat{U(\mathcal{L})}_KJ$. Since our choice of $r$ was arbitrary, it follows that the annihilator of $\widehat{U(\mathcal{L})}_K\otimes_{\widehat{U(\mathcal{H})}_K}M$ is contained in $\widehat{U(\mathcal{L})}_KJ$.\\

\noindent Furthermore, if $I\subseteq\widehat{U(\mathcal{L})}_KJ$ is a two-sided ideal of $\widehat{U(\mathcal{L})}_K$, then $I$ annihilates $M$, so since $I\widehat{U(\mathcal{L})}_K=\widehat{U(\mathcal{L})}_KI$, it must also annihilate the submodule generated by $M$ inside $\widehat{U(\mathcal{L})}_K\otimes_{\widehat{U(\mathcal{H})}_K}M$, which is clearly the whole module, and the result follows.\end{proof}

\noindent The next result will be essential to several of our proofs, since it allows us to safely pass to and from a reducing quadruple when studying two-sided ideals in $\widehat{U(\mathcal{L})}_K$.

\begin{theorem}\label{control}

Suppose that $\mathfrak{g}$ has a reducing quadruple $(x,y,z,\mathfrak{g}')$ with $x,y,z\in\mathcal{L}$, and let $\mathcal{L}':=\mathcal{L}\cap\mathfrak{g}'$. Then if $I$ is a two-sided ideal of $\widehat{U(\mathcal{L})}_K$ such that $z+I$ is not a zero divisor in $\widehat{U(\mathcal{L})}_K/I$, then $I$ is controlled by $\mathcal{L}'$, i.e.:

\begin{center}
$I=\left(I\cap\widehat{U(\mathcal{L}')}_K\right)\widehat{U(\mathcal{L})}_K=\widehat{U(\mathcal{L})}_K\left(I\cap\widehat{U(\mathcal{L}')}_K\right)$.
\end{center}

\end{theorem}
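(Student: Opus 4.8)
First I would reduce to the case $\mathcal{L}=\mathcal{L}'\oplus\mathcal{O}x$. Since $\mathfrak{g}'$ has codimension $1$, $\mathcal{L}/\mathcal{L}'$ is a rank-one $\mathcal{O}$-lattice, so there is $x_0\in\mathcal{L}$ with $\mathcal{L}=\mathcal{L}'\oplus\mathcal{O}x_0$; writing $x=cx_0+v'$ with $c\in\mathcal{O}$, $v'\in\mathcal{L}'$, the tuple $(x_0,y,z,\mathfrak{g}')$ is again a reducing quadruple (with $[x_0,y]=c^{-1}\alpha z$, still a nonzero multiple of $z$) and still has $x_0,y,z\in\mathcal{L}$, so we may rename $x_0$ as $x$. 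By Lemma \ref{PBW}, $S:=\widehat{U(\mathcal{L})}_K$ is then topologically free as a left module over $R:=\widehat{U(\mathcal{L}')}_K$ on $\{x^n\}_{n\ge 0}$, with $xa=ax+\delta(a)$ for $a\in R$, where $\delta:=\ad(x)|_R$ is a continuous derivation of $R$ (well-defined because $\mathfrak{g}'$ is an ideal): every element has a unique expansion $\sum_{n\ge 0}a_nx^n$ with $a_n\in R$ and $a_n\to 0$. Let $\partial\colon S\to S$ be the continuous, left-$R$-linear ``formal $x$-derivative'' $\sum a_nx^n\mapsto\sum na_nx^{n-1}$, which makes sense since each $n$ has $\pi$-adic norm $\le 1$. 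Because $I\cap R$ is $\delta$-stable ($\delta(I\cap R)=[x,I\cap R]\subseteq I\cap R$), one checks that $(I\cap R)S$ is a closed two-sided ideal equal to $S(I\cap R)$ and admitting the coefficientwise description $\{\sum r_nx^n:r_n\in I\cap R,\ r_n\to 0\}$, and $(I\cap R)S\subseteq I$. So the theorem reduces to the claim: \emph{every $u=\sum a_nx^n\in I$ has $a_n\in I$ (equivalently $a_n\in I\cap R$) for all $n$.}

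Next I would extract the role of the hypothesis on $z$. As $y$ is central in $R$, $z$ is central in $S$, $\delta(z)=0$ and $\delta(y)=\alpha z$, a one-line computation on the monomials $ax^n$ gives the operator identity $\ad(y)=-\alpha z\,\partial$ on all of $S$. For $u\in I$ we have $\ad(y)(u)\in I$, hence $z\,\partial(u)\in I$; since $z+I$ is not a zero divisor in $S/I$ this forces $\partial(u)\in I$. Thus $\partial(I)\subseteq I$, and by iteration $\partial^{k}(I)\subseteq I$ for every $k$.

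The heart of the argument is then an ``Euler operator'' replacing the classical peeling of the top coefficient. Set $\mathcal{E}:=\ell_x\circ\partial-\partial\circ\ad(x)\colon S\to S$, where $\ell_x$ denotes left multiplication by $x$. A direct computation gives $\mathcal{E}(ax^n)=nax^n$ for $a\in R$, so $\mathcal{E}$ is diagonalizable with eigenvalue $n$ on the $R$-summand $Rx^n$, and $p(\mathcal{E})(\sum a_nx^n)=\sum p(n)a_nx^n$ for any $p\in K[T]$. Since $\ell_x(I)\subseteq I$, $\ad(x)(I)\subseteq I$ and $\partial(I)\subseteq I$, we also get $\mathcal{E}(I)\subseteq I$, hence $p(\mathcal{E})(I)\subseteq I$ for all $p\in K[T]$. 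Now take the Lagrange-type polynomials $q_N(T):=\tfrac{(-1)^N}{N!}(T-1)(T-2)\cdots(T-N)$, for which $q_N(0)=1$, $q_N(n)=0$ for $1\le n\le N$, and $q_N(n)=(-1)^N\binom{n-1}{N}$ for $n>N$. Applying $q_N(\mathcal{E})$ to $u=\sum a_nx^n\in I$ yields $q_N(\mathcal{E})(u)=a_0+(-1)^N\sum_{n>N}\binom{n-1}{N}a_nx^n\in I$; since $\binom{n-1}{N}\in\mathbb{Z}$ the terms of the tail are no coarser than $a_nx^n$ in the $\pi$-adic filtration, so the tail $\to 0$ and $q_N(\mathcal{E})(u)\to a_0$. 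As $I$ is closed this gives $a_0\in I$. Applying the same to $\partial^{k}(u)\in I$, whose constant coefficient is $k!\,a_k$, gives $k!\,a_k\in I$, hence $a_k\in I$, for every $k$. This proves the claim, and thus $I=(I\cap R)S=S(I\cap R)$.

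The step I expect to be the main obstacle is exactly this last extraction. Classically (for $U(\mathfrak{h})$, as in \cite[4.7.x]{Dixmier}) one writes $u$ as a \emph{finite} sum $\sum_{n=0}^{d}a_nx^n$ and recovers the top coefficient from $\ad(y)^{d}(u)=(-\alpha z)^{d}d!\,a_d\in I$, then induces downward on the degree; in the affinoid setting there is no top degree, and the naive coefficient projection $S\to R$ is not a ring map, so one cannot simply ``clear'' the tail. The device that makes it work is finding a genuinely two-sided-ideal-preserving, spectrally decomposable operator whose spectral projections have $p$-adically bounded kernels — here $\mathcal{E}$ together with the binomial kernels $q_N$ — and it is precisely at the implication ``$z\,\partial(u)\in I\Rightarrow\partial(u)\in I$'' that the regularity of $z$ modulo $I$ is used. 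One should in addition verify the routine points: continuity of $\partial$, $\ell_x$, $\ad(x)$ and hence of $\mathcal{E}$; the identity $\ad(y)=-\alpha z\,\partial$ and $\mathcal{E}(ax^n)=nax^n$; and the closedness and coefficientwise description of $(I\cap R)S$ (using that two-sided ideals of the Noetherian Banach algebras $S$ and $R$ are closed, as in the proof of Proposition \ref{fin-gen}).
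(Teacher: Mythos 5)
Your proposal is correct, and its core mechanism is the same as the paper's: after expanding elements of $\widehat{U(\mathcal{L})}_K$ as convergent series $\sum_n a_nx^n$ with coefficients $a_n\in\widehat{U(\mathcal{L}')}_K$, the hypothesis on $z$ enters exactly as in the paper's proof, through the identity $\ad(y)=-\alpha z\,\partial$ (the paper phrases it as $l_y-r_y=-\alpha z\frac{d}{dx}$ on $\widehat{U(\mathcal{L})}_K/I$), which combined with the regularity of $z$ modulo $I$ shows that $\partial$ preserves $I$. The only genuine divergence is the final coefficient extraction: the paper peels off the constant coefficient by the iterative subtraction $r_i:=r_{i-1}-x^i\tfrac{1}{i!}\partial^i(r_{i-1})$, borrowed from the proof of Proposition \ref{Tate-action}$(iii)$, and then handles higher coefficients via $\tfrac{1}{n!}g^{(n)}$, whereas you build the Euler operator $\mathcal{E}=\ell_x\circ\partial-\partial\circ\ad(x)$ and apply the integer-valued Lagrange polynomials $q_N(\mathcal{E})$, recovering $a_k$ from $\partial^k(u)$; both devices manufacture a sequence in $I$ converging to the desired coefficient and then invoke closedness of $I$, so they are interchangeable here, with yours requiring the one extra (correct) computation that the $\delta(a)$-terms cancel in $\mathcal{E}(ax^n)=nax^n$. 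Your opening reduction replacing $x$ by an $\mathcal{O}$-generator of $\mathcal{L}/\mathcal{L}'$ is a worthwhile explicit step that the paper leaves implicit in its appeal to Lemma \ref{PBW}.
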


\begin{proof}

Using Lemma \ref{PBW}, we see that every element of $\widehat{U(\mathcal{L})}_K$ can be written as $g(x)$ for some Tate power series $g$ with coefficients in $\widehat{U(\mathcal{L}')}_K$. We will prove that if $g(x)\in I$ then the coefficients of $g$ all lie in $I$, and the result follows.\\

It will suffice to show that if $g(x)=c_0+c_1x+c_2x^2+\cdots\in I$ then the formal derivative $g'(x)=c_1+2c_2x+3c_3x^2+\cdots$ also lies in $I$. Then using an argument similar to the proof of Proposition \ref{Tate-action}$(iii)$, we can construct a sequence of elements in $I$ converging to $c_0$. By closure of $I$ in $\widehat{U(\mathcal{L})}_K$, it follows that $c_0\in I$, so repeating the argument for $\frac{1}{n!}g^{(n)}(x)$ for each $x$, it follows that all coefficients of $g(x)$ lie in $I$ as required.\\

\noindent To prove that $g'(x)$ lies in $I$, consider the action of $y$ on $\widehat{U(\mathcal{L})}_K/I$:\\

\noindent Since $y$ is central in $\mathcal{L}'$, $y$ commutes with everything in $\widehat{U(\mathcal{L}')}_K$. Also, since $[x,y]=\alpha z$, clearly $y\cdot x=xy-\alpha z$, and an easy induction shows that $y\cdot x^n=x^n y-n\alpha x^{n-1}z$. So if $l_y$ is the left action of $y$ on $\widehat{U(\mathcal{L})}_K/I$, $r_y$ is the right action, then $l_y-r_y=-\alpha z\frac{d}{dx}$. Therefore, since $z$ is not a zero divisor modulo $I$, and $\alpha\neq 0$, it follows that if $g(x)\in I$ then $\frac{d}{dx}(g(x))\in I$ as required.\end{proof}

\subsection{Independence Theorem}

\noindent We will now prove the main result of this section, namely that Dixmier annihilator ideals are independent of the choice of polarisation. First, we deal with a special case.

\begin{lemma}\label{Heisenberg}

Suppose that $\mathcal{L}$ has an $\mathcal{O}$-basis $\{x,y,z\}$ such that $z$ is central and $[x,y]=\alpha z$ for some $0\neq\alpha\in\mathcal{O}$. Then for any $0\neq\beta\in\mathcal{O}$, the ideal $(z-\beta)\widehat{U(\mathcal{L})}_K$ is a maximal two-sided ideal of $\widehat{U(\mathcal{L})}_K$.

\end{lemma}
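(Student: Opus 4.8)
The plan is to exhibit $\widehat{U(\mathcal{L})}_K/(z-\beta)\widehat{U(\mathcal{L})}_K$ explicitly as a Tate--Weyl algebra $\widehat{A_1(K)}$, which is a simple ring, whence $(z-\beta)\widehat{U(\mathcal{L})}_K$ is maximal. Here $\mathfrak{g}$ is the Heisenberg algebra $\mathrm{Span}_K\{x,y,z\}$, and the linear form $\lambda\in\mathfrak{g}^*$ with $\lambda(x)=\lambda(y)=0$, $\lambda(z)=\beta$ (note $\lambda(\mathcal{L})\subseteq\mathcal{O}$ since $\beta\in\mathcal{O}$) has the one-dimensional polarisation... no: $\mathfrak{g}^\lambda=Kz$ since $\lambda([x,\mathfrak{g}])=\lambda(\alpha z)\neq 0$ and similarly for $y$, so a polarisation $\mathfrak{b}$ has $\dim\mathfrak{b}=\tfrac12(3+1)=2$; take $\mathfrak{b}=\mathrm{Span}_K\{y,z\}$, which is abelian hence a polarisation by Lemma~\ref{polarisation-properties}. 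Then $\mathcal{B}=\mathfrak{b}\cap\mathcal{L}=\mathcal{O}y\oplus\mathcal{O}z$, and $\{x\}$ is a basis for $\mathcal{L}/\mathcal{B}$. By Proposition~\ref{Tate-action} applied with $\mathcal{H}=\mathcal{B}$, $\mathcal{L}=\mathcal{B}\oplus\mathcal{O}x$, and $M=K_\lambda$ (on which $y$ acts by $\lambda(y)=0$, $z$ by $\lambda(z)=\beta$), we get $\widehat{D(\lambda)}_{\mathfrak{b}}\cong K\langle t\rangle$ with $x$ acting by $t$, $z$ acting by the scalar $\beta$, and — since $[x,z]=0$ and $[y,x]=\alpha z$ — $y$ acting by $\alpha\beta\,\tfrac{d}{dt}+0=\alpha\beta\partial$. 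Moreover Proposition~\ref{Tate-action}$(iii)$ gives that $\widehat{D(\lambda)}_{\mathfrak{b}}$ is irreducible since $\alpha,\beta\neq 0$ and $K_\lambda$ is irreducible over $\widehat{U(\mathcal{B})}_K$.

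Next I would identify the action map $\rho:\widehat{U(\mathcal{L})}_K\to\mathrm{End}_K K\langle t\rangle$. From the above, $\rho(x)=t=x_1$ (multiplication by $t$), $\rho(z)=\beta$, and $\rho(y)=\alpha\beta\partial=\alpha\beta\,\partial_1$. Since $\{x,y,z\}$ generates $\widehat{U(\mathcal{L})}_K$ topologically, the image of $\rho$ lands in the subalgebra generated by $t$ and $\partial$ inside $\mathrm{End}_K K\langle t\rangle$; by Lemma~\ref{aff-endo} this subalgebra, together with the convergence condition, is exactly the Tate--Weyl algebra $\widehat{A_1(K)}$. To see $\rho$ is continuous (so that the image is all of $\widehat{A_1(K)}$, not just $A_1(K)$), note $\mathcal{L}$ is powerful: $[\mathcal{L},\mathcal{L}]\subseteq\mathcal{O}\alpha z\subseteq p\mathcal{L}$ provided $v_\pi(\alpha)\geq 1$ — if not, one rescales, or more simply invokes Proposition~\ref{explicit-formula} directly, which here gives the \emph{finite} sum $\rho(u)=\mu(u)+\mu(\mathrm{ad}(x)(u))\partial$ (the nilpotency class is $2$), and one checks $\rho(\mathcal{L})\subseteq A_1(\mathcal{O})$ by inspection since $\mathrm{ad}(x)(\mathcal{L})\subseteq\mathcal{O}z$ and $\mu(z)=\beta\in\mathcal{O}$. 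Either way $\rho(\widehat{U(\mathcal{L})}_K)=\widehat{A_1(K)}$.

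It then remains to identify $\ker\rho$ with $(z-\beta)\widehat{U(\mathcal{L})}_K$. The inclusion $(z-\beta)\widehat{U(\mathcal{L})}_K\subseteq\ker\rho$ is clear since $\rho(z)=\beta$ and $z$ is central, so $(z-\beta)\widehat{U(\mathcal{L})}_K$ is a two-sided ideal annihilating $\widehat{D(\lambda)}$. For the reverse, $z-\beta$ is a nonzerodivisor in $\widehat{U(\mathcal{L})}_K$ (the associated graded is a polynomial ring, and $z-\beta$ has nonzero symbol $z$), so by Theorem~\ref{control} applied to... — actually the cleaner route is: $\widehat{U(\mathcal{L})}_K/(z-\beta)\widehat{U(\mathcal{L})}_K$ is, by Lemma~\ref{PBW} and the centrality of $z$, a free module over $K\langle x,y\rangle$-type relations collapsing to the Tate--Weyl presentation $K\langle x,y\rangle$ with $[y,x]=\alpha\beta$ (after substituting $z\mapsto\beta$), which is precisely $\widehat{A_1(K)}$ up to rescaling $y\mapsto(\alpha\beta)^{-1}y$. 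Since $\rho$ factors through this quotient and both have the same $K$-vector space Hilbert series (namely $K\langle x,y\rangle$ by Lemma~\ref{Tate-Weyl}), $\rho$ induces an isomorphism $\widehat{U(\mathcal{L})}_K/(z-\beta)\widehat{U(\mathcal{L})}_K\xrightarrow{\sim}\widehat{A_1(K)}$. As $\widehat{A_1(K)}$ is simple, $(z-\beta)\widehat{U(\mathcal{L})}_K$ is a maximal two-sided ideal.

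The main obstacle I anticipate is the last step: rigorously checking that the surjection $\widehat{U(\mathcal{L})}_K/(z-\beta)\widehat{U(\mathcal{L})}_K\to\widehat{A_1(K)}$ is injective, i.e. that the kernel of $\rho$ is no larger than $(z-\beta)\widehat{U(\mathcal{L})}_K$. The cheap way around this is to avoid it entirely: since $\widehat{D(\lambda)}_{\mathfrak{b}}$ is irreducible, $\ker\rho$ is a primitive — in particular proper — two-sided ideal containing $(z-\beta)\widehat{U(\mathcal{L})}_K$; if one can show $(z-\beta)\widehat{U(\mathcal{L})}_K$ is already maximal among proper two-sided ideals by a direct computation with the PBW filtration (any two-sided ideal strictly containing it would, modulo $z-\beta$, be a nonzero two-sided ideal of the ring $K\langle x,y\mid [y,x]=\alpha\beta\rangle$, which is simple by Lemma~\ref{Tate-Weyl}-type reasoning), then maximality follows without ever computing $\ker\rho$ on the nose. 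I would present the argument in this second form, as it is self-contained given the simplicity of $\widehat{A_1(K)}$.
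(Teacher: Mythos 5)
Your overall strategy (identify the quotient $\widehat{U(\mathcal{L})}_K/(z-\beta)\widehat{U(\mathcal{L})}_K$ explicitly and show it is simple) is viable, but the step carrying all the weight does not hold as stated. After substituting $z\mapsto\beta$, the quotient is the algebra of Tate series $\sum\lambda_{ab}x^ay^b$ with $[y,x]=\alpha\beta$; the rescaling $y\mapsto(\alpha\beta)^{-1}y$ does \emph{not} identify this with $\widehat{A_1(K)}$, because it does not respect the convergence condition: one obtains only the deformed algebra $K\langle x,\alpha\beta\,\partial\rangle$, a proper subalgebra of $\widehat{A_1(K)}$ whenever $v_\pi(\alpha\beta)>0$ (and here $\alpha,\beta$ are merely nonzero elements of $\mathcal{O}$). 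This is exactly the subtlety the paper flags at the end of Section 2.5, citing Lewis: quotients of $\widehat{U(\mathcal{L})}_K$ need not be Tate--Weyl algebras. For the same reason your claim $\rho(\widehat{U(\mathcal{L})}_K)=\widehat{A_1(K)}$ is false in general (the image is $K\langle t,\alpha\beta\,\partial\rangle$), so you cannot quote the simplicity of $\widehat{A_1(K)}$; and Lemma \ref{Tate-Weyl} is a purely vector-space description containing no simplicity statement, so ``simple by Lemma \ref{Tate-Weyl}-type reasoning'' leaves the entire content of the lemma unproved. A secondary gap: the ``same Hilbert series'' argument does not prove injectivity of the surjection onto the quotient; the honest identification of $\widehat{U(\mathcal{L})}_K/(z-\beta)\widehat{U(\mathcal{L})}_K$ requires Weierstrass division by $z-\beta$ in the central variable, with norm control so that the quotients again assemble into a Tate series.

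The deformed algebra $C:=K\langle x,y\rangle$ with $[y,x]=\alpha\beta\neq0$ \emph{is} in fact simple, but proving it requires precisely the work you were hoping to outsource: a nonzero two-sided ideal of $C$ is closed (Zariskian filtration), is stable under the derivations $f(y)\mapsto[f(y),x]=\alpha\beta f'(y)$ and $g(x)\mapsto[y,g(x)]=\alpha\beta g'(x)$, hence under all divided derivatives after dividing by the nonzero scalars $(\alpha\beta)^n n!$, and then a limiting sequence as in Proposition \ref{Tate-action}$(iii)$ extracts a nonzero constant. This is essentially what the paper's proof does, while avoiding any Weyl-algebra identification altogether: for a proper ideal $I\supseteq(z-\beta)\widehat{U(\mathcal{L})}_K$, since $\beta\neq0$ the element $z+I$ is not a zero divisor, so Theorem \ref{control} applied to the reducing quadruple $(x,y,z,\mathrm{Span}_K\{y,z\})$ shows $I$ is controlled by the commutative subalgebra $\widehat{U(\mathcal{L}')}_K$, and the commutator-as-derivation trick in the single variable $\bar y$ then forces $I\cap\widehat{U(\mathcal{L}')}_K=(z-\beta)\widehat{U(\mathcal{L}')}_K$, whence $I=(z-\beta)\widehat{U(\mathcal{L})}_K$. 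So either carry out the simplicity proof for the deformed algebra in full, together with the Weierstrass-division identification of the quotient, or follow the control-theorem route; as written, your proposal has a genuine gap at its central step.
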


\begin{proof}

Let $I$ be a proper ideal of $\widehat{U(\mathcal{L})}_K$ containing $z-\beta$. Then since $\beta\neq 0$, $z+I$ is not a zero divisor in $\widehat{U(\mathcal{L})}_K/I$. So setting $\mathfrak{g}':=$ Span$_K\{y,z\}$, since $(x,y,z,\mathfrak{g}')$ is a reducing quadruple, it follows from Theorem \ref{control} that $I$ is controlled by $\mathcal{L}'=\mathfrak{g}'\cap\mathcal{L}$. Therefore, if we can prove that $I\cap\widehat{U(\mathcal{L}')}_K=(z-\beta)\widehat{U(\mathcal{L}')}_K$ then it follows that $I=(z-\beta)\widehat{U(\mathcal{L})}_K$.\\

\noindent Let $\bar{y}$ be the image of $y$ in $\widehat{U(\mathcal{L}')}_K/(z-\beta)\widehat{U(\mathcal{L}')}_K$, then given $r\in\widehat{U(\mathcal{L}')}_K$, by Lemma \ref{PBW}, the image of $r$ in $\widehat{U(\mathcal{L}')}_K/(z-\beta)\widehat{U(\mathcal{L}')}_K$ has the form $\bar{r}=\underset{n\geq 0}{\sum}{\lambda_n\bar{y}^n}$ for some $\lambda_n\in K$, $\lambda_n\rightarrow 0$ as $n\rightarrow\infty$.\\

\noindent Since $[x,y]=\alpha z$, we have that $x\cdot\bar{y}=\bar{y}x+\alpha\beta$, and an easy induction shows that $x\cdot\bar{y}^n=\bar{y}^nx+\alpha\beta\bar{y}^{n-1}$ for all $n$, i.e. if $l_x$ and $r_x$ are the respective left and right actions of $x$ on $\widehat{U(\mathcal{L}')}_K/(z-\beta)$, then $l_x-r_x=\alpha\beta\frac{d}{d\bar{y}}$. Since $\alpha,\beta\neq 0$ and $I$ is a two-sided ideal, $\frac{d}{d\bar{y}}$ preserves $I\cap\widehat{U(\mathcal{L}')}_K/(z-\beta)$.\\

\noindent So if $g(\bar{y})=\lambda_0+\lambda_1\bar{y}+\lambda_2\bar{y}^2+\cdots\in I\cap\widehat{U(\mathcal{L}')}_K/(z-\beta)\widehat{U(\mathcal{L}')}_K$, it follows that $\frac{1}{n!}g^{(n)}(\bar{y})\in I$ for all $n\in\mathbb{N}$, and using an argument similar to the proof of Proposition \ref{Tate-action}$(iii)$, we can construct a sequence of elements in $I\cap\widehat{U(\mathcal{L}')}_K$ converging to $\lambda_0\in K$.\\ 

\noindent By closure of $I\cap\widehat{U(\mathcal{L})}_K$, this implies that $\lambda_0\in I$, and hence $\lambda_0=0$, and it follows after replacing $g(\bar{y})$ by $\frac{1}{n!}g^{(n)}(\bar{y})$ that $\lambda_n=0$ for all $n$, i.e. $g(\bar{y})=0$. Therefore $I\cap\widehat{U(\mathcal{L}')}_K=(z-\beta)\widehat{U(\mathcal{L}')}_K$, and $I=(z-\beta)\widehat{U(\mathcal{L})}_K$. So since our choice of $I$ was arbitrary, $(z-\beta)\widehat{U(\mathcal{L})}_K$ is maximal as required.\end{proof}

\vspace{0.1in}

\begin{theorem}\label{independence}

Suppose $\mathfrak{g}$ is nilpotent, and let $\lambda\in\mathcal{L}^*$. Then for any polarisations $\mathfrak{b}_1$,$\mathfrak{b}_2$ of $\mathfrak{g}$ at $\lambda$, $\Ann_{\widehat{U(\mathcal{L})}_K}\widehat{D(\lambda)}_{\mathfrak{b}_1}=\Ann_{\widehat{U(\mathcal{L})}_K}\widehat{D(\lambda)}_{\mathfrak{b}_2}$.

\end{theorem}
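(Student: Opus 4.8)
The plan is to follow Dixmier's induction strategy from Section 2.4, inducting on $n=\dim_K\mathfrak{g}$, and to reduce the polarisation-independence of $\Ann_{\widehat{U(\mathcal{L})}_K}\widehat{D(\lambda)}_{\mathfrak{b}}$ to the same statement for strictly smaller nilpotent Lie algebras. The case $n=1$ is immediate, since then $\mathfrak{g}$ is abelian and $\mathfrak{g}$ itself is its only polarisation at $\lambda$. Next, if some non-zero ideal $\mathfrak{a}\trianglelefteq\mathfrak{g}$ satisfies $\lambda(\mathfrak{a})=0$, then by the last part of Lemma \ref{polarisation-properties} every polarisation $\mathfrak{b}_i$ of $\mathfrak{g}$ at $\lambda$ contains $\mathfrak{a}$, hence descends to a polarisation $\mathfrak{b}_i/\mathfrak{a}$ of $\mathfrak{g}_1:=\mathfrak{g}/\mathfrak{a}$ at the induced form $\lambda_1$. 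By Lemma \ref{induced}$(ii)$ the module $\widehat{D(\lambda)}_{\mathfrak{b}_i}$ is identified with $\widehat{D(\lambda_1)}_{\mathfrak{b}_i/\mathfrak{a}}$, and since the annihilator in $\widehat{U(\mathcal{L})}_K$ of a module killed by $\mathfrak{a}\widehat{U(\mathcal{L})}_K$ is the preimage of its annihilator in $\widehat{U(\mathcal{L}_1)}_K$, the result for $(\mathfrak{g},\lambda)$ follows from the inductive hypothesis for $(\mathfrak{g}_1,\lambda_1)$. So we may assume $\lambda(\mathfrak{a})\neq0$ for every non-zero ideal $\mathfrak{a}$; in particular, by Proposition \ref{sub-polarisation}$(i)$ there is a reducing quadruple $(x,y,z,\mathfrak{g}')$ of $\mathfrak{g}$, and $\mathfrak{g}$ is non-abelian, so $n\geq3$.

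The technical heart is a transitivity observation. If $\mathfrak{b}$ is a polarisation of $\mathfrak{g}$ at $\lambda$ and $\mathfrak{b}\subseteq\mathfrak{s}\subseteq\mathfrak{g}$ is any subalgebra, then directly from Definition \ref{polarisation} (for $u\in\mathfrak{s}$, $\lambda([u,\mathfrak{b}])=0$ already forces $u\in\mathfrak{b}$ because $\mathfrak{b}$ polarises $\mathfrak{g}$) $\mathfrak{b}$ is a polarisation of $\mathfrak{s}$ at $\lambda|_{\mathfrak{s}}$; and writing $\mathcal{S}:=\mathfrak{s}\cap\mathcal{L}$, induction in stages gives a $\widehat{U(\mathcal{L})}_K$-module isomorphism $\widehat{D(\lambda)}_{\mathfrak{b}}\cong\widehat{U(\mathcal{L})}_K\otimes_{\widehat{U(\mathcal{S})}_K}\widehat{D(\lambda|_{\mathfrak{s}})}_{\mathfrak{b}}$, checked exactly as in Lemma \ref{induced} using Proposition \ref{Tate-action} (both sides are finitely generated, hence $\pi$-adically complete by Proposition \ref{fin-gen}, with common dense submodule the classical $D(\lambda)$). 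Combining this with Lemma \ref{ind-annihilator} yields the following principle: \emph{if $\mathfrak{b},\mathfrak{b}'$ are polarisations of $\mathfrak{g}$ at $\lambda$ both contained in a single subalgebra $\mathfrak{s}\subsetneq\mathfrak{g}$, then $\Ann_{\widehat{U(\mathcal{L})}_K}\widehat{D(\lambda)}_{\mathfrak{b}}=\Ann_{\widehat{U(\mathcal{L})}_K}\widehat{D(\lambda)}_{\mathfrak{b}'}$} --- because the inductive hypothesis applied to $\mathfrak{s}$ equates the two annihilators of $\widehat{D(\lambda|_{\mathfrak{s}})}_{\mathfrak{b}}$ and $\widehat{D(\lambda|_{\mathfrak{s}})}_{\mathfrak{b}'}$ over $\widehat{U(\mathcal{S})}_K$, and Lemma \ref{ind-annihilator} shows that inducing up preserves this.

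For the inductive step with $n>3$: given polarisations $\mathfrak{b}_1,\mathfrak{b}_2$ of $\mathfrak{g}$ at $\lambda$, apply Proposition \ref{small-pol} to each, obtaining polarisations $\mathfrak{b}_i'\subseteq\mathfrak{g}'$ of $\mathfrak{g}$ at $\lambda$ together with proper subalgebras $\mathfrak{t}_i\subsetneq\mathfrak{g}$ with $\mathfrak{b}_i,\mathfrak{b}_i'\subseteq\mathfrak{t}_i$. The transitivity principle with $\mathfrak{s}=\mathfrak{t}_i$ gives $\Ann\widehat{D(\lambda)}_{\mathfrak{b}_i}=\Ann\widehat{D(\lambda)}_{\mathfrak{b}_i'}$ for $i=1,2$; and since $\mathfrak{b}_1',\mathfrak{b}_2'$ both lie in $\mathfrak{g}'$, they are polarisations of $\mathfrak{g}'$ at $\mu:=\lambda|_{\mathfrak{g}'}$, so the inductive hypothesis for $(\mathfrak{g}',\mathcal{L}',\mu)$ (valid since $\dim_K\mathfrak{g}'=n-1<n$) together with Lemma \ref{ind-annihilator} and the transitivity isomorphism gives $\Ann\widehat{D(\lambda)}_{\mathfrak{b}_1'}=\Ann\widehat{D(\lambda)}_{\mathfrak{b}_2'}$. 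Chaining the equalities gives $\Ann\widehat{D(\lambda)}_{\mathfrak{b}_1}=\Ann\widehat{D(\lambda)}_{\mathfrak{b}_2}$. The remaining case is $n=3$ with $\mathfrak{g}$ non-abelian --- i.e. the Heisenberg algebra --- and, by the standing assumption of this branch, $\lambda$ non-zero on the centre. Choosing an $\mathcal{O}$-basis $\{x,y,z_0\}$ of $\mathcal{L}$ with $z_0$ spanning $\mathcal{L}\cap Z(\mathfrak{g})$ and $[x,y]=\alpha z_0$, $\alpha\in\mathcal{O}\setminus\{0\}$, we have $z_0\in\mathfrak{b}$ for every polarisation $\mathfrak{b}$ (Lemma \ref{polarisation-properties}), so $z_0$ acts on $\widehat{D(\lambda)}_{\mathfrak{b}}$ by the non-zero scalar $\lambda(z_0)\in\mathcal{O}$, whence $(z_0-\lambda(z_0))\widehat{U(\mathcal{L})}_K\subseteq\Ann\widehat{D(\lambda)}_{\mathfrak{b}}\subsetneq\widehat{U(\mathcal{L})}_K$; as $(z_0-\lambda(z_0))\widehat{U(\mathcal{L})}_K$ is maximal by Lemma \ref{Heisenberg}, equality holds and the annihilator is manifestly independent of $\mathfrak{b}$.

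I expect the main obstacle to be pinning down the transitivity isomorphism $\widehat{D(\lambda)}_{\mathfrak{b}}\cong\widehat{U(\mathcal{L})}_K\otimes_{\widehat{U(\mathcal{S})}_K}\widehat{D(\lambda|_{\mathfrak{s}})}_{\mathfrak{b}}$ carefully, since the tensor products here are of topological modules rather than ordinary ones; the point that rescues this is that every module in sight is finitely generated over a Noetherian Banach algebra and hence automatically $\pi$-adically complete by Proposition \ref{fin-gen}, so the naive associativity of $\otimes$ can be upgraded to the completed setting by comparing the two finitely generated complete modules on their common dense classical submodule via a PBW basis, along the lines of the proof of Lemma \ref{induced}. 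The only other place needing a separate argument is the low-dimensional base case, which falls outside the scope of Proposition \ref{small-pol} (that result requires $\dim\mathfrak{g}>3$) and is instead dispatched using the maximality statement of Lemma \ref{Heisenberg}.
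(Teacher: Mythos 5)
Your proposal is correct and follows essentially the same route as the paper: Dixmier's induction with the quotient step via Lemma \ref{induced}, a reducing quadruple from Proposition \ref{sub-polarisation}, the intermediate polarisations $\mathfrak{b}_i'\subseteq\mathfrak{g}'$ and proper subalgebras from Proposition \ref{small-pol} combined with Lemma \ref{ind-annihilator}, and the Heisenberg case settled by Lemma \ref{Heisenberg}. The only differences are organisational — you treat the three-dimensional case inside the branch where $\lambda$ kills no non-zero ideal rather than up front, and you make explicit the induction-in-stages isomorphism and the fact that a polarisation of $\mathfrak{g}$ contained in a subalgebra $\mathfrak{s}$ polarises $\mathfrak{s}$, both of which the paper uses implicitly.
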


\begin{proof}

If $\mathfrak{g}$ is abelian then $\mathfrak{b}_1=\mathfrak{b}_2=\mathfrak{g}$ so the statement is obvious. Since all nilpotent Lie algebras of dimension 1 and 2 are abelian, we may assume that dim$_K\mathfrak{g}\geq 3$.\\

\noindent If $\mathfrak{g}$ is non-abelian and dim$_K\mathfrak{g}=3$ then it is straightforward to show that $\mathcal{L}$ has basis $\{x,y,z\}$ with $z$ central and $[x,y]=\alpha z$ for some $\alpha\in\mathcal{O}\backslash 0$. If $\lambda(z)=0$ then $\lambda$ is a character of $\mathfrak{g}$, so $\mathfrak{g}$ is the only polarisation and the statement is trivially true. If $\lambda(z)\neq 0$, then for any polarisation $\mathfrak{b}$, $z$ acts on $\widehat{D(\lambda)}_{\mathfrak{b}}$ by $\lambda(z)$, and so the $\widehat{U(\mathcal{L})}_K$-annihilator must contain $(z-\lambda(z))$, which is a maximal ideal by Lemma \ref{Heisenberg}, hence this must be the annihilator in all cases as we require.\\

\noindent So from now on, we may assume that $n=$ dim$_K\mathfrak{g}\geq 4$ and we will proceed by Dixmier's induction strategy on $n$:\\

\noindent Suppose first that there exists a non-zero ideal $\mathfrak{a}\trianglelefteq\mathfrak{g}$ such that $\lambda(\mathfrak{a})=0$, so $\lambda$ induces a linear form $\lambda_0$ of $\mathfrak{g}_0:=\mathfrak{g}/\mathfrak{a}$. Setting $\mathcal{A}:=\mathfrak{a}\cap\mathcal{L}$, $\mathcal{L}_0:=\frac{\mathcal{L}}{\mathcal{A}}$, it is clear that $\mathcal{L}_0$ is a Lie lattice in $\mathfrak{g}_0$ and $\lambda_0(\mathcal{L}_0)\subseteq\mathcal{O}$.\\

\noindent Note that $\mathfrak{a}\subseteq\mathfrak{b}_1,\mathfrak{b}_2$ by Lemma \ref{polarisation-properties}, so set $\mathfrak{b}_{i,0}:=\mathfrak{b}_i/\mathfrak{a}$ for $i=1,2$, and $\mathfrak{b}_{1,0}$,$\mathfrak{b}_{2,0}$ are polarisations of $\mathfrak{g}_0$ at $\lambda_0$.\\

\noindent Since dim$_K\mathfrak{g}_0<n$, it follows from induction that $\Ann_{\widehat{U(\mathcal{L}_0)}_K}\widehat{D(\lambda_0)}_{\mathfrak{b}_{1,0}}=\Ann_{\widehat{U(\mathcal{L}_0)}_K}\widehat{D(\lambda_0)}_{\mathfrak{b}_{2,0}}$.\\

\noindent Using Lemma \ref{induced}, we see that for $i=1,2$, $\widehat{D(\lambda_0)}_{\mathfrak{b}_{i,0}}=\widehat{U(\mathcal{L}_0)}_K\otimes_{\widehat{U(\mathcal{B}_{i,0})}_K}K_{\lambda}$ is naturally a $\widehat{U(\mathcal{L})}_K$-module, and that it is isomorphic to $\widehat{U(\mathcal{L})}_K\otimes_{\widehat{U(\mathcal{B}_i)}_K}K_{\lambda}=\widehat{D(\lambda)}_{\mathfrak{b}_i}$.\\

\noindent If $\phi:\widehat{U(\mathcal{L})}_K\twoheadrightarrow\widehat{U(\mathcal{L}_0)}_K$ is the natural surjection, then it is clear that $\Ann_{\widehat{U(\mathcal{L}_0)}_K}\widehat{D(\lambda_0)}_{\mathfrak{b}_{i,0}}=\frac{\Ann_{\widehat{U(\mathcal{L})}_K}\widehat{D(\lambda_0)}_{\mathfrak{b}_{i,0}}}{\ker(\phi)}$ for $i=1,2$. Thus $\Ann_{\widehat{U(\mathcal{L})}_K}\widehat{D(\lambda_0)}_{\mathfrak{b}_{1,0}}=\Ann_{\widehat{U(\mathcal{L})}_K}\widehat{D(\lambda_0)}_{\mathfrak{b}_{2,0}}$, and hence:\\

\noindent $\Ann_{\widehat{U(\mathcal{L})}_K}\widehat{D(\lambda)}_{\mathfrak{b}_1}=\Ann_{\widehat{U(\mathcal{L})}_K}\widehat{D(\lambda_0)}_{\mathfrak{b}_{1,0}}=\Ann_{\widehat{U(\mathcal{L})}_K}\widehat{D(\lambda_0)}_{\mathfrak{b}_{2,0}}=\Ann_{\widehat{U(\mathcal{L})}_K}\widehat{D(\lambda)}_{\mathfrak{b}_2}$ as required.\\

\noindent So from now on, we may assume that $\lambda(\mathfrak{a})\neq 0$ for all non-zero ideals $\mathfrak{a}$ of $\mathfrak{g}$. Using Proposition \ref{sub-polarisation}, we see that this means that there exists a reducing quadruple $(x,y,z,\mathfrak{g}')$ for $\mathfrak{g}$. Since we are assuming dim$_K\mathfrak{g}>3$, we may apply Proposition \ref{small-pol} to get that for each $i=1,2$ there exists a proper subalgebra $\mathfrak{h}_i$ of $\mathfrak{g}$ containing $\mathfrak{b}_i$, and a polarisation $\mathfrak{b}_i'$ of $\mathfrak{g}$ at $\lambda$ contained in $\mathfrak{h}_i$ and $\mathfrak{g}'$.

By induction, since dim$_K\mathfrak{g}'<n$, we get that $\Ann_{\widehat{U(\mathcal{L}')}_K}\widehat{D(\lambda|_{\mathfrak{g}'})}_{\mathfrak{b}_1'}=\Ann_{\widehat{U(\mathcal{L}')}_K}\widehat{D(\lambda|_{\mathfrak{g}'})}_{\mathfrak{b}_2'}$, so by Lemma \ref{ind-annihilator}, $\Ann_{\widehat{U(\mathcal{L})}_K}\widehat{D(\lambda)}_{\mathfrak{b}_1'}=\Ann_{\widehat{U(\mathcal{L})}_K}\widehat{D(\lambda)}_{\mathfrak{b}_2'}$.\\

\noindent Similarly, since $\mathfrak{h}_1,\mathfrak{h}_2$ are proper subalgebras of $\mathfrak{g}$, we also have that $\Ann_{\widehat{U(\mathcal{H}_i)}_K}\widehat{D(\lambda|_{\mathfrak{h_i}})}_{\mathfrak{b}_i}=\Ann_{\widehat{U(\mathcal{H}_i)}_K}\widehat{D(\lambda|_{\mathfrak{h_i}})}_{\mathfrak{b}_i'}$ for $i=1,2$ by induction, and applying Lemma \ref{ind-annihilator} again gives that $\Ann_{\widehat{U(\mathcal{L})}_K}\widehat{D(\lambda)}_{\mathfrak{b}_i}=\Ann_{\widehat{U(\mathcal{L})}_K}\widehat{D(\lambda)}_{\mathfrak{b}_i'}$, and it follows that $\Ann_{\widehat{U(\mathcal{L})}_K}\widehat{D(\lambda)}_{\mathfrak{b}_1}=\Ann_{\widehat{U(\mathcal{L})}_K}\widehat{D(\lambda)}_{\mathfrak{b}_2}$ as required.\end{proof}

\noindent Now that we have established that the annihilator of a Dixmier module does not depend on the choice of polarisation, we can unambiguously make the following definition:

\begin{definition}

Let $F/K$ be a finite extension, and let $\lambda\in\mathcal{L}_F^*:=(\mathcal{L}\otimes_{\mathcal{O}}\mathcal{O}_F)^*$ be a linear form. Define the \emph{Dixmier annihilator} in $\widehat{U(\mathcal{L})}_K$ associated to $\lambda$ to be the two sided ideal $I(\lambda):=\Ann_{\widehat{U(\mathcal{L})}_K}\widehat{D(\lambda)}_F$ (or $I(\lambda)_F$ if it is unclear what the base field is).

\end{definition}

\noindent\textbf{Note:} This definition makes sense because there is a natural embedding $\widehat{U(\mathcal{L})}_K\to\widehat{U(\mathcal{L}_F)}_F$ for any finite extension $F/K$. Using Theorem \ref{aff-Dix1}, we see that $I(\lambda)$ is a primitive ideal of $\widehat{U(\mathcal{L})}_K$ whenever $F=K$.

\section{Locally Closed Ideals}

Now we will study some general ring theoretic properties of the affinoid enveloping algebra.

\subsection{Prime ideals in $\widehat{U(\mathcal{L})}_K$}

\begin{proposition}\label{power-converge}

Let $P$ be a prime ideal of $\widehat{U(\mathcal{L})}_K$, and let $J$ be a two-sided ideal of $R:=\widehat{U(\mathcal{L})}_K/P$. Then if $J\neq 0$ there exists an element $a\in J$ such that $a^n$ does not converge to 0 as $n\rightarrow\infty$.

\end{proposition}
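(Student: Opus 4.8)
The plan is to prove the contrapositive: assume that $a^n\to 0$ for every $a\in J$, and derive a contradiction from the fact that $R$ is a nonzero prime ring with $J\neq 0$. Throughout write $w$ for the quotient on $R$ of the $\pi$-adic filtration of $\widehat{U(\mathcal{L})}_K$; it is complete, separated and Zariskian, and $a^n\to 0$ means exactly $w(a^n)\to\infty$.

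\textbf{Step 1 (from topological nilpotence to quasinilpotence).} Since $J$ is a $K$-vector subspace, $\pi^{-m}a\in J$ for every $m\in\mathbb{N}$ whenever $a\in J$, so by hypothesis $w(\pi^{-m}a^{n})=w(a^{n})-mn\to\infty$ for every fixed $m$; as $n\mapsto w(a^{n})$ is superadditive, $w(a^{n})/n$ converges, and the displayed divergences force $w(a^{n})/n\to\infty$. So the assumption upgrades automatically to: every element of $J$ has spectral radius $0$.

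\textbf{Step 2 (reduction modulo $\pi$).} Let $R^{\circ}=\{x\in R:w(x)\geq 0\}$ be the unit ball; it is a $\pi$-adically complete $\mathcal{O}$-algebra with $R=R^{\circ}[1/\pi]$, and $\bar R:=R^{\circ}/\pi R^{\circ}$ is the degree-$0$ component of $\operatorname{gr}_{w}R$, itself a quotient of the Noetherian ring $\operatorname{gr}_{w}\widehat{U(\mathcal{L})}_K$ (the argument of Proposition~\ref{fin-gen}); hence $\bar R$ and $R^{\circ}$ are Noetherian. Put $N:=J\cap R^{\circ}$, a two-sided ideal of $R^{\circ}$ with $N[1/\pi]=J$. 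Since every element of $N$ is topologically nilpotent, its image $\bar N\subseteq\bar R$ is a nil ideal, hence nilpotent by Levitzki's theorem: $N^{e}\subseteq\pi R^{\circ}$ for some $e$. Because $J$ is a $K$-ideal one checks $\pi R^{\circ}\cap J=\pi N$, so in fact $N^{e}\subseteq\pi N$; an easy induction then gives $N^{ke}\subseteq\pi^{k}N$ for all $k$, whence $\bigcap_{m}N^{m}\subseteq\bigcap_{k}\pi^{k}R^{\circ}=0$ and therefore $\bigcap_{m}J^{m}=0$.

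\textbf{Step 3 (closing the argument; the main obstacle).} Since $R$ is prime and $J\neq 0$ we have $J^{m}\neq 0$ for all $m$, so $J\supseteq J^{e}\supseteq J^{2e}\supseteq\cdots$ is a chain of nonzero ideals with zero intersection; by itself this is only a Krull-type intersection, not yet a contradiction, and Step 2 supplies for each $m$ merely an exponent growing with $m$ rather than a single $E$ with $J^{E}=0$. To finish I would invoke Goldie's theorem: $R$ is prime Noetherian, so it has a simple Artinian classical ring of quotients (in the sense of \cite[Theorem~2.3.6]{McConnell}), and the nonzero ideal $J$ contains a regular element $c$; it then suffices to show a regular element of $R$ cannot have spectral radius $0$. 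Equivalently, one shows the principal symbol $\sigma(c)\in\operatorname{gr}_{w}R$ is not nilpotent: if it is not, then $w(c^{n})=n\,w(c)$ for all $n$, so $c$ is not quasinilpotent, and rescaling $c$ by a suitable power of $\pi$ produces the element $a\in J$ with $a^{n}\not\to 0$. The step I expect to be genuinely hard is ruling out the remaining case, namely that $\sigma(a)$ is nilpotent for every $a\in J$ (so $\operatorname{gr}_{w}J$ lies inside the nilradical of the Noetherian ring $\operatorname{gr}_{w}R$): this must be excluded by combining primeness of $R$ — so that $c$ is a nonzerodivisor — with finer properties of $\widehat{U(\mathcal{L})}_K$, namely its Zariskian/Artin--Rees behaviour and the Auslander--Gorenstein, GK--Cohen--Macaulay nature of its prime quotients, which together should prevent the symbol ideal of a two-sided ideal containing a regular element from being nil.
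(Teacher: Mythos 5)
Your proposal does not close the argument, and you say so yourself: the entire difficulty of the proposition is concentrated in the step you defer at the end of Step 3, namely excluding the possibility that \emph{every} element of $J$ is quasinilpotent (equivalently, that the symbol ideal $\mathrm{gr}_{w}J$ is nil in $\mathrm{gr}_{w}R$, so that even the Goldie-regular element $c\in J$ has nilpotent symbol). Nothing in the Zariskian/Artin--Rees formalism, nor in the Auslander--Gorenstein or GK--Cohen--Macaulay properties you invoke, is shown to rule this out, and it is not a routine deduction: the associated graded of a prime quotient can be far from semiprime, and the inequality $w(c^{n})\geq n\,w(c)$ gives no upper bound on $w(c^n)$ once $\sigma(c)$ is nilpotent. (In the commutative case the needed fact is that the spectral seminorm of a reduced affinoid algebra is a power-multiplicative norm; it is precisely the noncommutative surrogate for this that has to be supplied, and your sketch does not supply it.) A secondary slip: in Step 2 the inference from $\bigcap_{m}N^{m}=0$ to $\bigcap_{m}J^{m}=0$ is invalid, since $J^{m}=N^{m}[1/\pi]$ and inverting $\pi$ destroys the $\pi$-adic smallness $N^{ke}\subseteq\pi^{k}N$; but as you discard that thread anyway, the admitted gap above is the real issue.

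For comparison, the paper's proof shares your outer skeleton (rescaling by powers of the central element $\pi$ to pass from ``$a^{n}\to 0$'' to infinite growth rate, then Levitzki-type nilpotence of a nil ideal plus primeness to force $J=0$), but it resolves the hard step by leaving $R$ and its associated graded ring altogether: using \cite{APB} it constructs a filtration $v$ on the Goldie quotient ring $Q(R)$ which restricts to a valuation on the centre and makes the inclusion $(R,\bar w)\to(Q(R),v)$ continuous, and then applies the growth-rate lemma of \cite{APB} asserting that in the simple Artinian ring $Q(R)$ an element of infinite growth rate is nilpotent. That cited input is exactly what replaces your hoped-for ``finer properties'' argument; without it (or an equivalent), your reduction to the regular element $c$ and its principal symbol does not yield the proposition.
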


\begin{proof}

Let $w$ be the $\pi$-adic filtration on $\widehat{U(\mathcal{L})}_K$ corresponding to the lattice $\widehat{U(\mathcal{L})}$, and let $\bar{w}$ be the quotient filtration on $R:=\widehat{U(\mathcal{L})}_K/P$. Then since $R$ is complete with respect to $\bar{w}$ and gr$_{\bar{w}}$ $R\cong\frac{\text{gr}_w\text{ }\widehat{U(\mathcal{L})}_K}{\text{gr}_w\text{ }P}$, it follows from \cite[Ch \rom{2} Theorem 2.2.1]{LVO} that $\bar{w}$ is a Zariskian filtration on $R$.\\

\noindent Also, since gr$_w$ $\widehat{U(\mathcal{L})}_K\cong U(\mathcal{L}/\pi\mathcal{L})[t,t^{-1}]$ by \cite[Lemma 3.1]{annals}, and it is well known that $U(\mathcal{L}/\pi\mathcal{L})$ is finitely generated over its centre, it follows that gr$_{\bar{w}}$ $R$ is also finitely generated over its centre.\\

Furthermore, $t=$ gr $\pi$ is central of positive degree in gr $R$, and it is non-nilpotent, so it follows that gr $R$ is finitely generated over a central Noetherian subring whose positive part is non-nilpotent. Hence after applying \cite[Theorem 3.3]{APB}, we can find a filtration $v$ on the ring of quotients $Q(R)$ such that $v$ restricts to a valuation of the centre, and the inclusion $(R,\bar{w})\to (Q(R),v)$ is continuous.\\

\noindent Suppose for contradiction that for every element $a\in J$, $a^n\rightarrow 0$ as $n\rightarrow\infty$. Choose an arbitrary $a\in J$, and following \cite[Definition 3.2]{APB}, define the growth rate function $\rho:Q(R)\to\mathbb{R}\cup\{\infty\},q\mapsto\underset{n\rightarrow\infty}{\lim}{\frac{v(q^n)}{n}}$, and let $m:=\ceil{\rho(a)}$.\\

\noindent If we assume that $m<\infty$, then set $b:=\pi^{-(m+1)}a$, and since $\pi$ is central in $R$, we see using \cite[Lemma 3.7($v$)]{APB} that $\rho(b)=\rho(a)-(m+1)v(\pi)\leq \rho(a)-(\rho(a)+1)v(\pi)<0$, and hence $b^n$ does not converge to 0 as $n\rightarrow\infty$ -- contradiction since $b\in J$.\\

\noindent Therefore $m=\rho(a)=\infty$, so since $Q(R)$ is simple and artinian, it follows from \cite[Lemma 3.7($iv$)]{APB} that $a$ is nilpotent.\\

\noindent Since our choice of $a$ was arbitrary, this means that every element of $J$ is nilpotent, and using \cite[Lemma 3.1.14]{Dixmier} it follows that $J$ is a nilpotent ideal of $R$. Since $R$ is prime, this means that $J=0$ as required.\end{proof}

\noindent The following result is the affinoid version of \cite[Proposition 3.1.15]{Dixmier}:

\begin{theorem}\label{semiprimitive}

Let $I$ be a two sided ideal of $\widehat{U(\mathcal{L})}_K$. Then $I$ is semiprime if and only if $I$ is an intersection of primitive ideals.

\end{theorem}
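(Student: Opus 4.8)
The plan is to prove the two implications separately. The reverse implication is formal; the forward one reduces, by Noetherianity, to a statement about prime quotients that is powered by Proposition \ref{power-converge}.

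For the reverse implication, recall that in any ring the annihilator of a simple module is a prime ideal, so every primitive ideal is prime; and an intersection $I=\bigcap_\alpha Q_\alpha$ of prime ideals is semiprime, since a two-sided ideal $A$ with $A^2\subseteq I$ satisfies $A^2\subseteq Q_\alpha$, hence $A\subseteq Q_\alpha$, for every $\alpha$, so $A\subseteq I$, whence $\widehat{U(\mathcal{L})}_K/I$ has no nonzero nilpotent ideals.

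For the forward implication, suppose $I$ is semiprime. Since $\widehat{U(\mathcal{L})}_K$ is Noetherian, $I=P_1\cap\dots\cap P_m$ is the finite intersection of the prime ideals minimal over it. As primitive ideals are prime, a primitive ideal contains $I$ if and only if it contains some $P_j$, so the intersection of all primitive ideals containing $I$ equals $\bigcap_{j}\bigl(\bigcap\{Q\text{ primitive}:Q\supseteq P_j\}\bigr)$. It therefore suffices to show that every prime ideal $P$ of $\widehat{U(\mathcal{L})}_K$ is the intersection of the primitive ideals containing it, for then this expression collapses to $\bigcap_j P_j=I$. Passing to the prime Noetherian Banach $K$-algebra $R:=\widehat{U(\mathcal{L})}_K/P$, the claim to prove is that the Jacobson radical $\mathrm{Jac}(R)$, i.e. the intersection of the left primitive ideals of $R$, vanishes.

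Assume for contradiction that $J:=\mathrm{Jac}(R)\neq 0$. Since $R$ is prime and $J$ a nonzero two-sided ideal, Proposition \ref{power-converge} gives $a\in J$ with $a^n\not\to 0$ in $R$. The contradiction will come from the claim that every element of $\mathrm{Jac}(R)$ is topologically nilpotent ($a^n\to 0$), which then forces $\mathrm{Jac}(R)=0$. To prove that claim I would reuse the apparatus from the proof of Proposition \ref{power-converge} — the complete Zariskian quotient filtration $\bar w$ on $R$, the structure of $\mathrm{gr}\,R$ as a ring finitely generated over a central Noetherian subring with non-nilpotent positive part, and the induced filtration $v$ on the simple Artinian ring $Q(R)$ together with its growth-rate function $\rho$. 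For $a\in\mathrm{Jac}(R)$ every scalar multiple $\lambda a$ ($\lambda\in K$) again lies in $\mathrm{Jac}(R)$, so $1-\lambda a$ is a unit of $R$; analysing $(1-\lambda a)^{-1}$ inside $(Q(R),v)$ for $\lambda=\pi^{-N}$ with $N\gg0$ should rule out $\rho(a)<\infty$, leaving $\rho(a)=\infty$, whence $a$ is nilpotent in $Q(R)$ (as $Q(R)$ is simple Artinian), so $a^N=0$ in $R$ for some $N$ and in particular $a^n\to 0$. The main obstacle is precisely this last step: membership of $\mathrm{Jac}(R)$ only supplies invertibility of $1-\lambda a$, not a convergent geometric series for its inverse, and $v$ need not dominate the $\pi$-adic filtration of $R$, so the growth-rate estimate excluding $\rho(a)<0$ for radical elements must be carried out carefully via the unit $1-a$ in $Q(R)$. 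The remaining ingredients — the reduction to prime quotients and the two routine verifications — are bookkeeping.
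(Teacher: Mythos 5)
Your reverse implication and the reduction of the forward implication to showing $J(R)=0$ for the prime quotient $R=\widehat{U(\mathcal{L})}_K/P$ are fine, and up to that point you follow the paper (which simply notes that a semiprime ideal is an intersection of primes and works with a prime $I$). But the heart of the argument is missing, and you have located the gap yourself: from $a\in J(R)$ you only get that $1-\lambda a$ is invertible in $R$, which gives no control on the $v$-growth of the powers $a^n$ (the inverse need not be a geometric series, and the filtration $v$ on $Q(R)$ is not comparable to the $\pi$-adic one), so the claim that every element of $J(R)$ is topologically nilpotent is not established -- and in fact the paper never proves such a claim. What the paper does instead is an Amitsur-style trick: take $a\in J(R)$ with $a^n\not\to 0$ (Proposition \ref{power-converge}), pass to the Tate algebra $C:=R\langle X\rangle$, which is again a quotient of an affinoid enveloping algebra, namely $C\cong\widehat{U(\mathcal{L}_0)}_K/I\widehat{U(\mathcal{L}_0)}_K$ with $\mathcal{L}_0=\mathcal{L}\times\mathcal{O}$ (Lemma \ref{PBW}), and observe that $1-aX$ cannot be a unit of $C$: its inverse would have to be $\sum_n a^nX^n$, forcing $a^n\to 0$. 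Hence $1-aX$ lies in a maximal left ideal of $C$, giving an irreducible $C$-module $M$ and $0\neq m\in M$ with $(1-aX)m=0$. By Schur's lemma $X$ acts invertibly on $M$, and the affinoid Nullstellensatz \cite[Theorem 6.4.6]{ioan} makes the action of $X^{-1}$ algebraic over $K$; since $am=X^{-1}m$, one produces a polynomial $g(t)=1+b_1t+\cdots+b_nt^n$ over $K$ with $g(a)m=0$, while $g(a)=1+(b_1+b_2a+\cdots+b_na^{n-1})a$ is a unit of $R$ because $a\in J(R)$ -- contradiction.

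So the decisive ingredients are the adjunction of the Tate variable (so that non-convergence of $a^n$ becomes non-invertibility of $1-aX$) together with the Nullstellensatz for irreducible modules over affinoid enveloping algebras; your proposed growth-rate analysis of $(1-\lambda a)^{-1}$ inside $(Q(R),v)$ does not replace them, and the step you flag as the ``main obstacle'' is exactly the unproved core of the forward implication rather than bookkeeping.
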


\begin{proof}

Clearly if $I$ is an intersection of primitive ideals, then it is semiprime, so it remains only to prove the converse, i.e. that if $I$ is semiprime then it is the intersection of primitive ideals.\\

\noindent Since semiprime ideals arise as an intersection of primes, we can assume that $I$ is prime in $\widehat{U(\mathcal{L})}_K$, and we will show that $J(\widehat{U(\mathcal{L})}_K/I)=0$, from which the result follows.\\

\noindent Assume for contradiction that $J:=J(\widehat{U(\mathcal{L})}_K/I)\neq 0$, then since $I$ is prime it follows from Proposition \ref{power-converge} that we can choose an element $a\in J$ such that $a^n$ does not converge to zero as $n\rightarrow\infty$.\\

\noindent Let $R:=\frac{\widehat{U(\mathcal{L})}_K}{I}$, and let $C:=R\langle X\rangle$ be the Tate algebra in one variable over $R$. Then if we set $\mathfrak{g}_0:=\mathfrak{g}\times K$, $\mathcal{L}_0:=\mathcal{L}\times\mathcal{O}$, it is clear that $\mathcal{L}_0$ is a Lie lattice in $\mathfrak{g}_0$ and it follows from Lemma \ref{PBW} that $C\cong\widehat{U(\mathcal{L}_0)}_K/I\widehat{U(\mathcal{L}_0)}_K$.\\

\noindent Consider the element $1-aX\in C$. If this element is a unit, its inverse must have the form $a_0+a_1X+a_2X^2+\cdots$ for some $a_i\in \widehat{U(\mathcal{L})}_K/I$ with $a_n\rightarrow 0$ as $n\rightarrow\infty$. But since $1=(1-aX)(a_0+a_1X+a_2X^2+\cdots)$, it follows that $a_0=1$, $a_1=a$, $a_2=a^2$, $\cdots$, $a_n=a^n$, and hence $a^n\rightarrow 0$ as $n\rightarrow\infty$ -- contradiction.\\

\noindent Therefore $1-aX$ is not a unit in $C$, so there exists a maximal left ideal of $C$ containing $1-aX$, i.e. there exists an irreducible $C$-module $M$ and an element $0\neq m\in M$ such that $(1-aX)m=0$.\\

\noindent Now, $X$ does not act by zero on $M$, otherwise $1-aX$ would act by 1, and we would have $(1-aX)m=m\neq 0$. So using Schur's Lemma, the action of $X$ is invertible, and using \cite[Theorem 6.4.6]{ioan} we see that the action of $X^{-1}$ is algebraic over $K$, i.e. there exists $f(t)=a_0+a_1t+\cdots+a_nt^n$ for some $a_i\in K$ such that $f(X^{-1})=0$, and we may assume that $a_0\neq 0$. So let $g(t):=a_0^{-1}f(t)=1+b_1t+\cdots+b_nt^n$.\\

\noindent Since $aXm=m$, we have that $am=X^{-1}aXm=X^{-1}m$, hence $a^rm=X^{-r}m$ for all $r\in\mathbb{N}$, and thus $g(a)m=g(X^{-1})m=0$.\\

\noindent But $g(a)=1+(b_1+b_2a+\cdots+b_na^{n-1})a$, so since $a\in J(\widehat{U(\mathcal{L})}_K/I)$, this means that $g(a)$ is a unit in $\widehat{U(\mathcal{L})}_K/I$. Therefore, since $m\neq 0$, $g(a)m\neq 0$ -- contradiction.\\

\noindent Therefore $J(\widehat{U(\mathcal{L})}_K/I)=0$ as we require.\end{proof}

\subsection{Towards the Dixmier-Moeglin equivalence}

\noindent Now, recall from the introduction that a prime ideal $P$ of $\widehat{U(\mathcal{L})}_K$ is \emph{locally closed} if 

\begin{center}
$P\neq\cap\{Q\trianglelefteq\widehat{U(\mathcal{L})}_K: Q$ prime,$ P\subsetneq Q\}$.
\end{center}

\begin{proposition}\label{nullstellensatz}

Let $P$ be a prime ideal of $\widehat{U(\mathcal{L})}_K$. Then:

\begin{center}
$P$ is locally closed $\implies$ $P$ is primitive $\implies$ $P$ is weakly rational.
\end{center}

\end{proposition}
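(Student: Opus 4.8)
The plan is to establish the two implications separately, using the structure theory already developed in the paper. For the second implication ($P$ primitive $\Rightarrow$ $P$ weakly rational), I would argue as follows. Suppose $P = \Ann_{\widehat{U(\mathcal{L})}_K} M$ for some irreducible module $M$. By Schur's lemma, $\operatorname{End}_{\widehat{U(\mathcal{L})}_K}(M)$ is a division ring, and the centre $Z(\widehat{U(\mathcal{L})}_K/P)$ embeds into it; more precisely, multiplication by a central element of $\widehat{U(\mathcal{L})}_K/P$ is a $\widehat{U(\mathcal{L})}_K$-endomorphism of $M$, giving a field embedding $Z(\widehat{U(\mathcal{L})}_K/P) \hookrightarrow \operatorname{End}_{\widehat{U(\mathcal{L})}_K}(M)$. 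The key point is that this endomorphism ring is not too large: since $\widehat{U(\mathcal{L})}_K$ is a Noetherian Banach $K$-algebra whose associated graded is an affine $k$-algebra (so a Jacobson ring / satisfies a Nullstellensatz-type statement), any irreducible module has endomorphism ring finite-dimensional — in fact algebraic — over $K$. This is exactly the affinoid Quillen-type lemma; I would cite the Nullstellensatz result for affinoid enveloping algebras (the "ioan" reference \cite{ioan} already invoked in the proof of Theorem~\ref{semiprimitive}, e.g. \cite[Theorem 6.4.6]{ioan}, which says the relevant endomorphisms are algebraic over $K$). It follows that $Z(\widehat{U(\mathcal{L})}_K/P)$ is algebraic over $K$, i.e. $P$ is weakly rational.

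For the first implication ($P$ locally closed $\Rightarrow$ $P$ primitive), I would pass to $R := \widehat{U(\mathcal{L})}_K/P$, which is a prime Noetherian Banach $K$-algebra, and show its Jacobson radical $J(R)$ is zero — equivalently that $0$ is an intersection of primitive ideals of $R$, so in particular (using primeness and a little work) that $0$ itself is primitive. Here Theorem~\ref{semiprimitive} does the heavy lifting: since $P$ is prime, it is semiprime, hence an intersection of primitive ideals of $\widehat{U(\mathcal{L})}_K$, so $J(R) = 0$. Now suppose for contradiction that $P$ is not primitive, so $0$ is not primitive in $R$; then every primitive ideal of $R$ is nonzero, i.e. every primitive ideal strictly contains $P$ when pulled back. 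Since $J(R)=0$, the intersection of all primitive ideals of $R$ is $0$, hence the intersection of all primes of $\widehat{U(\mathcal{L})}_K$ strictly containing $P$ is contained in $P$ (it equals $P$, since each contains $P$) — contradicting that $P$ is locally closed. This gives $P$ primitive.

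The main obstacle is the second implication, and specifically justifying that $\operatorname{End}_{\widehat{U(\mathcal{L})}_K}(M)$ is algebraic over $K$ for an irreducible $M$ — the non-commutative Nullstellensatz in the affinoid setting. Classically (Dixmier 2.6.4, Quillen's lemma) this uses that $U(\mathfrak{g})$ is generated over $K$ by countably many elements and a generic-flatness / Amitsur argument; in the Banach setting one must instead exploit the filtered structure: $\operatorname{gr} \widehat{U(\mathcal{L})}_K \cong U(\mathcal{L}/\pi\mathcal{L})[t,t^{-1}]$ is a finitely generated (hence Jacobson) $k$-algebra, module-finite over its centre, so one can run the valuation/growth-rate machinery of \cite{APB} exactly as in the proof of Proposition~\ref{power-converge} to control endomorphisms. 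I expect the cleanest route is to cite the already-available statement \cite[Theorem 6.4.6]{ioan} (used above) that the action of an invertible central-type element on an irreducible module is algebraic over $K$, and more generally that $\operatorname{End}$ of an irreducible $\widehat{U(\mathcal{L})}_K$-module is a division algebra algebraic over $K$; the rest is then formal. The first implication, by contrast, is essentially a packaging of Theorem~\ref{semiprimitive} together with the definition of locally closed, and should be routine.
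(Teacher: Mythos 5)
Your proposal is correct and follows essentially the same route as the paper: the implication ``primitive $\implies$ weakly rational'' via embedding $Z(\widehat{U(\mathcal{L})}_K/P)$ into $\operatorname{End}_{\widehat{U(\mathcal{L})}_K}M$ and invoking \cite[Theorem 6.4.6]{ioan}, and ``locally closed $\implies$ primitive'' by combining Theorem~\ref{semiprimitive} (prime, hence semiprime, ideals are intersections of primitive ideals) with the definition of locally closed. Your extra worry about justifying the affinoid Nullstellensatz is unnecessary, since the cited result from \cite{ioan} is exactly what the paper uses.
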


\begin{proof}

First, suppose that $P$ is primitive, i.e. $P=$ Ann$_{\widehat{U(\mathcal{L})}_K}M$ for some simple $\widehat{U(\mathcal{L})}_K$-module $M$. Then it follows from \cite[Theorem 6.4.6]{ioan} that every element of  End$_{\widehat{U(\mathcal{L})}_K}M$ is algebraic over $K$. So since $Z(\widehat{U(\mathcal{L})}_K/P)$ is a domain, and embeds into End$_{\widehat{U(\mathcal{L})}_K}M$, it follows that $Z(\widehat{U(\mathcal{L})}_K/P)$ is an algebraic field extension of $K$, and hence $P$ is weakly rational.\\

\noindent Using Theorem \ref{semiprimitive}, we see that if $P$ is equal to an intersection of primitive ideals. So if $P$ is locally closed, i.e. not equal to the intersection of all prime ideals properly containing it, then $P$ must be primitive.\end{proof}

\noindent\textbf{Note:} These implications also hold for primes $P$ in $U(\mathfrak{g})$, and in the case where $\mathfrak{g}$ is nilpotent, $P$ weakly rational $\implies$ $P$ maximal \cite[Proposition 4.7.4]{Dixmier}. We suspect that this is also true in the affinoid case, but this is only known in the case where $\mathfrak{g}$ contains an abelian ideal of codimension 1 \cite[Corollary 1.6.2]{Lewis}.\\

\noindent Using this result, we see that to prove the Dixmier-Moeglin equivalence for $\widehat{U(\mathcal{L})}_K$, it remains to show that weakly rational ideals are locally closed. Taking steps in this direction, the aim of this section is to prove that all locally closed prime ideals $P$ of $\widehat{U(\mathcal{L})}_K$ have the form of a Dixmier annihilator $I(\lambda)_F$ for some finite extension $F$ of $K$.

\begin{proposition}\label{prime-intersection}

Let $\mathfrak{g}$ be a nilpotent $K$-Lie algebra, and let $\mathcal{L}$ be an $\mathcal{O}$-Lie lattice in $\mathfrak{g}$ such that every locally closed prime ideal in $\widehat{U(\mathcal{L})}_K$ has the form $I(\lambda)_F$ for some finite extension $F/K$ and some $K$-linear map $\lambda:\mathfrak{g}\to F$ such that $\lambda(\mathcal{L})\subseteq\mathcal{O}_F$.\\

\noindent Then given any prime ideal $P$ in $\widehat{U(\mathcal{L})}_K$, $P$ arises as an intersection of Dixmier annihilators.

\end{proposition}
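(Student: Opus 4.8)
The plan is to reduce the statement to the general fact that, in a Noetherian ring, every prime ideal is an intersection of \emph{locally closed} prime ideals; once this is established, the hypothesis that every locally closed prime of $\widehat{U(\mathcal{L})}_K$ has the form $I(\lambda)_F$ finishes the proof at once. The first thing I would record is that $\widehat{U(\mathcal{L})}_K$ is Noetherian (already noted in Section 2), so that the set of its two-sided ideals satisfies the ascending chain condition; this is what makes a Noetherian-induction argument available.

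The key step is the following. Let $S$ be the set of prime ideals $P$ of $\widehat{U(\mathcal{L})}_K$ that are \emph{not} equal to an intersection of locally closed prime ideals, and suppose for contradiction that $S \neq \emptyset$. By the ascending chain condition, $S$ has a maximal element $P$. Since a single locally closed prime is (vacuously) an intersection of locally closed primes, $P$ itself is not locally closed, so by the definition of locally closed,
\[
P = \bigcap\{Q \trianglelefteq \widehat{U(\mathcal{L})}_K : Q \text{ prime},\ P \subsetneq Q\}.
\]
For each prime $Q$ appearing on the right we have $Q \supsetneq P$, hence $Q \notin S$ by maximality of $P$ in $S$, so each such $Q$ is an intersection of locally closed primes. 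Substituting these descriptions into the displayed identity exhibits $P$ as an intersection of locally closed primes, contradicting $P \in S$. Therefore $S = \emptyset$, i.e. every prime ideal of $\widehat{U(\mathcal{L})}_K$ is an intersection of locally closed prime ideals. (The case of a maximal ideal $P$ is covered by the usual convention that the intersection over the empty family equals the whole ring, so that maximal ideals are automatically locally closed.)

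Finally, given an arbitrary prime $P$, write it as $P = \bigcap_{j} Q_j$ with each $Q_j$ a locally closed prime, and apply the hypothesis of the proposition to replace each $Q_j$ by an ideal of the form $I(\lambda_j)_{F_j}$; then $P = \bigcap_j I(\lambda_j)_{F_j}$ is an intersection of Dixmier annihilators, as required. I do not expect a genuine obstacle in this argument: the only substantive inputs are the Noetherianity of $\widehat{U(\mathcal{L})}_K$ and the trivial observation about the empty intersection, and all the real content of the proposition is carried by its hypothesis. The only point worth double-checking is that the inductive comparison is taken among \emph{prime} ideals (matching the definition of ``locally closed''), so that maximality of $P$ may legitimately be invoked against each strictly larger prime $Q$.
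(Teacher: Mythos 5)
Your proof is correct, and it takes a somewhat different route from the paper's. The paper first establishes that $\widehat{U(\mathcal{L})}_K$ has finite \emph{classical} Krull dimension (via gr $\widehat{U(\mathcal{L})}_K$ having finite Krull dimension, then \cite[Ch.\rom{1} Theorem 7.1.3]{LVO} and \cite[Lemma 6.4.5]{McConnell}), and then runs an induction on $\dim(P)$, the maximal length of a chain of primes above $P$: primes of dimension $0$ are maximal, hence locally closed, hence Dixmier annihilators, and a non-locally-closed prime of dimension $n$ is the intersection of strictly larger primes, each of smaller dimension. You instead use Noetherian induction on a maximal counterexample, which needs only the ACC on (two-sided) ideals already recorded in Section 2, and you factor the argument through the intermediate statement that every prime is an intersection of locally closed primes before invoking the hypothesis. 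Both arguments are sound and structurally similar (an induction up the poset of primes, using the definition of locally closed at the non-locally-closed primes, plus the convention that maximal ideals are locally closed); yours is more economical in that it avoids the Krull-dimension machinery entirely and works verbatim in any Noetherian ring, while the paper's finiteness of classical Krull dimension is an input it wants anyway elsewhere (e.g. in the proof of Corollary \ref{min-prime}). The one point you flag — that the comparison is made only among prime ideals so that maximality in $S$ applies to each $Q\supsetneq P$ — is indeed the only thing to check, and it goes through.
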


\begin{proof}

Note that since gr $\widehat{U(\mathcal{L})}_K\cong U(\mathcal{L}/\pi\mathcal{L})[t,t^{-1}]$ has finite left and right Krull dimension, it follows  from \cite[Ch.\rom{1} Theorem 7.1.3]{LVO} that $\widehat{U(\mathcal{L})}_K$ has finite left and right Krull dimension. Therefore, using \cite[Lemma 6.4.5]{McConnell}, it follows that $\widehat{U(\mathcal{L})}_K$ has finite \emph{classical Krull dimension}, i.e. there is a finite upper bound on the length of chains of prime ideals in $\widehat{U(\mathcal{L})}_K$.\\

\noindent So, given a prime ideal $P$ of $R$, define the \emph{dimension} $\dim(P)$ of $P$ to be the largest integer $n\geq 0$ such that there exists a chain of prime ideals $P=P_0\subsetneq P_1\subsetneq\cdots\subsetneq P_n$ of $\widehat{U(\mathcal{L})}_K$. We will proceed by induction on $\dim(P)$.\\

\noindent If $\dim(P)=0$, then $P$ is maximal, and hence locally closed, so $P=I(\lambda)_F$ for some finite extension $F$, $\lambda:\mathfrak{g}\to F$ as required. So suppose the result holds whenever $\dim(P)<n$.\\

\noindent If $\dim(P)=n$ then for every prime ideal $Q$ of $\widehat{U(\mathcal{L})}_K$ with $P\subsetneq Q$, $Q$ arises as an intersection of Dixmier annihilators by the inductive hypothesis.\\

\noindent If $P$ is locally closed, then $P$ is a Dixmier annihilator by assumption, otherwise $P$ is equal to the intersection of all prime ideals properly containing it, and hence it is an intersection of Dixmier annihilators as required.\end{proof}

\subsection{Classification}

\noindent Now we will complete the classification of locally closed ideals of $\widehat{U(\mathcal{L})}_K$ in terms of Dixmier annihilators. First, we need some technical results:

\begin{lemma}\label{rational}

Suppose that \emph{dim}$_K\mathfrak{g}>1$, and let $I$ be an ideal of $\widehat{U(\mathcal{L})}_K$. Suppose further that $Z(\widehat{U(\mathcal{L})}_K/I)=K$, and $I\cap\mathfrak{g}=0$. Then $\mathfrak{g}$ has a reducing quadruple $(x,y,z,\mathfrak{g}')$.

\end{lemma}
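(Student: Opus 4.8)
The plan is to build a reducing quadruple inside $\mathfrak{g}$ by exploiting the hypotheses exactly as in Dixmier's classical argument (Proposition \ref{sub-polarisation}), but replacing the linear-form condition ``$\lambda(\mathfrak{a})\neq 0$ for all nonzero ideals $\mathfrak{a}$'' with the ideal-theoretic condition ``$I\cap\mathfrak{g}=0$ and $Z(\widehat{U(\mathcal{L})}_K/I)=K$''. First I would observe that since $\mathfrak{g}$ is nilpotent, its centre $Z(\mathfrak{g})$ is nonzero, so we may pick $0\neq z\in Z(\mathfrak{g})$. The key claim is that $Z(\mathfrak{g})$ is one-dimensional: if $z_1,z_2\in Z(\mathfrak{g})$ were linearly independent, then (after scaling so they lie in $\mathcal{L}$) their images in $\widehat{U(\mathcal{L})}_K/I$ are central, so by $Z(\widehat{U(\mathcal{L})}_K/I)=K$ each $z_i$ maps to a scalar $c_i\in K$; then $c_2 z_1 - c_1 z_2$ is a nonzero element of $\mathfrak{g}$ lying in $I$ (one checks $c_1,c_2$ cannot both be $0$, else $z_i\in I\cap\mathfrak{g}=0$), contradicting $I\cap\mathfrak{g}=0$. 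Hence $Z(\mathfrak{g})=Kz$, and since $\dim_K\mathfrak{g}>1=\dim_K Z(\mathfrak{g})$, $\mathfrak{g}$ is non-abelian.

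Next, using nilpotency, I would choose $y\in\mathfrak{g}$ with $y\notin Z(\mathfrak{g})$ but $[y,\mathfrak{g}]\subseteq Z(\mathfrak{g})=Kz$ (take $y$ in the second term $Z_2(\mathfrak{g})$ of the upper central series, not in $Z(\mathfrak{g})$). Then $\ad(y)\colon\mathfrak{g}\to\mathfrak{g}$ has rank $1$, so $\mathfrak{g}':=\ker(\ad(y))$ is an ideal of codimension $1$ containing both $y$ and $z$, and we may write $\mathfrak{g}=\mathfrak{g}'\oplus Kx$ for some $x$ with $[x,y]\neq 0$, i.e. $[x,y]=\alpha z$ for some $0\neq\alpha\in K$. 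This is exactly the structure needed: $z$ central in $\mathfrak{g}$, $y$ central in $\mathfrak{g}'$ (as $\mathfrak{g}'=\ker(\ad y)$ means $[y,\mathfrak{g}']=0$ and $y\in\mathfrak{g}'$), $y,z\in\mathfrak{g}'$, $x\notin\mathfrak{g}'$. One should double-check $x,y,z$ are all nonzero — $z\neq 0$ and $y\neq 0$ by construction, and $x\neq 0$ since $\dim\mathfrak{g}>\dim\mathfrak{g}'$. So $(x,y,z,\mathfrak{g}')$ is a reducing quadruple for $\mathfrak{g}$.

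The only real subtlety — the ``main obstacle'' — is the one-dimensionality of $Z(\mathfrak{g})$: one must be careful that scaling $z_1,z_2$ into $\mathcal{L}$ is legitimate (multiply by a high power of $\pi$; this doesn't affect membership in $I\cap\mathfrak{g}$ or in $Z(\mathfrak{g})$), and that the image of a central element of $\mathcal{L}$ in the quotient is genuinely central in $\widehat{U(\mathcal{L})}_K/I$ and hence scalar. The latter is immediate since $\mathfrak{g}$ generates $\widehat{U(\mathcal{L})}_K$ topologically, so anything commuting with $\mathfrak{g}$ is central. Everything else is a routine transcription of Proposition \ref{sub-polarisation}(i); no completion or affinoid-analytic input is needed beyond this identification of the centre.
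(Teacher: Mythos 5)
Your proposal is correct and follows essentially the same route as the paper's proof: both deduce $\dim_K Z(\mathfrak{g})=1$ from $Z(\widehat{U(\mathcal{L})}_K/I)=K$ together with $I\cap\mathfrak{g}=0$, then pick $y$ in the second term of the upper central series and take $\mathfrak{g}'=\ker(\ad(y))$ to assemble the reducing quadruple. The extra care you take (scaling into $\mathcal{L}$, checking centrality of the images, ruling out $c_1=c_2=0$) is fine but not needed beyond what the paper already asserts.
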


\begin{proof}

Firstly, suppose $u,v\in\mathfrak{g}$ are central, then $u+I,v+I\in Z(\widehat{U(\mathcal{L})}_K/I)=K$, hence they are $K$-linearly dependent. So there exist non-zero $\alpha,\beta\in K$ such that $\alpha u+\beta v\in I\cap\mathfrak{g}=0$, hence $u,v$ are $K$-linearly dependent in $\mathfrak{g}$.

Since $\mathfrak{g}$ is nilpotent, $Z(\mathfrak{g})\neq 0$, so it follows that $Z(\mathfrak{g})$ has dimension 1, i.e. $Z(\mathfrak{g})=Kz$ for some $z\in Z(\mathfrak{g})$.\\

\noindent So, since dim$_K\mathfrak{g}>1$, $\mathfrak{g}$ is non-abelian, and again using nilpotence of $\mathfrak{g}$, we can find $y\in\mathfrak{g}$ such that $y$ is not central, but $[y,\mathfrak{g}]\subseteq Z(\mathfrak{g})=Kz$.\\

\noindent Let $\mathfrak{g}':=\ker(\ad(y))$. Then $\mathfrak{g}'$ is an ideal of $\mathfrak{g}$, and since $ad(y):\mathfrak{g}\to Kz$ is non-zero, $\mathfrak{g}'$ must have codimension 1 in $\mathfrak{g}$, so $\mathfrak{g}=\mathfrak{g}'\oplus Kx$, and it is clear that $(x,y,z,\mathfrak{g}')$ is a reducing quadruple for $\mathfrak{g}$.\end{proof}

\begin{proposition}\label{ind-centre}

Let $I$ be a two sided ideal of $\widehat{U(\mathcal{L})}_K$ such that $F=Z(\widehat{U(\mathcal{L})}_K/I)$ is a finite field extension of $K$. Then $\widehat{U(\mathcal{L}\otimes_{\mathcal{O}}\mathcal{O}_F)}_F\cong \widehat{U(\mathcal{L})}_K\otimes_KF$, and there exists a surjection $\widehat{U(\mathcal{L}\otimes_{\mathcal{O}}\mathcal{O}_F)}_F\to\widehat{U(\mathcal{L})}_K/I$ of $F$-algebras with kernel containing $I\otimes_KF$.

\end{proposition}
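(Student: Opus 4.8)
The plan is to reduce the statement to a base-change computation together with a short centrality argument; I expect no serious obstacle, only bookkeeping.

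\emph{First,} I would establish the isomorphism $\widehat{U(\mathcal{L}\otimes_{\mathcal{O}}\mathcal{O}_F)}_F\cong\widehat{U(\mathcal{L})}_K\otimes_KF$. Fix an $\mathcal{O}$-basis $\{x_1,\dots,x_d\}$ of $\mathcal{L}$; it is simultaneously an $\mathcal{O}_F$-basis of $\mathcal{L}\otimes_{\mathcal{O}}\mathcal{O}_F$, so Lemma \ref{PBW} gives isomorphisms of topological modules $\widehat{U(\mathcal{L})}_K\cong K\langle x_1,\dots,x_d\rangle$ and $\widehat{U(\mathcal{L}\otimes_{\mathcal{O}}\mathcal{O}_F)}_F\cong F\langle x_1,\dots,x_d\rangle$. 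The multiplication map $\widehat{U(\mathcal{L})}_K\otimes_KF\to\widehat{U(\mathcal{L}\otimes_{\mathcal{O}}\mathcal{O}_F)}_F$, $a\otimes\lambda\mapsto\lambda a$, is an $F$-algebra homomorphism (note $\widehat{U(\mathcal{L})}_K\otimes_KF$ is already complete since $F/K$ is finite), and I would check it is bijective by working in a $K$-basis $e_1,\dots,e_m$ of $F$: the valuation on $F$ restricts to a norm equivalent to the coordinatewise sup norm, so a series $\sum_\alpha\mu_\alpha\underline{x}^\alpha$ with $\mu_\alpha\to 0$ in $F$ splits as $\sum_je_j\big(\sum_\alpha\lambda_{\alpha,j}\underline{x}^\alpha\big)$ with each inner series in $K\langle\underline{x}\rangle$, and injectivity is handled the same way. (Alternatively: $\widehat{U(\mathcal{L})}_K\otimes_KF$ is the completion of $U(\mathfrak{g})\otimes_KF=U(\mathfrak{g}\otimes_KF)$ at the lattice $U(\mathcal{L})\otimes_{\mathcal{O}}\mathcal{O}_F=U(\mathcal{L}\otimes_{\mathcal{O}}\mathcal{O}_F)$, and since $\mathcal{O}_F/\mathcal{O}$ is finite the $\pi$-adic and $\pi_F$-adic topologies on this lattice coincide, so this completion is $\widehat{U(\mathcal{L}\otimes_{\mathcal{O}}\mathcal{O}_F)}_F$ by definition.)

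\emph{Next,} writing $R:=\widehat{U(\mathcal{L})}_K/I$ with quotient map $q$, I observe that $F=Z(R)$ is central in $R$, so $R$ is an $F$-algebra and the $K$-bilinear pairing $(a,\lambda)\mapsto q(a)\lambda$ descends to an $F$-linear map $\Phi\colon\widehat{U(\mathcal{L})}_K\otimes_KF\to R$. Centrality of $F$ makes $\Phi$ multiplicative, since $\Phi\big((a\otimes\lambda)(b\otimes\mu)\big)=q(ab)\lambda\mu=q(a)\lambda\,q(b)\mu$, and $\Phi$ is surjective because $q$ is. Composing with the isomorphism of the first step gives the required $F$-algebra surjection $\widehat{U(\mathcal{L}\otimes_{\mathcal{O}}\mathcal{O}_F)}_F\twoheadrightarrow\widehat{U(\mathcal{L})}_K/I$. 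For the kernel, flatness (indeed freeness) of $F$ over $K$ lets me view $I\otimes_KF$ inside $\widehat{U(\mathcal{L})}_K\otimes_KF$, and every $\sum_ia_i\otimes\lambda_i$ with $a_i\in I$ satisfies $\Phi\big(\sum_ia_i\otimes\lambda_i\big)=\sum_iq(a_i)\lambda_i=0$, so $I\otimes_KF$ lies in the kernel.

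\emph{Main obstacle.} There is essentially none beyond the base-change isomorphism of the first step — i.e.\ that $\pi$-adic completion commutes with the finite scalar extension $\mathcal{O}\to\mathcal{O}_F$; everything else is formal, and the proof really only consists of assembling these observations.
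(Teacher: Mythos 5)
Your proposal is correct and follows essentially the same route as the paper: the base-change isomorphism via compatibility of the lattices $U(\mathcal{L})\otimes_{\mathcal{O}}\mathcal{O}_F\cong U(\mathcal{L}\otimes_{\mathcal{O}}\mathcal{O}_F)$ with their $\pi$-adic filtrations (your parenthetical alternative is exactly the paper's argument, and your PBW/Tate-series splitting is just a more explicit version of it), followed by the observation that centrality of $F$ in $\widehat{U(\mathcal{L})}_K/I$ makes $r\otimes\alpha\mapsto \alpha r+I$ a surjection of $F$-algebras killing $I\otimes_KF$. No gaps.
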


\begin{proof}

To see that $\widehat{U(\mathcal{L}\otimes_{\mathcal{O}}\mathcal{O}_F)}_F\cong \widehat{U(\mathcal{L})}_K\otimes_KF$, note that:

\begin{center}
$U(\mathcal{L})\otimes_{\mathcal{O}}\mathcal{O}_F\leftrightarrow U(\mathcal{L}\otimes_{\mathcal{O}}\mathcal{O}_F)$
\end{center}

\begin{center}
$a\otimes\alpha\mapsto \alpha a$
\end{center}

\begin{center}
$u\otimes\alpha\mapsfrom u\otimes\alpha$
\end{center}

\noindent Are isomorphisms of $\mathcal{O}_F$ algebras, preserving the $\pi$-adic filtration. Hence they induce an isomorphism $\widehat{U(\mathcal{L}\otimes_{\mathcal{O}}\mathcal{O}_F)}\cong \widehat{U(\mathcal{L})}\otimes_{\mathcal{O}}\mathcal{O}_F$, and the result follows.\\

\noindent Since $F=Z(\widehat{U(\mathcal{L})}_K/I)\subseteq \widehat{U(\mathcal{L})}_K/I$, it is clear that $\widehat{U(\mathcal{L})}_K/I$ is an $F$-algebra, and the map $\widehat{U(\mathcal{L})}_K\otimes_KF\to\widehat{U(\mathcal{L})}_K/I,r\otimes (\alpha+I)\mapsto \alpha r+I$ is clearly a surjection of $F$-algebras sending $I\otimes F$ to $0$ as required.\end{proof}

\begin{lemma}\label{Dix-quotient}

Let $\mathfrak{a}$ be an ideal of $\mathfrak{g}$ nilpotent, let $\mathcal{A}:=\mathfrak{a}\cap\mathcal{L}$ and let $\mathcal{L}_0:=\mathcal{L}/\mathcal{A}$. Let $P$ be a prime ideal of $\widehat{U(\mathcal{L})}_K$, containing $\mathfrak{a}$, such that the image $P_0\trianglelefteq\widehat{U(\mathcal{L}_0)}_K$ of $P$ under the surjection $\widehat{U(\mathcal{L})}_K\to\widehat{U(\mathcal{L}_0)}_K$ is a Dixmier annihilator. Then $P$ is a Dixmier annihilator.

\end{lemma}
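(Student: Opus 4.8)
The plan is to lift a Dixmier presentation of the quotient $P_0$ back up along the projection. First I would invoke Lemma~\ref{induced}$(i)$ to get the continuous surjection $\phi\colon\widehat{U(\mathcal{L})}_K\twoheadrightarrow\widehat{U(\mathcal{L}_0)}_K$ with $\ker\phi=\mathfrak{a}\widehat{U(\mathcal{L})}_K$. Since $P$ is a two-sided ideal containing $\mathfrak{a}$, it contains $\mathfrak{a}\widehat{U(\mathcal{L})}_K=\ker\phi$, so $P=\phi^{-1}(P_0)$, and the whole problem reduces to producing a finite extension $F/K$ and a linear form $\lambda\colon\mathfrak{g}\to F$ with $\lambda(\mathcal{L})\subseteq\mathcal{O}_F$ for which $I(\lambda)=\phi^{-1}(P_0)$.

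By hypothesis $P_0=I(\lambda_0)$ for some finite extension $F/K$ and some $\lambda_0\in(\mathcal{L}_0\otimes_{\mathcal{O}}\mathcal{O}_F)^*$. The natural candidate is the pullback $\lambda:=\lambda_0\circ\bar\pi$, where $\bar\pi\colon\mathfrak{g}_F\to(\mathfrak{g}_0)_F$ is the projection with kernel $\mathfrak{a}_F:=\mathfrak{a}\otimes_K F$; then automatically $\lambda(\mathcal{L}\otimes_{\mathcal{O}}\mathcal{O}_F)\subseteq\mathcal{O}_F$ and $\lambda(\mathfrak{a}_F)=0$. Next I would fix a polarisation $\mathfrak{b}_0$ of $(\mathfrak{g}_0)_F$ at $\lambda_0$ (which exists by Proposition~\ref{standard-polarisation}, since $(\mathfrak{g}_0)_F$ is nilpotent) and take $\mathfrak{b}\subseteq\mathfrak{g}_F$ to be its full preimage under $\bar\pi$; then $\mathfrak{a}_F\subseteq\mathfrak{b}$, and, exactly as in the proof of Theorem~\ref{aff-Dix1}, $\mathfrak{b}$ is a polarisation of $\mathfrak{g}_F$ at $\lambda$ (for $u\in\mathfrak{g}_F$ with image $\bar u$ one has $\lambda([u,\mathfrak{b}])=\lambda_0([\bar u,\mathfrak{b}_0])$, so $u$ satisfies the polarisation condition for $\lambda$ iff $\bar u$ does for $\lambda_0$, i.e.\ iff $u\in\mathfrak{b}$; and $\mathfrak{b}$ is solvable, being an extension of the solvable $\mathfrak{b}_0$ by the nilpotent $\mathfrak{a}_F$). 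By Theorem~\ref{independence} the annihilator of a Dixmier module does not depend on the polarisation, so $I(\lambda)=\Ann_{\widehat{U(\mathcal{L})}_K}\widehat{D(\lambda)}_{\mathfrak{b}}$ and $P_0=\Ann_{\widehat{U(\mathcal{L}_0)}_K}\widehat{D(\lambda_0)}_{\mathfrak{b}_0}$, where in each case the acting algebra is embedded into the corresponding algebra over $F$.

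The core step is then to apply Lemma~\ref{induced}$(ii)$ over the base field $F$, with the subalgebra $\mathfrak{b}$, the ideal $\mathfrak{a}_F$, and the one-dimensional (hence finitely generated) $\widehat{U(\mathcal{B}_0)}_F$-module $F_{\lambda_0}$, whose inflation along $\widehat{U(\mathcal{B})}_F\to\widehat{U(\mathcal{B}_0)}_F$ is $F_\lambda$. This produces an isomorphism $\widehat{D(\lambda)}_{\mathfrak{b}}\cong\widehat{D(\lambda_0)}_{\mathfrak{b}_0}$ of modules over $\widehat{U(\mathcal{L}\otimes_{\mathcal{O}}\mathcal{O}_F)}_F$, where $\widehat{D(\lambda_0)}_{\mathfrak{b}_0}$ carries that module structure by inflation along the quotient $\widehat{U(\mathcal{L}\otimes_{\mathcal{O}}\mathcal{O}_F)}_F\twoheadrightarrow\widehat{U(\mathcal{L}_0\otimes_{\mathcal{O}}\mathcal{O}_F)}_F$ (here I would record that extending scalars to $F$ commutes with quotienting by $\mathfrak{a}$, i.e.\ $\mathfrak{a}_F\cap(\mathcal{L}\otimes\mathcal{O}_F)=\mathcal{A}\otimes_{\mathcal{O}}\mathcal{O}_F$, and combine it with Proposition~\ref{ind-centre}). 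Restricting the action to $\widehat{U(\mathcal{L})}_K$ and using that this restricted action factors through $\phi$, together with $\Ann_{\widehat{U(\mathcal{L}_0)}_K}\widehat{D(\lambda_0)}_{\mathfrak{b}_0}=P_0$, one reads off
\begin{center}
$I(\lambda)=\Ann_{\widehat{U(\mathcal{L})}_K}\widehat{D(\lambda)}_{\mathfrak{b}}=\Ann_{\widehat{U(\mathcal{L})}_K}\widehat{D(\lambda_0)}_{\mathfrak{b}_0}=\phi^{-1}(P_0)=P,$
\end{center}
so that $P=I(\lambda)$ is a Dixmier annihilator.

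I expect the only real obstacle to be the bookkeeping around base change: one must check that extending scalars to $F$ commutes with passing to $\mathfrak{g}/\mathfrak{a}$, and that the $\widehat{U(\mathcal{L})}_K$-module structure on $\widehat{D(\lambda_0)}_{\mathfrak{b}_0}$ appearing in the displayed computation is genuinely the inflation of its $\widehat{U(\mathcal{L}_0)}_K$-structure — which comes down to checking that the two continuous $K$-algebra homomorphisms $\widehat{U(\mathcal{L})}_K\to\widehat{U(\mathcal{L}_0\otimes_{\mathcal{O}}\mathcal{O}_F)}_F$ (namely $\phi$ followed by the embedding, and the embedding into $\widehat{U(\mathcal{L}\otimes_{\mathcal{O}}\mathcal{O}_F)}_F$ followed by the quotient) agree on the dense subalgebra $U(\mathcal{L})$, hence coincide. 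Once that is pinned down, everything else is an immediate application of Lemma~\ref{induced}, Proposition~\ref{standard-polarisation}, Proposition~\ref{ind-centre}, and Theorem~\ref{independence}.
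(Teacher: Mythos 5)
Your proposal is correct and follows essentially the same route as the paper: pull $\lambda_0$ back to a form $\lambda$ on $\mathfrak{g}\otimes_K F$ vanishing on $\mathfrak{a}$, take a polarisation containing $\mathfrak{a}_F$ (you build it as a preimage and verify directly, the paper invokes Proposition~\ref{standard-polarisation}/Theorem~\ref{independence}, which is only a cosmetic difference), identify $\widehat{D(\lambda)}_F\cong\widehat{D(\lambda_0)}_F$ via Lemma~\ref{induced}, and transfer the annihilator through $P=\phi^{-1}(P_0)$. The base-change bookkeeping you flag is exactly the routine verification implicit in the paper's argument, so nothing is missing.
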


\begin{proof}

We know that $P_0=Ann_{\widehat{U(\mathcal{L}_0)}_K}\widehat{D(\mu)}_F$ for some finite extension $F/K$, $\mu\in (\mathcal{L}/\mathcal{A})_F^*$. Clearly $\mu$ is induced from a linear form $\lambda$ of $\mathfrak{g}\otimes_KF$ such that $\lambda(\mathcal{L})\subseteq\mathcal{O}_F$ and $\lambda(\mathfrak{a})=0$. We will prove that $P=Ann_{\widehat{U(\mathcal{L})}_K}\widehat{D(\lambda)}_F$.\\

\noindent Choose a polarisation $\mathfrak{b}$ of $\mathfrak{g}\otimes_KF$ at $\lambda$, and since the annihilator is independent of the choice of polarisation by Theorem \ref{independence}, we may assume that $\mathfrak{a}\subseteq\mathfrak{b}$, i.e. $\mathfrak{b}/\mathfrak{a}$ is a polarisation of $\mathfrak{g}/\mathfrak{a}$ at $\mu$. Using Lemma \ref{induced}$(iii)$, we see that $\widehat{D(\lambda)}_F=\widehat{U(\mathcal{L})}_F\otimes_{\widehat{U(\mathcal{B})}_F}F\cong\widehat{U(\mathcal{L}/\mathcal{A})_F}\otimes_{\widehat{U(\mathcal{B}/\mathcal{A})}_F}F=\widehat{D(\mu)}_F$.\\

\noindent Using Lemma \ref{induced}$(i)$, we know that $\widehat{U(\mathcal{L})}/\widehat{\mathfrak{a}U(\mathcal{L})}\cong\widehat{U(\mathcal{L}/\mathcal{A})}_K$, and hence $P_0=P/\mathfrak{a}\widehat{U(\mathcal{L})}_K$. Therefore, since $P_0=\Ann_{\widehat{U(\mathcal{L}_0)}_K}\widehat{D(\mu)}_F$, and hence $P\widehat{D(\mu)}_F=0$, it follows that $P\widehat{D(\lambda)}_F=0$, i.e. $P\subseteq \Ann_{\widehat{U(\mathcal{L})}_K}\widehat{D(\lambda)}_F$. 

Moreover, if $x\widehat{D(\lambda)}_F=0$ then $x\widehat{D(\mu)}_F=0$ so $x+\mathfrak{a}\widehat{U(\mathcal{L})}_K\in P_0$ and hence $x\in P$. Therefore  $P=\Ann_{\widehat{U(\mathcal{L})}_K}\widehat{D(\lambda)}_F$ as required.\end{proof}

\noindent Now we can prove the main theorem of this section, classifying locally closed ideals in terms of Dixmier annihilators.

\begin{theorem}\label{aff-Dix2}

Let $\mathfrak{g}$ be a nilpotent $K$-lie algebra, with $\mathcal{O}$-Lie lattice $\mathcal{L}$, and let $P$ be a locally closed prime ideal of $\widehat{U(\mathcal{L})}_K$. Then there exists a finite extension $F\backslash K$ and a $K$-linear map $\lambda:\mathfrak{g}\to F$ with $\lambda(\mathcal{L})\subseteq\mathcal{O}_F$ such that $P=I(\lambda)_F$.

\end{theorem}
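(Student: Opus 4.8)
The plan is to adapt Dixmier's induction strategy (the proof of Theorem~\ref{Dix2}) to $n:=\dim_K\mathfrak{g}$, the new ingredients being the control theorem~\ref{control} and the scalar-extension Proposition~\ref{ind-centre}. The base case $n=0$ is trivial ($P=0=I(0)_K$). For the inductive step I would first deal with the centre. By Proposition~\ref{nullstellensatz} (which uses Theorem~\ref{semiprimitive}) a locally closed prime $P$ is primitive, say $P=\Ann_{\widehat{U(\mathcal{L})}_K}M$ with $M$ simple, and weakly rational, so $F:=Z(\widehat{U(\mathcal{L})}_K/P)$ is a field algebraic over $K$; the first real task is to show $F$ is \emph{finite} over $K$, using that $F$ embeds into $\operatorname{End}_{\widehat{U(\mathcal{L})}_K}M$ together with the affinoid Quillen lemma \cite[Theorem~6.4.6]{ioan}. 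Granting this, Proposition~\ref{ind-centre} gives a surjection $\widehat{U(\mathcal{L}\otimes_{\mathcal{O}}\mathcal{O}_F)}_F\cong\widehat{U(\mathcal{L})}_K\otimes_KF\twoheadrightarrow\widehat{U(\mathcal{L})}_K/P$ whose kernel $\widetilde P$ is prime with $Z(\widehat{U(\mathcal{L}_F)}_F/\widetilde P)=F$ (now the ground field) and with $P=\varphi^{-1}(\widetilde P)$ for the natural $\varphi:\widehat{U(\mathcal{L})}_K\to\widehat{U(\mathcal{L}_F)}_F$; since $\widetilde P$ lies over $P$ under a finite scalar extension it is again a locally closed prime, and any identity $\widetilde P=I(\lambda)_{F'}$ pulls back to $P=I(\lambda|_{\mathfrak{g}})_{F'}$. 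So we may assume $Z(\widehat{U(\mathcal{L})}_K/P)=K$.

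Next I would strip off ideals. If $0\ne\mathfrak{a}\trianglelefteq\mathfrak{g}$ lies in $P$, then with $\mathcal{A}=\mathfrak{a}\cap\mathcal{L}$, $\mathcal{L}_0=\mathcal{L}/\mathcal{A}$, Lemma~\ref{induced}(i) identifies $\mathfrak{a}\widehat{U(\mathcal{L})}_K\subseteq P$ as the kernel of $\widehat{U(\mathcal{L})}_K\twoheadrightarrow\widehat{U(\mathcal{L}_0)}_K$, so the image $P_0$ of $P$ is a locally closed prime of $\widehat{U(\mathcal{L}_0)}_K$; by the inductive hypothesis $P_0=I(\mu)_{F_0}$, and Lemma~\ref{Dix-quotient} lifts this to $P=I(\lambda)_{F_0}$. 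Hence we may assume no nonzero ideal of $\mathfrak{g}$ is contained in $P$; since $[u,v]=uv-vu\in P\cap\mathfrak{g}$ whenever $u\in\mathfrak{g}$ and $v\in P\cap\mathfrak{g}$, the subspace $P\cap\mathfrak{g}$ is a Lie ideal of $\mathfrak{g}$, forcing $P\cap\mathfrak{g}=0$. If $\dim_K\mathfrak{g}\le 1$ then $\mathfrak{g}$ is abelian, $\widehat{U(\mathcal{L})}_K$ is commutative, $\widehat{U(\mathcal{L})}_K/P=K$, and $P$ is the kernel of the character $\lambda:\mathfrak{g}\to K$ coming from the quotient map (which sends $\mathcal{L}$ into $\mathcal{O}$), i.e.\ $P=I(\lambda)_K$.

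So assume $n\ge 2$, $P\cap\mathfrak{g}=0$, $Z(\widehat{U(\mathcal{L})}_K/P)=K$. Here I would apply Lemma~\ref{rational} to obtain a reducing quadruple $(x,y,z,\mathfrak{g}')$ of $\mathfrak{g}$; rescaling $x,y,z$ by powers of $\pi$ (still a reducing quadruple) we may take them in $\mathcal{L}$, and since $\mathfrak{g}'=\ker\ad(y)$ we may replace $x$ by a generator of $\mathcal{L}/\mathcal{L}'$, where $\mathcal{L}':=\mathcal{L}\cap\mathfrak{g}'$, so that $\mathcal{L}=\mathcal{L}'\oplus\mathcal{O}x$ and still $[x,y]\in Kz\setminus\{0\}$. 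As $Kz$ is a nonzero ideal of $\mathfrak{g}$ we have $z\notin P$, and $z+P\in Z(\widehat{U(\mathcal{L})}_K/P)=K$ is nonzero, hence a unit and in particular not a zero divisor; so Theorem~\ref{control} gives $P=P'\widehat{U(\mathcal{L})}_K=\widehat{U(\mathcal{L})}_KP'$ with $P':=P\cap\widehat{U(\mathcal{L}')}_K$. Every prime $Q\supseteq P$ has $z+Q$ a unit (image of a unit), so $Q$ too is controlled by $\mathcal{L}'$; arguing as in the proof of Theorem~\ref{control} that the PBW-coefficients in $x$ of any element of $Q$ lie in $Q\cap\widehat{U(\mathcal{L}')}_K$, the map $Q\mapsto Q\cap\widehat{U(\mathcal{L}')}_K$ is an inclusion-preserving bijection from the primes of $\widehat{U(\mathcal{L})}_K$ containing $P$ onto the primes of $\widehat{U(\mathcal{L}')}_K$ containing $P'$, compatible with arbitrary intersections and with proper inclusions; hence $P'$ is a locally closed prime of $\widehat{U(\mathcal{L}')}_K$. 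Since $\dim_K\mathfrak{g}'=n-1<n$, the inductive hypothesis gives a finite $F'/K$ and a $K$-linear $\mu:\mathfrak{g}'\to F'$ with $\mu(\mathcal{L}')\subseteq\mathcal{O}_{F'}$ and $P'=I(\mu)_{F'}$.

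To conclude, extend $\mu$ to a $K$-linear $\lambda:\mathfrak{g}\to F'$ with $\lambda(\mathcal{L})\subseteq\mathcal{O}_{F'}$, say $\lambda(x)=0$. Since $z\in Z(\mathfrak{g}')$ acts on $\widehat{D(\mu)}_{F'}$ by the scalar $\mu(z)$ and $z\notin P'=I(\mu)_{F'}$, we get $\lambda(z)=\mu(z)\ne 0$. Choosing a polarisation $\mathfrak{b}\subseteq\mathfrak{g}'_{F'}$ of $\mathfrak{g}'_{F'}$ at $\mu$ (Proposition~\ref{standard-polarisation}), the argument in the proof of Proposition~\ref{sub-polarisation}(ii)---which uses only $\lambda(z)\ne 0$ and $[x,y]\in(F')^{\times}z$---shows $\mathfrak{b}$ is a polarisation of $\mathfrak{g}_{F'}$ at $\lambda$, whence by transitivity of induced modules together with $\widehat{U(\mathcal{L})}_{F'}=\widehat{U(\mathcal{L})}_K\otimes_KF'$ we obtain $\widehat{D(\lambda)}_{F'}\cong\widehat{U(\mathcal{L})}_K\otimes_{\widehat{U(\mathcal{L}')}_K}\widehat{D(\mu)}_{F'}$. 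By Lemma~\ref{ind-annihilator}, $I(\lambda)_{F'}=\Ann_{\widehat{U(\mathcal{L})}_K}\widehat{D(\lambda)}_{F'}$ is the largest two-sided ideal of $\widehat{U(\mathcal{L})}_K$ contained in $\widehat{U(\mathcal{L})}_KP'=P$; since $P$ is already two-sided, $I(\lambda)_{F'}=P$, completing the induction. The step I expect to be hardest is the preliminary one---showing $F=Z(\widehat{U(\mathcal{L})}_K/P)$ is finite over $K$, and that local closedness is preserved under this finite scalar extension; by contrast, descending local closedness from $P$ to $P'$ along the control theorem is routine once the bijection of spectra is set up, and the final assembly via Lemmas~\ref{ind-annihilator} and~\ref{sub-polarisation} is formal.
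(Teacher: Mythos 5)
Your overall skeleton (induction on $\dim_K\mathfrak{g}$, quotienting by $P\cap\mathfrak{g}$, base change to reduce to $Z(\widehat{U(\mathcal{L})}_K/P)=K$, then a reducing quadruple together with Theorem~\ref{control} and Lemma~\ref{ind-annihilator}) is the paper's, but the pivotal step of your inductive argument is wrong. You claim that $Q\mapsto Q\cap\widehat{U(\mathcal{L}')}_K$ is a bijection from the primes of $\widehat{U(\mathcal{L})}_K$ containing $P$ onto the primes of $\widehat{U(\mathcal{L}')}_K$ containing $P':=P\cap\widehat{U(\mathcal{L}')}_K$, and deduce that $P'$ is a locally closed prime, so that the inductive hypothesis gives $P'=I(\mu)_{F'}$ for a \emph{single} Dixmier annihilator. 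Both claims fail. Take $\mathfrak{g}$ the three-dimensional Heisenberg algebra with basis $\{x,y,z\}\subseteq\mathcal{L}$, $[x,y]=\alpha z$, $z$ central, $\mathfrak{g}'=Ky\oplus Kz$, and $P=(z-\beta)\widehat{U(\mathcal{L})}_K$ with $\beta=\lambda(z)\neq 0$: by Lemma~\ref{Heisenberg} $P$ is maximal, so the only prime of $\widehat{U(\mathcal{L})}_K$ containing $P$ is $P$ itself, whereas $P'=(z-\beta)\widehat{U(\mathcal{L}')}_K$ sits inside the commutative Tate algebra $\widehat{U(\mathcal{L}')}_K\cong K\langle y,z\rangle$ and is properly contained in infinitely many maximal ideals $(z-\beta,\,y-\gamma)$. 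Your contraction map is therefore far from surjective; worse, the intersection of all primes strictly containing $P'$ is $P'$ itself (the quotient $K\langle y\rangle$ is Jacobson with zero Jacobson radical), so $P'$ is \emph{not} locally closed even though $P$ is, and $P'$ is not a Dixmier annihilator of the abelian algebra $\widehat{U(\mathcal{L}')}_K$ (those are maximal ideals). So local closedness does not descend along the control theorem, and the inductive hypothesis cannot be applied to $P'$ as you propose. (Even the primality of $Q\cap\widehat{U(\mathcal{L}')}_K$ is not automatic and would need a separate $\ad(x)$-stability argument; the paper only ever uses that $P'$ is semiprime.)

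The paper's way around this is the point you skipped: it does not try to make $P'$ locally closed. Instead, $P'$ is only semiprime; combining the inductive hypothesis with Proposition~\ref{prime-intersection} it is an intersection $\bigcap_{j}I(\mu_j)_{F_j}$ of Dixmier annihilators of $\widehat{U(\mathcal{L}')}_K$; each $\mu_j$ is extended to $\lambda_j$ on $\mathfrak{g}$, and using Lemma~\ref{ind-annihilator} in one direction and Theorem~\ref{control} applied to each prime $I(\lambda_j)_{F_j}$ (where $z$ is invertible modulo the ideal, since $\mu_j(z)\neq 0$) in the other, one proves $P=\bigcap_j I(\lambda_j)_{F_j}$. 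Only at this point is local closedness used, and it is used for $P$ itself at the top level, to conclude $P=I(\lambda_j)_{F_j}$ for some $j$. Your final assembly via Lemma~\ref{ind-annihilator} is essentially the paper's inclusion $P\subseteq I(\lambda)_{F'}$, but it rests on the false premise $P'=I(\mu)_{F'}$. The remaining ingredients you flagged (finiteness of $Z(\widehat{U(\mathcal{L})}_K/P)$ over $K$, which the paper gets from closedness of the centre plus completeness of the quotient, and the behaviour of local closedness under the finite base change of Proposition~\ref{ind-centre}) are handled in the paper essentially as you anticipate, so the genuine gap is the spectral bijection and the descent of local closedness to $P'$.
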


\begin{proof}

We will use induction on $n=$ dim$_{K}\mathfrak{g}$:\\

\noindent First suppose that $n=1$, and hence $\widehat{U(\mathcal{L})}_K\cong K\langle u\rangle$ by Lemma \ref{PBW}. So if $P$ is a locally closed ideal, then it is primitive, and hence maximal since $\widehat{U(\mathcal{L})}_K$ is commutative. So let $F:=\widehat{U(\mathcal{L})}_K/P$, then $F$ is a field.\\

\noindent Furthermore, using \cite[Corollary 2.2.12]{Bosch}, we see that $F$ is a finite extension of $K$, so define $\lambda:\mathfrak{g}\to F,x\mapsto x+P$, and clearly this map is $K$-linear. Also, $\widehat{U(\mathcal{L})}/P\cap \widehat{U(\mathcal{L})}=\mathcal{O}\langle u\rangle/P\cap\mathcal{O}\langle u\rangle$ is a lattice in $F=K\langle u\rangle/P$. Thus $\widehat{U(\mathcal{L})}/P\cap \widehat{U(\mathcal{L})}\subseteq\mathcal{O}_F$ so clearly $\lambda(\mathcal{L})\subseteq\mathcal{O}_F$.\\

\noindent So $\widehat{D(\lambda)}_F=F$, where $x\in\widehat{U(\mathcal{L})}_K$ acts by zero if and only if $\lambda(x)=0$, i.e. if and only if $x\in P$, so $P=\Ann_{\widehat{U(\mathcal{L})}_K}\widehat{D(\lambda)}_F=I(\lambda)_F$ as required.\\

\noindent So now suppose that the result holds whenever $\dim_K\mathfrak{g}<n$.\\

\noindent Again, suppose that $P$ is a locally closed ideal of $\widehat{U(\mathcal{L})}_K$, and let $\mathfrak{a}:=P\cap\mathfrak{g}$, $\mathcal{A}:=\mathfrak{a}\cap\mathcal{L}$. Clearly $\mathfrak{a}$ is an ideal of $\mathfrak{g}$, contained in $P$, and $\mathcal{A}$ is a Lie lattice in $\mathfrak{a}$. We will suppose first that $\mathfrak{a}\neq 0$.\\

\noindent Let $P_0$ be the image of $P$ under the surjection $\widehat{U(\mathcal{L})}_K\to\widehat{U(\mathcal{L}/\mathcal{A})}_K$, then $P_0$ is a locally closed ideal of $\widehat{U(\mathcal{L}/\mathcal{A})}_K$. Since dim$_K\mathfrak{g}/\mathfrak{a}<n$, it follows from induction that $P_0$ is a Dixmier annihilator. Therefore, using Lemma \ref{Dix-quotient}, $P$ is a Dixmier annihilator as required.\\

\noindent So from now on we may assume that $\mathfrak{a}=P\cap\mathfrak{g}=0$.\\

\noindent Since we know by Proposition \ref{nullstellensatz} that $P$ is primitive, it follows from \cite[Theorem 6.4.6]{ioan} that $F=Z(\widehat{U(\mathcal{L})}_K/P)$ is an algebraic field extension of $K$. Since the centre of $\widehat{U(\mathcal{L})}_K/P$ is closed and $\widehat{U(\mathcal{L})}_K/P$ is complete, it follows that $F$ is complete, so it must in fact be a finite extension of $K$.\\ 

\noindent We will assume for now that $F=K$, so applying Lemma \ref{rational}, we see that $\mathfrak{g}$ has a reducing quadruple $(x,y,z,\mathfrak{g}')$. So let $\mathcal{L}':=\mathfrak{g}'\cap\mathcal{L}$, then since $z\notin P$, it is clear that $z+P\in Z(\widehat{U(\mathcal{L})}_K/P)=K$ is not a zero divisor, so using Theorem \ref{control}, we see that $P$ is controlled by $\mathcal{L}'$, i.e. $P=\widehat{U(\mathcal{L})}_K(P\cap\widehat{U(\mathcal{L}')}_K)$.\\

\noindent Let $Q:=P\cap\widehat{U(\mathcal{L}')}_K$, then $Q$ is a semiprime ideal of $\widehat{U(\mathcal{L}')}_K$, so since all locally closed prime ideals in $\widehat{U(\mathcal{L}')}_K$ are Dixmier annihilators by induction, it follows from Proposition \ref{prime-intersection} that all semiprime ideals arise as an intersection of Dixmier annihilators, i.e. there exist finite extensions $F_j/K$, $\mu_j\in(\mathcal{L}')_{F_j}^*$, as $j$ ranges over some indexing set $X$, and 

\begin{center}
$Q=\underset{j\in X}{\bigcap}{I(\mu_j)_{F_j}}$.
\end{center}

\noindent Since $z\notin Q$ and $Z(\widehat{U(\mathcal{L})}_K/P)=K$, there exists $0\neq\beta\in K$ such that $z-\beta\in Q$. Therefore $z-\beta\in I(\mu_j)_{F_j}$ for each $j$. Since $\beta\neq 0$, this means that $z\notin I(\mu_j)_{F_j}$, i.e. $\mu_j(z)\neq 0$.\\

\noindent Now, it is clear that $(x\otimes 1,y\otimes 1,z\otimes 1,\mathfrak{g}'\otimes_KF_j)$ is a reducing quadruple for $\mathfrak{g}\otimes_KF_j$, so applying Lemma \ref{extension} gives that if $\mathfrak{b}$ is a polarisation of $\mathfrak{g}'\otimes_KF_j$ at $\mu_j$ and $\lambda_j$ is an extension of $\mu_j$ to $\mathfrak{g}\otimes_KF_{j}$, then $\mathfrak{b}$ is a polarisation of $\mathfrak{g}\otimes_KF_j$ at $\lambda_j$.\\

\noindent Therefore, $\widehat{D(\lambda_j)}_{F_j}\cong\widehat{U(\mathcal{L})}_{F_j}\otimes_{\widehat{U(\mathcal{L}')}_{F_j}}\widehat{D(\mu_j)}_{F_j}$, so by Lemma \ref{ind-annihilator}, $I(\lambda_j)_{F_j}=\Ann_{\widehat{U(\mathcal{L})}_{F_j}}\widehat{D(\lambda_j)}_{F_j}$ is the largest two-sided ideal of $\widehat{U(\mathcal{L})}_{F_j}$ contained in $\widehat{U(\mathcal{L})}_{F_j}\Ann_{\widehat{U(\mathcal{L}')}_F}\widehat{D(\mu_j)}_{F_j}$.\\

\noindent But $P=\widehat{U(\mathcal{L})}_KQ\subseteq \widehat{U(\mathcal{L})}_KI(\mu_j)_{F_j}$, and by Proposition \ref{ind-centre}, $\widehat{U(\mathcal{L})}_{F_j}=\widehat{U(\mathcal{L})}_K\otimes_K{F_j}$, hence $P\otimes_K{F_j}\subseteq\widehat{U(\mathcal{L})}_{F_j}I(\mu_j)_{F_j}\subseteq\widehat{U(\mathcal{L})}_{F_j}\Ann_{\widehat{U(\mathcal{L}')}_{F_j}}\widehat{D(\mu_j)}_{F_j}$. 

Thus $P\otimes_KF_j\subseteq \Ann_{\widehat{U(\mathcal{L})}_{F_j}}\widehat{D(\lambda_j)}_{F_j}$ and $P\subseteq \Ann_{\widehat{U(\mathcal{L})}_K}\widehat{D(\lambda_j)}_{F_j}=I(\lambda_j)_{F_j}$.\\

\noindent Furthermore, given $r\in\underset{j\in X}{\bigcap}{I(\lambda_j)_{F_j}}$, we have that $r=\underset{i\geq 0}{\sum}{x^ir_i}$ for some $r_i\in\widehat{U(\mathcal{L}')}_K$ by Lemma \ref{PBW}, with $r_i\rightarrow 0$ as $i\rightarrow\infty$. Then since each $I(\lambda_j)_{F_j}$ is a prime ideal of $\widehat{U(\mathcal{L})}_K$, and $z\notin I(\lambda_j)_{F_j}$, it follows from Theorem \ref{control} that each $r_i$ lies in $I(\lambda_j)_{F_j}$ for every $j$.\\

\noindent This means that $r_i\widehat{D(\lambda_j)}_{F_j}=0$ for all $i,j$, so $r_i\widehat{D(\mu_j)}_{F_j}=0$ and thus $r_i\in\underset{j\in X}{\bigcap}{I(\mu_j)_{F_j}}=Q$ for every $i$. Therefore $r\in \widehat{U(\mathcal{L})}_K Q=P$. Since our choice of $r$ was arbitrary, it follows that:

\begin{center}
$P=\underset{j\in X}{\bigcap}{I(\lambda_j)_{F_j}}$.
\end{center}

\noindent Since $P$ is locally closed and each $I(\lambda_j)_{F_j}$ is a prime ideal of $\widehat{U(\mathcal{L})}_K$ containing $P$, it follows that $P=I(\lambda_j)_{F_j}$ for some $j\in X$ as we require.\\

\noindent Finally, take $P$ to be a general locally closed prime ideal. Then  $F=Z(\widehat{U(\mathcal{L})}_K/P)$ is a finite extension of $K$, so let $\mathfrak{g}_0:=\mathfrak{g}\otimes_KF$, $\mathcal{L}_0:=\mathcal{L}\otimes_{\mathcal{O}}\mathcal{O}_F$. Then dim$_F\mathfrak{g}_0=$ dim$_K\mathfrak{g}=n$, $\mathcal{L}_0$ is a Lie lattice in $\mathfrak{g}_0$, and by Proposition \ref{ind-centre}, there exists a surjection of $F$-algebras $\widehat{U(\mathcal{L}_0)}_F=\widehat{U(\mathcal{L})}_K\otimes_KF\twoheadrightarrow\widehat{U(\mathcal{L})}_K/P$ whose kernel contains $P\otimes_KF$. Let $J$ be this kernel.\\

\noindent Then $J$ is a locally closed prime ideal of $\widehat{U(\mathcal{L}_0)}_F$ and $\widehat{U(\mathcal{L}_0)}_F/J\cong\widehat{U(\mathcal{L})}_K/P$. But $Z(\widehat{U(\mathcal{L})}_F/J)\cong Z(\widehat{U(\mathcal{L})}_K/P)=F$ so it follows from the above discussion that $J=\Ann_{\widehat{U(\mathcal{L}_0)}_F}\widehat{D(\lambda)}_{F'}$ for some finite extension $F'/F$ and some linear form $\lambda$ of $\mathfrak{g}_0\otimes_FF'$ such that $\lambda(\mathcal{L}_0)\subseteq\mathcal{O}_{F'}$.\\

\noindent It is clear that $J\cap\widehat{U(\mathcal{L})}_K=P$, and hence $P=\Ann_{\widehat{U(\mathcal{L})}_K}\widehat{D(\lambda)}_{F'}=I(\lambda)_{F'}$ as required.\end{proof}

\begin{corollary}\label{Dix-intersection}

Let $\mathcal{L}$ be a Lie lattice in $\mathfrak{g}$ nilpotent. Then given a prime ideal $P$ of $\widehat{U(\mathcal{L})}_K$, $P$ arises as an intersection of Dixmier annihilators.

\end{corollary}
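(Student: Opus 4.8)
The plan is to combine the just-proved classification of locally closed primes (Theorem \ref{aff-Dix2}) with the dimension-reduction machinery of Proposition \ref{prime-intersection}. Indeed, Theorem \ref{aff-Dix2} establishes precisely the hypothesis of Proposition \ref{prime-intersection}: every locally closed prime ideal of $\widehat{U(\mathcal{L})}_K$ has the form $I(\lambda)_F$ for some finite extension $F/K$ and some $K$-linear $\lambda:\mathfrak{g}\to F$ with $\lambda(\mathcal{L})\subseteq\mathcal{O}_F$. So the corollary follows immediately by applying Proposition \ref{prime-intersection}.

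Concretely, I would argue as follows. Let $P$ be a prime ideal of $\widehat{U(\mathcal{L})}_K$. By Theorem \ref{aff-Dix2}, every locally closed prime of $\widehat{U(\mathcal{L})}_K$ is a Dixmier annihilator $I(\lambda)_F$, so the hypothesis of Proposition \ref{prime-intersection} is satisfied. Proposition \ref{prime-intersection} then gives directly that $P$ arises as an intersection of Dixmier annihilators. That is the whole proof.

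There is really no main obstacle here: the corollary is a formal consequence of the two results immediately preceding it, and all the genuine work (the finite Krull dimension argument, the induction on $\dim(P)$ reducing an arbitrary prime to an intersection of strictly larger primes, and the classification of the locally closed primes themselves) has already been carried out in Proposition \ref{prime-intersection} and Theorem \ref{aff-Dix2}. The one thing worth double-checking is that the phrasing in Proposition \ref{prime-intersection} matches what Theorem \ref{aff-Dix2} delivers — namely that $\lambda$ is a $K$-linear map $\mathfrak{g}\to F$ with $\lambda(\mathcal{L})\subseteq\mathcal{O}_F$ rather than, say, an $\mathcal{O}_F$-linear map on $\mathcal{L}\otimes_{\mathcal{O}}\mathcal{O}_F$ — but these describe the same data and the notation $I(\lambda)_F$ is already set up to cover it.

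\begin{proof}
By Theorem \ref{aff-Dix2}, every locally closed prime ideal of $\widehat{U(\mathcal{L})}_K$ has the form $I(\lambda)_F$ for some finite extension $F/K$ and some $K$-linear map $\lambda:\mathfrak{g}\to F$ with $\lambda(\mathcal{L})\subseteq\mathcal{O}_F$. Hence $\mathcal{L}$ satisfies the hypothesis of Proposition \ref{prime-intersection}, and so by that proposition, any prime ideal $P$ of $\widehat{U(\mathcal{L})}_K$ arises as an intersection of Dixmier annihilators.
\end{proof}
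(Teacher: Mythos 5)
Your proof is correct and matches the paper exactly: the paper's own argument is precisely that the corollary is immediate from Theorem \ref{aff-Dix2} together with Proposition \ref{prime-intersection}.
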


\begin{proof}

This is immediate from Theorem \ref{aff-Dix2} and Proposition \ref{prime-intersection}.\end{proof}

\section{Weakly Rational Ideals}

In this section, we will prove our main result, Theorem \ref{A}, which allows us to describe all weakly rational ideals in $\widehat{U(\mathcal{L})}_K$ in terms of Dixmier annihilators.

\subsection{Dixmier Annihilators under Isomorphisms}

In section 2, we saw how the adjoint algebraic group $\mathbb{G}$ of $\mathfrak{g}$ acts on $\mathfrak{g}^*$ via the coadjoint action. More generally, if $\sigma$ is any Lie automorphism of $\mathfrak{g}$, then for any $\lambda$ in $\mathfrak{g}^*$, we can similarly define $\sigma\cdot\lambda:\mathfrak{g}\to K,u\mapsto\lambda(\sigma^{-1}(u))$. Note that if $\mathcal{L}$ is a Lie lattice in $\mathfrak{g}$ then $\sigma(\mathcal{L})$ also is.\\

\noindent More generally, let us suppose that $\mathcal{L}_1,\mathcal{L}_2$ are Lie lattices in $\mathfrak{g}$ and $\sigma:\mathcal{L}_1\to\mathcal{L}_2$ is an $\mathcal{O}$-linear Lie isomorphism. Then $\sigma$ extends to a $K$-linear isomorphism $\sigma:\widehat{U(\mathcal{L}_1)}_K\to\widehat{U(\mathcal{L}_2)}_K$ of affinoid enveloping algebras. 

Given a linear form $\lambda\in$ Hom$_{\mathcal{O}}(\mathcal{L}_1,\mathcal{O})$, $\sigma\cdot\lambda\in$ Hom$_{\mathcal{O}}(\mathcal{L}_2,\mathcal{O})$, and if $\mathcal{B}_1$ is a polarisation at $\lambda$ then $\mathcal{B}_2=\sigma\mathcal{B}_1$ is a polarisation at $\sigma\cdot\lambda$.

\begin{lemma}\label{transport-structure}

Let $I(\lambda):=$ Ann$_{\widehat{U(\mathcal{L}_1)}_K}\widehat{D(\lambda)}_{\mathcal{B}_1}\trianglelefteq\widehat{U(\mathcal{L}_1)}_K$, $I(\sigma\cdot\lambda):=$ Ann$_{\widehat{U(\mathcal{L}_2)}_K}\widehat{D(\sigma\cdot\lambda)}_{\mathcal{B}_2}\trianglelefteq\widehat{U(\mathcal{L}_2)}_K$. Then $\sigma(I(\lambda))=I(\sigma\cdot\lambda)$

\end{lemma}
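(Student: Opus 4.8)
The plan is to use the isomorphism $\sigma:\widehat{U(\mathcal{L}_1)}_K\to\widehat{U(\mathcal{L}_2)}_K$ to transport the Dixmier module $\widehat{D(\lambda)}_{\mathcal{B}_1}$ to a $\widehat{U(\mathcal{L}_2)}_K$-module and to identify this transported module with $\widehat{D(\sigma\cdot\lambda)}_{\mathcal{B}_2}$. Concretely, given a $\widehat{U(\mathcal{L}_1)}_K$-module $M$, we can form the pullback $M^\sigma$, which is $M$ as a $K$-vector space but with $\widehat{U(\mathcal{L}_2)}_K$ acting via $r\cdot m:=\sigma^{-1}(r)m$. The first step is the purely formal observation that $\Ann_{\widehat{U(\mathcal{L}_2)}_K}M^\sigma=\sigma\bigl(\Ann_{\widehat{U(\mathcal{L}_1)}_K}M\bigr)$: indeed $r\cdot M^\sigma=0$ iff $\sigma^{-1}(r)M=0$ iff $\sigma^{-1}(r)\in\Ann M$ iff $r\in\sigma(\Ann M)$. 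Applying this with $M=\widehat{D(\lambda)}_{\mathcal{B}_1}$ reduces the lemma to producing an isomorphism of $\widehat{U(\mathcal{L}_2)}_K$-modules
\[
\bigl(\widehat{D(\lambda)}_{\mathcal{B}_1}\bigr)^\sigma\;\cong\;\widehat{D(\sigma\cdot\lambda)}_{\mathcal{B}_2}.
\]

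The second step is to construct this isomorphism. Since $\sigma(\mathcal{B}_1)=\mathcal{B}_2$, the restriction of $\sigma$ identifies $\widehat{U(\mathcal{B}_1)}_K$ with $\widehat{U(\mathcal{B}_2)}_K$, and because $(\sigma\cdot\lambda)(\sigma(x))=\lambda(x)$ for $x\in\mathcal{B}_1$, it intertwines the character $\lambda$ on $\mathcal{B}_1$ with the character $\sigma\cdot\lambda$ on $\mathcal{B}_2$; hence $\sigma$ carries the one-dimensional module $K_\lambda$ to $K_{\sigma\cdot\lambda}$. Now use the defining tensor-product presentation $\widehat{D(\lambda)}_{\mathcal{B}_1}=\widehat{U(\mathcal{L}_1)}_K\otimes_{\widehat{U(\mathcal{B}_1)}_K}K_\lambda$: the map $r\otimes v\mapsto \sigma(r)\otimes v$ is well defined because $\sigma$ is a ring isomorphism matching up the two subalgebras and the two characters, it is $K$-linear and bijective (with inverse induced by $\sigma^{-1}$), and it is $\widehat{U(\mathcal{L}_2)}_K$-linear precisely with respect to the $\sigma$-twisted action on the source, i.e.\ it gives an isomorphism $\widehat{D(\sigma\cdot\lambda)}_{\mathcal{B}_2}\cong\bigl(\widehat{D(\lambda)}_{\mathcal{B}_1}\bigr)^\sigma$. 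One small point to check is continuity/completeness: $\sigma$ is an isomorphism of Banach $K$-algebras (it preserves the $\pi$-adic filtrations up to a bounded shift, since $\sigma(\mathcal{L}_1)=\mathcal{L}_2$ implies $\sigma$ and $\sigma^{-1}$ are isometries for the respective lattice filtrations), so it induces a homeomorphism on the completed tensor products, and the map above is automatically a topological isomorphism; by Proposition \ref{fin-gen} both modules are complete, so no convergence subtlety arises.

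Combining the two steps: $I(\sigma\cdot\lambda)=\Ann_{\widehat{U(\mathcal{L}_2)}_K}\widehat{D(\sigma\cdot\lambda)}_{\mathcal{B}_2}=\Ann_{\widehat{U(\mathcal{L}_2)}_K}\bigl(\widehat{D(\lambda)}_{\mathcal{B}_1}\bigr)^\sigma=\sigma\bigl(\Ann_{\widehat{U(\mathcal{L}_1)}_K}\widehat{D(\lambda)}_{\mathcal{B}_1}\bigr)=\sigma(I(\lambda))$, as required. I do not expect a genuine obstacle here; the only mild care needed is in verifying that $\sigma$ really does restrict to an isomorphism $\widehat{U(\mathcal{B}_1)}_K\to\widehat{U(\mathcal{B}_2)}_K$ compatible with the characters, and in checking the well-definedness of $r\otimes v\mapsto\sigma(r)\otimes v$ on the balanced tensor product — both are routine once one notes that an $\mathcal{O}$-linear Lie isomorphism $\mathcal{L}_1\to\mathcal{L}_2$ restricting to a bijection $\mathcal{B}_1\to\mathcal{B}_2$ functorially induces all the ring and module isomorphisms in sight.
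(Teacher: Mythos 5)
Your proposal is correct and follows essentially the same route as the paper: the core of both arguments is the map $r\otimes v\mapsto\sigma(r)\otimes v$ on the tensor-product presentations, shown to be a $K$-linear bijection intertwining the actions semilinearly ($\Theta(rm)=\sigma(r)\Theta(m)$), from which $\sigma(I(\lambda))=I(\sigma\cdot\lambda)$ follows immediately. Your reformulation via the twisted module $M^\sigma$ and your remarks on well-definedness and completeness are just a slightly more explicit packaging of the same argument.
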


\begin{proof}

By definition, $\widehat{D(\lambda)}_{\mathcal{B}_1}=\widehat{U(\mathcal{L}_1)}_K\otimes_{\widehat{U(\mathcal{B}_1)}_K}K_{\lambda}$ and $\widehat{D(\sigma\cdot\lambda)}_{\mathcal{B}_2}=\widehat{U(\mathcal{L}_2)}_K\otimes_{\widehat{U(\mathcal{B}_2)}_K}K_{\sigma\cdot\lambda}=\widehat{U(\sigma\mathcal{L}_1)}_K\otimes_{\widehat{U(\sigma\mathcal{B}_1)}_K}K_{\sigma\cdot\lambda}$. So consider the map $\Theta:\widehat{D(\lambda)}_{\mathcal{B}_1}\to \widehat{D(\sigma\cdot\lambda)}_{\mathcal{B}_2},x\otimes v\mapsto \sigma(x)\otimes v$.\\

\noindent We will show that $\Theta$ is a $K$-linear isomorphism such that $\Theta(xm)=\sigma(x)\Theta(m)$ for all $x\in\widehat{U(\mathcal{L})}_K$, $m\in\widehat{D(\lambda)}_{\mathcal{B}_1}$. It will follow from this that $x\widehat{D(\lambda)}_{\mathcal{B}_1}=0$ if and only if $\sigma(x)\widehat{D(\sigma\cdot\lambda)}_{\mathcal{B}_2}=0$, and hence $\sigma(I(\lambda))=I(\sigma\cdot\lambda)$ as required.\\

\noindent It is clear that $\Theta$ is $K$-linear, and that it has an inverse defined by $x\otimes v\mapsto \sigma^{-1}(x)\otimes v$, hence it is an isomorphism of vector spaces.\\

\noindent Finally, $\Theta(x(y\otimes v))=\Theta(xy\otimes v)=\sigma(xy)\otimes v=\sigma(x)(\sigma(y)\otimes v)=\sigma(x)\Theta(y\otimes v)$.\end{proof}

\noindent This result becomes particularly useful when comparing Dixmier annihilators, particularly using the following lemma.

\begin{lemma}\label{orbit}

Suppose $\lambda,\mu\in$ Hom$_{\mathcal{O}}(\mathcal{L},\mathcal{O})$ such that $I(\lambda)\cap U(\mathfrak{g})= I(\mu)\cap U(\mathfrak{g})$, e.g. if  $I(\lambda)\subseteq I(\mu)$. Then there exists $g\in\mathbb{G}(K)$ such that $\mu=g\cdot\lambda$.

\end{lemma}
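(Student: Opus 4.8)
The plan is to reduce the statement to the classical result of Dixmier, \cite[Theorem 6.2.4]{Dixmier}, which says exactly that two linear forms on a nilpotent Lie algebra $\mathfrak{g}$ have the same Dixmier annihilator in $U(\mathfrak{g})$ if and only if they lie in the same coadjoint $\mathbb{G}(K)$-orbit. So the whole content of the lemma is the \emph{comparison} between the affinoid Dixmier annihilator $I(\lambda) \trianglelefteq \widehat{U(\mathcal{L})}_K$ and the classical Dixmier annihilator $\Ann_{U(\mathfrak{g})} D(\lambda) \trianglelefteq U(\mathfrak{g})$, together with the observation that the hypothesis is genuinely about the classical picture. First I would record the fact — already noted after Proposition \ref{explicit-formula} and implicit in the remarks following Definition \ref{aff-Dixmier-module} — that $\widehat{D(\lambda)}$ is the $\pi$-adic completion of the classical Dixmier module $D(\lambda)$, and that the natural map $U(\mathfrak{g}) \to \widehat{U(\mathcal{L})}_K$ intertwines the $U(\mathfrak{g})$-action on $D(\lambda)$ with the $\widehat{U(\mathcal{L})}_K$-action on $\widehat{D(\lambda)}$ via the dense inclusion $D(\lambda) \hookrightarrow \widehat{D(\lambda)}$. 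Since $D(\lambda)$ is dense in $\widehat{D(\lambda)}$ and the action of $\widehat{U(\mathcal{L})}_K$ is continuous, an element $a \in U(\mathfrak{g})$ kills $D(\lambda)$ if and only if its image in $\widehat{U(\mathcal{L})}_K$ kills $\widehat{D(\lambda)}$; in other words
\[
I(\lambda) \cap U(\mathfrak{g}) = \Ann_{U(\mathfrak{g})} D(\lambda).
\]

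Granting this identity, the argument is short. The hypothesis $I(\lambda) \cap U(\mathfrak{g}) = I(\mu) \cap U(\mathfrak{g})$ translates into $\Ann_{U(\mathfrak{g})} D(\lambda) = \Ann_{U(\mathfrak{g})} D(\mu)$, and then \cite[Theorem 6.2.4]{Dixmier} produces $g \in \mathbb{G}(K)$ with $\mu = g \cdot \lambda$. (Strictly speaking one should check that $\lambda, \mu$ taking values in $\mathcal{O}$ rather than merely in $K$ causes no trouble: Dixmier's theorem is about forms valued in the base field $K$, and restricting the image to the valuation ring is no loss since $\mathbb{G}(K)$ acts on $\mathfrak{g}^*$ and the orbit of $\lambda$ meets $\mathrm{Hom}_{\mathcal{O}}(\mathcal{L},\mathcal{O})$ — indeed it contains $\lambda$ itself.) For the parenthetical "e.g. if $I(\lambda) \subseteq I(\mu)$": if $I(\lambda) \subseteq I(\mu)$ then $\Ann_{U(\mathfrak{g})} D(\lambda) \subseteq \Ann_{U(\mathfrak{g})} D(\mu)$, but by \cite[Proposition 6.2.2]{Dixmier} both of these classical annihilators are the kernels of maps onto a Weyl algebra $A_r(K)$, and in particular are maximal among the primitive ideals of their respective "weights"; Dixmier's dimension theory (the weight of $\Ann D(\lambda)$ equals the codimension of a polarisation, which is determined by the rank of the form $\lambda([\cdot,\cdot])$, an invariant of the coadjoint orbit) forces the two annihilators to have equal weight, hence to be equal, so the inclusion is automatically an equality and the previous case applies. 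I would state this containment-implies-equality step as a short remark rather than belabour it.

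The main obstacle, and the step deserving the most care, is the identity $I(\lambda) \cap U(\mathfrak{g}) = \Ann_{U(\mathfrak{g})} D(\lambda)$. The inclusion $\subseteq$ is immediate (an element of $\widehat{U(\mathcal{L})}_K$ killing $\widehat{D(\lambda)}$ in particular kills the dense submodule $D(\lambda)$, and $U(\mathfrak{g})$ preserves $D(\lambda)$). For $\supseteq$ one needs that an element $a \in U(\mathfrak{g})$ annihilating $D(\lambda)$ also annihilates the completion $\widehat{D(\lambda)}$; this follows because $\widehat{D(\lambda)}$ is the completion of $D(\lambda)$ with respect to a lattice stable under $\mathcal{L}$ (Proposition \ref{fin-gen}), the action of $a$ is continuous for the $\pi$-adic topology, and it vanishes on the dense subset $D(\lambda)$. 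Concretely, using the explicit realisation $\widehat{D(\lambda)} \cong K\langle u_1,\dots,u_r\rangle$ of Proposition \ref{Tate-action} together with the explicit formula of Proposition \ref{explicit-formula}, the action of any $a \in U(\mathfrak{g})$ lands in the Weyl algebra $A_r(K) \subseteq \mathrm{End}_K K\langle u_1,\dots,u_r\rangle$, and $A_r(K)$ acts faithfully on the polynomial ring $k[u_1,\dots,u_r]$ dense in $K\langle u_1,\dots,u_r\rangle$ (Lemma \ref{endo}, Lemma \ref{aff-endo}); so $a$ kills $D(\lambda) = k[u_1,\dots,u_r]$ if and only if it is zero as an element of $A_r(K)$ if and only if it kills all of $\widehat{D(\lambda)}$. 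This gives both inclusions cleanly and is the route I would write up.
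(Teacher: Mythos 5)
Your main argument is essentially the paper's own: density of $D(\lambda)$ in $\widehat{D(\lambda)}$ gives $I(\lambda)\cap U(\mathfrak{g})=\Ann_{U(\mathfrak{g})}D(\lambda)$, and then the classical result of Dixmier (the paper cites \cite[Proposition 6.2.3]{Dixmier}; your citation of the orbit--primitive ideal correspondence serves the same purpose) yields $\mu=g\cdot\lambda$ for some $g\in\mathbb{G}(K)$. Your more detailed justification of why an element of $U(\mathfrak{g})$ killing $D(\lambda)$ also kills the completion is a correct elaboration of the paper's one-line density remark, so the main case is fine.

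Your treatment of the parenthetical case $I(\lambda)\subseteq I(\mu)$, however, is circular as written. You argue that the two classical annihilators have equal weight because the weight is determined by the rank of the form $\lambda([\cdot,\cdot])$, ``an invariant of the coadjoint orbit'' --- but whether $\lambda$ and $\mu$ lie in the same coadjoint orbit is exactly the conclusion being proved, so orbit-invariance cannot be used at this stage. The fix is immediate from the very result you cite and requires no weight comparison at all: by \cite[Proposition 6.2.2]{Dixmier}, $U(\mathfrak{g})/\Ann_{U(\mathfrak{g})}D(\lambda)$ is a Weyl algebra, hence simple, so $\Ann_{U(\mathfrak{g})}D(\lambda)$ is a maximal two-sided ideal of $U(\mathfrak{g})$; since $\Ann_{U(\mathfrak{g})}D(\mu)$ is proper, the inclusion $\Ann_{U(\mathfrak{g})}D(\lambda)\subseteq\Ann_{U(\mathfrak{g})}D(\mu)$ forces equality, and the first case applies. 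This is precisely the paper's route, phrased there via weak rationality and maximality, \cite[Proposition 4.7.4]{Dixmier}.
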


\begin{proof}

Since the classical Dixmier module $D(\lambda)$ is dense in $\widehat{D(\lambda)}$, it follows that $r\widehat{D(\lambda)}=0$ if and only if $rD(\lambda)=0$, and hence $I(\lambda)\cap U(\mathfrak{g})=$ Ann$_{U(\mathfrak{g})}D(\lambda)$. Therefore, if $I(\lambda)\cap U(\mathfrak{g})= I(\mu)\cap U(\mathfrak{g})$ then  Ann$_{U(\mathfrak{g})}D(\lambda)=$ Ann$_{U(\mathfrak{g})}D(\mu)$, and using \cite[Proposition 6.2.3]{Dixmier}, it follows that $\lambda$ and $\mu$ lie in the same coadjoint orbit as required.\\

\noindent Also, if $I(\lambda)\subseteq I(\mu)$ then Ann$_{U(\mathfrak{g})}D(\lambda)\subseteq$ Ann$_{U(\mathfrak{g})}D(\mu)$. But Ann$_{U(\mathfrak{g})}D(\lambda)$ is a weakly rational ideal of $U(\mathfrak{g})$, and hence it is maximal by \cite[Proposition 4.7.4]{Dixmier}, therefore $I(\lambda)\cap U(\mathfrak{g})=$ Ann$_{U(\mathfrak{g})}D(\lambda)=$ Ann$_{U(\mathfrak{g})}D(\mu)=I(\mu)\cap U(\mathfrak{g})$. \end{proof} 

\subsection{The Coadjoint action}

Now, let $\lambda,\mu:\mathfrak{g}\to F$ be linear forms such that $\lambda(\mathcal{L}),\mu(\mathcal{L})\subseteq\mathcal{O}_F$ for some finite extension $F/K$. We want to compare the Dixmier annihilators $I(\lambda)$ and $I(\mu)$ in $\widehat{U(\mathcal{L})}_K$ in the case where $\lambda$ and $\mu$ lie in the same coadjoint orbit, i.e. $\mu=g\cdot\lambda$ for some $g\in\mathbb{G}(F)$. Explicitly, $g=\exp(\ad(u))$ for some $u\in\mathfrak{g}\otimes_K F$.\\ 

\noindent Our first results ensure that it is sufficient to consider the case where $F=K$.

\begin{proposition}\label{scalar-extension}

Let $F/K$ be a finite extension, and let $\lambda:\mathfrak{g}\to K$ be $K$-linear. Then there exists a polarisation $\mathfrak{b}$ of $\mathfrak{g}$ at $\lambda$ such that $\mathfrak{b}\otimes_K F$ is a polarisation for $\mathfrak{g}\otimes_K F$ at the extension $\lambda_F:\mathfrak{g}\otimes_K F\to F$. 

\end{proposition}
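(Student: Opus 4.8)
The plan is to reduce the statement to the numerical criterion for polarisations supplied by Lemma \ref{polarisation-properties}, and then to observe that this criterion is insensitive to the scalar extension $K\subseteq F$. Since $\mathfrak{g}$ is nilpotent, it is completely solvable, so Proposition \ref{standard-polarisation} (applied with the ideal $\mathfrak{a}=0$) already produces \emph{some} polarisation $\mathfrak{b}$ of $\mathfrak{g}$ at $\lambda$. I would in fact prove the slightly stronger assertion that \emph{every} such $\mathfrak{b}$ has the property that $\mathfrak{b}\otimes_K F$ is a polarisation of $\mathfrak{g}\otimes_K F$ at $\lambda_F$, whence the existence claim is immediate.

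The heart of the argument is the behaviour under base change of the subspace $\mathfrak{g}^{\lambda}=\{u\in\mathfrak{g}:\lambda([u,\mathfrak{g}])=0\}$. This is precisely the radical of the alternating $K$-bilinear form $B_{\lambda}(u,v):=\lambda([u,v])$ on $\mathfrak{g}$, and the corresponding form attached to $\lambda_F$ on $\mathfrak{g}\otimes_K F$ is $B_{\lambda}\otimes_K F$. Since the rank of a bilinear form over a field is unchanged by field extension, one obtains $(\mathfrak{g}\otimes_K F)^{\lambda_F}=\mathfrak{g}^{\lambda}\otimes_K F$, and in particular $\dim_F(\mathfrak{g}\otimes_K F)^{\lambda_F}=\dim_K\mathfrak{g}^{\lambda}$; of course also $\dim_F(\mathfrak{g}\otimes_K F)=\dim_K\mathfrak{g}$.

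Now set $\mathfrak{b}_F:=\mathfrak{b}\otimes_K F$. It is a Lie subalgebra of $\mathfrak{g}\otimes_K F$, it is solvable because $\mathfrak{b}$ is, and $\lambda_F([\mathfrak{b}_F,\mathfrak{b}_F])=\lambda([\mathfrak{b},\mathfrak{b}])\otimes_K F=0$ since $\lambda$ restricts to a character of $\mathfrak{b}$. By Lemma \ref{polarisation-properties} applied to $\mathfrak{b}\subseteq\mathfrak{g}$ we have $\dim_K\mathfrak{b}=\frac{1}{2}(\dim_K\mathfrak{g}+\dim_K\mathfrak{g}^{\lambda})$, so by the dimension identities above $\dim_F\mathfrak{b}_F=\frac{1}{2}\big(\dim_F(\mathfrak{g}\otimes_K F)+\dim_F(\mathfrak{g}\otimes_K F)^{\lambda_F}\big)$. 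Applying the third bullet of Lemma \ref{polarisation-properties} with the ambient Lie algebra taken to be $\mathfrak{g}\otimes_K F$ then forces $\mathfrak{b}_F$ to be a polarisation of $\mathfrak{g}\otimes_K F$ at $\lambda_F$, which is exactly the assertion.

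There is no real obstacle in this proof; the single point that needs a word of justification is the base-change identity $(\mathfrak{g}\otimes_K F)^{\lambda_F}=\mathfrak{g}^{\lambda}\otimes_K F$, i.e. the stability of the rank of $B_\lambda$ under field extension. (One could instead verify the defining property of Definition \ref{polarisation} directly: writing an element of $\mathfrak{g}\otimes_K F$ in a $K$-basis of $F$ and using $K$-linear independence reduces the condition $\lambda_F([u,\mathfrak{b}_F])=0$ to the statement that each coordinate of $u$ lies in the $B_\lambda$-orthogonal space of $\mathfrak{b}$, which equals $\mathfrak{b}$ because $\mathfrak{b}$ is already a polarisation; but the dimension-count route above is cleaner and avoids the bookkeeping.)
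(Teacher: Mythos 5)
Your proof is correct, but it follows a genuinely different route from the paper. The paper proves the proposition by Dixmier's induction strategy: it quotients out a non-zero ideal killed by $\lambda$ when one exists, and otherwise picks a reducing quadruple $(x,y,z,\mathfrak{g}')$, applies the inductive hypothesis to $\mathfrak{g}'$, and uses Proposition \ref{sub-polarisation} (over both $K$ and $F$) to lift; this produces one particular polarisation with the required property. You instead give a direct, non-inductive argument showing that \emph{every} polarisation of $\mathfrak{g}$ at $\lambda$ base-changes to a polarisation of $\mathfrak{g}\otimes_K F$ at $\lambda_F$: the key input is that $\mathfrak{g}^{\lambda}$ is the kernel of the $K$-linear map $u\mapsto \lambda([u,\cdot])$, so by flatness of $F$ over $K$ (equivalently, invariance of the rank of $B_\lambda$ under field extension) one has $(\mathfrak{g}\otimes_K F)^{\lambda_F}=\mathfrak{g}^{\lambda}\otimes_K F$, and then the numerical criterion in the second and third bullets of Lemma \ref{polarisation-properties} closes the argument. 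This is cleaner, avoids the nilpotency-specific machinery of reducing quadruples, and yields a strictly stronger conclusion (any $\mathfrak{b}$ works, with existence supplied by Proposition \ref{standard-polarisation}), which would also make the reduction to $F=K$ elsewhere in the paper slightly more flexible. Two small points worth making explicit: solvability of $\mathfrak{b}\otimes_K F$ is automatic since $\mathfrak{g}\otimes_K F$ is nilpotent (your derived-series remark also suffices), and the third bullet of Lemma \ref{polarisation-properties} is stated under the hypothesis that a polarisation of the ambient algebra exists, which for $\mathfrak{g}\otimes_K F$ at $\lambda_F$ is guaranteed by Proposition \ref{standard-polarisation} because $\mathfrak{g}\otimes_K F$ is completely solvable; with that remark added your argument is complete.
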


\begin{proof}

Using induction on $\dim(\mathfrak{g})$. If $\dim(\mathfrak{g})=1$ then it is obvious, because $\mathfrak{g}$ and $\mathfrak{g}\otimes F$ are the only polarisations. So suppose the result holds whenever $\dim(\mathfrak{g})<n$.\\

\noindent If $\lambda(\mathfrak{a})=0$ for some non-zero ideal $\mathfrak{a}$ of $\mathfrak{g}$, then using induction we may choose a polarisation $\mathfrak{a}\subseteq\mathfrak{b}$ such that $\frac{\mathfrak{b}}{\mathfrak{a}}\otimes_K F=\frac{\mathfrak{b}\otimes_K F}{\mathfrak{a}\otimes_K F}$ is a polarisation for $\frac{\mathfrak{g}\otimes_K F}{\mathfrak{a}\otimes_K F}$ at $\lambda_F$. Hence $\mathfrak{b}\otimes_K F$ is a polarisation for $\mathfrak{g}\otimes_K F$ at $\lambda_F$.\\

\noindent So from now on, we may assume that $\lambda(\mathfrak{a})=0$ for all non-zero ideals $\mathfrak{a}$ of $\mathfrak{g}$. Then it follows from Proposition \ref{sub-polarisation} that $\mathfrak{g}$ has a reducing quadruple $(x,y,z,\mathfrak{g}')$, and $\lambda(z)\neq 0$. Clearly $(x\otimes 1,y\otimes 1,z\otimes 1,\mathfrak{g}'\otimes_K F)$ is a reducing quadruple for $\mathfrak{g}\otimes_K F$.\\

\noindent Let $\mathfrak{b}$ be a polarisation for $\mathfrak{g}'$ at $\lambda|_{\mathfrak{g}'}$ such that $\mathfrak{b}\otimes_K F$ is a polarisation for $\mathfrak{g}'\otimes_K F$. Then using Lemma \ref{sub-polarisation} we see that $\mathfrak{b}$ is a polarisation for $\mathfrak{g}$ at $\lambda$, and $\mathfrak{b}\otimes_K F$ is a polarisation for $\mathfrak{g}\otimes_K F$ at $\lambda_F$ as required.\end{proof}

\begin{corollary}\label{ind-extension}

Let $F/K$ be a finite extension, and let $\lambda:\mathfrak{g}\to F$ be $K$-linear such that $\lambda(\mathcal{L})\subseteq\mathcal{O}_F$. Then for any finite extension $L/F$, $I(\lambda)_F=I(\lambda)_L$.

\end{corollary}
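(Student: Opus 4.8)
The plan is to reduce the statement $I(\lambda)_F = I(\lambda)_L$ to the basic fact that a single polarisation can be chosen compatibly with all the relevant scalar extensions. The key observation is that $I(\lambda)_F = \Ann_{\widehat{U(\mathcal{L})}_K}\widehat{D(\lambda)}_{\mathfrak{b}}$, where $\widehat{D(\lambda)}_{\mathfrak{b}} = \widehat{U(\mathcal{L})}_F \otimes_{\widehat{U(\mathcal{B})}_F} F_\lambda$ and this annihilator is computed inside $\widehat{U(\mathcal{L})}_K$ via the embedding $\widehat{U(\mathcal{L})}_K \hookrightarrow \widehat{U(\mathcal{L})}_F$. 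By Theorem \ref{independence}, the annihilator is independent of the polarisation, so it suffices to exhibit one good polarisation $\mathfrak{b}$ of $\mathfrak{g}\otimes_K F$ at $\lambda$ for which base-changing up to $L$ again gives a polarisation, and then to compare the two Dixmier modules over the respective fields.

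First I would apply Proposition \ref{scalar-extension}, with the field $F$ playing the role of the base: since $\lambda$ is already an $F$-valued linear form, there exists a polarisation $\mathfrak{b}$ of $\mathfrak{g}\otimes_K F$ at $\lambda$ such that $\mathfrak{b}\otimes_F L$ is a polarisation of $\mathfrak{g}\otimes_K L = (\mathfrak{g}\otimes_K F)\otimes_F L$ at the extension $\lambda_L$. Setting $\mathcal{B} := \mathfrak{b}\cap(\mathcal{L}\otimes_{\mathcal{O}}\mathcal{O}_F)$, the next step is to observe that there is a canonical identification $\widehat{D(\lambda_L)}_{\mathfrak{b}\otimes_F L} \cong \widehat{D(\lambda)}_{\mathfrak{b}} \otimes_F L$ as modules over $\widehat{U(\mathcal{L})}_L = \widehat{U(\mathcal{L})}_F \otimes_F L$; this follows from Proposition \ref{ind-centre} (which identifies $\widehat{U(\mathcal{L}_L)}_L$ with $\widehat{U(\mathcal{L})}_F\otimes_F L$ and similarly for $\widehat{U(\mathcal{B})}$) together with the fact that the tensor product defining the Dixmier module commutes with the flat base change $-\otimes_F L$.

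Finally I would run the annihilator comparison. An element $r\in\widehat{U(\mathcal{L})}_K$ annihilates $\widehat{D(\lambda)}_{\mathfrak{b}}$ if and only if $r\otimes 1$ annihilates $\widehat{D(\lambda)}_{\mathfrak{b}}\otimes_F L \cong \widehat{D(\lambda_L)}_{\mathfrak{b}\otimes_F L}$; the forward direction is immediate, and the reverse uses that $\widehat{D(\lambda)}_{\mathfrak{b}}\hookrightarrow\widehat{D(\lambda)}_{\mathfrak{b}}\otimes_F L$ is an injection of $F$-vector spaces (faithful flatness of $L/F$), so $r$ acting as zero after base change forces $r$ to act as zero before. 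Unwinding the definitions, $r\in I(\lambda)_F$ iff $r\in I(\lambda)_L$, which is the desired equality. The only genuinely delicate point is verifying that the Dixmier module construction commutes with scalar extension, i.e. that the natural map $\bigl(\widehat{U(\mathcal{L})}_F\otimes_{\widehat{U(\mathcal{B})}_F}F_\lambda\bigr)\otimes_F L \to \widehat{U(\mathcal{L})}_L\otimes_{\widehat{U(\mathcal{B})}_L}L_{\lambda_L}$ is an isomorphism; using the explicit Tate-module description from Proposition \ref{Tate-action} (both sides are $L\langle u_1,\dots,u_r\rangle$ for a common basis $\{u_1,\dots,u_r\}$ of $(\mathcal{L}\otimes\mathcal{O}_F)/\mathcal{B}$, which remains a basis after $\otimes_F L$), this isomorphism is transparent, and compatibility of the $\widehat{U(\mathcal{L})}_K$-actions is then a formal check. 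I expect this verification to be the main obstacle, though it is routine given the machinery already developed.
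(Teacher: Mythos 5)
Your proposal is correct and takes essentially the same route as the paper, whose proof simply cites Proposition \ref{scalar-extension} (applied over the base $F$) together with Theorem \ref{independence}. The base-change compatibility of the Dixmier module, which you verify via the Tate-module description of Proposition \ref{Tate-action} and faithful flatness of $L/F$, is precisely the detail the paper leaves implicit in calling the result ``immediate''.
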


\begin{proof} 

This is immediate from Proposition \ref{scalar-extension} and Theorem \ref{independence}.\end{proof}

\noindent So, from now on, we will assume that $\lambda$ and $\mu$ take values in $K$, and $\mu=\exp(\ad(u))\cdot\lambda$ for some $u\in\mathfrak{g}$. Let $\sigma:=\exp(\ad(u))\in\mathbb{G}(K)$, and fix a natural number $N\in\mathbb{N}$ such that $u\in p^{-N}\mathcal{L}$. Also let $c$ be the nilpotency class of $\mathfrak{g}$, i.e. $c$ is minimal such that $\ad(\mathfrak{g})^c=0$.

Since $\sigma$ is a Lie automorphism of $\mathfrak{g}$, it follows that $\sigma\mathcal{L}$ is an $\mathcal{O}$-Lie lattice in $\mathfrak{g}$, hence there exists a natural number $n\in\mathbb{N}$ such that $p^n\mathcal{L}\subseteq \sigma\mathcal{L}$ and $p^n\sigma\mathcal{L}\subseteq\mathcal{L}$.

\begin{lemma}\label{bound}

For any $n\geq cN+v_p(c!)$, $p^n\mathcal{L}\subseteq \sigma\mathcal{L}$ and $p^n\sigma\mathcal{L}\subseteq\mathcal{L}$.

\end{lemma}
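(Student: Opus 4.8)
The plan is to expand the automorphism $\sigma=\exp(\ad(u))$ as a finite sum and then estimate, term by term, how far it can move the lattice $\mathcal{L}$. Since $c$ is the nilpotency class of $\mathfrak{g}$ we have $\ad(\mathfrak{g})^c=0$, hence $\ad(u)^c=0$, so that
\[
\sigma \;=\; \sum_{k=0}^{c-1}\frac{1}{k!}\,\ad(u)^k, \qquad \sigma^{-1} \;=\; \exp(-\ad(u)) \;=\; \sum_{k=0}^{c-1}\frac{(-1)^k}{k!}\,\ad(u)^k,
\]
both being finite sums of $K$-linear endomorphisms of $\mathfrak{g}$.

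Next I would record the elementary estimate on the powers of $\ad(u)$. Because $u\in p^{-N}\mathcal{L}$ and $\mathcal{L}$ is closed under the Lie bracket, $\ad(u)$ carries $\mathcal{L}$ into $p^{-N}\mathcal{L}$, and iterating this gives $\ad(u)^k(\mathcal{L})\subseteq p^{-kN}\mathcal{L}$ for every $k\ge 0$. Combining this with the fact that the integer $k!$ has $p$-adic valuation $v_p(k!)$ — so that $1/k!\in p^{-v_p(k!)}\mathcal{O}$ — we obtain $\frac{1}{k!}\ad(u)^k(\mathcal{L})\subseteq p^{-(kN+v_p(k!))}\mathcal{L}$. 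The exponent $kN+v_p(k!)$ is nondecreasing in $k$, so over the range $0\le k\le c-1$ its maximum is attained at $k=c-1$; summing the terms appearing in $\sigma$ (and, identically, in $\sigma^{-1}$) therefore yields
\[
\sigma(\mathcal{L})\subseteq p^{-m}\mathcal{L}, \qquad \sigma^{-1}(\mathcal{L})\subseteq p^{-m}\mathcal{L}, \qquad\text{where } m=(c-1)N+v_p\bigl((c-1)!\bigr).
\]

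Finally I would translate these two containments into the statement. Since $m\le cN+v_p(c!)$, for any $n\ge cN+v_p(c!)$ (so in particular $n\ge m$) we get $p^n\sigma(\mathcal{L})\subseteq p^{\,n-m}\mathcal{L}\subseteq\mathcal{L}$ directly, which is the second inclusion. For the first, $p^n\sigma^{-1}(\mathcal{L})\subseteq\mathcal{L}$, and applying the $K$-linear map $\sigma$ — using that the scalar $p^n$ commutes with $\sigma$ — this rearranges to $p^n\mathcal{L}\subseteq\sigma(\mathcal{L})$. There is no genuine obstacle in this argument: it is pure bookkeeping with $p$-adic valuations, and the only point requiring a moment's care is keeping the two directions of inclusion straight — passing from ``$\sigma^{-1}$ shrinks $\mathcal{L}$ by at most $p^{-m}$'' to ``$\sigma(\mathcal{L})$ contains $p^{m}\mathcal{L}$'' — which is precisely why the automorphism $\sigma$ is applied at the end. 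It is also worth noting in passing that the stated bound $cN+v_p(c!)$ is not sharp, as $(c-1)N+v_p\bigl((c-1)!\bigr)$ already suffices; the cleaner, weaker bound is recorded because that is all the subsequent sections require.
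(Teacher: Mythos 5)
Your argument is correct and is essentially the paper's own proof: expand $\exp(\ad(u))$ as a finite sum (terminating by nilpotency), bound each term $\frac{1}{k!}\ad(u)^k(\mathcal{L})\subseteq p^{-(kN+v_p(k!))}\mathcal{L}$, and obtain the reverse inclusion by running the same estimate for $\sigma^{-1}=\exp(\ad(-u))$ with $-u\in p^{-N}\mathcal{L}$ and applying $\sigma$. The only cosmetic difference is that the paper bounds every valuation directly by $-cN-v_p(c!)\geq -n$ rather than isolating the sharper exponent $(c-1)N+v_p((c-1)!)$, which does not affect the statement.
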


\begin{proof}

Since $\sigma=\exp(\ad(u))$, where $u=p^{-N}v$ for some $v\in\mathcal{L}$, it follows that for all $w\in\mathcal{L}$:

\begin{equation}
\sigma(w)=w+p^{-N}[v,w]+\frac{1}{2}p^{-2N}[v,[v,w]]+\cdots+\frac{1}{c!}p^{-cN}(ad(v))^c(w)
\end{equation}

\noindent But for each $0\leq i\leq c$, $(\ad(v))^i(w)\in\mathcal{L}$, $v_p(\frac{1}{i!}p^{-iN})=-iN-v_p(i!)\geq -cN-v_p(c!)\geq -n$, so $\frac{1}{i!}p^{-iN}(\ad(v))^i(w)\in p^{-n}\mathcal{L}$. Hence $\sigma\mathcal{L}\subseteq p^{-n}\mathcal{L}$, and $p^n\sigma\mathcal{L}\subseteq\mathcal{L}$.\\

\noindent Also, $\sigma$ is an isomorphism, and $\sigma^{-1}=\exp(\ad(-u))$, with $-u\in p^{-N}\mathcal{L}$. So since $\sigma^{-1}:\sigma\mathcal{L}\to\mathcal{L}$ is a Lie-isomorphism, it follows from the above discussion that $p^n\mathcal{L}\subseteq \sigma\mathcal{L}$.\end{proof}

\noindent It is clear that since $\sigma:\mathcal{L}\to \sigma\mathcal{L}$ is a continuous isomorphism of $\mathcal{O}$-Lie lattices, it extends to a continuous isomorphism $\sigma:\widehat{U(\mathcal{L})}_K\to\widehat{U(\sigma\mathcal{L})}_K$ of $K$-algebras. Moreover, for any $n\in\mathbb{N}$, $\sigma$ induces an isomorphism $\sigma:\widehat{U(p^n\mathcal{L})}_K\to\widehat{U(p^n\sigma\mathcal{L})}_K$, and thus using Lemma \ref{bound}, for $n\geq cN+v_p(c!)$, there is an injective $K$-algebra homomorphism $\sigma:\widehat{U(p^n\mathcal{L})}_K\to\widehat{U(\mathcal{L})}_K$.

\begin{proposition}\label{preserve}

Given $n\in\mathbb{N}$ such that $n\geq cN+v_p(c!)$, if $I$ is a two-sided ideal of $\widehat{U(\mathcal{L})}_K$, then $\sigma:\widehat{U(p^n\mathcal{L})}_K\to\widehat{U(\mathcal{L})}_K$ maps $I\cap\widehat{U(p^n\mathcal{L})}_K$ into $I$.

\end{proposition}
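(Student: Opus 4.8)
The plan is to exploit the exponential-of-adjoint description of $\sigma$ together with the fact that two-sided ideals of $\widehat{U(\mathcal{L})}_K$ are closed. Fix $a\in I\cap\widehat{U(p^n\mathcal{L})}_K$; we must show $\sigma(a)\in I$. Write $u=p^{-N}v$ with $v\in\mathcal{L}$, so that on the dense subalgebra $U(\mathfrak{g})\subseteq\widehat{U(p^n\mathcal{L})}_K$ the map $\sigma$ coincides with $\exp(\ad(u))=\sum_{k\geq 0}\tfrac{1}{k!}\ad(u)^k$, which is a locally finite operator on $U(\mathfrak{g})$ since $\mathfrak{g}$ is nilpotent. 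Two easy observations: first, each term $\tfrac{1}{k!}\ad(u)^k(a)$ lies in $I$, because $u\in p^{-N}\mathcal{L}\subseteq\widehat{U(\mathcal{L})}_K$, so $\ad(u)=[u,-]$ maps the two-sided ideal $I$ into itself, and $I$ is a $K$-subspace, so iterating gives $\tfrac{1}{k!}\ad(u)^k(a)\in I$ for every $k$; second, $I$ is closed in $\widehat{U(\mathcal{L})}_K$, since the $\pi$-adic filtration on $\widehat{U(\mathcal{L})}$ is Zariskian (as used in the proof of Proposition \ref{fin-gen}), so left submodules, in particular $I$, are closed. Granting these, it suffices to prove that the series $\sum_{k\geq 0}\tfrac{1}{k!}\ad(u)^k(a)$ converges in $\widehat{U(\mathcal{L})}_K$ and has sum $\sigma(a)$; by Lemma \ref{bound} the element $\sigma(a)$ already makes sense in $\widehat{U(\mathcal{L})}_K$ (as $\sigma$ is continuous $\widehat{U(p^n\mathcal{L})}_K\to\widehat{U(\mathcal{L})}_K$), so the content is the identification of it with this series.

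To prove this, fix a $K$-basis $\{x_1,\dots,x_d\}$ of $\mathfrak{g}$ which is an $\mathcal{O}$-basis of $\mathcal{L}$, so $\{p^nx_1,\dots,p^nx_d\}$ is an $\mathcal{O}$-basis of $p^n\mathcal{L}$ and by Lemma \ref{PBW} we may write $a=\sum_{\beta}\lambda_{\beta}(p^nx_1)^{\beta_1}\cdots(p^nx_d)^{\beta_d}$ with $\lambda_{\beta}\in K$, $\lambda_{\beta}\to 0$ as $|\beta|\to\infty$. Since $\mathfrak{g}$ has nilpotency class $c$, the operator $\ad(u)^k$ annihilates every monomial of PBW-degree $<k/(c-1)$, so $\sigma$ of each monomial is the \emph{finite} sum $\sum_{k=0}^{(c-1)|\beta|}\tfrac{1}{k!}\ad(u)^k\bigl((p^nx)^{\beta}\bigr)$. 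Using $\ad(u)^j(x_i)=p^{-jN}\ad(v)^j(x_i)\in p^{-jN}\mathcal{L}$ and the Leibniz rule, one bounds the $\pi$-adic value in $\widehat{U(\mathcal{L})}$ of the $(\beta,k)$-term $\lambda_{\beta}\tfrac{1}{k!}\ad(u)^k\bigl((p^nx)^{\beta}\bigr)$ from below by a quantity of the form $v_p(\lambda_{\beta})+n|\beta|-kN-v_p(k!)$, subject to $k\leq (c-1)|\beta|$; the hypothesis $n\geq cN+v_p(c!)$ (which also underlies Lemma \ref{bound}) is what forces this to tend to $+\infty$ over the index set $\{(\beta,k):0\leq k\leq (c-1)|\beta|\}$. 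This makes the double sum $\sum_{\beta,k}\lambda_{\beta}\tfrac{1}{k!}\ad(u)^k\bigl((p^nx)^{\beta}\bigr)$ ultrametrically absolutely convergent in $\widehat{U(\mathcal{L})}_K$, so we may rearrange: $\sigma(a)=\sum_{\beta}\lambda_{\beta}\sigma\bigl((p^nx)^{\beta}\bigr)=\sum_{\beta}\lambda_{\beta}\sum_{k}\tfrac{1}{k!}\ad(u)^k\bigl((p^nx)^{\beta}\bigr)=\sum_{k}\tfrac{1}{k!}\ad(u)^k(a)$.

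Combining the three points, the partial sums $\sum_{k=0}^{m}\tfrac{1}{k!}\ad(u)^k(a)$ lie in $I$, they converge in $\widehat{U(\mathcal{L})}_K$ to $\sigma(a)$, and $I$ is closed; hence $\sigma(a)\in I$, as required. The main obstacle is the convergence estimate in the middle paragraph: one must track how the nilpotency class $c$ forces the degree $|\beta|$ of a surviving monomial to grow linearly in $k$, and balance the factor $p^{n|\beta|}$ coming from the deformation against the denominators $p^{kN}$ and $k!$ coming from $u=p^{-N}v$ and from $\exp$ — it is precisely here that the bound on $n$ is consumed, and it is the only nontrivial ingredient of the argument.
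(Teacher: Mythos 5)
Your overall strategy coincides with the paper's: the truncations $\sigma_t=\sum_{k\le t}\tfrac{1}{k!}\ad(u)^k$ map $I\cap\widehat{U(p^n\mathcal{L})}_K$ into $I$ because $u\in p^{-N}\mathcal{L}\subseteq\widehat{U(\mathcal{L})}_K$ and $I$ is two-sided, ideals of $\widehat{U(\mathcal{L})}_K$ are closed, and everything reduces to the pointwise convergence $\sigma_t(a)\to\sigma(a)$. The gap is in the convergence estimate, which you yourself identify as the only nontrivial step. Your lower bound $v_p(\lambda_\beta)+n|\beta|-kN-v_p(k!)$ keeps the denominator $k!$ of the exponential in one piece; since $k$ ranges up to $(c-1)|\beta|$, one has $v_p(k!)\sim\tfrac{(c-1)|\beta|}{p-1}$, a linear loss in $|\beta|$ with slope $\tfrac{c-1}{p-1}$, and the hypothesis $n\ge cN+v_p(c!)$ does not dominate it: $v_p(c!)=\tfrac{c-s_p(c)}{p-1}<\tfrac{c-1}{p-1}$ whenever $c$ is not a power of $p$. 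Concretely, take $p=2$, $c=3$, $N=0$ and $n=1$ (allowed, since $cN+v_p(c!)=1$), and $k=2|\beta|$: your bound becomes $v_p(\lambda_\beta)-|\beta|+s_2(|\beta|)$ (with $s_2$ the sum of binary digits), which tends to $-\infty$ whenever $v_p(\lambda_\beta)$ grows sublinearly — and only $v_p(\lambda_\beta)\to\infty$, with no rate, is guaranteed. So the assertion that the hypothesis on $n$ forces your bound to tend to $+\infty$ is false, and as written the absolute convergence, the rearrangement, and the identification of $\sigma(a)$ with the series are not established.

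The repair is to distribute the factorial over the letters of each monomial before estimating, which is exactly the paper's identity (\ref{der-expansion}): with $\delta=\ad(u)$, $\tfrac{1}{k!}\delta^k(u_1\cdots u_r)=\sum_{k_1+\cdots+k_r=k}\prod_{m}\tfrac{1}{k_m!}\delta^{k_m}(u_m)$. Since $\delta^c=0$ on $\mathcal{L}$, each $k_m\le c-1$, so each factor has valuation at least $v(u_m)-k_mN-v_p(c!)$, and a monomial of length $r$ (total order $k\le cr$) loses at most $r\bigl(cN+v_p(c!)\bigr)\le nr$; this exactly cancels the factor $p^{n|\beta|}$ coming from the deformation and yields term valuation at least $v_p(\lambda_\beta)$, which does tend to $+\infty$. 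This shows the threshold $cN+v_p(c!)$ is calibrated for the per-letter denominators $v_p(k_m!)\le v_p(c!)$, not for the global $v_p(k!)$. With this single change your argument becomes the paper's proof of Proposition \ref{preserve}.
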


\begin{proof}

Consider the sequence of continuous $\mathcal{O}$-linear maps $\sigma_i:=\underset{0\leq j\leq i}{\sum}{\frac{1}{j!}(\ad(u))^j}:\widehat{U(p^n\mathcal{L})}_K\to\widehat{U(\mathcal{L})}_K$. Clearly each of these sends $I\cap\widehat{U(p^n\mathcal{L})}_K$ into $I$.\\

\noindent We will show that $\sigma_i$ converges pointwise to $\sigma$ as $i\rightarrow\infty$, and since all ideals in $\widehat{U(\mathcal{L})}_K$ are closed, it will follow that $\sigma(I\cap\widehat{U(p^n\mathcal{L})}_K)\subseteq I$.\\

\noindent Let $\delta:=\ad(u)$, and let $v$ be the $p$-adic filtration on $\widehat{U(\mathcal{L})}_K$ induced from $\widehat{U(\mathcal{L})}$. Then for all $u\in\mathcal{L}$, $v(\delta(u))\geq v(u)-N$.\\

\noindent Since $\delta$ is a derivation, a standard inductive argument shows that for all $a_1,\cdots,a_r\in\widehat{U(\mathcal{L})}_K$:

\begin{equation}\label{der-expansion}
\underset{0\leq j\leq i}{\sum}{\frac{1}{j!}\delta^j(a_1a_2\cdots a_r)}=\underset{0\leq j\leq i}{\sum}{\left(\underset{j_1+\cdots+j_r=j}{\sum}{\left(\underset{1\leq m\leq r}{\prod}{\frac{1}{j_m!}\delta^{j_m}(a_j)}\right)}\right)}
\end{equation}

\noindent So, if $x\in\widehat{U(\mathcal{L})}_K$, then $x=\underset{(r,u)}{\sum}{\lambda_u u_1\cdots u_r}$, where the sum is taken over all $r\geq 0$, $u=u_1\cdots u_r$ for $u_i\in\mathcal{L}$, and $v_p(\lambda_u)-nr\rightarrow\infty$ as $r\rightarrow\infty$. Therefore, fixing $t\in\mathbb{N}$, we have:

\begin{equation}
(\sigma-\sigma_t)(x)=\underset{(r,u)}{\sum}{\lambda_u(\sigma-\sigma_t)(u_1\cdots u_r)}=\underset{(r,u)}{\sum}{\lambda_u\left(\underset{j>t}{\sum}{\left(\underset{j_1+\cdots+j_r=j}{\sum}{\left(\underset{1\leq m\leq r}{\prod}{\frac{1}{j_m!}\delta^{j_m}(u_m)}\right)}\right)}\right)}
\end{equation}

\noindent For each $r\geq 0$, let $A_r:=\{\alpha\in [c]^r:\vert\alpha\vert>t\}$, where $[c]=\{0,\cdots,c-1\}$. Note that $\delta^c(u)=0$ for all $u\in\mathcal{L}$.\\

\noindent Then $(\sigma-\sigma_t)(x)=\underset{(r,u)}{\sum}{\lambda_u\left(\underset{\alpha\in A_r}{\sum}{\underset{1\leq m\leq r}{\prod}{\frac{1}{\alpha_m!}\delta^{\alpha_m}(u_m)}}\right)}$, and since $A_r$ is finite, $\underset{\alpha\in A_r}{\sum}{\underset{1\leq m\leq r}{\prod}{\frac{1}{\alpha_m!}\delta^{\alpha_m}(u_m)}}$ is a finite sum.\\

\noindent Since $\alpha_m<c$ for all $\alpha\in A_r$, we have that $v_p(\alpha_m!)\leq v_p(c!)$. Also, since $v(\delta(u))\geq v(u)-N$, it follows that $v(\delta^{\alpha_m}(u))\geq v(u)-\alpha_m N$. Therefore $v(\frac{1}{\alpha_m!}\delta^{\alpha_m}(u_m))\geq v(u_m)-\alpha_mN-v_p(c!)$ for all $m\leq r$.\\

\noindent Thus for each pair $(r,u)$, $v(\underset{\alpha\in A_r}{\sum}{\underset{1\leq m\leq r}{\prod}{\frac{1}{\alpha_m!}\delta^{\alpha_m}(u_m)}})\geq v(u_1)+\cdots+v(u_r)-(\vert\alpha\vert N+v_p(c!)r)\geq -(\vert\alpha\vert N+rv_p(c!))\geq -r(cN+v_p(c!))$, the last inequality follows since $\vert\alpha\vert\leq rc$.\\

\noindent Therefore, $v\left(\lambda_u\left(\underset{\alpha\in A_r}{\sum}{\underset{1\leq m\leq r}{\prod}{\frac{1}{\alpha_m!}\delta^{\alpha_m}(u_m)}}\right)\right)\geq v_p(\lambda_u)-r(cN+v_p(c!))\geq v_p(\lambda_u)-nr\rightarrow\infty$ as $r\rightarrow\infty$.\\

\noindent Moreover, for $r\leq\frac{t}{c}$, $A_r=\emptyset$, so we have:

\begin{equation}
(\sigma-\sigma_t)(x)=\underset{(r,u),r>\frac{t}{c}}{\sum}{\lambda_u\left(\underset{\alpha\in A_r}{\sum}{\underset{1\leq m\leq r}{\prod}{\frac{1}{\alpha_m!}\delta^{\alpha_m}(u_m)}}\right)}
\end{equation}

\noindent Therefore, $v((\sigma-\sigma_t)(x))\geq\inf\{v_p(\lambda_u)-nr:u=u_1\cdots u_r$ with $r>\frac{t}{c}\}$, and this tends to infinity as $t\rightarrow\infty$. Hence $(\sigma-\sigma_t)(x)\rightarrow 0$ as $t\rightarrow\infty$.\\

\noindent So $\sigma(x)=\underset{t\rightarrow\infty}{\lim}{\sigma_t(x)}$, so if $x\in I$ then $\sigma(x)\in I$ as required.\end{proof}

\vspace{0.2in}

\noindent Now, let $\mathfrak{b}$ be a polarisation for $\mathfrak{g}$ at $\lambda$, and let $\mathfrak{b}'$ be a polarisation for $\mathfrak{g}$ at $\mu$. Since $\mu=\sigma\cdot\lambda$, it follows that $\sigma\mathfrak{b}$ is also a polarisation for $\mathfrak{g}$ at $\mu$. Also, it is clear that $\sigma\mathfrak{b}\cap \sigma\mathcal{L}=\sigma(\mathfrak{b}\cap\mathcal{L})$, so let $\mathcal{B}:=\mathfrak{b}\cap\mathcal{L}$ and $\mathcal{B}':=\mathfrak{b}'\cap\mathcal{L}$.\\

\noindent Consider the Dixmier modules $\widehat{D(\lambda)}_{\mathcal{B}}:=\widehat{U(\mathcal{L})}_K\otimes_{\widehat{U(\mathcal{B})}_K}K_{\lambda}$, $\widehat{D(\mu)}_{\sigma\mathcal{B}}:=\widehat{U(\sigma\mathcal{L})}_K\otimes_{\widehat{U(\sigma\mathcal{B})}_K}K_{\mu}$, $\widehat{D(\mu)}_{\mathcal{B}'}:=\widehat{U(\mathcal{L})}_K\otimes_{\widehat{U(\mathcal{B}')}_K}K_{\mu}$.\\

\noindent Then $\widehat{D(\lambda)}_{\mathcal{B}}$ and $\widehat{D(\mu)}_{\mathcal{B}'}$ are $\widehat{U(\mathcal{L})}_K$-modules, topological completions of the $U(\mathfrak{g})$-modules $D(\lambda)_{\mathfrak{b}}$ and $D(\mu)_{\mathfrak{b}'}$ respectively, while $\widehat{D(\mu)}_{\sigma\mathcal{B}}$ is a $\widehat{U(\sigma\mathcal{L})}_K$-module, a topological completion of $D(\mu)_{\sigma\mathfrak{b}}$.\\

\noindent Let $I(\mu):=Ann_{\widehat{U(\mathcal{L})}_K}\widehat{D(\mu)}_{\mathcal{B}'}\trianglelefteq \widehat{U(\mathcal{L})}_K$, and let $I'(\mu):=Ann_{\widehat{U(\sigma\mathcal{L})}_K}\widehat{D(\mu)}_{\sigma\mathcal{B}}\trianglelefteq\widehat{U(\sigma\mathcal{L})}_K$.

\begin{lemma}\label{I=I'}

$I'(\mu)=\sigma(I(\lambda))$, and given $n\in\mathbb{N}$ such that $p^n\mathcal{L}\subseteq\sigma\mathcal{L}$, $I(\mu)\cap\widehat{U(p^n\mathcal{L})}_K=I'(\mu)\cap\widehat{U(p^n\mathcal{L})}_K$.

\end{lemma}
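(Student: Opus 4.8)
The equality $I'(\mu)=\sigma(I(\lambda))$ is immediate from Lemma \ref{transport-structure}. Take $\mathcal{L}_1:=\mathcal{L}$, $\mathcal{L}_2:=\sigma\mathcal{L}$, the $\mathcal{O}$-linear Lie isomorphism $\sigma:\mathcal{L}_1\to\mathcal{L}_2$, the linear form $\lambda$ on $\mathcal{L}_1$, and the polarisations $\mathcal{B}_1:=\mathcal{B}$, $\mathcal{B}_2:=\sigma\mathcal{B}=\sigma\mathcal{B}_1$. Then $\sigma\cdot\lambda=\mu$, the subalgebra $\sigma\mathcal{B}=\sigma\mathfrak{b}\cap\sigma\mathcal{L}$ comes from the polarisation $\sigma\mathfrak{b}$ of $\mathfrak{g}$ at $\mu$, and Lemma \ref{transport-structure} gives $\sigma(I(\lambda))=\Ann_{\widehat{U(\sigma\mathcal{L})}_K}\widehat{D(\mu)}_{\sigma\mathcal{B}}=I'(\mu)$.

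\noindent For the second statement, fix $n$ with $p^n\mathcal{L}\subseteq\sigma\mathcal{L}$, so that $\widehat{U(p^n\mathcal{L})}_K$ is a subalgebra of both $\widehat{U(\mathcal{L})}_K$ and $\widehat{U(\sigma\mathcal{L})}_K$. Intersecting the defining annihilators with $\widehat{U(p^n\mathcal{L})}_K$ gives $I(\mu)\cap\widehat{U(p^n\mathcal{L})}_K=\Ann_{\widehat{U(p^n\mathcal{L})}_K}\widehat{D(\mu)}_{\mathcal{B}'}$ and $I'(\mu)\cap\widehat{U(p^n\mathcal{L})}_K=\Ann_{\widehat{U(p^n\mathcal{L})}_K}\widehat{D(\mu)}_{\sigma\mathcal{B}}$, the Dixmier modules being regarded as $\widehat{U(p^n\mathcal{L})}_K$-modules by restriction of scalars. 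I would then introduce the Dixmier modules of the lattice $p^n\mathcal{L}$ at $\mu$ with respect to the polarisations $\mathfrak{b}'$ and $\sigma\mathfrak{b}$, namely $N':=\widehat{U(p^n\mathcal{L})}_K\otimes_{\widehat{U(\mathfrak{b}'\cap p^n\mathcal{L})}_K}K_\mu$ and $N'':=\widehat{U(p^n\mathcal{L})}_K\otimes_{\widehat{U(\sigma\mathfrak{b}\cap p^n\mathcal{L})}_K}K_\mu$ (note that $\mu$ restricts to a map $p^n\mathcal{L}\to\mathcal{O}$, and that $\mathfrak{b}'$ and $\sigma\mathfrak{b}$ are both polarisations of $\mathfrak{g}$ at $\mu$), and prove the two identities
\[
\Ann_{\widehat{U(p^n\mathcal{L})}_K}\widehat{D(\mu)}_{\mathcal{B}'}=\Ann_{\widehat{U(p^n\mathcal{L})}_K}N',\qquad \Ann_{\widehat{U(p^n\mathcal{L})}_K}\widehat{D(\mu)}_{\sigma\mathcal{B}}=\Ann_{\widehat{U(p^n\mathcal{L})}_K}N''.
\]
Granting these, Theorem \ref{independence} applied over the lattice $p^n\mathcal{L}$ to the two polarisations $\mathfrak{b}'$ and $\sigma\mathfrak{b}$ of $\mathfrak{g}$ at $\mu$ yields $\Ann_{\widehat{U(p^n\mathcal{L})}_K}N'=\Ann_{\widehat{U(p^n\mathcal{L})}_K}N''$, and chaining the three equalities gives $I(\mu)\cap\widehat{U(p^n\mathcal{L})}_K=I'(\mu)\cap\widehat{U(p^n\mathcal{L})}_K$, as required.

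\noindent To prove the two displayed identities I argue, say for the first, that both $N'$ and $\widehat{D(\mu)}_{\mathcal{B}'}$ are $\pi$-adic completions of the classical Dixmier module $D(\mu)_{\mathfrak{b}'}$ --- the former with respect to the $U(p^n\mathcal{L})$-stable lattice $U(p^n\mathcal{L})v_\mu$, the latter with respect to the larger stable lattice $U(\mathcal{L})v_\mu$. The natural map $N'\to\widehat{D(\mu)}_{\mathcal{B}'}$ is $\widehat{U(p^n\mathcal{L})}_K$-linear, continuous (the smaller lattice induces a finer topology, so the identity of $D(\mu)_{\mathfrak{b}'}$ extends to the completions), restricts to the identity on the common dense subspace $D(\mu)_{\mathfrak{b}'}$, and is injective (as one checks from the descriptions in Lemma \ref{PBW} and Proposition \ref{Tate-action}); hence $N'$ is a dense $\widehat{U(p^n\mathcal{L})}_K$-submodule of $\widehat{D(\mu)}_{\mathcal{B}'}$. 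Since each element of $\widehat{U(p^n\mathcal{L})}_K$ acts continuously on $N'$ (which is finitely generated over the Banach algebra $\widehat{U(p^n\mathcal{L})}_K$, so Proposition \ref{fin-gen} applies) and on $\widehat{D(\mu)}_{\mathcal{B}'}$ (the action there extending to a continuous one of $\widehat{U(\mathcal{L})}_K$), annihilating either module is equivalent to annihilating the dense subspace $D(\mu)_{\mathfrak{b}'}$, which is the same condition; so the two annihilators coincide. The second identity is proved identically, with $\mathcal{L},\mathfrak{b}',\mathcal{B}'$ replaced by $\sigma\mathcal{L},\sigma\mathfrak{b},\sigma\mathcal{B}$, and it is exactly here that the hypothesis $p^n\mathcal{L}\subseteq\sigma\mathcal{L}$ enters, guaranteeing $U(p^n\mathcal{L})v_\mu\subseteq U(\sigma\mathcal{L})v_\mu$ inside $D(\mu)_{\sigma\mathfrak{b}}$. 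I expect this comparison of Dixmier modules built from the same linear form over different $\mathcal{O}$-Lie lattices to be the main obstacle: the point is that refining the lattice used to build the module does not change the annihilator computed inside the coarser algebra $\widehat{U(p^n\mathcal{L})}_K$, and it is the density of the classical Dixmier module together with the continuity of the module action that make this go through.
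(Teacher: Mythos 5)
Your proposal is correct and follows essentially the same route as the paper: $I'(\mu)=\sigma(I(\lambda))$ via Lemma \ref{transport-structure}, then introducing the Dixmier modules of $p^n\mathcal{L}$ at $\mu$ for the two polarisations $\mathfrak{b}'$ and $\sigma\mathfrak{b}$, comparing annihilators with the coarser modules by density and continuity of the action, and invoking Theorem \ref{independence} over the lattice $p^n\mathcal{L}$. Your write-up merely spells out the dense-embedding/continuity step that the paper asserts in one line.
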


\begin{proof}

Since $\sigma:\mathcal{L}\to\sigma\mathcal{L}$ is a Lie isomorphism, it follows from Lemma \ref{transport-structure} that $I'(\mu)=\sigma(I(\lambda))$.\\ 

\noindent Let $\mathcal{C}:=\sigma\mathfrak{b}\cap p^n\mathcal{L}$, then the $\widehat{U(p^n\mathcal{L})}_K$-affinoid Dixmier module $\widehat{D(\mu)}_{\mathcal{C}}:=\widehat{U(p^n\mathcal{L})}_K\otimes_{\widehat{U(\mathcal{C})}_K}K_{\mu}$ embeds densely into $\widehat{D(\mu)}_{\sigma\mathcal{B}}$. So if $x\in \widehat{U(p^n\mathcal{L})}_K$ then $x\widehat{D(\mu)}_{\mathcal{C}}=0$ if and only if $x\widehat{D(\mu)}_{\sigma\mathcal{B}}=0$.\\

\noindent But since $\sigma\mathfrak{b}$ and $\mathfrak{b}'$ are polarisations of $\mathfrak{g}$ at $\mu$ with $\sigma\mathfrak{b}\cap p^n\mathcal{L}=C$ and $\mathfrak{b}'\cap p^n\mathcal{L}=p^n\mathcal{B}'$, we can apply Theorem \ref{independence} to get that $Ann_{\widehat{U(p^n\mathcal{L})}_K}\widehat{D(\mu)}_{\mathcal{C}}=Ann_{\widehat{U(p^n\mathcal{L})}_K}\widehat{D(\mu)}_{p^n\mathcal{B}'}$. Therefore, given $x\in\widehat{U(p^n\mathcal{L})}_K$:

\begin{center}
$x\widehat{D(\mu)}_{\sigma\mathcal{B}}=0 \iff x\widehat{D(\mu)}_{\mathcal{C}}=0 \iff \widehat{D(\mu)}_{p^n\mathcal{B}'}=0 \iff x\widehat{D(\mu)}_{\mathcal{B}'}=0$
\end{center}

\noindent Therefore $I(\mu)\cap\widehat{U(p^n\mathcal{L})}_K=I'(\mu)\cap\widehat{U(p^n\mathcal{L})}_K$ as required.\end{proof}

\noindent Now we can prove the main result of this section, which allows us to compare Dixmier annihilators for $\lambda,\mu\in\mathfrak{g}^*$ in the same coadjoint orbit:

\begin{theorem}\label{orbit-restriction}

Let $\mathfrak{g}$ be a nilpotent $K$-Lie algebra, with nilpotency class $c$, and let $\mathcal{L}$ be an $\mathcal{O}$-Lie lattice in $\mathfrak{g}$. Let $\lambda,\mu:\mathfrak{g}\to\overline{K}$ be $K$-linear maps such that $\lambda(\mathcal{L})\subseteq\mathcal{O}_{\overline{K}}$, and suppose that $\mu=\exp(\ad(u))\cdot\lambda$ for some $u\in p^{-N}(\mathcal{L}\otimes_{\mathcal{O}}\mathcal{O}_{\overline{K}})$. Then given $n\in\mathbb{N}$ such that $n\geq Nc+v_p(c!)$, $I(\lambda)\cap\widehat{U(p^{2n}\mathcal{L})}_K=I(\mu)\cap\widehat{U(p^{2n}\mathcal{L})}_K$.

\end{theorem}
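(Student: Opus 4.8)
The plan is to reduce at once to the case where $\lambda$ and $\mu$ take values in $K$. Replacing $K$ by a finite extension $F$ containing the coordinates of $u$ and the values of $\lambda,\mu$, and invoking Proposition \ref{scalar-extension} together with Corollary \ref{ind-extension} to see that the Dixmier annihilators are unaffected, it suffices to prove the corresponding identity over $\widehat{U(\mathcal{L}_F)}_F$ and then intersect both sides with $\widehat{U(p^{2n}\mathcal{L})}_K\subseteq\widehat{U(p^{2n}\mathcal{L}_F)}_F$, using $I(\lambda)=I(\lambda)_F\cap\widehat{U(\mathcal{L})}_K$ (and likewise for $\mu$) via Proposition \ref{ind-centre}. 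So from now on I assume $\lambda,\mu:\mathfrak{g}\to K$, set $\sigma:=\exp(\ad(u))$ with $u\in p^{-N}\mathcal{L}$, and fix $n\geq Nc+v_p(c!)$.

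The argument rests on a few lattice inclusions. Since $\sigma^{-1}=\exp(\ad(-u))$ with $-u\in p^{-N}\mathcal{L}$ has the same shape, Lemma \ref{bound} applies to both $\sigma$ and $\sigma^{-1}$, giving $p^n\mathcal{L}\subseteq\sigma\mathcal{L}\cap\sigma^{-1}\mathcal{L}$ and $p^n\sigma\mathcal{L}\subseteq\mathcal{L}$, $p^n\sigma^{-1}\mathcal{L}\subseteq\mathcal{L}$. As $\sigma$ and $\sigma^{-1}$ are $K$-linear, $\sigma^{\pm 1}(p^{2n}\mathcal{L})=p^n\cdot\sigma^{\pm 1}(p^n\mathcal{L})\subseteq p^n\mathcal{L}$, so each of $\sigma,\sigma^{-1}$ restricts to a continuous $K$-algebra embedding $\widehat{U(p^{2n}\mathcal{L})}_K\to\widehat{U(p^n\mathcal{L})}_K$; in particular $\widehat{U(p^{2n}\mathcal{L})}_K\subseteq\widehat{U(p^n\mathcal{L})}_K\subseteq\widehat{U(\sigma\mathcal{L})}_K$.

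Next I would apply Lemma \ref{I=I'}. Since $p^{2n}\mathcal{L}\subseteq\sigma\mathcal{L}$, that lemma gives $I(\mu)\cap\widehat{U(p^{2n}\mathcal{L})}_K=I'(\mu)\cap\widehat{U(p^{2n}\mathcal{L})}_K=\sigma(I(\lambda))\cap\widehat{U(p^{2n}\mathcal{L})}_K$, where $\sigma(I(\lambda))=I'(\mu)$ is viewed as a two-sided ideal of $\widehat{U(\sigma\mathcal{L})}_K$. Thus the theorem is reduced to proving $\sigma(I(\lambda))\cap\widehat{U(p^{2n}\mathcal{L})}_K=I(\lambda)\cap\widehat{U(p^{2n}\mathcal{L})}_K$. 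For ``$\subseteq$'': if $x$ lies in the left side then $\sigma^{-1}(x)\in I(\lambda)$, and $\sigma^{-1}(x)\in\sigma^{-1}(\widehat{U(p^{2n}\mathcal{L})}_K)\subseteq\widehat{U(p^n\mathcal{L})}_K$, hence $\sigma^{-1}(x)\in I(\lambda)\cap\widehat{U(p^n\mathcal{L})}_K$; Proposition \ref{preserve}, applied to the ideal $I(\lambda)$ and the map $\sigma$, then yields $x=\sigma(\sigma^{-1}(x))\in I(\lambda)$. For ``$\supseteq$'': if $x\in I(\lambda)\cap\widehat{U(p^{2n}\mathcal{L})}_K\subseteq I(\lambda)\cap\widehat{U(p^n\mathcal{L})}_K$, then Proposition \ref{preserve} applied to $I(\lambda)$ and the map $\sigma^{-1}=\exp(\ad(-u))$ (same shape, so the proposition is available) gives $\sigma^{-1}(x)\in I(\lambda)$, i.e.\ $x\in\sigma(I(\lambda))$, while $x\in\widehat{U(p^{2n}\mathcal{L})}_K$ is given. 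Combining the two identities finishes the proof.

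I expect the only genuine work to be bookkeeping: keeping straight which completed enveloping algebra each of $I(\lambda)$, $I(\mu)$, $\sigma(I(\lambda))$, $I'(\mu)$ lives in, checking that the inclusions of Lemma \ref{bound} really do force $\sigma^{\pm 1}(p^{2n}\mathcal{L})\subseteq p^n\mathcal{L}$ (this is exactly where the exponent $2n$ in the statement, rather than $n$, is required), and verifying that Proposition \ref{preserve} applies with $\sigma^{-1}$ in place of $\sigma$. Granting these, the statement is a formal consequence of Lemma \ref{I=I'} and Proposition \ref{preserve}.
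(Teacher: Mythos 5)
Your proof is correct and follows essentially the same route as the paper: reduce to the case $F=K$ via Corollary \ref{ind-extension}, then combine Lemma \ref{bound}, Lemma \ref{I=I'} and Proposition \ref{preserve}. The only (cosmetic) difference is that you obtain both inclusions by comparing $\sigma(I(\lambda))$ with $I(\lambda)$, applying Proposition \ref{preserve} with $\sigma$ and with $\sigma^{-1}$ to the same ideal, whereas the paper proves one inclusion and then swaps the roles of $\lambda$ and $\mu$ (replacing $\sigma$ by $\sigma^{-1}$) for the other.
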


\begin{proof}

Firstly, note that $u$ lies in $p^{-N}(\mathcal{L}\otimes_{\mathcal{O}}\mathcal{O}_F)$ for some finite extension $F$ of $K$, and we may assume further that $\lambda$ and $\mu$ take values in $F$, possibly after extending $F$.\\

\noindent Using Corollary \ref{ind-extension}, we see that $I(\lambda)=I(\lambda)_F$, i.e. $I(\lambda)=\Ann_{\widehat{U(\mathcal{L})}_K}\widehat{D(\lambda)}_F$, and similarly $I(\mu)=\Ann_{\widehat{U(\mathcal{L})}_K}\widehat{D(\mu)}_F$, so we may safely assume that $F=K$. So $\lambda$ and $\mu$ are $K$-linear forms of $\mathfrak{g}$, $u\in p^{-N}\mathcal{L}$, and setting $\sigma:=\exp(\ad(u))$, since $n\geq Nc+v(c!)$ we see using Lemma \ref{bound} that $p^n\mathcal{L}\subseteq \sigma\mathcal{L}$ and $p^n\sigma\mathcal{L}\subseteq\mathcal{L}$.\\

\noindent We will prove that $I(\mu)\cap\widehat{U(p^{2n}\mathcal{L})}_K\subseteq I(\lambda)$, and after replacing $\sigma$ by $\sigma^{-1}=\exp(\ad(-u))$, it will follow that $I(\lambda)\cap\widehat{U(p^{2n}\mathcal{L})}_K\subseteq I(\mu)$ as required.\\

\noindent By Lemma \ref{I=I'}, we see that $I(\mu)\cap\widehat{U(p^{2n}\mathcal{L})}_K=\sigma(I(\lambda))\cap\widehat{U(p^{2n}\mathcal{L})}_K\subseteq \sigma(I(\lambda))\cap \widehat{U(p^{n}\sigma\mathcal{L})}_K=\sigma(I(\lambda)\cap \widehat{U(p^{n}\mathcal{L})}_K)$.\\

\noindent But since $I(\lambda)$ is a two-sided ideal of $\widehat{U(\mathcal{L})}_K$, it follows from Proposition \ref{preserve} that $\sigma(I(\lambda)\cap\widehat{U(p^{n}\mathcal{L})}_K)\subseteq I(\lambda)$, and hence $I(\mu)\cap\widehat{U(p^{2n}\mathcal{L})}_K\subseteq I(\lambda)$ as required.\end{proof}

\subsection{Proof of Theorem \ref{A}}

\noindent Now, let $P$ be a weakly rational ideal of $\widehat{U(\mathcal{L})}_K$. Using Theorem \ref{aff-Dix2} and Proposition \ref{prime-intersection}, we see that $P$ arises an an intersection of Dixmier annihilators:

\begin{center}
$P=\underset{j\in X}{\bigcap}{I(\lambda_j)}$
\end{center}

\noindent for some $\lambda_j:\mathfrak{g}\to\overline{K}$ $K$-linear, such that $\lambda_j(\mathcal{L})\subseteq\mathcal{O}_{\overline{K}}$ for each $j$.\\

\noindent Also, $P\cap U(\mathfrak{g})$ is a weakly rational ideal of $U(\mathfrak{g})$ and hence it is maximal by \cite[Proposition 4.7.4]{Dixmier}. So since $P\cap U(\mathfrak{g})\subseteq I(\lambda_j)\cap U(\mathfrak{g})$ for all $j$, it follows that $I(\lambda_j)\cap U(\mathfrak{g})=I(\lambda_k)\cap U(\mathfrak{g})$ for all $j,k\in X$. 

Therefore, using Lemma \ref{orbit}, this means that for every $j,k\in X$, there exists $a_{j,k}\in\mathbb{G}(\overline{K})$ such that $a_{j,k}\cdot\lambda_j=\lambda_k$, i.e. all $\lambda_j$ lie in the same coadjoint orbit.

\begin{proposition}\label{final}

Let $\lambda:\mathfrak{g}\to\overline{K}$ be a $K$-linear map such that $\lambda(\mathcal{L})\subseteq\mathcal{O}_{\overline{K}}$. Then there exists an integer $N\geq 0$ such that for any linear form $\mu:\mathfrak{g}\to\overline{K}$ in the $\mathbb{G}$-coadjoint orbit of $\lambda$ with $\mu(\mathcal{L})\subseteq\mathcal{O}_{\overline{K}}$, $\mu=\exp(\ad(u))\cdot\lambda$ for some $u\in p^{-N}(\mathcal{L}\otimes_{\mathcal{O}}\mathcal{O}_{\overline{K}})$.

\end{proposition}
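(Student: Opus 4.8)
The plan is to combine the geometric description of the coadjoint orbit of $\lambda$ with the elementary fact that a polynomial map is bounded on a polydisc. Fix an $\mathcal{O}$-basis $\{x_1,\dots,x_d\}$ of $\mathcal{L}$ (hence a $K$-basis of $\mathfrak{g}$), and use the coordinates $\mu\mapsto(\mu(x_1),\dots,\mu(x_d))$ to identify $\mathfrak{g}^*$ with affine $d$-space; under this identification $\{\mu\in\mathfrak{g}^*:\mu(\mathcal{L})\subseteq\mathcal{O}_{\overline{K}}\}$ is precisely the unit polydisc $\mathbb{D}_0^d$. Let $c$ be the nilpotency class of $\mathfrak{g}$, let $X\subseteq\mathfrak{g}^*_{\overline{K}}$ be the coadjoint orbit of $\lambda$, and let $\phi:\mathfrak{g}_{\overline{K}}\to\mathfrak{g}^*_{\overline{K}}$ be the map $u\mapsto\exp(\ad(u))\cdot\lambda$. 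Since $\ad(u)$ is nilpotent of index at most $c$, the operator $\exp(\ad(u))$ is polynomial of degree $<c$ in $u$, so $\phi$ is a morphism of affine varieties; and since $\mathbb{G}=\{\exp(\ad(u)):u\in\mathfrak{g}\}$ by definition, the image of $\phi$ is exactly $X$.

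The heart of the argument is to promote the set-theoretic surjection $\phi:\mathfrak{g}_{\overline{K}}\twoheadrightarrow X$ to a \emph{morphic} section. Applying Lemma \ref{product-decomposition} to $\mathfrak{g}_{\overline{K}}$ at $\lambda$ gives an isomorphism of varieties $\mathbb{G}\cong S\times X$ under which the orbit map $g\mapsto g\cdot\lambda$ becomes the projection onto $X$; composing the section $\mu\mapsto(e,\mu)$ of this projection with the isomorphism produces a morphism $\tau:X\to\mathbb{G}$ with $\tau(\mu)\cdot\lambda=\mu$ for all $\mu\in X$. By the remark in Section 2.5, $\exp:\ad(\mathfrak{g}_{\overline{K}})\to\mathbb{G}$ is an isomorphism of varieties with inverse $\log$, and the surjection $\ad:\mathfrak{g}_{\overline{K}}\to\ad(\mathfrak{g}_{\overline{K}})$ admits a $\overline{K}$-linear section $s$ (split $0\to Z(\mathfrak{g}_{\overline{K}})\to\mathfrak{g}_{\overline{K}}\to\ad(\mathfrak{g}_{\overline{K}})\to0$). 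Setting $\psi:=s\circ\log\circ\tau:X\to\mathfrak{g}_{\overline{K}}$, which is a morphism of varieties, we get $\ad(\psi(\mu))=\log\tau(\mu)$ and hence $\exp(\ad(\psi(\mu)))\cdot\lambda=\tau(\mu)\cdot\lambda=\mu$ for every $\mu\in X$.

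It remains to extract a uniform bound. Write $\psi(\mu)=\sum_{i=1}^{d}\psi_i(\mu)x_i$; each $\psi_i$ is a regular function on the closed subvariety $X\subseteq\mathfrak{g}^*_{\overline{K}}\cong\mathbb{A}^d_{\overline{K}}$, hence the restriction of some polynomial $\widetilde\psi_i\in\overline{K}[y_1,\dots,y_d]$. A polynomial in $d$ variables is bounded on the unit polydisc (its sup-norm there equals the maximum of the absolute values of its coefficients), so there is $N\geq0$ with $\widetilde\psi_i(\mathbb{D}_0^d)\subseteq p^{-N}\mathcal{O}_{\overline{K}}$ for all $i$. Now let $\mu:\mathfrak{g}\to\overline{K}$ be any $K$-linear form in the coadjoint orbit of $\lambda$ with $\mu(\mathcal{L})\subseteq\mathcal{O}_{\overline{K}}$; then $\mu\in X\cap\mathbb{D}_0^d$, so $u:=\psi(\mu)=\sum_i\widetilde\psi_i(\mu)x_i$ lies in $\bigoplus_i p^{-N}\mathcal{O}_{\overline{K}}x_i=p^{-N}(\mathcal{L}\otimes_{\mathcal{O}}\mathcal{O}_{\overline{K}})$, and $\exp(\ad(u))\cdot\lambda=\mu$ by construction. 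This gives the proposition with this value of $N$, which depends only on $\lambda$ and the choices made above.

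The main obstacle I anticipate is the middle step: turning Lemma \ref{product-decomposition} into a genuine morphism of varieties $\psi$ sectioning $\phi$, and checking that this can be transported through $\log$ and the linear splitting of $\ad$ so that the components of $\psi$ really are restrictions of polynomials. One must in particular be sure the orbit $X$ is Zariski closed in $\mathfrak{g}^*_{\overline{K}}$ — this holds in the nilpotent case and is already invoked in the proof of Lemma \ref{product-decomposition} — since it is this closedness that lets each $\psi_i$ be realised as an honest polynomial and hence be estimated on $\mathbb{D}_0^d$. After that, the boundedness estimate is routine.
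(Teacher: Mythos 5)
Your proof is correct, and its skeleton is the same as the paper's: both arguments take the trivialisation $\mathbb{G}\cong S\times X$ of Lemma \ref{product-decomposition}, use the section of the orbit map given by the slice through the identity of $S$, transport it through the isomorphism $\exp:\ad(\mathfrak{g})\to\mathbb{G}$, and lift through $\ad$ by splitting off the centre (the paper does this last lift rather tersely; your explicit linear section $s$ makes it cleaner). Where you genuinely diverge is the final boundedness step. The paper stays in rigid geometry: it observes that the locus $U$ of integral forms is an affinoid subdomain of the orbit, pulls it back under $\exp$ to an affinoid subdomain $V$ of $\ad(\mathfrak{g})$, and then quotes that an affinoid subdomain of the rigid space $\ad(\mathfrak{g})$ (an increasing union of discs) must sit inside a single disc $p^{-N}(\mathcal{L}/Z(\mathcal{L}))\otimes\mathcal{O}_{\overline{K}}$. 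You instead exploit that the coadjoint orbit of a nilpotent Lie algebra is Zariski closed in $\mathfrak{g}^*$ (which, as you note, the paper already uses inside Lemma \ref{product-decomposition}), so the components of your section $\psi$ extend to honest polynomials, and a polynomial with coefficients of valuation $\geq -N$ takes values of valuation $\geq -N$ on the unit polydisc. This is more elementary --- it avoids analytification, affinoid subdomains and the quasi-compactness-style boundedness fact entirely --- at the cost of leaning directly on closedness of the orbit; the paper's rigid-analytic phrasing is what generalises more readily if one only knew the relevant locus to be affinoid rather than closed. Either way the constant $N$ you produce depends only on $\lambda$ and the chosen basis, exactly as required, so your argument is a valid substitute for the paper's.
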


\begin{proof}

Let $Y$ be the coadjoint orbit in $\mathfrak{g}^*=$ Hom$_K(\mathfrak{g},\overline{K})$ containing $\lambda$, and recall from Lemma \ref{product-decomposition} that there exists an affine algebraic subgroup $S$ of $\mathbb{G}$ such that $\mathbb{G}\cong S\times Y$ as varieties, where the natural morphism $\mathbb{G}\to Y,g\mapsto g\cdot\lambda$ is just the projection to the second factor. Consider the following sequence in the category of $\overline{K}$-varieties defined over $K$:

\begin{equation}\label{variety'}
\ad(\mathfrak{g})\underset{\exp}{\rightarrow}\mathbb{G}\cong S\times Y\rightarrow Y.
\end{equation}

\noindent Let $U$ be the set of all $\mu\in Y$ such that $\mu(\mathcal{L})\subseteq\mathcal{O}_{\overline{K}}$. Then $U$ is an affinoid subdomain of $Y$ (when $Y$ is considered as a rigid variety) isomorphic to Sp $\widehat{S(\mathfrak{g})}$, where $\widehat{S(\mathfrak{g})}$ is the $\pi$-adic completion of the symmetric algebra $S(\mathfrak{g})$ with respect to the lattice $S(\mathcal{L})$. Since $\exp$ is an isomorphism, we may take the inverse image $V:=\exp^{-1}(1\times U)$ of $1\times U$, which will be an affinoid subdomain of $\ad(\mathfrak{g})$.\\

\noindent But $\ad(\mathfrak{g})\cong\mathfrak{g}/Z(\mathfrak{g})$ is a union of open discs containing $p^{-n}(\mathcal{L}/Z(\mathcal{L}))$ for $n\in\mathbb{N}$. So since $V$ is affinoid, it follows that $V$ is contained in $p^{-N}(\mathcal{L}/Z(\mathcal{L})\otimes_{\mathcal{O}}\mathcal{O}_{\overline{K}})$ for some $N\in\mathbb{N}$.

Therefore, for any $\mu\in U$, we can choose $u\in p^{-N}(\mathcal{L}\otimes_{\mathcal{O}}\mathcal{O}_{\overline{K}})$ such that $\mu$ is the image of $\ad(u)$ under the composition $\ad(\mathfrak{g})\to \mathbb{G}\to Y$, i.e. $\mu=\exp(\ad(u))\cdot\lambda$ as required.\end{proof}

\vspace{0.2in}

\noindent Now we can finally prove our main theorem:\\

\noindent\emph{Proof of Theorem \ref{A}.} Let $P$ be a weakly rational ideal of $\widehat{U(\mathcal{L})}_K$, then since $P$ is prime, we see using Corollary \ref{Dix-intersection} that $P=\underset{j\in X}{\bigcap}{I(\lambda_j)}$ for linear forms $\lambda_j$ all lying in the same coadjoint orbit. 

Since $\lambda_j(\mathcal{L})\subseteq\mathcal{O}_{\overline{K}}$ for each $j$, it follows from Proposition \ref{final} that we can choose $N\in\mathbb{N}$, $u_{j,k}\in p^{-N}(\mathcal{L}\otimes_{\mathcal{O}}\mathcal{O}_{\overline{K}})$ for each $j,k\in X$ such that $\lambda_k=\exp(\ad(u_{j,k}))\cdot\lambda_j$.\\

\noindent Therefore, let $c$ be the nilpotency class of $\mathfrak{g}$, and choose $n\in\mathbb{N}$ with $n\geq 2Nc+2v(c!)$. Then using Theorem \ref{orbit-restriction}, we see that $I(\lambda_j)\cap\widehat{U(p^n\mathcal{L})}_K=I(\lambda_k)\cap\widehat{U(p^n\mathcal{L})}_K$ for each $j,k\in X$.\\

\noindent Therefore, $P\cap\widehat{U(p^n\mathcal{L})}_K=\underset{j\in X}{\bigcap}{I(\lambda_j)\cap\widehat{U(p^n\mathcal{L})}_K}=I(\lambda_j)\cap\widehat{U(p^n\mathcal{L})}_K$ for any $j\in X$. Hence $P\cap\widehat{U(p^n\mathcal{L})}_K=\Ann_{\widehat{U(p^n\mathcal{L})}_K}\widehat{D(\lambda_j)}$ is a Dixmier annihilator as required.\qed\\

\noindent We suspect that we can always take $n=0$ in the statement of Theorem \ref{A}, but we do not have a proof of this.

\section{Special Dixmier Annihilators}

As outlined in the introduction, our aim is to prove the \emph{deformed Dixmier-Moeglin equivalence} for $\widehat{U(\mathcal{L})}_K$, and using Proposition \ref{nullstellensatz}, we see that this just means proving that if $P$ is a weakly rational ideal of $\widehat{U(\mathcal{L})}_K$ then $P\cap\widehat{U(\pi^n\mathcal{L})}_K$ is locally closed in $\widehat{U(\pi^n\mathcal{L})}_K$ for $n$ sufficiently high. In fact, in the case where $\mathcal{L}$ is nilpotent, we suspect moreover that $P\cap\widehat{U(\pi^n\mathcal{L})}_K$ is maximal.

So now that we have established that weakly rational ideals become Dixmier annihilators after passing to a suitably high deformation $\widehat{U(\pi^n\mathcal{L})}_K$, it remains to prove that Dixmier annihilator ideals $I(\lambda)$ are maximal. In this section, we will establish this in some important cases, and in particular prove Theorem \ref{B}.

\subsection{Base Change}

Fix $F/K$ a finite, Galois extension, with $G:=Gal(F/K)$, let $R$ be a $K$-algebra and let $S:=R\otimes_K F$. There is a natural action of $G$ on $S$ by automorphisms via $\sigma\cdot(r\otimes\alpha)=r\otimes \sigma(\alpha)$, and clearly $R$ is invariant under this action.

We will make a further assumption on $S$, namely that if $\alpha_1,\cdots,\alpha_n\in F$ are linearly independent over $K$ and $r_1\otimes\alpha_1+\cdots+r_n\otimes\alpha_n=0$ in $S$ then $r_i=0$ for all $i$. Using Lemma \ref{PBW}, it is easily verified that this property is satisfied if we take $R$ to be the affinoid enveloping algebra $\widehat{U(\mathcal{L})}_K$.

\begin{proposition}\label{Galois-ext}

Let $I$ be a $G$-invariant ideal of $S$, and let $J:=I\cap R$. Then $I=J\otimes_KF$. In fact, if $r_1\otimes\alpha_1+\cdots+r_n\otimes\alpha_n\in I$ with $\alpha_1,\cdots,\alpha_n$ linearly independent over $K$, then $r_1,\cdots,r_n\in J$.

\end{proposition}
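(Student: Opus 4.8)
The plan is a Galois–descent argument resting on linear independence of characters. I will prove the stronger ``in fact'' statement first; the equality $I=J\otimes_KF$ follows formally. So fix $x=r_1\otimes\alpha_1+\cdots+r_n\otimes\alpha_n\in I$ with $\alpha_1,\dots,\alpha_n\in F$ linearly independent over $K$. Extend to a $K$-basis $\alpha_1,\dots,\alpha_m$ of $F$, where $m=[F:K]=|G|$, and rewrite $x=\sum_{i=1}^{m}r_i\otimes\alpha_i$ with $r_{n+1}=\cdots=r_m=0$; it suffices to show $r_i\in J$ for every $i$.

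Enumerate $G=\{\sigma_1,\dots,\sigma_m\}$ and put $y_j:=\sigma_j\cdot x=\sum_{i=1}^{m}r_i\otimes\sigma_j(\alpha_i)$, which lies in $I$ because $I$ is $G$-invariant. Regard $S=R\otimes_KF$ as an $F$-module through its second tensor factor and set $\rho_i:=r_i\otimes 1\in S$, so that $y_j=\sum_{i=1}^{m}\sigma_j(\alpha_i)\,\rho_i$. In matrix form this reads $(y_j)_j=M\,(\rho_i)_i$, where $M=\bigl(\sigma_j(\alpha_i)\bigr)_{j,i}$ is an $m\times m$ matrix with entries in $F$. The key input is that $M$ is invertible over $F$: this is the standard fact that for $F/K$ finite Galois the map $F\otimes_KF\to\prod_{\sigma\in G}F$, $a\otimes b\mapsto(a\sigma(b))_\sigma$, is an isomorphism — equivalently, Dedekind's lemma on the linear independence of the $\sigma_j$, or the non-vanishing of the discriminant $\det\bigl(\mathrm{Tr}_{F/K}(\alpha_i\alpha_k)\bigr)=\det(MM^{\mathrm t})$ of a basis of the separable extension $F/K$. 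Hence $(\rho_i)_i=M^{-1}(y_j)_j$, so each $\rho_i=r_i\otimes 1$ is an $F$-linear combination of $y_1,\dots,y_m$. Since $I$ is an ideal of $S$ it is closed under multiplication by $1\otimes\beta$ for $\beta\in F$, hence is an $F$-submodule of $S$, and each $y_j\in I$; therefore $r_i\otimes 1\in I$. Because $F$ is free over $K$, the map $R\to S$, $r\mapsto r\otimes 1$, is injective, so identifying $R$ with $R\otimes 1\subseteq S$ we conclude $r_i\in I\cap R=J$, as claimed.

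For the equality $I=J\otimes_KF$: the inclusion $J\otimes_KF\subseteq I$ holds since $J\subseteq I$ and $I$ is an $F$-submodule of $S$; conversely, every element of $I$ can be written as $\sum_i r_i\otimes\alpha_i$ with the $\alpha_i$ linearly independent over $K$, and by the statement just proved each such $r_i$ lies in $J$, so $I\subseteq J\otimes_KF$. Finally, the hypothesis on $S$ (linear independence over $K$ of the $\alpha_i$ forces $r_i=0$) is automatic since $F$ is $K$-free, and, as noted in the excerpt, Lemma \ref{PBW} shows it holds for $R=\widehat{U(\mathcal{L})}_K$.

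There is no serious obstacle here: the only point requiring care is the bookkeeping of module structures — keeping straight which copy of $F$ is acting, so that the system $(y_j)_j=M(\rho_i)_i$ genuinely lives over $F$ — together with invoking invertibility of $M$ (the separability/Galois input) at the right moment. Everything else is formal.
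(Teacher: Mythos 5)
Your argument is correct, and it is genuinely different from the one in the paper. You run the classical Galois-descent argument: apply all $\sigma_j\in G$ to $x=\sum_i r_i\otimes\alpha_i$ (with $\{\alpha_i\}$ completed to a $K$-basis of $F$), observe that the resulting system is governed by the matrix $M=(\sigma_j(\alpha_i))$, and invert $M$ over $F$ — the separability/Galois input being Dedekind's independence of characters, equivalently the non-vanishing of the discriminant $\det(\mathrm{Tr}_{F/K}(\alpha_i\alpha_k))$, equivalently $F\otimes_KF\cong\prod_{\sigma\in G}F$. Since $1\otimes F$ is central in $S$, the ideal $I$ is an $F$-submodule, so each $r_i\otimes 1$ is an $F$-combination of the $G$-translates of $x$ and hence lies in $I$, giving $r_i\in J$; the equality $I=J\otimes_KF$ then follows formally, exactly as you say. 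The paper instead proceeds by induction on the minimal number $l(s)$ of simple tensors needed to write $s\in I$, choosing a minimal intermediate field $F'$ over which $s$ can be expressed and using the Galois correspondence to produce $\sigma\in G$ not fixing $F'$, then deriving a contradiction with the minimality/maximality choices; it uses the Galois hypothesis through the correspondence rather than through invertibility of $(\sigma_j(\alpha_i))$. Your route is shorter, uses only one standard input, and makes the $F$-linearity of the descent transparent; the paper's induction is more elementary (no determinant or character-independence needed) but requires delicate bookkeeping with subfields. Two small remarks: your observation that the paper's ``further assumption'' on $S$ is automatic for the algebraic tensor product $R\otimes_KF$ is correct (the paper records it because in the intended application $S=\widehat{U(\mathcal{L})}_F$ and the identification with $\widehat{U(\mathcal{L})}_K\otimes_KF$ is what Lemma \ref{PBW} supplies); and in your discriminant identity the relevant product is $M^{\mathrm{t}}M$ rather than $MM^{\mathrm{t}}$, which of course has the same determinant, so nothing is affected.
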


\begin{proof}

For each $s\in S$, let $l(s)$ be the minimal number of simple tensors required to sum to $s$, so $l(s)=0$ if and only if $s=0$. If $s\in I$ and $l(s)=1$ then $s=r\otimes\alpha$ is a simple tensor, with $\alpha\neq 0$, thus $r=\alpha^{-1}s\in I\cap R=J$. Hence $s\in J\otimes_K F$.

So, we will assume, for induction, that for all $s\in I$ with $l(s)<n$, $s\in J\otimes_K F$, and moreover that if $s=r_1\otimes\alpha_1+\cdots+r_m\otimes\alpha_m$ with $m<n$ and $\alpha_1,\cdots,\alpha_m$ linearly independent over $K$, then $r_i\in J$ for all $i$.\\

\noindent Now, suppose $s\in  I$ with $l(s)=n>1$. Then choose a subfield $F'\subseteq F$ with $K\subseteq F'$ such that $F'$ is minimal with respect to the property that there exist $r_1,\cdots,r_n\in R$, $\alpha_1,\cdots,\alpha_n\in F'$ such that $s=r_1\otimes\alpha_1+\cdots+r_n\otimes\alpha_n$. Since $l(s)=n$, it follows immediately that $\alpha_1,\cdots,\alpha_n$ are linearly independent over $K$, and note that since $\alpha_1^{-1}s\in I$ and $l(s)=l(\alpha_1^{-1}s)$, we may assume that $\alpha_1=1$.\\

\noindent If $F'=K$ then $s\in R$ so $l(s)=1$, a contradiction. Hence $F'\neq K$, so since $F/K$ is a Galois extension, there exists $\sigma\in G$ such that the set $F'':=\{\alpha\in F':\sigma(\alpha)=\alpha\}\neq F'$. Clearly $F''$ is a subfield of $F'$ with $K\subseteq F''$.\\

\noindent Since $\alpha_1=1\in F''$, choose $m\geq 1$ maximal such that there exists $t_1,\cdots,t_n\in R$, $\beta_1,\cdots,\beta_n\in F'$ with $\beta_1,\cdots,\beta_m\in F''$ and $s=t_1\otimes\beta_2+\cdots+t_n\otimes\beta_n$. Clearly $m<n$ by minimality of $F'$ and the fact that $F''\subsetneq F'$.\\

\noindent Now, since $I$ is $G$-invariant, $\sigma(s)-s\in I$. But $\sigma(s)-s=t_1\otimes (\sigma(\beta_1)-\beta_1)+\cdots+t_n\otimes(\sigma(\beta_n)-\beta_n)=t_{m+1}\otimes (\sigma(\beta_{m+1})-\beta_{m+1})+\cdots+t_n\otimes(\sigma(\beta_n)-\beta_n)$.\\

\noindent If $t_i\in J$  for all $i>m$ then $t_1\otimes\beta_1+\cdots+t_m\otimes\beta_m\in I$, and hence since $m<n$ and $\beta_1,\cdots,\beta_m$ are linearly independent over $K$, $t_i\in J$ for all $i$ by induction, and hence $s\in J\otimes_K F$. 

Therefore, we will assume for contradiction that $t_i\notin J$ for some $i>m$, so since $\sigma(s)-s\in I$, it follows from induction that $\sigma(\beta_{m+1})-\beta_{m+1},\cdots,\sigma(\beta_n)-\beta_n$ are linearly dependent over $K$.\\

\noindent So, there exist $\gamma_{m+1},\cdots,\gamma_n\in K$, not all zero, such that $\sigma(\gamma_{m+1}\beta_{m+1}+\cdots+\gamma_n\beta_n)=\gamma_{m+1}\beta_{m+1}+\cdots+\gamma_n\beta_n$. So let $\beta':=\gamma_{m+1}\beta_{m+1}+\cdots+\gamma_n\beta_n\in F'$, and clearly $\beta'\in F''$ by the definition of $F''$. We may assume without loss of generality that $\gamma_{m+1}\neq 0$. Thus:\\ 

\noindent $s=t_1\otimes\beta_2+\cdots+t_n\otimes\beta_n$\\

$=t_1\otimes\beta_1+\cdots+t_m\otimes\beta_m+\gamma_{m+1}^{-1}t_{m+1}\otimes(\beta'-\gamma_{m+2}\beta_{m+2}-\cdots-\gamma_n\beta_n)+t_{m+2}\otimes\beta_{m+2}+$

$\cdots+t_n\otimes\beta_n$\\

$=t_1\otimes \beta_1+\cdots+t_m\otimes\beta_m+\gamma_{m+1}^{-1}t_{m+1}\otimes\beta'+(t_{m+2}-\gamma_{m+1}^{-1}\gamma_{m+2}t_{m+1})\otimes\beta_{m+2}+\cdots+$

$(t_n-\gamma_{m+1}^{-1}\gamma_nt_n)\otimes\beta_n$.\\

\noindent But since $\beta_1,\cdots,\beta_m,\beta'\in F''$, this contradicts the maximality of $m$. Therefore $s\in J\otimes_K F$.\\

\noindent Finally, to complete the induction, suppose that $s=r_1\otimes\alpha_1+\cdots+r_n\otimes\alpha_n\in I$ with $\alpha_1,\cdots,\alpha_n$ linearly independent over $K$. Then $l(s)\leq n$ so we have proved that $s\in J\otimes_K F$, so there exist $t_1,\cdots, t_m\in J$, $\beta_1,\cdots,\beta_m\in F$ such that $s=t_1\otimes\beta_1+\cdots+t_m\otimes\beta_m$.\\

\noindent So, extend $\{\alpha_1,\cdots,\alpha_n\}$ to a basis $\{\alpha_1,\cdots,\alpha_d\}$ of $F/K$, and set $r_{n+1}=\cdots=r_d=0$. Then for each $i=1\cdots,m$, $\beta_i=\underset{1\leq j\leq d}{\sum}{\gamma_{i,j}\alpha_j}$ for some $\gamma_{i,j}\in K$, and:

$s=\underset{1\leq i\leq m}{\sum}{t_i\otimes\beta_i}=\underset{1\leq i\leq m}{\sum}{\underset{1\leq j\leq d}{\sum}{\gamma_{i,j}t_i\otimes\alpha_j}}=\underset{1\leq j\leq d}{\sum}{\left(\underset{1\leq i\leq m}{\sum}{\gamma_{i,j}t_i}\right)\otimes\alpha_j}$.\\

\noindent But we know that $s=\underset{1\leq j\leq d}{\sum}{r_j\otimes\alpha_j}$, and hence $\underset{1\leq j\leq d}{\sum}{\left(r_j-\underset{1\leq i\leq m}{\sum}{\gamma_{i,j}t_i}\right)}\otimes\alpha_j=0$, and this implies that $r_j=\underset{1\leq i\leq m}{\sum}{\gamma_{i,j}t_i}\in J$ for each $j$ as required.\end{proof}

\begin{corollary}\label{min-prime}

If we assume $S$ has finite left and right Krull dimension, and $P$ be is prime ideal of $S$, then $\{\sigma(P):\sigma\in G\}$ form a complete set of minimal prime ideals above $(P\cap R)\otimes_K F$.

\end{corollary}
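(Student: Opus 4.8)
Write $Q := P \cap R$. The plan is in two parts: first identify $Q \otimes_K F$ with $\bigcap_{\sigma \in G}\sigma(P)$, and then show that the prime ideals $\sigma(P)$ ($\sigma \in G$) are pairwise incomparable; the corollary follows immediately from these two facts.

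For the first part, I would observe that $\bigcap_{\sigma \in G}\sigma(P)$ is $G$-invariant, since $G$ merely permutes the ideals $\sigma(P)$ among themselves. As noted just before Proposition \ref{Galois-ext}, the standing hypothesis on $S$ made there holds when $R = \widehat{U(\mathcal{L})}_K$ (by Lemma \ref{PBW}), so Proposition \ref{Galois-ext} applies and yields $\bigcap_\sigma \sigma(P) = \big(\bigcap_\sigma \sigma(P) \cap R\big)\otimes_K F$. Since the $G$-action fixes $R$ pointwise, it fixes the subset $Q = P\cap R$ of $R$, and hence $\sigma(P)\cap R = \sigma(P)\cap\sigma(R) = \sigma(P\cap R) = Q$ for every $\sigma$; intersecting over $\sigma$ gives $\bigcap_\sigma\sigma(P)\cap R = Q$, so $\bigcap_\sigma\sigma(P) = Q\otimes_K F$. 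Each $\sigma(P)$ is prime (the image of a prime under a ring automorphism), so $Q\otimes_K F$ is a finite intersection of primes; if $\mathfrak{q}$ is any prime of $S$ containing it, then $\mathfrak{q}\supseteq \prod_{\sigma\in G}\sigma(P)$, whence $\sigma(P)\subseteq\mathfrak{q}$ for some $\sigma$ by primeness. Thus every minimal prime over $Q\otimes_K F$ lies in $\{\sigma(P):\sigma\in G\}$.

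For the second part, suppose $\sigma(P)\subseteq\tau(P)$ with $\sigma,\tau\in G$; applying $\sigma^{-1}$ and putting $\rho := \sigma^{-1}\tau$ gives $P\subseteq\rho(P)$, and iterating produces an ascending chain $P\subseteq\rho(P)\subseteq\rho^2(P)\subseteq\cdots\subseteq\rho^{n}(P) = P$, where $n$ is the order of $\rho$ in the finite group $G$. Hence all these inclusions are equalities, so $\sigma(P) = \tau(P)$; alternatively one may invoke the finite Krull dimension of $S$ to force the chain to terminate. Therefore each $\sigma(P)$ is minimal over $Q\otimes_K F = (P\cap R)\otimes_K F$, and $\{\sigma(P):\sigma\in G\}$ is exactly the complete set of minimal primes above it. I expect no real obstacle here: the only substantive input is the descent statement Proposition \ref{Galois-ext}, and everything else is routine prime-ideal bookkeeping together with the finiteness of $G$.
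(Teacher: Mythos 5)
Your proof is correct, and its skeleton matches the paper's: both first apply Proposition \ref{Galois-ext} to the $G$-invariant ideal $\bigcap_{\sigma\in G}\sigma(P)$ (noting $\sigma(P)\cap R = P\cap R$) to get $\bigcap_\sigma\sigma(P) = (P\cap R)\otimes_K F$, and both then use the product trick $\prod_\sigma\sigma(P)\subseteq\bigcap_\sigma\sigma(P)$ to see that any prime over $(P\cap R)\otimes_K F$ contains some conjugate $\sigma(P)$. Where you diverge is in the last, crucial step: having reduced to an inclusion $P\subseteq\rho(P)$ between conjugates, the paper invokes the finite (classical) Krull dimension of $S$, via \cite[Lemma 6.4.5]{McConnell}, to force the chain $P\subseteq\rho(P)$ to be an equality, whereas you iterate $\rho$ and use that $\rho^n=\mathrm{id}$ for $n$ the order of $\rho$ in the finite group $G$, so the chain $P\subseteq\rho(P)\subseteq\cdots\subseteq\rho^n(P)=P$ collapses. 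Your version is more elementary and shows that the finite Krull dimension hypothesis in the statement is not actually needed for the incomparability of the conjugates (it is only your fallback remark and the paper's chosen tool); this is a small but genuine simplification. Two cosmetic points: the corollary is stated for a general $R$ satisfying the section's standing linear-independence hypothesis, so you need not specialize to $R=\widehat{U(\mathcal{L})}_K$ to justify applying Proposition \ref{Galois-ext}; and it would be worth one explicit sentence combining your two parts (a prime $\mathfrak{q}$ with $(P\cap R)\otimes_K F\subseteq\mathfrak{q}\subseteq\sigma(P)$ contains some $\tau(P)$, and incomparability then gives $\mathfrak{q}=\sigma(P)$), though this is routine and clearly what you intend.
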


\begin{proof}

Let $Q=P\cap R$. Then for every $\sigma\in G$, $\sigma(Q)=Q$, and hence $Q\subseteq\sigma(P)$. So, let $I:=\underset{\sigma\in G}{\cap}{\sigma(P)}$, then $I$ is a $G$-invariant two-sided ideal of $S$, and $I\cap R=Q$, thus $I=Q\otimes_KF$ by Proposition \ref{Galois-ext}.\\

\noindent If $P'$ is a prime ideal of $S$ with $I\subseteq P'\subseteq P$, then $\underset{\sigma\in G}{\prod}{\sigma(P)}\subseteq I\subseteq P'$, which implies that $\sigma(P)\subseteq P'\subseteq P$ for some $\sigma\in G$, so $P\subseteq \sigma^{-1}(P)$.\\

\noindent But $S$ has finite Krull dimension, so it follows from \cite[Lemma 6.4.5]{McConnell} that $S$ has finite classical Krull dimension, and hence there exists $n\geq 0$ maximal such that there exists a chain of prime ideals $P=P_0\subsetneq P_1\subsetneq\cdots\subsetneq P_n$ of $S$. But $P\subseteq\sigma^{-1}(P)\subsetneq\sigma^{-1}(P_1)\subsetneq\cdots\subsetneq\sigma^{-1}(P_n)$ is also a chain of prime ideals, and thus $P=\sigma^{-1}(P)$ and $\sigma(P)=P$.

But since $\sigma(P)\subseteq P'\subseteq P$, this implies that $P=P'$, and hence $P$ is minimal prime above $I$ as required. The same argument shows that $\sigma(P)$ is minimal prime above $I$ for every $\sigma\in G$.\end{proof}

\noindent In particular, we could take $R=\widehat{U(\mathcal{L})}_K$, in which case $S=\widehat{U(\mathcal{L})}_F$ has finite Krull dimension as required. Therefore, we can apply this result in the case where $P$ is a Dixmier annihilator.\\

\noindent So, let $\lambda:\mathfrak{g}\to F$ be a linear form such that $\lambda(\mathcal{L})\subseteq\mathcal{O}_F$, and $P:=$ Ann$_{\widehat{U(\mathcal{L})}_F}\widehat{D(\lambda)}_F$, then for every $\sigma\in G$, $\sigma(P)=$ Ann$_{\widehat{U(\mathcal{L})}_F}\widehat{D(\sigma\cdot\lambda)}_F$ by Lemma \ref{transport-structure}.

\begin{proposition}\label{Dix-extension}

Let $\lambda,\mu:\mathfrak{g}\to F$ be linear forms such that $\lambda(\mathcal{L}),\mu(\mathcal{L})\subseteq\mathcal{O}_F$. Let $P=I(\lambda)\trianglelefteq\widehat{U(\mathcal{L})}_K$, $Q=I(\mu)\trianglelefteq\widehat{U(\mathcal{L})}_K$.

Then if $P\subseteq Q$, there exists $\sigma\in G$ such that if $P_{\sigma}:=$\emph{ Ann}$_{\widehat{U(\mathcal{L})}_F}\widehat{D(\sigma\cdot\lambda)}_F$ and $Q_1:=$\emph{ Ann}$_{\widehat{U(\mathcal{L})}_F}\widehat{D(\mu)}_F$ then $P_{\sigma}\subseteq Q_1$.

\end{proposition}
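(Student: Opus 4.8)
The plan is to lift everything to $S=\widehat{U(\mathcal{L})}_F\cong\widehat{U(\mathcal{L})}_K\otimes_KF=R\otimes_KF$, where the ideals involved become prime, and then exploit the descent statement of Proposition \ref{Galois-ext}. Write $P_1:=\Ann_S\widehat{D(\lambda)}_F$; by Theorem \ref{aff-Dix1} applied over $F$, together with the independence of the annihilator from the chosen polarisation (Theorem \ref{independence}), $P_1$ is a primitive, hence prime, ideal of $S$, and likewise $Q_1=\Ann_S\widehat{D(\mu)}_F$ is prime. Since every $\sigma\in G$ fixes $R=R\otimes_K1$ pointwise, we have $P_1\cap R=P$ and $Q_1\cap R=Q$. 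The first genuine step is to identify the Galois conjugates of $P_1$: regarding $\sigma$ as an $\mathcal{O}$-linear Lie automorphism of $\mathcal{L}\otimes_{\mathcal{O}}\mathcal{O}_F$ — which extends precisely to the ring automorphism of $S$ described above — Lemma \ref{transport-structure} gives $\sigma(P_1)=\Ann_S\widehat{D(\sigma\cdot\lambda)}_F=P_\sigma$ for all $\sigma\in G$.

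Next I would compute $\bigcap_{\sigma\in G}P_\sigma$. As $G$ permutes the finite family $\{P_\sigma:\sigma\in G\}$, the ideal $I:=\bigcap_{\sigma\in G}P_\sigma$ is a $G$-invariant two-sided ideal of $S$, and since $P_\sigma\cap R=\sigma(P_1\cap R)=\sigma(P)=P$ for each $\sigma$, we get $I\cap R=\bigcap_{\sigma\in G}(P_\sigma\cap R)=P$. Proposition \ref{Galois-ext} then forces $I=P\otimes_KF$.

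Finally I would bring in the hypothesis $P\subseteq Q$. Since $Q=Q_1\cap R\subseteq Q_1$ and $Q_1$ is a two-sided ideal of the $F$-algebra $S$, we get $P\otimes_KF\subseteq Q\otimes_KF\subseteq Q_1$, hence $\prod_{\sigma\in G}P_\sigma\subseteq\bigcap_{\sigma\in G}P_\sigma=P\otimes_KF\subseteq Q_1$; primality of $Q_1$ and finiteness of $G$ then give $P_\sigma\subseteq Q_1$ for some $\sigma$, which is the assertion. (One could equally invoke Corollary \ref{min-prime}, by which the $P_\sigma$ are exactly the minimal primes over $P\otimes_KF$, so that the prime $Q_1\supseteq P\otimes_KF$ must contain one of them.) The only point requiring real care is the first step — checking that conjugating the Dixmier annihilator by $\sigma$ amounts to twisting the linear form $\lambda$ by $\sigma$ — and this is exactly what Lemma \ref{transport-structure} delivers; everything afterwards is a short manipulation with $G$-invariant ideals.
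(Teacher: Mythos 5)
Your argument is correct and follows essentially the same route as the paper: pass to $\widehat{U(\mathcal{L})}_F$, use Lemma \ref{transport-structure} to identify $\sigma(P_1)$ with $P_\sigma$, and then deduce from the Galois-descent results that the prime $Q_1\supseteq P\otimes_KF$ must contain some $P_\sigma$. The only cosmetic difference is that your main route re-derives $\bigcap_{\sigma\in G}P_\sigma=P\otimes_KF$ from Proposition \ref{Galois-ext} and finishes with the product-of-primes trick, which merely inlines the content of Corollary \ref{min-prime} that the paper (and your parenthetical alternative) cites directly.
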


\begin{proof}

Firstly, let $P_1=$ Ann$_{\widehat{U(\mathcal{L})}_F}\widehat{D(\lambda)}_F$.  Then since $P_1,Q_1$ are primitive ideals of Ann$_{\widehat{U(\mathcal{L})}_F}\widehat{D(\lambda)}_F$ by Theorem \ref{aff-Dix2} and Theorem \ref{independence}, it follows from Corollary \ref{min-prime} that the set $\{\sigma(P_1):\sigma\in G\}$ consists of all minimal prime ideals above $(P_1\cap\widehat{U(\mathcal{L})}_K)\otimes_K F=P\otimes_K F$, and also that $Q_1$ is minimal prime above $Q\otimes_KF$.\\

\noindent Therefore, if $P\subseteq Q$ then $P\otimes_K F\subseteq Q\otimes_K F\subseteq Q_1$, and hence there exists $\sigma\in G$ such that $\sigma(P_1)\subseteq Q_1$. But using Lemma \ref{transport-structure} we see that $\sigma(P_1)=$ Ann$_{\widehat{U(\mathcal{L})}_F}\widehat{D(\sigma\cdot\lambda)}_F=P_{\sigma}$ as required.\end{proof}

\subsection{Dimension Theory}

Now, fix linear forms $\lambda,\mu:\mathfrak{g}\to K$ such that $\lambda(\mathcal{L}),\mu(\mathcal{L})\subseteq\mathcal{O}$, and suppose that $I(\lambda)\subseteq I(\mu)$. Let us assume further that $\mathcal{L}$ is powerful, i.e. $[\mathcal{L},\mathcal{L}]\subseteq p\mathcal{L}$.\\ 

\noindent Let $\mathfrak{a}$ be an abelian ideal of $\mathfrak{g}$, and recall from section 3.2 that we define $\mathfrak{a}^{\perp}=\{u\in\mathfrak{g}:\lambda([u,\mathfrak{a}])=0\}$. Also, using Proposition \ref{standard-polarisation}, we may fix a polarisation $\mathfrak{b}$ of $\mathfrak{g}$ at $\lambda$ such that $\mathfrak{a}\subseteq\mathfrak{b}$, and construct the affinoid Dixmier module $\widehat{D(\lambda)}_{\mathfrak{b}}$ with respect to this polarisation. Note that since $\lambda([\mathfrak{a},\mathfrak{b}])=0$, $\mathfrak{b}\subseteq\mathfrak{a}^{\perp}$.\\

\noindent Suppose $\mathfrak{a}^{\perp}$ has codimension $s$ in $\mathfrak{g}$, and fix a basis $\{u_1,\cdots,u_s\}$ for $\mathcal{L}/(\mathfrak{a}^{\perp}\cap\mathcal{L})$. Then if $\mathcal{A}:=\mathfrak{a}\cap\mathcal{L}$, it follows from Corollary \ref{completely-prime} and Proposition \ref{perp} that the action $\widehat{U(\mathcal{A})}_K\to$ End$_K\widehat{D(\lambda)}$ has image contained in $K\langle \partial_1,\cdots,\partial_s\rangle$, where $\partial_i=\frac{d}{du_i}$, and this image contains $\partial_1,\cdots,\partial_s$.

\begin{proposition}\label{dimension-theory}

The quotient $\frac{\widehat{U(\mathcal{A})}_K}{I(\lambda)\cap\widehat{U(\mathcal{A})}_K}$ has Krull dimension $s$. 

\end{proposition}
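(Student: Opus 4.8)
The plan is to identify the quotient with the image of the action, and then to locate that image between a polynomial ring and a Tate algebra, each of Krull dimension $s$.

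Write $\rho\colon\widehat{U(\mathcal{L})}_K\to\mathrm{End}_K\widehat{D(\lambda)}_{\mathfrak b}$ for the action, so that $\ker\rho=I(\lambda)$. Restricting $\rho$ to $\widehat{U(\mathcal{A})}_K$ shows that
\[
\frac{\widehat{U(\mathcal{A})}_K}{I(\lambda)\cap\widehat{U(\mathcal{A})}_K}\;\cong\;B:=\rho\big(\widehat{U(\mathcal{A})}_K\big)
\]
as $K$-algebras. Since $\mathfrak a$ is abelian, $\widehat{U(\mathcal{A})}_K\cong K\langle a_1,\dots,a_m\rangle$ with $m=\dim\mathfrak a$ by Lemma~\ref{PBW}, so $B$ is a commutative affinoid $K$-algebra (a quotient of a Tate algebra by a necessarily closed ideal). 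Thus it suffices to prove $\dim B=s$.

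Next I would pin down where $B$ lies. By Proposition~\ref{perp}, every element of $\mathfrak a$ acts by a polynomial in $\partial_1,\dots,\partial_s$, and $\rho(U(\mathfrak a))=K[\partial_1,\dots,\partial_s]$; as $U(\mathfrak a)\subseteq\widehat{U(\mathcal{A})}_K$, this gives $K[\partial_1,\dots,\partial_s]\subseteq B$. On the other hand, since $\mathcal{L}$ is powerful, the explicit formula of Proposition~\ref{explicit-formula} together with the estimate in the proof of Corollary~\ref{completely-prime} yields $\rho(\mathcal{A})\subseteq\mathcal{O}[\partial_1,\dots,\partial_s]$, hence $\rho(U(\mathcal{A}))\subseteq\mathcal{O}[\partial_1,\dots,\partial_s]$; passing to the $\pi$-adic completion and tensoring with $K$ (using continuity of $\rho$) gives $B\subseteq K\langle\partial_1,\dots,\partial_s\rangle=:C$. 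So we have a sandwich $K[\partial_1,\dots,\partial_s]\subseteq B\subseteq C$ of $K$-algebras, with $B$ an affinoid domain (a subring of the domain $C$, which is a Tate algebra of Krull dimension $s$).

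Finally I would bound $\dim B$ on both sides. The injection $B\hookrightarrow C$ realises $\mathrm{Sp}\,C\to\mathrm{Sp}\,B$ as a dominant morphism of affinoid varieties, and a dominant morphism $X\to Y$ of affinoid varieties satisfies $\dim X\ge\dim Y$ (via Noether normalisation, cf.\ \cite{Bosch}); hence $\dim B\le\dim C=s$. For the reverse inequality, note that in $C$ the ideals $\mathfrak p_i:=(\partial_1,\dots,\partial_i)C$ form a chain $(0)=\mathfrak p_0\subsetneq\mathfrak p_1\subsetneq\cdots\subsetneq\mathfrak p_s$ of prime ideals, since $C/\mathfrak p_i\cong K\langle\partial_{i+1},\dots,\partial_s\rangle$ is a domain. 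Contracting to $B$ yields primes $\mathfrak P_i:=\mathfrak p_i\cap B$ of $B$, and each inclusion $\mathfrak P_i\subsetneq\mathfrak P_{i+1}$ is strict because $\partial_{i+1}\in B$ lies in $\mathfrak P_{i+1}$ but not in $\mathfrak P_i$ (its image in $C/\mathfrak p_i$ is nonzero). So $\dim B\ge s$, and therefore $\dim B=s$.

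The main obstacle is that one cannot hope to simply identify $B$ with $C$: a continuous injection of affinoid algebras with dense image need not be surjective, so $B$ may be a proper subalgebra of $K\langle\partial_1,\dots,\partial_s\rangle$, and the dimension count has to be extracted genuinely from the two-sided containment rather than from an isomorphism. The remaining ingredients — the integrality bound $\rho(\mathcal{A})\subseteq\mathcal{O}[\partial_1,\dots,\partial_s]$ coming from powerfulness, and the standard affinoid dimension facts — are routine.
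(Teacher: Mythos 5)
Your identification of the quotient with the image $B=\rho\big(\widehat{U(\mathcal{A})}_K\big)$, and your lower bound, are correct; indeed the chain argument (contracting the primes $(\partial_1,\dots,\partial_i)C$ of $C=K\langle\partial_1,\dots,\partial_s\rangle$ to $B$ and using $\partial_{i+1}\in B$, which Proposition \ref{perp} supplies, to force strict inclusions) is a clean, purely algebraic substitute for the paper's route to $\mathrm{K.dim}(B)\geq s$, which instead realises the unit polydisc as a Weierstrass subdomain of $\mathrm{Sp}\,B$ and appeals to the dimension theory of \cite{dimension}.

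The gap is in the upper bound. You deduce $\mathrm{K.dim}(B)\leq s$ from the assertion that an injection of affinoid algebras $B\hookrightarrow C$ (equivalently a morphism $\mathrm{Sp}\,C\to\mathrm{Sp}\,B$ with Zariski-dense image) forces $\dim B\leq\dim C$, justified only by ``via Noether normalisation, cf.\ \cite{Bosch}''. This is the crux of that half of the proof and it does not follow from Noether normalisation in any routine way: for general Noetherian rings a subring can have strictly larger Krull dimension; the transcendence-degree argument that rescues the affine case fails here because $\mathrm{Frac}\,T_s$ has infinite transcendence degree over $K$; and the reduction $\tilde B\to\tilde C$ of an injective map need not stay injective (e.g.\ $K\langle t\rangle\to K\langle u\rangle$, $t\mapsto\pi u$), so one cannot simply pass to reductions. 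The statement is in fact true, but it is a genuine theorem of rigid-analytic dimension theory (essentially that the Zariski closure of the image of a morphism of affinoid spaces has dimension at most that of the source), i.e.\ material of the type found in \cite{dimension}, not something available in \cite{Bosch}. The paper sidesteps it with an explicit construction you could adopt: since $\pi^{m}\partial_1,\dots,\pi^{m}\partial_s\in\rho(\widehat{U(\mathcal{A})})$ for some $m$ and $\widehat{U(\mathcal{A})}$ is $\pi$-adically complete, one has $T:=K\langle\pi^{m}\partial_1,\dots,\pi^{m}\partial_s\rangle\subseteq B$, and $B$ is topologically generated over $T$ by the finitely many polynomials $f_i(\partial_1,\dots,\partial_s)=\rho(v_i)$ coming from an $\mathcal{O}$-basis $\{v_1,\dots,v_r\}$ of $\mathcal{A}$ (with $\mathcal{O}$-coefficients, by powerfulness). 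Hence $B$ is a quotient of $T\langle\zeta_1,\dots,\zeta_r\rangle/(\zeta_i-f_i)$, whose $\mathrm{Sp}$ is the Weierstrass subdomain $\{x\in\mathrm{Sp}\,T:\vert f_i(x)\vert\leq 1\text{ for all }i\}$ of the $s$-dimensional polydisc $\mathrm{Sp}\,T$, so its Krull dimension is at most $s$ by \cite[Proposition 3.3.11]{Bosch} together with the subdomain-monotonicity of dimension from \cite{dimension}. As written, the two-sided containment $K[\partial_1,\dots,\partial_s]\subseteq B\subseteq K\langle\partial_1,\dots,\partial_s\rangle$ alone does not yield the upper bound; you must either prove or properly cite the dominant-morphism dimension inequality, or replace that step by an explicit argument of the above kind.
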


\begin{proof}

We will prove that the image $R$ of $\widehat{U(\mathcal{A})}_K$ under the action $\rho:\widehat{U(\mathcal{A})}_K\to$ End$_K\widehat{D(\lambda)}_{\mathfrak{b}}$ has Krull dimension $s$. Since the Dixmier annihilator $I(\lambda)$ does not depend on the choice of polarisation by Theorem \ref{independence}, the result will follow.\\

Note that since $\mathcal{A}$ is abelian, $\widehat{U(\mathcal{A})}_K$ is isomorphic to a commutative Tate-power series ring, and hence $R$ is a commutative affinoid algebra. Also, since $\partial_1,\cdots,\partial_s\in\rho(\widehat{U(\mathcal{A})}_K)$, it follows that there exist $m\geq 0$ such that $\pi^{m}\partial_1,\cdots,\pi^{m}\partial_s\in\rho(\widehat{U(\mathcal{A})})$, and hence every power series in $K\langle\pi^{m}\partial_1,\cdots,\pi^{m}\partial_s\rangle$ lies in $\rho(\widehat{U(\mathcal{A})}_K)=R$.\\

\noindent Therefore we have inclusions of commutative affinoid algebras, $K\langle\pi^{m}\partial_1,\cdots,\pi^{m}\partial_s\rangle\xhookrightarrow{}R\xhookrightarrow{}K\langle\partial_1,\cdots,\partial_s\rangle$, which gives rise to a chain of open embeddings of the associated affinoid spectra: Sp $K\langle\partial_1,\cdots,\partial_s\rangle\xhookrightarrow{}$ Sp $R\xhookrightarrow{}$ Sp $K\langle\pi^{m}\partial_1,\cdots,\pi^{m}\partial_s\rangle$.\\

\noindent Now, in \cite{dimension} the notion of the analytic dimension dim $\mathcal{X}$ of a rigid variety $\mathcal{X}$ is defined, and it is proved to be equal to the supremum of the Krull dimensions of every affinoid algebra $A$ such that Sp $A$ is an affinoid subdomain of $\mathcal{X}$. In particular, if Sp $A$ is an affinoid subdomain of Sp $B$ in the sense of Definition \ref{affinoid-subdomain}, then K.dim$(A)\leq$ K.dim$(B)$.

Therefore, since the Tate algebras $K\langle\partial_1,\cdots,\partial_s\rangle$ and $K\langle\pi^{m}\partial_1,\cdots,\pi^{m}\partial_s\rangle$ both have dimension $s$, it remains to prove that the embeddings Sp $R\to$ Sp $K\langle\pi^{m}\partial_1,\cdots,\pi^{m}\partial_s\rangle$ and Sp $K\langle\partial_1,\cdots,\partial_s\rangle\to$ Sp $R$ define affinoid subdomains.\\

\noindent For convenience, set $D:=$ Sp $K\langle\partial_1,\cdots,\partial_s\rangle$ and $D_1:=$ Sp $K\langle\pi^{m}\partial_1,\cdots,\pi^{m}\partial_s\rangle$. Then $D$ can be realised as the unit disc in $s$-dimensional rigid $K$-space, while $D_1$ is a larger disc containing $D$, so explicitly $D=\{(x_1,\cdots,x_s)\in D_1:\vert x_i\vert\leq 1$ for all $i\}$. 

But since Sp $R$ contains $D$, we could instead write $D=\{(x_1,\cdots,x_s)\in$ Sp $R:\vert x_i\vert\leq 1$ for all $i\}$, and this is a Weierstrass subdomain of Sp $R$ in the sense of \cite[Definition 3.3.7]{Bosch}, and hence $D$ is an affinoid subdomain of Sp $R$ by \cite[Proposition 3.3.11]{Bosch}. Therefore K.dim$(R)\geq$ dim$D=s$.\\

\noindent Now, choose a basis $\{v_1,\cdots,v_r\}$ for $\mathcal{A}$, so that $\widehat{U(\mathcal{A})}_K\cong K\langle v_1,\cdots,v_r\rangle$. Using Proposition \ref{perp} we see that for each $i$, $\rho(v_i)=f_i(\partial_1,\cdots,\partial_s)$ for some polynomial $f_i$. So, let $T:=K\langle\pi^{m_1}\partial_1,\cdots,\pi^{m_s}\partial_s\rangle$, then since $\rho(\widehat{U(\mathcal{A})}_K)$ contains $T$, it follows that the affinoid algebra $B:=T\langle\zeta_1,\cdots,\zeta_r\rangle/(\zeta_i-f_i(\partial_1,\cdots,\partial_s))$ surjects onto $R=\rho(\widehat{U(\mathcal{A})}_K)$, where each $a\in T$ is sent to $a$, and each $\zeta_i$ is sent to $\rho(v_i)$. Therefore, K.dim$(R)\leq$ K.dim$(B)$.\\

\noindent But clearly there is a map $T\to B$, inducing a map of affinoid varieties Sp $B\to$ Sp $T$, and the proof of \cite[Proposition 3.3.11]{Bosch} shows that this corresponds to the embedding of the Weierstrass subdomain $\{x\in$ Sp $T:\vert f_i(x)\vert\leq 1$ for all $i\}$, into Sp $T$, and hence Sp $B$ is an affinoid subdomain of Sp $T=D_1$ by \cite[Proposition 3.3.11]{Bosch}, and hence K.dim$(B)\leq$ dim$(D_1)=s$.

Therefore $s\leq$ K.dim$(R)\leq$ K.dim$(B)\leq s$, forcing equality, so K.dim$(R)=s$ as required.\end{proof}

\begin{corollary}\label{reduction}

Suppose that $\mathcal{L}$ is a powerful Lie lattice in $\mathfrak{g}$, $\lambda,\mu:\mathfrak{g}\to K$ are linear forms with $\lambda(\mathcal{L}),\mu(\mathcal{L})\subseteq\mathcal{O}$, and suppose that $I(\lambda)\subseteq I(\mu)$. Then for any abelian ideal $\mathfrak{a}$ of $\mathfrak{g}$, if $\mathcal{A}:=\mathfrak{a}\cap\mathcal{L}$ then $I(\lambda)\cap\widehat{U(\mathcal{A})}_K=I(\mu)\cap\widehat{U(\mathcal{A})}_K$.

\end{corollary}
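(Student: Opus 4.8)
The plan is to observe that one inclusion is automatic — since $I(\lambda)\subseteq I(\mu)$ we get $I(\lambda)\cap\widehat{U(\mathcal{A})}_K\subseteq I(\mu)\cap\widehat{U(\mathcal{A})}_K$ — and to deduce the reverse inclusion by a Krull-dimension count. First I would feed the hypothesis $I(\lambda)\subseteq I(\mu)$ into Lemma \ref{orbit}: this yields $g\in\mathbb{G}(K)$, $g=\exp(\ad u)$ for some $u\in\mathfrak{g}$, with $\mu=g\cdot\lambda$, i.e. $\mu(v)=\lambda(g^{-1}v)$ for all $v\in\mathfrak{g}$. Because $g$ is a Lie automorphism of $\mathfrak{g}$ and $\mathfrak{a}$ is an ideal, $\ad(u)(\mathfrak{a})\subseteq\mathfrak{a}$ and hence $g(\mathfrak{a})=\mathfrak{a}$. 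A one-line computation then shows that the annihilator subalgebras $\mathfrak{a}^{\perp_\lambda}=\{v:\lambda([v,\mathfrak{a}])=0\}$ and $\mathfrak{a}^{\perp_\mu}=\{v:\mu([v,\mathfrak{a}])=0\}$ satisfy $\mathfrak{a}^{\perp_\mu}=g(\mathfrak{a}^{\perp_\lambda})$ (substituting $a'=g^{-1}a$, which ranges over $\mathfrak{a}$). In particular $\mathfrak{a}^{\perp_\lambda}$ and $\mathfrak{a}^{\perp_\mu}$ have the same codimension $s$ in $\mathfrak{g}$.

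Next I would set $R_\lambda:=\widehat{U(\mathcal{A})}_K/\bigl(I(\lambda)\cap\widehat{U(\mathcal{A})}_K\bigr)$ and $R_\mu:=\widehat{U(\mathcal{A})}_K/\bigl(I(\mu)\cap\widehat{U(\mathcal{A})}_K\bigr)$. Since $\mathcal{A}=\mathfrak{a}\cap\mathcal{L}$ is abelian, $\widehat{U(\mathcal{A})}_K$ is a commutative Tate algebra, so $R_\lambda$ and $R_\mu$ are commutative affinoid algebras. Applying Proposition \ref{dimension-theory} once as stated and once with the roles of $\lambda$ and $\mu$ interchanged — both applications being legitimate since $\mathcal{L}$ is powerful and $\lambda(\mathcal{L}),\mu(\mathcal{L})\subseteq\mathcal{O}$ — and using the previous paragraph gives $\dim R_\lambda=\dim R_\mu=s$. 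Moreover $R_\lambda$ is an integral domain: the natural inclusion $\widehat{U(\mathcal{A})}_K\hookrightarrow\widehat{U(\mathcal{L})}_K$ induces an embedding $R_\lambda\hookrightarrow\widehat{U(\mathcal{L})}_K/I(\lambda)$, and the latter is a domain by Corollary \ref{completely-prime} (here again using that $\mathcal{L}$ is powerful).

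Finally, the inclusion from the first step gives a surjection of affinoid algebras $R_\lambda\twoheadrightarrow R_\mu$, with kernel $\mathfrak{p}:=\bigl(I(\mu)\cap\widehat{U(\mathcal{A})}_K\bigr)/\bigl(I(\lambda)\cap\widehat{U(\mathcal{A})}_K\bigr)$, so that $R_\lambda/\mathfrak{p}\cong R_\mu$ has Krull dimension $s$. Choosing a minimal prime $\mathfrak{q}\supseteq\mathfrak{p}$ of $R_\lambda$ with $\dim R_\lambda/\mathfrak{q}=s$, the dimension formula for affinoid domains (affinoid domains being equidimensional and catenary, cf. \cite{Bosch}) gives $\operatorname{ht}(\mathfrak{q})+\dim R_\lambda/\mathfrak{q}=\dim R_\lambda=s$, whence $\operatorname{ht}(\mathfrak{q})=0$; since $R_\lambda$ is a domain this forces $\mathfrak{q}=0$, hence $\mathfrak{p}=0$, i.e. $I(\lambda)\cap\widehat{U(\mathcal{A})}_K=I(\mu)\cap\widehat{U(\mathcal{A})}_K$. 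I expect the only delicate point to be this last step — the clean invocation that a surjection of affinoid algebras out of an affinoid domain onto an algebra of the same Krull dimension is an isomorphism — which I would either cite directly from the theory of affinoid algebras or re-derive via Noether normalization; everything else is bookkeeping with the coadjoint action together with a direct appeal to Proposition \ref{dimension-theory} and Corollary \ref{completely-prime}.
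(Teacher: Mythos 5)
Your proposal is correct and follows essentially the same route as the paper's own proof: both invoke Lemma \ref{orbit} to produce $g\in\mathbb{G}(K)$ with $\mu=g\cdot\lambda$, show that $g$ carries $\{v:\lambda([v,\mathfrak{a}])=0\}$ onto $\{v:\mu([v,\mathfrak{a}])=0\}$ so the two codimensions agree, apply Proposition \ref{dimension-theory} twice together with Corollary \ref{completely-prime}, and conclude from the surjection of commutative affinoid domains of equal Krull dimension. Your explicit justification of the last step via the dimension formula for affinoid domains (and your explicit remark that $g(\mathfrak{a})=\mathfrak{a}$) simply spells out what the paper leaves implicit.
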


\begin{proof}

Let $\mathfrak{a}_1^{\perp}=\{u\in\mathfrak{g}:\lambda([u,\mathfrak{a}])=0\}$, and let $\mathfrak{a}_2^{\perp}=\{u\in\mathfrak{g}:\mu([u,\mathfrak{a}])=0\}$. Then if $s_i$ is the codimension of $\mathfrak{a}_i^{\perp}$ in $\mathfrak{g}$, for $i=1,2$, then it follows from Proposition \ref{dimension-theory} that $\widehat{U(\mathcal{A})}_K/I(\lambda)\cap\widehat{U(\mathcal{A})}_K$ has Krull dimension $s_1$ and $\widehat{U(\mathcal{A})}_K/I(\mu)\cap\widehat{U(\mathcal{A})}_K$ has Krull dimension $s_2$.

Since $I(\lambda)\subseteq I(\mu)$, clearly there is a surjection $\widehat{U(\mathcal{A})}_K/I(\lambda)\cap\widehat{U(\mathcal{A})}_K\to \widehat{U(\mathcal{A})}_K/I(\mu)\cap\widehat{U(\mathcal{A})}_K$, and by Corollary \ref{completely-prime} both these algebras are commutative domains. Therefore $s_2\leq s_1$, with equality if and only if $I(\lambda)\cap\widehat{U(\mathcal{A})}_K=I(\mu)\cap\widehat{U(\mathcal{A})}_K$, so we will prove that $s_1=s_2$.\\

\noindent Since $I(\lambda)\subseteq I(\mu)$, it follows from Lemma \ref{orbit} that there exists $g\in\mathbb{G}(K)$ such that $\mu=g\cdot\lambda$. We will prove that the image of $\mathfrak{a}_1^{\perp}$ under $g$ is $\mathfrak{a}_2^{\perp}$, and hence $\mathfrak{a}_1^{\perp}$ and $\mathfrak{a}_2^{\perp}$ must have the same dimension, and hence the same codimension in $\mathfrak{g}$ as required.\\

\noindent Firstly, if $u\in\mathfrak{a}_1^{\perp}$, then $\lambda([u,\mathfrak{g}])=0$, so given $v\in\mathfrak{g}$, $\mu([g(u),v])=(g\cdot\lambda)([g(u),g(g^{-1}(v))])=\lambda(g^{-1}g[u,g^{-1}(v)])=\lambda([u,g^{-1}(v)])=0$, and hence $\mu([g(u),\mathfrak{g}])=0$ and $g(u)\in\mathfrak{a}_2^{\perp}$. Thus $g(\mathfrak{a}_1^{\perp})\subseteq\mathfrak{a}_2^{\perp}$

Conversely, since $\lambda=g^{-1}\cdot\mu$, the same argument shows that $g^{-1}(\mathfrak{a}_2^{\perp})\subseteq\mathfrak{a}_1^{\perp}$, and hence $g(\mathfrak{a}_1^{\perp})=\mathfrak{a}_2^{\perp}$ as required.\end{proof}

\subsection{Control Theorem}

A particularly useful technique, which we hope should ultimately help us complete the proof of the deformed Dixmier-Moeglin equivalence in full generality, is to look for a suitable generating set for Dixmier annihilators.

Specifically, recall that an ideal $I$ of $\widehat{U(\mathcal{L})}_K$ is \emph{controlled} by a closed ideal $\mathcal{A}$ of $\mathcal{L}$ if $I=\widehat{U(\mathcal{L})}_K(I\cap\widehat{U(\mathcal{A})}_K)$, i.e. $I$ is generated as a left ideal by a subset of $\widehat{U(\mathcal{A})}_K$. If $\mathcal{A}=\mathfrak{a}\cap\mathcal{L}$ for some ideal $\mathfrak{a}$ of $\mathfrak{g}$, we may also say that $I$ is controlled by $\mathfrak{a}$ for convenience.

\begin{lemma}\label{control-properties}

Let $I$ be a two-sided ideal of $\widehat{U(\mathcal{L})}_K$, and let $\mathfrak{a}$ be an ideal of $\mathfrak{g}$:

\begin{itemize}

\item If $I$ is controlled by $\mathfrak{a}$, then $I$ is controlled by any ideal $\mathfrak{a}'$ of $\mathfrak{g}$ with $\mathfrak{a}\subseteq\mathfrak{a}'$.

\item If $I=\underset{i\in X}{\cap}{J_i}$ for some two-sided ideals $J_i$ of $\widehat{U(\mathcal{L})}_K$, and each $J_i$ is controlled by $\mathfrak{a}$, then $I$ is controlled by $\mathfrak{a}$.

\item If $F/K$ is a finite extension, and $J$ is a two-sided ideal of $\widehat{U(\mathcal{L})}_F$ such that $J\cap\widehat{U(\mathcal{L})}_K=I$, then if $J$ is controlled by $\mathfrak{a}\otimes_KF$, $I$ is controlled by $\mathfrak{a}$.

\end{itemize}

\end{lemma}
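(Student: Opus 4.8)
The plan is to reduce all three items to a single normal-form characterisation of control, proved once and for all. Fix an $\mathcal{O}$-basis $\{x_1,\dots,x_d\}$ of $\mathcal{L}$ whose last $d-r$ members form a basis of $\mathcal{A}:=\mathfrak{a}\cap\mathcal{L}$; this is possible since $\mathcal{A}$ is an $\mathcal{O}$-Lie lattice of $\mathfrak{a}$ (indeed an ideal of $\mathcal{L}$). Writing $\underline{x}^{\alpha}:=x_1^{\alpha_1}\cdots x_r^{\alpha_r}$ for $\alpha\in\mathbb{N}^r$, Lemma \ref{PBW} gives every element of $\widehat{U(\mathcal{L})}_K$ a unique expansion $\sum_{\alpha\in\mathbb{N}^r}\underline{x}^{\alpha}c_{\alpha}$ with $c_{\alpha}\in\widehat{U(\mathcal{A})}_K$ tending to $0$; call the $c_{\alpha}$ the $\mathcal{A}$-coefficients of the element. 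The key claim to establish is: \emph{a two-sided ideal $J$ of $\widehat{U(\mathcal{L})}_K$ is controlled by $\mathfrak{a}$ if and only if every $\mathcal{A}$-coefficient of every element of $J$ again lies in $J$.}

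To prove the claim, note that $\widehat{U(\mathcal{A})}_K$ is Noetherian, so $J\cap\widehat{U(\mathcal{A})}_K$ is generated as a left ideal by finitely many $a_1,\dots,a_m$, whence $\widehat{U(\mathcal{L})}_K(J\cap\widehat{U(\mathcal{A})}_K)=\sum_k\widehat{U(\mathcal{L})}_K a_k$ is a finitely generated left ideal, hence closed (the $\pi$-adic filtration on $\widehat{U(\mathcal{L})}_K$ is Zariskian, as used in Proposition \ref{fin-gen}). Because the $\mathcal{A}$-variables $x_{r+1},\dots,x_d$ come last in the chosen PBW ordering, a product $\underline{x}^{\beta}d$ with $d\in\widehat{U(\mathcal{A})}_K$ is already in normal form, so for $s_k=\sum_{\beta}\underline{x}^{\beta}d_{k,\beta}$ one has $s_ka_k=\sum_{\beta}\underline{x}^{\beta}(d_{k,\beta}a_k)$ with $d_{k,\beta}a_k\in J\cap\widehat{U(\mathcal{A})}_K$; collecting terms and using uniqueness of the expansion shows that the $\mathcal{A}$-coefficients of any element $\sum_k s_ka_k$ of $J$ lie in $J\cap\widehat{U(\mathcal{A})}_K$. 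Conversely, if all $\mathcal{A}$-coefficients of elements of $J$ lie in $J$, then the partial sums of the expansion of any $r\in J$ lie in $\widehat{U(\mathcal{L})}_K(J\cap\widehat{U(\mathcal{A})}_K)$, which is closed, so $r$ lies there too; combined with the trivial inclusion $\widehat{U(\mathcal{L})}_K(J\cap\widehat{U(\mathcal{A})}_K)\subseteq J$ this gives control. This is essentially the coefficient-extraction argument of Lemma \ref{induced}(i) and Theorem \ref{control}.

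Granting the claim, the three items follow at once. For the first, if $\mathfrak{a}\subseteq\mathfrak{a}'$ then $\mathcal{A}\subseteq\mathcal{A}':=\mathfrak{a}'\cap\mathcal{L}$ and $\widehat{U(\mathcal{A})}_K\subseteq\widehat{U(\mathcal{A}')}_K$ inside $\widehat{U(\mathcal{L})}_K$, so $I\cap\widehat{U(\mathcal{A})}_K\subseteq I\cap\widehat{U(\mathcal{A}')}_K$ and $I=\widehat{U(\mathcal{L})}_K(I\cap\widehat{U(\mathcal{A})}_K)\subseteq\widehat{U(\mathcal{L})}_K(I\cap\widehat{U(\mathcal{A}')}_K)\subseteq I$ forces equality (no claim needed here). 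For the second, take $r\in I=\bigcap_{i}J_i$; by the claim each $\mathcal{A}$-coefficient of $r$ lies in every $J_i$, hence in $I\cap\widehat{U(\mathcal{A})}_K$, so by the claim again $I$ is controlled by $\mathfrak{a}$. For the third, note $\widehat{U(\mathcal{L})}_F=\widehat{U(\mathcal{L})}_K\otimes_KF$ (Proposition \ref{ind-centre}), that the same basis $\{x_1,\dots,x_d\}$ is an $\mathcal{O}_F$-basis of $\mathcal{L}\otimes_{\mathcal{O}}\mathcal{O}_F$ with last members spanning $\mathcal{A}\otimes_{\mathcal{O}}\mathcal{O}_F$, so the $\mathcal{A}$-expansion of $r\in I\subseteq\widehat{U(\mathcal{L})}_K$ is simultaneously its expansion in $\widehat{U(\mathcal{L})}_F$; since $J$ is controlled by $\mathfrak{a}\otimes_KF$, the claim over $F$ gives each $\mathcal{A}$-coefficient in $J\cap\widehat{U(\mathcal{L})}_K=I$, and then the claim over $K$ gives control of $I$ by $\mathfrak{a}$.

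The only genuinely delicate points are bookkeeping: checking that finitely generated left ideals of $\widehat{U(\mathcal{L})}_K$ are closed (so partial sums pass to their limit) and verifying that products $\underline{x}^{\beta}\cdot d$ with $d\in\widehat{U(\mathcal{A})}_K$ need no PBW reordering — both secured by putting the $\mathcal{A}$-variables last. I expect the careful verification of the normal-form claim to be the main, though still routine, obstacle.
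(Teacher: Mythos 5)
Your proof is correct and follows essentially the same route as the paper: both establish the coefficient-extraction characterisation of control via a PBW basis with the $\mathcal{A}$-variables last (relying on closedness of $\widehat{U(\mathcal{L})}_K(I\cap\widehat{U(\mathcal{A})}_K)$), and then deduce the three items exactly as you do. Your extra justification of the closedness and of the forward implication via Noetherianity of $\widehat{U(\mathcal{A})}_K$ merely makes explicit what the paper's proof leaves implicit.
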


\begin{proof}

For the first statement, it is clear that if $I$ is controlled by $\mathcal{A}=\mathfrak{a}\cap\mathcal{L}\subseteq\mathfrak{a}'\cap\mathcal{L}=\mathcal{A}'$, then $I=\widehat{U(\mathcal{L})}_K(I\cap\widehat{U(\mathcal{A})}_K)\subseteq \widehat{U(\mathcal{L})}_K(I\cap\widehat{U(\mathcal{A}')}_K)\subseteq I$, forcing equality.\\

\noindent Now, let $\mathcal{A}=\mathfrak{a}\cap\mathcal{L}$, and let $\{u_1,\cdots,u_d\}$ be a basis for $\mathcal{L}$ such that $\{u_{r+1},\cdots,u_d\}$ is a basis for $\mathcal{A}$ for some $r\geq 0$. Using Lemma \ref{PBW}, we see that every element $s\in\widehat{U(\mathcal{L})}_K$ can be written as $s=\underset{\alpha\in\mathbb{N}^d}{\sum}{\lambda_{\alpha}u_1^{\alpha_1}\cdots u_d^{\alpha_d}}$ for some unique $\lambda_{\alpha}\in K$ with $\lambda_{\alpha}\rightarrow 0$ and $\alpha\rightarrow\infty$. For each $\beta\in\mathbb{N}^{r}$, set $s_{\beta}=\underset{\gamma\in\mathbb{N}^{d-r}}{\sum}{\lambda_{(\beta,\gamma)}u_{r+1}^{\gamma_{r+1}}\cdots u_d^{\gamma_d}}\in\widehat{U(\mathcal{A})}_K$, and it follows that $s=\underset{\beta\in\mathbb{N}^{r}}{\sum}{u_1^{\beta_1}\cdots u_r^{\beta_r}s_{\beta}}$.

We will first prove that $I$ is controlled by $\mathcal{A}$ if and only if for every $s\in I$, $s_{\beta}\in I$ for all $\beta\in\mathbb{N}^r$. One direction is clear since if $I=\widehat{U(\mathcal{L})}_K(I\cap\widehat{U(\mathcal{A})}_K)$ then every element of $I$ is a sum of elements of the form $su$ with $u\in I\cap\widehat{U(\mathcal{A})}_K$, so $su=\underset{\beta\in\mathbb{N}^{r}}{\sum}{u_1^{\beta_1}\cdots u_r^{\beta_r}s_{\beta}u}$ and $s_{\beta}u\in I$ as required.\\

\noindent Conversely, if $s_{\beta}\in I$ for all $s\in I$ then $u_1^{\beta_1}\cdots u_r^{\beta_r}s_{\beta}\in\widehat{U(\mathcal{L})}_K(I\cap\widehat{U(\mathcal{A})}_K)$ for each $\beta$, so since $\widehat{U(\mathcal{L})}_K(I\cap\widehat{U(\mathcal{A})}_K)$ is closed in $\widehat{U(\mathcal{L})}_K$, it follows that $s\in\widehat{U(\mathcal{L})}_K(I\cap\widehat{U(\mathcal{A})}_K)$ and hence $I$ is controlled by $\mathcal{A}$.\\

\noindent Now, if $I=\underset{i\in X}{\cap}{J_i}$ for some two-sided ideals $J_i$ of $\widehat{U(\mathcal{L})}_K$, then given $s=\underset{\beta\in\mathbb{N}^{r}}{\sum}{u_1^{\beta_1}\cdots u_r^{\beta_r}s_{\beta}}\in I$, we know that $s\in J_i=\widehat{U(\mathcal{L})}_K(J_i\cap\widehat{U(\mathcal{A})}_K)$ for every $i$, and hence $s_{\beta}\in J_i$ for every $i$, $\beta$, i.e. $s_{\beta}\in\underset{i\in X}{\cap}{J_i}=I$ for all $\beta$, and hence $I$ is controlled by $\mathcal{A}$.\\

\noindent For the final statement, it is clear that $\{u_{r+1},\cdots,u_d\}$ is an $F$-basis for $\mathcal{A}\otimes_{\mathcal{O}}\mathcal{O}_F$. So if $J$ is controlled by $\mathcal{A}\otimes_{\mathcal{O}}\mathcal{O}_F$ then given $s\in I\subseteq J$, $s_{\beta}\in J$ for every $\beta$. Therefore $s_{\beta}\in J\cap\widehat{U(\mathcal{L})}_K=I$ and hence $I$ is controlled by $\mathcal{A}$ as required.\end{proof}

\noindent Now, given a linear form $\lambda:\mathfrak{g}\to K$, recall that we define $\mathfrak{g}^{\lambda}:=\{u\in\mathfrak{g}:\lambda([u,\mathfrak{g}])=0\}$.

\begin{lemma}\label{twist}

Let $\sigma$ be a Lie automorphism of $\mathfrak{g}$, and let $\sigma\cdot\lambda$ be the linear form defined by $(\sigma\cdot\lambda)(u)=\lambda(\sigma^{-1}(u))$. Then $\mathfrak{g}^{\sigma\cdot\lambda}=\sigma(\mathfrak{g}^{\lambda})$.

\end{lemma}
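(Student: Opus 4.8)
The plan is to unwind the definitions directly: we must show that $u \in \mathfrak{g}^{\sigma\cdot\lambda}$ if and only if $u \in \sigma(\mathfrak{g}^\lambda)$, i.e. if and only if $\sigma^{-1}(u) \in \mathfrak{g}^\lambda$. Fix $u \in \mathfrak{g}$. By definition, $u \in \mathfrak{g}^{\sigma\cdot\lambda}$ means $(\sigma\cdot\lambda)([u,\mathfrak{g}]) = 0$, which by the formula for $\sigma\cdot\lambda$ unpacks to $\lambda(\sigma^{-1}([u,\mathfrak{g}])) = 0$.

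The key step is to use that $\sigma$ is a Lie automorphism, so $\sigma^{-1}$ is too, and hence $\sigma^{-1}([u,v]) = [\sigma^{-1}(u), \sigma^{-1}(v)]$ for all $v \in \mathfrak{g}$. Since $\sigma^{-1}$ is also bijective on $\mathfrak{g}$, we have $\sigma^{-1}([u,\mathfrak{g}]) = [\sigma^{-1}(u), \sigma^{-1}(\mathfrak{g})] = [\sigma^{-1}(u), \mathfrak{g}]$. Therefore the condition $\lambda(\sigma^{-1}([u,\mathfrak{g}])) = 0$ is precisely $\lambda([\sigma^{-1}(u), \mathfrak{g}]) = 0$, which by definition says $\sigma^{-1}(u) \in \mathfrak{g}^\lambda$, i.e. $u \in \sigma(\mathfrak{g}^\lambda)$. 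Running the equivalences in both directions gives $\mathfrak{g}^{\sigma\cdot\lambda} = \sigma(\mathfrak{g}^\lambda)$.

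There is no real obstacle here — the statement is a one-line verification once the automorphism property and the bijectivity of $\sigma$ are invoked; the only thing to be careful about is keeping track of which map ($\sigma$ or $\sigma^{-1}$) appears where, since the coadjoint-style twist $(\sigma\cdot\lambda)(u) = \lambda(\sigma^{-1}(u))$ introduces an inverse. I would write the proof as a short chain of equivalences, perhaps a single displayed line, and not belabor it.

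\begin{proof}
Fix $u\in\mathfrak{g}$. Since $\sigma$ is a Lie automorphism, so is $\sigma^{-1}$; in particular $\sigma^{-1}$ is a bijective linear map with $\sigma^{-1}([u,v])=[\sigma^{-1}(u),\sigma^{-1}(v)]$ for all $v\in\mathfrak{g}$, so that $\sigma^{-1}([u,\mathfrak{g}])=[\sigma^{-1}(u),\sigma^{-1}(\mathfrak{g})]=[\sigma^{-1}(u),\mathfrak{g}]$. Hence
\begin{center}
$u\in\mathfrak{g}^{\sigma\cdot\lambda}\iff(\sigma\cdot\lambda)([u,\mathfrak{g}])=0\iff\lambda(\sigma^{-1}([u,\mathfrak{g}]))=0\iff\lambda([\sigma^{-1}(u),\mathfrak{g}])=0\iff\sigma^{-1}(u)\in\mathfrak{g}^{\lambda}$,
\end{center}
and $\sigma^{-1}(u)\in\mathfrak{g}^{\lambda}$ if and only if $u\in\sigma(\mathfrak{g}^{\lambda})$. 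Therefore $\mathfrak{g}^{\sigma\cdot\lambda}=\sigma(\mathfrak{g}^{\lambda})$.
\end{proof}
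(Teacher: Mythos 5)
Your proof is correct and follows essentially the same route as the paper's: both unwind the definition of $\mathfrak{g}^{\sigma\cdot\lambda}$ and use that $\sigma^{-1}$ is a Lie automorphism to rewrite $\lambda(\sigma^{-1}([u,\mathfrak{g}]))=0$ as $\lambda([\sigma^{-1}(u),\mathfrak{g}])=0$. The chain of equivalences you give matches the paper's argument, with your explicit remark about bijectivity of $\sigma^{-1}$ being a minor (and harmless) elaboration.
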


\begin{proof}

Firstly, $u\in\mathfrak{g}^{\sigma\cdot\lambda}$ if and only if $\sigma\cdot\lambda([u,\mathfrak{g}])=\lambda(\sigma^{-1}([u,\mathfrak{g}]))=0$. But since $\sigma$ is a Lie automorphism, this is true if and only if $\lambda([\sigma^{-1}(u),\mathfrak{g}])=0$, i.e. $\sigma^{-1}(u)\in\mathfrak{g}^{\lambda}$ and $u\in\sigma(\mathfrak{g}^{\lambda})$ as required.\end{proof}

\begin{proposition}\label{control1}

Suppose that the subalgebra $\mathfrak{g}^{\lambda}$ is an ideal in $\mathfrak{g}$. Then $P=I(\lambda)$ is controlled by $\mathfrak{g}^{\lambda}$.

\end{proposition}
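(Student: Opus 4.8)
The plan is to run Dixmier's induction strategy on $n = \dim_K\mathfrak{g}$, exactly as in the proofs of Theorems \ref{aff-Dix1} and \ref{independence}. The base case $n=1$ is trivial: $\mathfrak{g}$ is abelian, $\mathfrak{g}^\lambda = \mathfrak{g}$, and control by $\mathfrak{g}$ is automatic. For the inductive step, I would first dispose of the case where there is a non-zero ideal $\mathfrak{a}\trianglelefteq\mathfrak{g}$ with $\lambda(\mathfrak{a})=0$: then $\lambda$ descends to $\lambda_0$ on $\mathfrak{g}_0 := \mathfrak{g}/\mathfrak{a}$, and by Lemma \ref{polarisation-properties} any polarisation contains $\mathfrak{a}$, so $\widehat{D(\lambda)}$ is (via Lemma \ref{induced}) the inflation of $\widehat{D(\lambda_0)}$ and $I(\lambda) = \phi^{-1}(I(\lambda_0))$ where $\phi:\widehat{U(\mathcal{L})}_K\twoheadrightarrow\widehat{U(\mathcal{L}_0)}_K$. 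One checks that $\mathfrak{g}_0^{\lambda_0} = \mathfrak{g}^\lambda/\mathfrak{a}$ is an ideal of $\mathfrak{g}_0$ (using $\mathfrak{a}\subseteq\mathfrak{g}^\lambda$, which holds by Lemma \ref{polarisation-properties} since $\lambda(\mathfrak{a})=0$ forces $\lambda([\mathfrak{a},\mathfrak{g}])=0$... actually more carefully, $\mathfrak{a}\subseteq\mathfrak{g}^\lambda$ need not hold; instead note $\mathfrak{a}+\mathfrak{g}^\lambda$ maps onto $\mathfrak{g}_0^{\lambda_0}$), apply induction to get $I(\lambda_0)$ controlled by $\mathfrak{g}_0^{\lambda_0}$, and pull this back through $\phi$ to conclude $I(\lambda)$ is controlled by $\mathfrak{g}^\lambda$. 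This last "pull-back of control" step needs a small lemma: if $\phi$ kills $\mathfrak{a}\subseteq\mathfrak{g}^\lambda$ and $I(\lambda_0)$ is controlled by the image of $\mathfrak{g}^\lambda$, then $I(\lambda)=\phi^{-1}(I(\lambda_0))$ is controlled by $\mathfrak{g}^\lambda$ — this follows from the basis description of control in Lemma \ref{control-properties} together with Lemma \ref{induced}.

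So we may assume $\lambda(\mathfrak{a})\neq 0$ for every non-zero ideal $\mathfrak{a}$. By Proposition \ref{sub-polarisation}, $\mathfrak{g}$ has a reducing quadruple $(x,y,z,\mathfrak{g}')$, and $\lambda(z)\neq 0$ since $Kz$ is a non-zero ideal. Now $z+I(\lambda)$ acts on $\widehat{D(\lambda)}$ by the scalar $\lambda(z)\neq 0$, so $z$ is not a zero-divisor modulo $I(\lambda)$; by Theorem \ref{control}, $I(\lambda)$ is controlled by $\mathcal{L}' = \mathcal{L}\cap\mathfrak{g}'$, i.e. by $\mathfrak{g}'$. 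Next I would invoke Proposition \ref{sub-polarisation}(ii): choosing a polarisation $\mathfrak{b}\subseteq\mathfrak{g}'$ of $\mathfrak{g}'$ at $\mu:=\lambda|_{\mathfrak{g}'}$ (which is also a polarisation of $\mathfrak{g}$ at $\lambda$), we get $\widehat{D(\lambda)}_{\mathfrak{b}} = \widehat{U(\mathcal{L})}_K\otimes_{\widehat{U(\mathcal{L}')}_K}\widehat{D(\mu)}_{\mathfrak{b}}$, so by Lemma \ref{ind-annihilator}, $I(\lambda)$ is the largest two-sided ideal of $\widehat{U(\mathcal{L})}_K$ contained in $\widehat{U(\mathcal{L})}_K(I(\mu)\cap\widehat{U(\mathcal{L}')}_K)$; but since we already know $I(\lambda)$ is controlled by $\mathfrak{g}'$, in fact $I(\lambda)\cap\widehat{U(\mathcal{L}')}_K = I(\mu)$ and $I(\lambda) = \widehat{U(\mathcal{L})}_K\,I(\mu)$. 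By induction (as $\dim_K\mathfrak{g}' = n-1$), $I(\mu)$ is controlled by $(\mathfrak{g}')^{\mu}$ inside $\widehat{U(\mathcal{L}')}_K$, and then $I(\lambda)$ is controlled by $(\mathfrak{g}')^\mu$ inside $\widehat{U(\mathcal{L})}_K$ by transitivity of control (stated, via the basis criterion, in Lemma \ref{control-properties}).

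It therefore remains to show $(\mathfrak{g}')^{\mu}\subseteq\mathfrak{g}^{\lambda}$, whereupon the first bullet of Lemma \ref{control-properties} upgrades control by $(\mathfrak{g}')^\mu$ to control by the (by hypothesis) ideal $\mathfrak{g}^\lambda$. For $u\in(\mathfrak{g}')^\mu$ we have $\lambda([u,\mathfrak{g}'])=0$; to get $\lambda([u,\mathfrak{g}])=0$ we must handle the bracket with $x$, i.e. show $\lambda([u,x])=0$. Since $\mathfrak{g}' = \ker(\ad y)$ and $[x,y]=\alpha z$, for any $u\in\mathfrak{g}'$ one has $[[u,x],y] = [u,[x,y]] - [x,[u,y]] = [u,\alpha z] - 0 = 0$ (as $z$ is central), so $[u,x]\in\mathfrak{g}'$; combined with $y\in Z(\mathfrak{g}')\subseteq(\mathfrak{g}')^\mu$ one needs to play off the Jacobi identity to move the pairing onto $y$, which forces $\lambda([u,x])$ to be a multiple of $\lambda(z)$ determined by $\mu([u,y]) = \lambda([u,y])$ — but $u\in(\mathfrak{g}')^\mu$ and $y\in\mathfrak{g}'$ give $\lambda([u,y])=0$, hence $\lambda([u,x])=0$. (This is the analogue of the argument in Proposition \ref{sub-polarisation}(ii) and the "$\mathfrak{h}^{\lambda}\subseteq\mathfrak{b}$" computations, and I expect it to come out cleanly.) The main obstacle is not any single step but keeping the bookkeeping honest: specifically, verifying in the $\lambda(\mathfrak{a})=0$ branch that $\mathfrak{g}_0^{\lambda_0}$ really is the image of $\mathfrak{g}^\lambda$ and really is an ideal of $\mathfrak{g}_0$ (so the inductive hypothesis applies), and confirming the "pull-back / push-forward of control along a quotient or an induction" manipulations rigorously from the basis criterion in Lemma \ref{control-properties}. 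These are routine but must be done carefully, since control is a delicate generation-type condition rather than a closed one.
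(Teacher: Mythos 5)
There is a genuine gap, in fact two, both in the second half of your argument. First, the identification $I(\lambda)\cap\widehat{U(\mathcal{L}')}_K=I(\mu)$ is false in exactly the case at hand. Lemma \ref{ind-annihilator} only says that $I(\lambda)$ is the largest two-sided ideal of $\widehat{U(\mathcal{L})}_K$ contained in $\widehat{U(\mathcal{L})}_K I(\mu)$; combined with Theorem \ref{control} this gives $I(\lambda)\cap\widehat{U(\mathcal{L}')}_K\subseteq I(\mu)$, but not equality. Concretely, $y\in Z(\mathfrak{g}')$ lies in every polarisation of $\mathfrak{g}'$ at $\mu$, so $y-\lambda(y)\in I(\mu)$; yet by Proposition \ref{Tate-action}$(ii)$, $y$ acts on $\widehat{D(\lambda)}\cong\widehat{D(\mu)}\langle t\rangle$ by $\alpha\lambda(z)\frac{d}{dt}+\lambda(y)$ with $\alpha\lambda(z)\neq 0$, so $y-\lambda(y)\notin I(\lambda)$. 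Thus $Q:=I(\lambda)\cap\widehat{U(\mathcal{L}')}_K$ is strictly smaller than $I(\mu)$ and is in general only semiprime. This is precisely why the paper does not induct on $\mu$ alone: it writes $Q=\bigcap_i \Ann_{\widehat{U(\mathcal{L}')}_K}\widehat{D(\mu_i)}_{F_i}$ via Corollary \ref{Dix-intersection}, uses Lemma \ref{orbit} to see that all the $\mu_i$ are coadjoint twists of $\mu$, uses Lemma \ref{twist} (plus the fact that $\mathfrak{g}'^{\mu}$ is an ideal of $\mathfrak{g}$, hence twist-stable) to identify $\mathfrak{g}'^{\mu_i}$ with $\mathfrak{g}'^{\mu}\otimes F_i$, applies the inductive hypothesis to each $\mu_i$, and then uses the stability of control under intersections and base change from Lemma \ref{control-properties}.

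Second, your concluding inclusion $\mathfrak{g}'^{\mu}\subseteq\mathfrak{g}^{\lambda}$ is simply wrong, and the Jacobi-identity argument cannot be repaired. In the reduced situation ($\lambda$ nonvanishing on every non-zero ideal, $\mathfrak{g}^{\lambda}$ an ideal) one has $\mathfrak{g}^{\lambda}=Kz$ of dimension $1$, whereas the dimension count of Lemma \ref{polarisation-properties} forces $\dim_K\mathfrak{g}'^{\mu}=\dim_K\mathfrak{g}^{\lambda}+1$, and indeed $\mathfrak{g}'^{\mu}=Kz\oplus Ky$: taking $u=y$ in your argument, $\lambda([y,\mathfrak{g}'])=0$ but $\lambda([y,x])=\pm\alpha\lambda(z)\neq 0$, so $y\notin\mathfrak{g}^{\lambda}$. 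So even after obtaining control of $Q$ by $\mathfrak{g}'^{\mu}$, an extra step is needed to shrink the controlling subalgebra from $Kz\oplus Ky$ down to $Kz=\mathfrak{g}^{\lambda}$; the paper does this by passing to the Heisenberg subalgebra $\mathrm{Span}_K\{x,y,z\}$ and invoking the maximality of $(z-\lambda(z))$ from Lemma \ref{Heisenberg} to show $Q\cap\widehat{U(\mathcal{A})}_K=(z-\lambda(z))\widehat{U(\mathcal{A})}_K$ and hence $Q=(z-\lambda(z))\widehat{U(\mathcal{L}')}_K$. The first half of your proposal (the quotient reduction when $\lambda$ kills a non-zero ideal, and the use of Theorem \ref{control} to pass to $\mathfrak{g}'$) does match the paper, but as written the inductive step does not go through.
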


\begin{proof}

We proceed by induction on $n:=$ dim$_K\mathfrak{g}$. If $n=1$ then $\mathfrak{g}^{\lambda}=\mathfrak{g}$ and the statement is obvious, so the base case holds.\\

\noindent For the inductive step, if there exists a non-zero ideal $\mathfrak{t}$ of $\mathfrak{g}$ such that $\lambda(\mathfrak{t})=0$ then clearly $\mathfrak{t}\subseteq\mathfrak{g}^{\lambda}$. Let $\mathfrak{g}_0:=\mathfrak{g}/\mathfrak{t}$, let $\lambda_0$ be the linear form of $\mathfrak{g}_0$ induced by $\lambda$, and let $\mathcal{L}_0:=\mathcal{L}/\mathfrak{t}\cap\mathcal{L}$. Then $\mathfrak{g}_0^{\lambda_0}=\mathfrak{g}^{\lambda}/\mathfrak{t}$ is an ideal in $\mathfrak{g}_0$, so using induction, the Dixmier annihilator $I(\lambda_0)$ of $\widehat{U(\mathcal{L}_0)}_K$ is controlled by $\mathfrak{g}_0^{\lambda_0}=\mathfrak{g}^{\lambda}/\mathfrak{t}$.

But using Lemma \ref{induced} we see that $\widehat{D(\lambda_0)}\cong\widehat{D(\lambda)}$ as $\widehat{U(\mathcal{L})}_K$-modules, and thus $I(\lambda_0)=I(\lambda)/\mathfrak{t}\widehat{U(\mathcal{L})}_K$ and it follows immediately that $I(\lambda)$ is controlled by $\mathfrak{g}^{\lambda}$.\\

\noindent Therefore, we may assume that $\lambda(\mathfrak{t})\neq 0$ for all non-zero ideals $\mathfrak{t}$ of $\mathfrak{g}$. Note that since $\mathfrak{g}^{\lambda}$ is an ideal in $\mathfrak{g}$, it follows that $\mathfrak{t}=\mathfrak{g}^{\lambda}\cap\ker(\lambda)$ is an ideal in $\mathfrak{g}$, and clearly $\lambda(\mathfrak{t})=0$, so $\mathfrak{t}=0$. Thus $\lambda$ is injective when restricted to $\mathfrak{g}^{\lambda}$, and hence $\mathfrak{g}^{\lambda}$ is one-dimensional over $K$. 

Write $\mathfrak{g}^{\lambda}=Kz$ for some $z\in Z(\mathfrak{g})$ with $\lambda(z)\neq 0$, and since $Z(\mathfrak{g})\subseteq\mathfrak{g}^{\lambda}$, it follows that $Z(\mathfrak{g})=Kz$. Naturally we may assume that $z\in\mathcal{L}$.\\

\noindent Using Proposition \ref{sub-polarisation}, $\mathfrak{g}$ has a reducing quadruple $(x,y,z,\mathfrak{g}')$, with $y,z\in\mathcal{L}$, and we can choose a polarisation $\mathfrak{b}$ of $\mathfrak{g}$ at $\lambda$ with $\mathfrak{b}\subseteq\mathfrak{g}'$. Since $\mathfrak{b}$ is also a polarisation of $\mathfrak{g}'$ at $\mu:=\lambda|_{\mathfrak{g}'}$, it follows from Lemma \ref{polarisation-properties} that dim$_K\mathfrak{b}=\frac{1}{2}($dim$_K\mathfrak{g}+$ dim$_K\mathfrak{g}^{\lambda})=\frac{1}{2}($dim$_K\mathfrak{g}'+$ dim$_K\mathfrak{g}'^{\mu})$. But dim$_K\mathfrak{g}'=$ dim$_K\mathfrak{g}-1$, so this means that dim$_K\mathfrak{g}'^{\mu}=$ dim$_K\mathfrak{g}^{\lambda}+1=2$.

But clearly $y,z\in\mathfrak{g}'^{\mu}$, and hence $\mathfrak{g}'^{\mu}=Kz\oplus Ky$, and clearly this is an ideal in $\mathfrak{g}$, and therefore in $\mathfrak{g}'$.\\

\noindent Now, using Theorem \ref{control}, we see that $I(\lambda)$ is controlled by $\mathcal{L}'=\mathfrak{g}'\cap\mathcal{L}$, so let $Q:=I(\lambda)\cap\widehat{U(\mathcal{L}')}_K$ -- a semiprime ideal of $\widehat{U(\mathcal{L}')}_K$ such that $I(\lambda)=Q\widehat{U(\mathcal{L})}_K=\widehat{U(\mathcal{L})}_KQ$. We will prove that $Q$ is controlled by $Kz$, and it will follow that $I(\lambda)$ is controlled by $Kz$.\\

\noindent Using Corollary \ref{Dix-intersection}, we know that $Q=\underset{i\in I}{\cap}{\text{Ann}_{\widehat{U(\mathcal{L}')}_K}\widehat{D(\mu_i)}_{F_i}}$ for some finite extensions $F_i/K$, linear forms $\mu_i:\mathfrak{g}'\to F$ such that $\mu_i(\mathcal{L}')\subseteq\mathcal{O}_F$. Thus $I(\lambda)=\widehat{U(\mathcal{L})}_KQ\subseteq \widehat{U(\mathcal{L})}_K\text{Ann}_{\widehat{U(\mathcal{L}')}_K}\widehat{D(\mu_i)}_{F_i}$ for each $i$. 

Setting $\lambda_i$ as any extension of $\mu_i$ to $\mathcal{L}$, it follows that $\widehat{D(\lambda_i)}_{F_i}=\widehat{U(\mathcal{L})}_{F_i}\otimes_{\widehat{U(\mathcal{L}')}_{F_i}}\widehat{D(\mu_i)}_{F_i}$. Since $I(\lambda)\otimes_K F_i$ is a two-sided ideal of $\widehat{U(\mathcal{L})}_{F_i}$ contained in $\widehat{U(\mathcal{L})}_{F_i}\text{Ann}_{\widehat{U(\mathcal{L}')}_{F_i}}\widehat{D(\mu_i)}_{F_i}$, it follows from Lemma \ref{ind-annihilator} that $I(\lambda)\subseteq$ Ann$_{\widehat{U(\mathcal{L})}_K}\widehat{D(\lambda_i)}_{F_i}=I(\lambda_i)$.\\ 

\noindent Using Lemma \ref{orbit}, this means that $\lambda$ and $\lambda_i$ lie in the same $\mathbb{G}$-coadjoint orbit for all $i$, i.e. $\lambda_i=g_i\cdot\lambda$ for some $g_i\in\mathbb{G}(F)$\\

\noindent So, fixing $i$, let $F=F_i$ for convenience, and let $\mu=\lambda|_{\mathfrak{g}'}$, then $\mu$ extends to an $F$-linear form on $\mathfrak{g}'_{F}=\mathfrak{g}'\otimes_K F$, and $\mathfrak{g}_F'^{\mu}=(\mathfrak{g}'^{\mu})\otimes_KF$. Let $\sigma$ be the Lie automorphism of $\mathfrak{g}'_F$ given by the restriction of $g_i$ to $\mathfrak{g}'_F$, then $\mu_i=\sigma\cdot\mu$, so it follows from Lemma \ref{twist} that $\mathfrak{g}_F'^{\mu_i}=\sigma(\mathfrak{g}_F'^{\mu})=\mathfrak{g}'^{\mu}\otimes F$ since $\mathfrak{g}'^{\mu}$ is an ideal in $\mathfrak{g}$, and hence is preserved by $\sigma$. 

It follows from induction that $\text{Ann}_{\widehat{U(\mathcal{L}')}_{F_i}}\widehat{D(\mu_i)}$ is controlled by $\mathfrak{g}'^{\mu}\otimes_K F_i$, and hence $\text{Ann}_{\widehat{U(\mathcal{L}')}_{K}}\widehat{D(\mu_i)}$ is controlled by $\mathfrak{g}'^{\mu}$ by Lemma \ref{control-properties}. Since this is true for all $i$, it also follows from Lemma \ref{control-properties} that $Q=\underset{i\in I}{\cap}{\text{Ann}_{\widehat{U(\mathcal{L}')}_K}\widehat{D(\mu_i)}_{F_i}}$ is controlled by $\mathfrak{g}'^{\mu}=Kz\oplus Ky$.\\

\noindent Finally, consider the subalgebra $\mathfrak{h}=$ Span$_K\{x,y,z\}$ of $\mathfrak{g}$, let $\mathcal{H}:=\mathfrak{h}\cap\mathcal{L}$, $\mathcal{A}:=\mathfrak{g}'^{\mu}\cap\mathcal{L}=\mathcal{O}z\oplus\mathcal{O}y$, and let $I:=(Q\cap\widehat{U(\mathcal{A})}_K)\widehat{U(\mathcal{H})}_K$. Then $I$ is a proper two-sided ideal of $\widehat{U(\mathcal{H})}_K$ containing $z-\lambda(z)$, thus $I=(z-\lambda(z))\widehat{U(\mathcal{H})}_K$ by Lemma \ref{Heisenberg}. 

Therefore, $Q\cap\widehat{U(\mathcal{A})}_K=I\cap\widehat{U(\mathcal{A})}_K=(z-\lambda(z))\widehat{U(\mathcal{A})}_K$, and hence $Q=(Q\cap\widehat{U(\mathcal{A}')}_K)\widehat{U(\mathcal{L}')}_K=(z-\lambda(z))\widehat{U(\mathcal{L}')}_K$, i.e. $Q$ is controlled by $\mathcal{O}z$\\

\noindent So since $\mathfrak{g}^{\lambda}=Kz$, it follows that $Q$ is controlled by $\mathfrak{g}^{\lambda}$, and thus $I(\lambda)$ is controlled by $\mathfrak{g}^{\lambda}$ as required.\end{proof}

\begin{proposition}\label{control2}

Suppose that $\mathfrak{a}$ is any ideal of $\mathfrak{g}$ containing $\mathfrak{g}^{\lambda}$. Then $I(\lambda)$ is controlled by $\mathfrak{a}$.

\end{proposition}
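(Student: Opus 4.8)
The plan is to reduce to Proposition \ref{control1} using Dixmier's induction strategy on $n := \dim_K \mathfrak{g}$. The base case $n = 1$ is trivial since then $\mathfrak{a} = \mathfrak{g}$. For the inductive step, I would first dispatch the situation where $\mathfrak{g}^\lambda$ is itself an ideal of $\mathfrak{g}$: then Proposition \ref{control1} says $I(\lambda)$ is controlled by $\mathfrak{g}^\lambda$, and since $\mathfrak{g}^\lambda \subseteq \mathfrak{a}$, the first bullet of Lemma \ref{control-properties} upgrades this to control by $\mathfrak{a}$. So the real work is when $\mathfrak{g}^\lambda$ is not an ideal of $\mathfrak{g}$.

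Next I would handle the case where there is a non-zero ideal $\mathfrak{t}$ of $\mathfrak{g}$ with $\lambda(\mathfrak{t}) = 0$. Then $\mathfrak{t} \subseteq \mathfrak{g}^\lambda \subseteq \mathfrak{a}$, so passing to $\mathfrak{g}_0 := \mathfrak{g}/\mathfrak{t}$ with induced form $\lambda_0$, lattice $\mathcal{L}_0 := \mathcal{L}/(\mathfrak{t}\cap\mathcal{L})$, and noting $\mathfrak{g}_0^{\lambda_0} = \mathfrak{g}^\lambda/\mathfrak{t} \subseteq \mathfrak{a}/\mathfrak{t} =: \mathfrak{a}_0$, the inductive hypothesis gives that $I(\lambda_0)$ is controlled by $\mathfrak{a}_0$. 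By Lemma \ref{induced} we have $\widehat{D(\lambda_0)} \cong \widehat{D(\lambda)}$ as $\widehat{U(\mathcal{L})}_K$-modules, so $I(\lambda_0) = I(\lambda)/\mathfrak{t}\widehat{U(\mathcal{L})}_K$, and it follows formally that $I(\lambda)$ is controlled by $\mathfrak{a}$ (lifting a control relation through the quotient by $\mathfrak{t}\widehat{U(\mathcal{L})}_K$, which is contained in $\widehat{U(\mathcal{A})}_K$ since $\mathfrak{t} \subseteq \mathfrak{a}$). Hence we may assume $\lambda(\mathfrak{t}) \neq 0$ for every non-zero ideal $\mathfrak{t}$ of $\mathfrak{g}$; in this situation, as in the proof of Proposition \ref{control1}, $Z(\mathfrak{g}) = Kz$ is one-dimensional with $\lambda(z) \neq 0$, and Proposition \ref{sub-polarisation} gives a reducing quadruple $(x,y,z,\mathfrak{g}')$ with $y,z \in \mathcal{L}$.

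Now I would run the reducing-quadruple argument. Since $z \notin P := I(\lambda)$ and $z$ is central, $z + P$ is not a zero divisor in $\widehat{U(\mathcal{L})}_K/P$ (the quotient being a domain by Theorem \ref{aff-Dix1} and the primitivity of $P$, or more directly because $P$ is prime and any ideal meeting a central non-nilpotent element nontrivially would force $z$ into $P$); so by Theorem \ref{control}, $I(\lambda)$ is controlled by $\mathcal{L}' := \mathfrak{g}' \cap \mathcal{L}$. Write $Q := I(\lambda) \cap \widehat{U(\mathcal{L}')}_K$. Using Corollary \ref{Dix-intersection}, $Q = \bigcap_{i} \mathrm{Ann}_{\widehat{U(\mathcal{L}')}_K}\widehat{D(\mu_i)}_{F_i}$ where, by the argument in Proposition \ref{control1} (via Lemma \ref{ind-annihilator} and Lemma \ref{orbit}), each $\mu_i$ lies in the coadjoint orbit of $\mu := \lambda|_{\mathfrak{g}'}$ under some $g_i \in \mathbb{G}(F_i)$. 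The key point is to control each summand. Set $\mathfrak{a}' := \mathfrak{a} \cap \mathfrak{g}'$, an ideal of $\mathfrak{g}'$ containing $\mathfrak{g}'^\mu$ (one checks $\mathfrak{g}'^\mu \subseteq \mathfrak{g}^\lambda \subseteq \mathfrak{a}$, and $\mathfrak{g}'^\mu \subseteq \mathfrak{g}'$, using that $y, z$ are central in $\mathfrak{g}'$). Then over $F_i$, Lemma \ref{twist} gives $\mathfrak{g}_{F_i}'^{\mu_i} = \sigma_i(\mathfrak{g}_{F_i}'^\mu)$ where $\sigma_i$ is the restriction of $g_i$ to $\mathfrak{g}'_{F_i}$; and since $\mathfrak{a}'$ is an ideal of $\mathfrak{g}'$ it is $\sigma_i$-invariant, so $\mathfrak{a}'_{F_i}$ is an ideal of $\mathfrak{g}'_{F_i}$ containing $\mathfrak{g}_{F_i}'^{\mu_i}$. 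By the inductive hypothesis (applied to $\mathfrak{g}'$, which has dimension $n - 1$), $\mathrm{Ann}_{\widehat{U(\mathcal{L}')}_{F_i}}\widehat{D(\mu_i)}_{F_i}$ is controlled by $\mathfrak{a}'_{F_i}$; by the third bullet of Lemma \ref{control-properties} this descends to control of $\mathrm{Ann}_{\widehat{U(\mathcal{L}')}_K}\widehat{D(\mu_i)}_{F_i}$ by $\mathfrak{a}'$; and by the second bullet, $Q$ is controlled by $\mathfrak{a}'$. Finally, $I(\lambda) = \widehat{U(\mathcal{L})}_K Q$ with $Q$ controlled by $\mathfrak{a}' \cap \mathcal{L} \subseteq \mathfrak{a} \cap \mathcal{L}$ yields that $I(\lambda)$ is controlled by $\mathfrak{a}$: explicitly, $I(\lambda) = \widehat{U(\mathcal{L})}_K(Q \cap \widehat{U(\mathcal{A}')}_K) \subseteq \widehat{U(\mathcal{L})}_K(I(\lambda) \cap \widehat{U(\mathcal{A})}_K) \subseteq I(\lambda)$, forcing equality.

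The main obstacle I anticipate is the bookkeeping in the reducing-quadruple step: ensuring that $\mathfrak{a} \cap \mathfrak{g}'$ genuinely contains $\mathfrak{g}'^\mu$ and is an ideal of $\mathfrak{g}'$ (so that the induction applies and Lemma \ref{twist} can be invoked), and carefully tracking the control relation down from $\widehat{U(\mathcal{L}')}_{F_i}$ through the base-field descent and the intersection, then back up through $I(\lambda) = \widehat{U(\mathcal{L})}_K Q$. Each of these is handled by the cited lemmas, but the chain of reductions needs to be assembled in the right order. A secondary subtlety is confirming that when $\mathfrak{g}^\lambda$ happens already to be an ideal we really are entitled to cite Proposition \ref{control1} directly, which is immediate, so that case is painless.
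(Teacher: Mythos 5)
Your overall architecture matches the paper's (induction on $\dim_K\mathfrak{g}$, quotienting by ideals killed by $\lambda$, a reducing quadruple plus Theorem \ref{control}, Corollary \ref{Dix-intersection}, Lemmas \ref{orbit}, \ref{twist} and \ref{control-properties}), but there is a genuine gap at the decisive step. You take an \emph{arbitrary} reducing quadruple $(x,y,z,\mathfrak{g}')$ from Proposition \ref{sub-polarisation} and then claim ``one checks $\mathfrak{g}'^{\mu}\subseteq\mathfrak{g}^{\lambda}\subseteq\mathfrak{a}$''. That containment is false: $y$ is central in $\mathfrak{g}'$, so $y\in\mathfrak{g}'^{\mu}$, but $\lambda([x,y])=\alpha\lambda(z)\neq 0$ forces $y\notin\mathfrak{g}^{\lambda}$. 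Indeed, the dimension count from Lemma \ref{polarisation-properties} (choosing a polarisation $\mathfrak{b}\subseteq\mathfrak{g}'$) gives $\dim_K\mathfrak{g}'^{\mu}=\dim_K\mathfrak{g}^{\lambda}+1$, and in fact $\mathfrak{g}'^{\mu}=\mathfrak{g}^{\lambda}\oplus Ky$. Consequently $\mathfrak{g}'^{\mu}\subseteq\mathfrak{a}\cap\mathfrak{g}'$ holds only if $y\in\mathfrak{a}$, and with a generic choice of $y$ nothing guarantees this; without it the inductive hypothesis cannot be applied to $\mathrm{Ann}_{\widehat{U(\mathcal{L}')}_{F_i}}\widehat{D(\mu_i)}_{F_i}$, and your reducing-quadruple step collapses.

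The paper's proof supplies exactly the missing idea: using the first bullet of Lemma \ref{control-properties} one may assume $\mathfrak{a}$ is the ideal generated by $\mathfrak{g}^{\lambda}$ (if $\mathfrak{a}=\mathfrak{g}^{\lambda}$, cite Proposition \ref{control1} and stop); then $\mathfrak{a}$ is non-central, so by nilpotency one can choose $y\in\mathfrak{a}\cap\mathcal{L}$, $y\notin Z(\mathfrak{g})$, with $[y,\mathfrak{g}]\subseteq Kz$, and build the reducing quadruple from \emph{this} $y$. One then checks $[y,\mathfrak{g}^{\lambda}]=0$ and $[y,[\mathfrak{g},\cdots[\mathfrak{g},\mathfrak{g}^{\lambda}]\cdots]]=0$, so $[y,\mathfrak{a}]=0$ and $\mathfrak{a}\subseteq\mathfrak{g}'$, and the dimension count gives $\mathfrak{g}'^{\mu}=\mathfrak{g}^{\lambda}\oplus Ky\subseteq\mathfrak{a}$, which is what the induction on $\mathfrak{g}'$ needs. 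A smaller point: your justification that $\mathfrak{a}'=\mathfrak{a}\cap\mathfrak{g}'$ is $\sigma_i$-invariant ``since it is an ideal of $\mathfrak{g}'$'' is not the right reason, because $\sigma_i$ is the restriction of $g_i=\exp(\ad(u))$ with $u\in\mathfrak{g}\otimes_K F_i$, not an element of the adjoint group of $\mathfrak{g}'$; invariance holds because $\mathfrak{a}\cap\mathfrak{g}'$ is an ideal of $\mathfrak{g}$ (in the paper's setup one simply has $\mathfrak{a}\subseteq\mathfrak{g}'$ and $\mathfrak{a}$ an ideal of $\mathfrak{g}$). Likewise, primitivity of $I(\lambda)$ does not make the quotient a domain; the regularity of $z+I(\lambda)$ should be justified either by $z$ acting on $\widehat{D(\lambda)}$ by the nonzero scalar $\lambda(z)$, or by your alternative argument that a nonzero central element of a prime ring is regular.
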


\begin{proof}

This proof is very similar to the proof of Proposition \ref{control1}, but uses it as a key step. Again, we proceed by induction on $n:=$ dim$_K\mathfrak{g}$. If $n=1$ then $\mathfrak{g}^{\lambda}=\mathfrak{g}=\mathfrak{a}$ and the statement is obvious, so the base case holds.\\

\noindent For the inductive step, if there exists a non-zero ideal $\mathfrak{t}$ of $\mathfrak{g}$ such that $\lambda(\mathfrak{t})=0$ then clearly $\mathfrak{t}\subseteq\mathfrak{g}^{\lambda}\subseteq\mathfrak{a}$. Let $\mathfrak{g}_0:=\mathfrak{g}/\mathfrak{t}$, let $\mathfrak{a}_0:=\mathfrak{a}/\mathfrak{t}$, let $\lambda_0$ be the linear form of $\mathfrak{g}_0$ induced by $\lambda$, and let $\mathcal{L}_0=\mathcal{L}/\mathfrak{t}\cap\mathcal{L}$. Then $\mathfrak{g}_0^{\lambda_0}=\mathfrak{g}^{\lambda}/\mathfrak{t}$, and $\mathfrak{a}_0$ is an ideal in $\mathfrak{g}_0$ containing $\mathfrak{g}_0^{\mu}$. So, using induction, the Dixmier annihilator $I(\lambda_0)$ of $\widehat{U(\mathcal{L}_0)}_K$ is controlled by $\mathfrak{a}_0$.

But using Lemma \ref{induced}, $\widehat{D(\lambda)}\cong\widehat{D(\lambda)}$ as $\widehat{U(\mathcal{L})}_K$-modules, and thus $I(\lambda_0)=I(\lambda)/\mathfrak{t}\widehat{U(\mathcal{L})}_K$ and it follows immediately that $I(\lambda)$ is controlled by $\mathfrak{a}$.\\

\noindent Therefore, we may assume that $\lambda(\mathfrak{t})\neq 0$ for all non-zero ideals $\mathfrak{t}$ of $\mathfrak{g}$, and it follows that $Z(\mathfrak{g})=Kz$ for some $z\in Z(\mathcal{L})$ such that $\lambda(z)\neq 0$. Furthermore, we may assume that $\mathfrak{a}$ is the ideal of $\mathfrak{g}$ generated by $\mathfrak{g}^{\lambda}$, i.e the subspace $\mathfrak{g}^{\lambda}+[\mathfrak{g},\mathfrak{g}^{\lambda}]+[\mathfrak{g},[\mathfrak{g},\mathfrak{g}^{\lambda}]]+\cdots$, since if $I(\lambda)$ is controlled by this ideal, then it will be controlled by any larger ideal by Lemma \ref{control-properties}.\\

\noindent If $\mathfrak{g}^{\lambda}=\mathfrak{a}$ then $\mathfrak{g}^{\lambda}$ is an ideal of $\mathfrak{g}$, and the result follows from Proposition \ref{control1}. So we may assume that $\mathfrak{g}^{\lambda}\neq\mathfrak{a}$, and hence $\mathfrak{a}$ is not central in $\mathfrak{g}$. Therefore, since $\mathfrak{g}$ is nilpotent, there exists $y\in\mathfrak{a}\cap\mathcal{L}$ with $y\notin Z(\mathfrak{g})$ such that $[y,\mathfrak{g}]\subseteq Z(\mathfrak{g})=Kz$. So setting $\mathfrak{g}':=\ker(\ad(y))$, it follows that $\mathfrak{g}=\mathfrak{g}'\oplus Kx$ and $(x,y,z,\mathfrak{g}')$ is a reducing quadruple.

Moreover, $\lambda([y,\mathfrak{g}^{\lambda}])=0$ and since $[y,\mathfrak{g}]\subseteq Kz$ and $\lambda(z)\neq 0$, this means that $[y,\mathfrak{g}^{\lambda}]=0$. Also, $[y,[\mathfrak{g},\mathfrak{g}]]=0$, so $[y,[\mathfrak{g},[\mathfrak{g}[\cdots,[\mathfrak{g},\mathfrak{g}^{\lambda}]\cdots]]=0$, and hence $[y,\mathfrak{a}]=0$ by the construction of $\mathfrak{a}$, and thus $\mathfrak{a}\subseteq\mathfrak{g}'$.\\

\noindent Now, using Theorem \ref{control}, we see that $I(\lambda)$ is controlled by $\mathcal{L}'=\mathfrak{g}'\cap\mathcal{L}$, so let $Q:=I(\lambda)\cap\widehat{U(\mathcal{L}')}_K$ -- a semiprime ideal of $\widehat{U(\mathcal{L}')}_K$ such that $I(\lambda)=Q\widehat{U(\mathcal{L})}_K=\widehat{U(\mathcal{L})}_KQ$. We will prove that $Q$ is controlled by $\mathfrak{a}$, and it will follow that $I(\lambda)$ is controlled by $\mathfrak{a}$.\\

\noindent Using Proposition \ref{sub-polarisation}, we can choose a polarisation $\mathfrak{b}$ of $\mathfrak{g}$ at $\lambda$ with $\mathfrak{b}\subseteq\mathfrak{g}'$. Since $\mathfrak{b}$ is also a polarisation of $\mathfrak{g}'$ at $\mu=\lambda|_{\mathfrak{g}'}$, it follows from Lemma \ref{polarisation-properties} that dim$_K\mathfrak{b}=\frac{1}{2}($dim$_K\mathfrak{g}+$ dim$_K\mathfrak{g}^{\lambda})=\frac{1}{2}($dim$_K\mathfrak{g}'+$ dim$_K\mathfrak{g}'^{\mu})$. But dim$_K\mathfrak{g}'=$ dim$_K\mathfrak{g}-1$, so this means that dim$_K\mathfrak{g}'^{\mu}=$ dim$_K\mathfrak{g}^{\lambda}+1$. 

So since $\mathfrak{g}'^{\mu}$ contains $\mathfrak{g}^{\lambda}$ and $y$, and since $\lambda([x,y])\neq 0$, $y\notin\mathfrak{g}^{\lambda}$. Therefore $\mathfrak{g}^{\lambda}\oplus Ky\subseteq\mathfrak{g}'^{\mu}$ and dim$_K\mathfrak{g}^{\lambda}\oplus Ky=$ dim$_K\mathfrak{g}'^{\mu}$, so $\mathfrak{g}'^{\mu}=\mathfrak{g}^{\lambda}\oplus Ky\subseteq\mathfrak{a}$.\\

\noindent Using Corollary \ref{Dix-intersection}, we know that $Q=\underset{i\in I}{\cap}{\text{Ann}_{\widehat{U(\mathcal{L}')}_K}\widehat{D(\mu_i)}_{F_i}}$ for some finite extensions $F_i/K$, linear forms $\mu_i:\mathfrak{g}'\to F$ such that $\mu_i(\mathcal{L}')\subseteq\mathcal{O}_F$. Thus $I(\lambda)=\widehat{U(\mathcal{L})}_KQ\subseteq \widehat{U(\mathcal{L})}_K\text{Ann}_{\widehat{U(\mathcal{L}')}_K}\widehat{D(\mu_i)}_{F_i}$ for each $i$. 

Setting $\lambda_i$ as any extension of $\mu_i$ to $\mathcal{L}$, it follows that $\widehat{D(\lambda_i)}_{F_i}=\widehat{U(\mathcal{L})}_{F_i}\otimes_{\widehat{U(\mathcal{L}')}_{F_i}}\widehat{D(\mu_i)}_{F_i}$. Since $I(\lambda)\otimes_K F_i$ is a two-sided ideal of $\widehat{U(\mathcal{L})}_{F_i}$ contained in $\widehat{U(\mathcal{L})}_{F_i}\text{Ann}_{\widehat{U(\mathcal{L}')}_{F_i}}\widehat{D(\mu_i)}_{F_i}$, it follows from Lemma \ref{ind-annihilator} that $I(\lambda)\subseteq$ Ann$_{\widehat{U(\mathcal{L})}_K}\widehat{D(\lambda_i)}_{F_i}=I(\lambda_i)$.\\ 

\noindent Using Lemma \ref{orbit}, this means that $\lambda$ and $\lambda_i$ lie in the same $\mathbb{G}$-coadjoint orbit for all $i$, i.e. $\lambda_i=g_i\cdot\lambda$ for some $g_i\in\mathbb{G}(F)$\\

\noindent So, fixing $i$, let $F=F_i$ for convenience, and let $\mu=\lambda|_{\mathfrak{g}'}$, then $\mu$ extends to an $F$-linear form on $\mathfrak{g}'_{F}=\mathfrak{g}'\otimes_K F$, and $\mathfrak{g}_F'^{\mu}=(\mathfrak{g}'^{\mu})\otimes_KF$. Let $\sigma$ be the Lie automorphism of $\mathfrak{g}'_F$ given by the restriction of $g_i$ to $\mathfrak{g}'_F$, then $\mu_i=\sigma\cdot\mu$, so it follows from Lemma \ref{twist} that $\mathfrak{g}_F'^{\mu_i}=\sigma(\mathfrak{g}_F'^{\mu})\subseteq\sigma(\mathfrak{a}\otimes F)\subseteq{a}\otimes F$.

Therefore, using induction, Ann$_{\widehat{U(\mathcal{L})}_{F_i}}\widehat{D(\mu_i)}_{F_i}$ is controlled by $\mathfrak{a}\otimes F_i$, and hence Ann$_{\widehat{U(\mathcal{L})}_{K}}\widehat{D(\mu_i)}_{F_i}$ is controlled by $\mathfrak{a}$ using Lemma \ref{control-properties}. Since this holds for all $i$ and $Q=\underset{i\in I}{\cap}{\text{Ann}_{\widehat{U(\mathcal{L}')}_K}\widehat{D(\mu_i)}_{F_i}}$, it also follows from Lemma \ref{control-properties} that $Q$ is controlled by $\mathfrak{a}$, and hence $I(\lambda)$ is controlled by $\mathfrak{a}$ as required.\end{proof}

\subsection{Special Linear forms}

\begin{definition}\label{special-Dix-annihilator}

We call a linear form $\lambda:\mathfrak{g}\to K$ \emph{special} if the ideal $\mathfrak{a}$ of $\mathfrak{g}$ generated by $\mathfrak{g}^{\lambda}$ satisfies $\lambda([\mathfrak{a},\mathfrak{a}])=0$.

\end{definition}

\noindent This condition is not universal, but as we will see, it is very useful. Using Proposition \ref{control2}, we see that if $\lambda$ is special then $P=I(\lambda)$ is controlled by an ideal $\mathfrak{a}$ of $\mathfrak{g}$ satisfying $\lambda([\mathfrak{a},\mathfrak{a}])=0$. We will now study the Dixmier annihilators associated to special linear forms.

\begin{lemma}\label{faithful}

Let $F/K$ be a finite extension, and let $\lambda:\mathfrak{g}\to F$ be a linear form such that $\lambda(\mathcal{L})\subseteq\mathcal{O}_F$. Then there exists a unique ideal $\mathfrak{t}$ of $\mathfrak{g}$, maximal with respect to the property that $\lambda(\mathfrak{t})=0$, and in fact $\mathfrak{t}=I(\lambda)\cap\mathfrak{g}$.

\end{lemma}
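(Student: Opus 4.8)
The plan is to produce the ideal $\mathfrak{t}$ as $I(\lambda)\cap\mathfrak{g}$ and then verify both halves of the claim: that $\lambda$ kills it, and that it absorbs every ideal killed by $\lambda$. First I would set $\mathfrak{t}:=I(\lambda)\cap\mathfrak{g}$, viewing $\mathfrak{g}\subseteq\widehat{U(\mathcal{L})}_F$ via the natural embedding and $I(\lambda)$ as an ideal of $\widehat{U(\mathcal{L})}_F$ (by Corollary \ref{ind-extension} it does not matter which finite base field we work over). Since $I(\lambda)$ is a two-sided ideal of $\widehat{U(\mathcal{L})}_F$, intersecting with $\mathfrak{g}$ gives a Lie ideal of $\mathfrak{g}$: indeed for $u\in\mathfrak{t}$ and $v\in\mathfrak{g}$ we have $[v,u]=vu-uv\in I(\lambda)$, and $[v,u]\in\mathfrak{g}$, so $\mathfrak{t}$ is closed under the bracket with all of $\mathfrak{g}$.

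Next I would show $\lambda(\mathfrak{t})=0$. Every element $u\in\mathfrak{t}\subseteq\mathfrak{g}$ annihilates the Dixmier module $\widehat{D(\lambda)}_{\mathfrak{b}}$ for a chosen polarisation $\mathfrak{b}$; in particular $u\cdot v_\lambda=0$ where $v_\lambda$ is the cyclic generator. Using the explicit formula of Proposition \ref{explicit-formula} (or more simply just the degree-zero term computed in its proof), the action of $u$ on the generator $1\leftrightarrow 1\otimes v_\lambda$ of $\widehat{D(\lambda)}_{\mathfrak{b}}\cong K\langle u_1,\dots,u_r\rangle$ has constant term equal to $\lambda(v_u)$ plus a genuine linear-in-$u_i$ part, where $u=v_u+\sum\alpha_{i,u}u_i$ with $v_u\in\mathfrak{b}$. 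For $u\in\mathfrak{t}$ to annihilate $v_\lambda$ forces $\alpha_{i,u}=0$ for all $i$ (so $u\in\mathfrak{b}$) and $\lambda(v_u)=\lambda(u)=0$. (Alternatively, and perhaps cleaner: $\mathfrak{t}$ is an ideal of $\mathfrak{g}$ with $\mathfrak{t}\subseteq I(\lambda)\subseteq\widehat{U(\mathcal{L})}_F$, and the classical fact that $\mathfrak{t}\cap I(\lambda)\cap U(\mathfrak{g})=\mathfrak{t}\cap\Ann_{U(\mathfrak{g})}D(\lambda)$; since an ideal $\mathfrak{t}$ of a nilpotent Lie algebra lying in $\Ann D(\lambda)$ must satisfy $\lambda(\mathfrak{t})=0$ — this is part of the classical theory, e.g. it follows because $\mathfrak{t}\subseteq\mathfrak{g}^\lambda$ and $\mathfrak{t}$ is an ideal so $\lambda(\mathfrak{t})=\lambda(\mathfrak{t}\cap\ker\lambda)$ forces $\lambda(\mathfrak{t})=0$ when combined with $\lambda(z)\ne 0$ arguments, but really the direct computation above is self-contained.)

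For maximality, suppose $\mathfrak{s}$ is any ideal of $\mathfrak{g}$ with $\lambda(\mathfrak{s})=0$; I must show $\mathfrak{s}\subseteq\mathfrak{t}$, i.e. $\mathfrak{s}\subseteq I(\lambda)$. Since $\mathfrak{s}$ is an ideal of $\mathfrak{g}$ contained in $\ker\lambda$ and $\mathfrak{g}$ is nilpotent, $\lambda([\mathfrak{g},\mathfrak{s}])=0$ as well (because $[\mathfrak{g},\mathfrak{s}]\subseteq\mathfrak{s}$), so by Lemma \ref{polarisation-properties} every polarisation $\mathfrak{b}$ of $\mathfrak{g}$ at $\lambda$ contains $\mathfrak{s}$, and moreover $\ad(\mathfrak{g})^n(u)\subseteq\mathfrak{s}\subseteq\mathfrak{b}$ for all $u\in\mathfrak{s},n\ge 0$. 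Then Note 2 after Proposition \ref{explicit-formula} tells us that each $u\in\mathfrak{s}$ acts on $\widehat{D(\lambda)}_{\mathfrak{b}}$ by a polynomial in $K[\partial_1,\dots,\partial_r]$ with coefficients $\lambda(\ad(u_r)^{\alpha_r}\cdots\ad(u_1)^{\alpha_1}(u))$. Every such iterated bracket lies in $\mathfrak{s}$, hence in $\ker\lambda$, so all the coefficients vanish and $\rho(u)=0$; thus $u\in I(\lambda)$ and $\mathfrak{s}\subseteq\mathfrak{t}$. Uniqueness of a maximal such ideal is then automatic: if $\mathfrak{t}_1,\mathfrak{t}_2$ are both maximal with $\lambda(\mathfrak{t}_i)=0$, then $\mathfrak{t}_1+\mathfrak{t}_2$ is an ideal with $\lambda(\mathfrak{t}_1+\mathfrak{t}_2)=0$, forcing $\mathfrak{t}_1=\mathfrak{t}_1+\mathfrak{t}_2=\mathfrak{t}_2$.

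The main obstacle I anticipate is making the computation "$u\in\mathfrak{t}\Rightarrow\lambda(u)=0$" airtight without circularity — one has to be careful that the iterated-bracket coefficients in Proposition \ref{explicit-formula} that involve $\ad(u_i)$ applied to $u$ land back in a controlled subspace, which is fine when $u$ lies in an ideal but needs a short argument for a general $u\in\mathfrak{t}$ (here one uses that $\mathfrak{t}$ is itself an ideal, so $\ad(\mathfrak{g})$-orbits of elements of $\mathfrak{t}$ stay in $\mathfrak{t}$, and then the degree-zero term of $\rho(u)$ is exactly $\mu(u)$ evaluated appropriately). Everything else is a direct application of Lemma \ref{polarisation-properties}, Proposition \ref{explicit-formula} and its Note 2.
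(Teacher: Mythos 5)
Your proposal is correct, and its skeleton matches the paper's: uniqueness of $\mathfrak{t}$ via closure of $\lambda$-killed ideals under sums, the inclusion $\lambda(I(\lambda)\cap\mathfrak{g})=0$ via the action on the cyclic generator (the degree-zero computation $u\cdot(1\otimes v)=\lambda(v_u)\otimes v+\sum_i\alpha_{i,u}u_i\otimes v$, forcing $u\in\mathfrak{b}$ and $\lambda(u)=0$), and the reverse inclusion via Lemma \ref{polarisation-properties}, which places any ideal $\mathfrak{s}$ with $\lambda(\mathfrak{s})=0$ inside every polarisation. The one place you genuinely diverge is the last step: to show such an $\mathfrak{s}$ lies in $I(\lambda)$, you invoke Proposition \ref{explicit-formula} and its Note 2 to see that $\rho(u)$ is a polynomial in the $\partial_i$ whose coefficients are $\lambda$ of iterated brackets landing in $\mathfrak{s}\subseteq\ker\lambda$, hence $\rho(u)=0$; the paper instead uses only that $\mathfrak{s}$ is an ideal, so $\mathfrak{s}\widehat{U(\mathcal{L})}_F=\widehat{U(\mathcal{L})}_F\mathfrak{s}$ and $\mathfrak{s}\widehat{D(\lambda)}_F=\widehat{U(\mathcal{L})}_F\otimes_{\widehat{U(\mathcal{B})}_F}\mathfrak{s}F_{\lambda}=0$. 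Both are valid; the paper's one-line tensor manipulation is lighter and avoids any dependence on the choice of basis implicit in the explicit formula, while your route has the small virtue of exhibiting $\rho(u)=0$ concretely. Two cosmetic cautions: $I(\lambda)$ is by definition an ideal of $\widehat{U(\mathcal{L})}_K$, not of $\widehat{U(\mathcal{L})}_F$ (Corollary \ref{ind-extension} says the choice of coefficient field for the module is immaterial, not that you may change the ambient algebra), though this does not matter here since $I(\lambda)\cap\mathfrak{g}$ is just the set of $u\in\mathfrak{g}$ killing $\widehat{D(\lambda)}_F$; and your parenthetical alternative for the step $\lambda(\mathfrak{t})=0$ is muddled and should be dropped in favour of the direct computation you already give.
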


\begin{proof}

Firstly, it is clear that if $\mathfrak{t}_1,\mathfrak{t_2}\trianglelefteq\mathfrak{g}$ with $\lambda(\mathfrak{t}_1)=\lambda(\mathfrak{t}_2)=0$, then $\lambda(\mathfrak{t}_1+\mathfrak{t}_2)=0$, so it follows that there exists a unique ideal $\mathfrak{t}$ which is maximal with respect to the property that $\lambda(\mathfrak{t})=0$.\\

\noindent Moreover, by Lemma \ref{polarisation-properties}, $\mathfrak{t}\otimes F\subseteq\mathfrak{b}$ for any polarisation $\mathfrak{b}$ of $\mathfrak{g}_F$ at $\lambda$, hence $\mathfrak{t}\widehat{D(\lambda)}_F=\mathfrak{t}\widehat{U(\mathcal{L})}_F\otimes_{\widehat{U(\mathcal{B})}_F}F_{\lambda}=\widehat{U(\mathcal{L})}_F\mathfrak{t}\otimes_{\widehat{U(\mathcal{B})}_F}F_{\lambda}=\widehat{U(\mathcal{L})}_F\otimes_{\widehat{U(\mathcal{B})}_F}\mathfrak{t}F_{\lambda}=0$, so $\mathfrak{t}\subseteq I(\lambda)\cap\mathfrak{g}$.

Conversely, let $\mathfrak{t}':=I(\lambda)\cap\mathfrak{g}$, then $\mathfrak{t}'\widehat{D(\lambda)}_F=0$, so $\mathfrak{t}'\otimes F_{\lambda}=0$, which implies that $\mathfrak{t}'\subseteq\mathfrak{b}$ and $\lambda(\mathfrak{t}')=0$ as required.\end{proof}

\begin{theorem}\label{Dix-maximal}

Assume that $\mathcal{L}$ is a powerful Lie lattice, and $\lambda:\mathfrak{g}\to K$ is a special linear form such that $\lambda(\mathcal{L})\subseteq\mathcal{O}$. Then the Dixmier annihilator $I(\lambda)$ is a maximal ideal of $\widehat{U(\mathcal{L})}_K$.

\end{theorem}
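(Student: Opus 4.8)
The strategy is to reduce to the case already understood in the literature --- namely, that quotients of affinoid enveloping algebras by Dixmier annihilators are ``large'' --- by exploiting the control statement in Proposition \ref{control2}. Since $\lambda$ is special, the ideal $\mathfrak{a}$ of $\mathfrak{g}$ generated by $\mathfrak{g}^\lambda$ satisfies $\lambda([\mathfrak{a},\mathfrak{a}])=0$, so $\mathfrak{a}$ is an abelian-modulo-$\ker\lambda$ ideal, and by Proposition \ref{control2} the Dixmier annihilator $I(\lambda)$ is controlled by $\mathfrak{a}$: that is, $I(\lambda)=\widehat{U(\mathcal{L})}_K\bigl(I(\lambda)\cap\widehat{U(\mathcal{A})}_K\bigr)$ where $\mathcal{A}:=\mathfrak{a}\cap\mathcal{L}$. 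So $I(\lambda)$ is determined by its intersection with the commutative subalgebra $\widehat{U(\mathcal{A})}_K$, and I would aim to show that $I(\lambda)\cap\widehat{U(\mathcal{A})}_K$ is a maximal ideal of $\widehat{U(\mathcal{A})}_K$ that is stable under the action of $\mathcal{L}$, hence generates a maximal two-sided ideal of the whole algebra.

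\textbf{Key steps.} First I would set up the induction on $n=\dim_K\mathfrak{g}$, with the one-dimensional (commutative) case trivial, and the standard reduction step: if there is a non-zero ideal $\mathfrak{t}$ of $\mathfrak{g}$ with $\lambda(\mathfrak{t})=0$, then $\mathfrak{t}\subseteq\mathfrak{g}^\lambda\subseteq\mathfrak{a}$ and by Lemma \ref{induced} we have $\widehat{D(\lambda)}\cong\widehat{D(\lambda_0)}$ for the induced form $\lambda_0$ on $\mathfrak{g}/\mathfrak{t}$, with $I(\lambda)=I(\lambda_0)+\mathfrak{t}\widehat{U(\mathcal{L})}_K$; so maximality of $I(\lambda_0)$ in $\widehat{U(\mathcal{L}/\mathcal{A}_0)}_K$ by induction transfers to maximality of $I(\lambda)$. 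This lets me assume $\lambda$ is faithful in the sense of Lemma \ref{faithful}, so $\mathfrak{g}^\lambda=Kz=Z(\mathfrak{g})$. Second, I would use Corollary \ref{completely-prime} together with Proposition \ref{perp} and Proposition \ref{dimension-theory}: the quotient $\widehat{U(\mathcal{A})}_K/\bigl(I(\lambda)\cap\widehat{U(\mathcal{A})}_K\bigr)$ is a commutative affinoid domain whose Krull dimension equals $s=\operatorname{codim}_{\mathfrak{g}}\mathfrak{a}^{\perp}$, where $\mathfrak{a}^\perp=\{u:\lambda([u,\mathfrak{a}])=0\}$. But since $\mathfrak{a}\supseteq\mathfrak{g}^\lambda$ is the ideal generated by $\mathfrak{g}^\lambda$ and $\lambda([\mathfrak{a},\mathfrak{a}])=0$, one computes that $\mathfrak{a}^\perp=\mathfrak{g}$, so $s=0$ --- meaning $I(\lambda)\cap\widehat{U(\mathcal{A})}_K$ is a maximal ideal of $\widehat{U(\mathcal{A})}_K$ with residue field $K$. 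Third, I would combine this with the control statement: since $I(\lambda)=\widehat{U(\mathcal{L})}_K\bigl(I(\lambda)\cap\widehat{U(\mathcal{A})}_K\bigr)$ and $I(\lambda)\cap\widehat{U(\mathcal{A})}_K$ is maximal with residue field $K$, any two-sided ideal strictly containing $I(\lambda)$ would, after intersecting with $\widehat{U(\mathcal{A})}_K$, have to equal all of $\widehat{U(\mathcal{A})}_K$ (so contain $1$) or equal $I(\lambda)\cap\widehat{U(\mathcal{A})}_K$; ruling out the latter (any larger two-sided ideal, again controlled by $\mathfrak{a}$ by Lemma \ref{control-properties} or a direct argument, must meet $\widehat{U(\mathcal{A})}_K$ in something strictly larger) forces the ideal to be the whole ring.

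\textbf{Main obstacle.} The delicate point is the passage from ``$I(\lambda)\cap\widehat{U(\mathcal{A})}_K$ is maximal in $\widehat{U(\mathcal{A})}_K$ and $I(\lambda)$ is controlled by $\mathfrak{a}$'' to ``$I(\lambda)$ is maximal in $\widehat{U(\mathcal{L})}_K$'': one needs to know that an arbitrary two-sided ideal $J\supsetneq I(\lambda)$ is itself controlled by $\mathfrak{a}$ (or at least that $J\cap\widehat{U(\mathcal{A})}_K\supsetneq I(\lambda)\cap\widehat{U(\mathcal{A})}_K$), and this is where Proposition \ref{control2} must be applied not just to $I(\lambda)$ but after noting $J$ is prime, using Corollary \ref{Dix-intersection} to write $J$ as an intersection of Dixmier annihilators $I(\mu)$ with $I(\lambda)\subseteq I(\mu)$, then invoking Corollary \ref{reduction} (powerfulness is used here) to get $I(\lambda)\cap\widehat{U(\mathcal{A})}_K=I(\mu)\cap\widehat{U(\mathcal{A})}_K$ --- which forces $J\cap\widehat{U(\mathcal{A})}_K=I(\lambda)\cap\widehat{U(\mathcal{A})}_K$ and hence $J=I(\lambda)$ by control. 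Threading the faithfulness reduction, the Galois descent of Section 7.1 (to handle the case where the relevant forms only take values in a finite extension), and the dimension count together cleanly is the part that will require care, but each individual ingredient is already available in the excerpt.
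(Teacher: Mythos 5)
There is a genuine gap in your central ``Key steps'' argument. You claim that, after the faithfulness reduction, $\mathfrak{g}^{\lambda}=Kz=Z(\mathfrak{g})$ and that $\mathfrak{a}^{\perp}=\mathfrak{g}$, so that $s=0$ and $I(\lambda)\cap\widehat{U(\mathcal{A})}_K$ is a maximal ideal of $\widehat{U(\mathcal{A})}_K$. Neither claim follows from speciality. Indeed $\mathfrak{a}^{\perp}=\mathfrak{g}$ means $\lambda([\mathfrak{g},\mathfrak{a}])=0$, i.e.\ $\mathfrak{a}\subseteq\mathfrak{g}^{\lambda}$, which (since $\mathfrak{g}^{\lambda}\subseteq\mathfrak{a}$) forces $\mathfrak{g}^{\lambda}$ to be an ideal of $\mathfrak{g}$ --- exactly the hypothesis of Proposition \ref{control1}, not of the theorem. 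A special form need not have this property: take $\mathfrak{g}$ filiform of dimension $4$ with basis $x,e_1,e_2,e_3$, $[x,e_1]=e_2$, $[x,e_2]=e_3$, $e_3$ central, and $\lambda(e_3)\neq 0$, $\lambda(x)=\lambda(e_1)=\lambda(e_2)=0$. This $\lambda$ kills no non-zero ideal, $\mathfrak{g}^{\lambda}=Ke_1\oplus Ke_3$ (two-dimensional, not an ideal), $\mathfrak{a}=Ke_1\oplus Ke_2\oplus Ke_3$ is abelian so $\lambda$ is special, but $\mathfrak{a}^{\perp}=\mathfrak{a}\neq\mathfrak{g}$ and $s=1$; by Proposition \ref{dimension-theory} the quotient $\widehat{U(\mathcal{A})}_K/(I(\lambda)\cap\widehat{U(\mathcal{A})}_K)$ has Krull dimension $1$ and is certainly not a field. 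Consequently the dichotomy you draw in the third step (any strictly larger two-sided ideal meets $\widehat{U(\mathcal{A})}_K$ in either everything or in $I(\lambda)\cap\widehat{U(\mathcal{A})}_K$) collapses, and the argument as written does not prove maximality.

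The correct mechanism, which is the one the paper uses and which your ``Main obstacle'' paragraph essentially rediscovers, never needs $I(\lambda)\cap\widehat{U(\mathcal{A})}_K$ to be maximal; it only needs that this intersection cannot grow. Concretely: after quotienting by $\mathfrak{t}=I(\lambda)\cap\mathfrak{g}$ one has $\lambda(\mathfrak{t}')\neq 0$ for every non-zero ideal $\mathfrak{t}'$, so $[\mathfrak{a},\mathfrak{a}]$, being an ideal killed by $\lambda$, is zero and $\mathfrak{a}$ is abelian (you need this explicitly, since Corollary \ref{reduction} is stated for abelian ideals). One then takes a maximal ideal $Q\supseteq I(\lambda)$, uses Theorem \ref{aff-Dix2} to write $Q=I(\mu)$, and treats the extension $F/K$ by Galois descent (Proposition \ref{Dix-extension}, Corollary \ref{min-prime}), running the argument over $F$ with $\sigma\cdot\lambda$ in place of $\lambda$; in the case $F=K$, Lemma \ref{orbit} gives $\mu=g\cdot\lambda$, so $\mathfrak{g}^{\mu}=g(\mathfrak{g}^{\lambda})\subseteq\mathfrak{a}$ and $\mu$ is special with the \emph{same} ideal $\mathfrak{a}$; Proposition \ref{control2} then controls both $I(\lambda)$ and $I(\mu)$ by $\mathfrak{a}$, Corollary \ref{reduction} (this is where powerfulness enters) gives $I(\lambda)\cap\widehat{U(\mathcal{A})}_K=I(\mu)\cap\widehat{U(\mathcal{A})}_K$, and control forces $I(\lambda)=I(\mu)=Q$. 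Note also that your fallback step ``$J\cap\widehat{U(\mathcal{A})}_K=I(\lambda)\cap\widehat{U(\mathcal{A})}_K$ hence $J=I(\lambda)$ by control'' needs the \emph{larger} ideal to be controlled by $\mathfrak{a}$, which is exactly what the coadjoint-orbit/speciality argument above supplies; it is not automatic. So the fix is to discard the dimension-count-to-zero step entirely and promote your obstacle paragraph, with these two missing ingredients made precise, to the main argument.
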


\begin{proof}

Let $\mathfrak{t}=I(\lambda)\cap\mathfrak{g}$, and let $\mathcal{L}_0=\mathcal{L}/\mathcal{L}\cap\mathfrak{t}$, then to prove that $I(\lambda)$ is maximal in $\widehat{U(\mathcal{L})}_K$, we need only prove that $I(\lambda)/\mathfrak{t}\widehat{U(\mathcal{L})}_K$ is maximal in $\widehat{U(\mathcal{L}_0)}_K=\widehat{U(\mathcal{L})}_K/\mathfrak{t}\widehat{U(\mathcal{L})}_K$.

But $\lambda(\mathfrak{t})=0$ by Lemma \ref{faithful}, so $\mathfrak{t}\subseteq\mathfrak{g}^{\lambda}$ and $\lambda$ induces a special linear form $\lambda_0$ of $\mathfrak{g}/\mathfrak{t}$, and it follows from Lemma \ref{induced} that $\widehat{D(\lambda)}\cong\widehat{D(\lambda_0)}$ as $\widehat{U(\mathcal{L})}_K$-modules, and thus Ann$_{\widehat{U(\mathcal{L}_0)}_K}\widehat{D(\lambda_0)}=I(\lambda)/\mathfrak{t}\widehat{U(\mathcal{L})}_K$.\\

\noindent Therefore, after quotienting out by $\mathfrak{t}$, we may assume that $I(\lambda)\cap\mathfrak{g}=0$, and hence $\lambda(\mathfrak{a})\neq 0$ for all non-zero ideals $\mathfrak{a}$ of $\mathfrak{g}$.\\

\noindent Now, since $\lambda$ is special, we may choose an ideal $\mathfrak{a}$ of $\mathfrak{g}$ with $\mathfrak{g}^{\lambda}\subseteq\mathfrak{a}$ and $\lambda([\mathfrak{a},\mathfrak{a}])=0$. But $[\mathfrak{a},\mathfrak{a}]$ is an ideal of $\mathfrak{g}$, and hence $[\mathfrak{a},\mathfrak{a}]=0$, i.e. $\mathfrak{a}$ is an abelian ideal of $\mathfrak{g}$. So suppose that $Q$ is a maximal ideal of $\widehat{U(\mathcal{L})}_K$ with $I(\lambda)\subseteq Q$, we will prove that $I(\lambda)=Q$, and hence $Q$ is maximal.\\

\noindent Since $Q$ is maximal, it is locally closed, so by Theorem \ref{aff-Dix2}, $Q=I(\mu)$ for some finite extension $F/K$ and some linear form $\mu:\mathfrak{g}\to F$ with $\mu(\mathcal{L})\subseteq\mathcal{O}_F$, and after extending $F$ if necessary, we may assume that $F/K$ is a Galois extension.\\

\noindent Using Proposition \ref{Dix-extension}, we see that there exists $\sigma\in Gal(F/K)$ such that Ann$_{\widehat{U(\mathcal{L})}_F}\widehat{D(\sigma\cdot\lambda)}_F\subseteq$ Ann$_{\widehat{U(\mathcal{L})}_F}\widehat{D(\mu)}$, and note that Ann$_{\widehat{U(\mathcal{L})}_F}\widehat{D(\sigma\cdot\lambda)}_F\cap\widehat{U(\mathcal{L})}_K=I(\lambda)$ by Lemma \ref{transport-structure}.

Also, note that $\mathfrak{g}_F^{\sigma\cdot\lambda}=\sigma(\mathfrak{g}_F^{\lambda})$ by Lemma \ref{twist}. But since $\lambda$ takes values in $K$, $\mathfrak{g}_F^{\lambda}=(\mathfrak{g}^{\lambda})\otimes_K F$, so it follows that $\mathfrak{g}_F^{\sigma\cdot\lambda}=\sigma((\mathfrak{g}^{\lambda})\otimes_K F)$ is contained in the abelian ideal $\sigma(\mathfrak{a}\otimes_K F)=\mathfrak{a}\otimes_K F$. Therefore $\sigma\cdot\lambda$ is a special linear form of $\mathfrak{g}\otimes_K F$.\\

\noindent So let us first suppose that $F=K$, and hence $\sigma=1$, then since $I(\lambda)\subseteq I(\mu)$, we know that $\mu=g\cdot\lambda$ for some $g\in\mathbb{G}(K)$, and hence $\mathfrak{g}^{\mu}=g(\mathfrak{g}^{\lambda})$ by Lemma \ref{twist}. Hence $\mathfrak{g}^{\mu}\subseteq g(\mathfrak{a})=\mathfrak{a}$ and hence $\mu$ is also a special linear form, and it follows from Proposition \ref{control2} that $I(\lambda)$ and $I(\mu)$ are both controlled by $\mathfrak{a}$, i.e. if $\mathcal{A}=\mathfrak{a}\cap\mathcal{L}$ then $I(\lambda)=\widehat{U(\mathcal{L})}_K(I(\lambda)\cap\widehat{U(\mathcal{A})}_K)$ and $I(\mu)=\widehat{U(\mathcal{L})}_K(I(\mu)\cap\widehat{U(\mathcal{A})}_K)$.

But since $\mathfrak{a}$ is abelian, we see using Corollary \ref{reduction} that $I(\lambda)\cap\widehat{U(\mathcal{A})}_K=I(\mu)\cap\widehat{U(\mathcal{A})}_K$, and hence $I(\mu)=\widehat{U(\mathcal{L})}_K(I(\mu)\cap\widehat{U(\mathcal{A})}_K)=\widehat{U(\mathcal{L})}_K(I(\lambda)\cap\widehat{U(\mathcal{A})}_K)=I(\lambda)$ as required.\\

\noindent More generally, since Ann$_{\widehat{U(\mathcal{L})}_F}\widehat{D(\sigma\cdot\lambda)}_F\subseteq$ Ann$_{\widehat{U(\mathcal{L})}_F}\widehat{D(\mu)}$ and $\sigma\cdot\lambda$ is special, it follows from this argument that Ann$_{\widehat{U(\mathcal{L})}_F}\widehat{D(\sigma\cdot\lambda)}_F=$ Ann$_{\widehat{U(\mathcal{L})}_F}\widehat{D(\mu)}$. Thus $I(\lambda)=$ Ann$_{\widehat{U(\mathcal{L})}_F}\widehat{D(\sigma\cdot\lambda)}_F\cap\widehat{U(\mathcal{L})}_K=$ Ann$_{\widehat{U(\mathcal{L})}_F}\widehat{D(\mu)}_F\cap\widehat{U(\mathcal{L})}_K=I(\mu)=Q$, and hence $I(\lambda)=Q$ is maximal as required.\end{proof}

\begin{corollary}\label{Dix-maximal2}

Let $F/K$ be a finite, Galois extension, $\mathcal{L}$ a powerful Lie lattice, $\lambda:\mathfrak{g}\to F$ a linear form such that $\lambda(\mathcal{L})\subseteq\mathcal{O}_F$ and the extension of $\lambda$ to $\mathfrak{g}\otimes_K F$ is special. Then $I(\lambda)$ is a maximal ideal of $\widehat{U(\mathcal{L})}_K$.

\end{corollary}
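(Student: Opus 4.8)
The plan is to reduce Corollary~\ref{Dix-maximal2} to Theorem~\ref{Dix-maximal} by passing to the scalar extension $\widehat{U(\mathcal{L})}_F$, where $\lambda$ is now a $K$-valued linear form on $\mathfrak{g}_F := \mathfrak{g}\otimes_K F$ in the sense required by Theorem~\ref{Dix-maximal} (i.e.\ $F$ plays the role of the ground field). First I would observe that $\mathcal{L}_F := \mathcal{L}\otimes_{\mathcal{O}}\mathcal{O}_F$ is a powerful $\mathcal{O}_F$-Lie lattice in $\mathfrak{g}_F$: indeed $[\mathcal{L}_F,\mathcal{L}_F] = [\mathcal{L},\mathcal{L}]\otimes_{\mathcal{O}}\mathcal{O}_F \subseteq p\mathcal{L}\otimes_{\mathcal{O}}\mathcal{O}_F = p\mathcal{L}_F$. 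Since by hypothesis the extension $\lambda_F: \mathfrak{g}_F \to F$ of $\lambda$ is special and satisfies $\lambda_F(\mathcal{L}_F)\subseteq\mathcal{O}_F$, Theorem~\ref{Dix-maximal} applies directly to $\mathfrak{g}_F$, $\mathcal{L}_F$, $\lambda_F$ over the base field $F$, yielding that $\mathrm{Ann}_{\widehat{U(\mathcal{L}_F)}_F}\widehat{D(\lambda)}_F$ is a maximal ideal of $\widehat{U(\mathcal{L}_F)}_F = \widehat{U(\mathcal{L})}_F$ (using the isomorphism $\widehat{U(\mathcal{L}\otimes_{\mathcal{O}}\mathcal{O}_F)}_F \cong \widehat{U(\mathcal{L})}_K\otimes_K F$ from Proposition~\ref{ind-centre}).

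Next I would descend this maximality statement along the inclusion $\widehat{U(\mathcal{L})}_K \hookrightarrow \widehat{U(\mathcal{L})}_F$. Write $J := \mathrm{Ann}_{\widehat{U(\mathcal{L})}_F}\widehat{D(\lambda)}_F$, so that $I(\lambda) = J\cap\widehat{U(\mathcal{L})}_K$ by the definition of the Dixmier annihilator over $K$. The key tool is Corollary~\ref{min-prime}: since $\widehat{U(\mathcal{L})}_F$ has finite Krull dimension, and $J$ is a prime ideal of $\widehat{U(\mathcal{L})}_F$ (it is maximal, hence prime), the set $\{\sigma(J):\sigma\in \mathrm{Gal}(F/K)\}$ is exactly the set of minimal primes above $(J\cap\widehat{U(\mathcal{L})}_K)\otimes_K F = I(\lambda)\otimes_K F$. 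By Lemma~\ref{transport-structure}, $\sigma(J) = \mathrm{Ann}_{\widehat{U(\mathcal{L})}_F}\widehat{D(\sigma\cdot\lambda)}_F$, and since each $\sigma\cdot\lambda$ is again $F$-valued with $(\sigma\cdot\lambda)(\mathcal{L}_F)\subseteq\mathcal{O}_F$ and $\mathfrak{g}_F^{\sigma\cdot\lambda} = \sigma(\mathfrak{g}_F^\lambda)$ lies in the abelian ideal $\sigma(\mathfrak{a}\otimes_K F)$ (which equals $\mathfrak{a}\otimes_K F$ if $\mathfrak{a}$ is $\mathrm{Gal}$-stable, or at worst another abelian ideal), each $\sigma\cdot\lambda$ is also special, so each $\sigma(J)$ is maximal in $\widehat{U(\mathcal{L})}_F$ by Theorem~\ref{Dix-maximal}.

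Now I would argue that $I(\lambda)$ is maximal. Suppose $I(\lambda) \subseteq Q$ for some proper ideal $Q$ of $\widehat{U(\mathcal{L})}_K$; extending $Q$ we may take $Q$ maximal, hence prime. Then $I(\lambda)\otimes_K F \subseteq Q\otimes_K F$, and picking a minimal prime $P'$ of $\widehat{U(\mathcal{L})}_F$ with $Q\otimes_K F\subseteq P'$ and contracting back, one gets $I(\lambda)\otimes_K F\subseteq P'$ with $P'\cap\widehat{U(\mathcal{L})}_K = Q$ (this last equality needs a small argument: $Q\subseteq P'\cap\widehat{U(\mathcal{L})}_K \subsetneq \widehat{U(\mathcal{L})}_K$ and $Q$ maximal force equality). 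Since $P'$ contains $I(\lambda)\otimes_K F$, it contains some minimal prime over it, i.e.\ some $\sigma(J)$; but $\sigma(J)$ is maximal, so $P' = \sigma(J)$, and therefore $Q = \sigma(J)\cap\widehat{U(\mathcal{L})}_K = I(\lambda)$ (the last equality again by $\mathrm{Gal}$-invariance of the contraction, since $\sigma(J)\cap\widehat{U(\mathcal{L})}_K = J\cap\widehat{U(\mathcal{L})}_K = I(\lambda)$). Hence $I(\lambda)$ is maximal.

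The main obstacle I anticipate is the bookkeeping around the contraction of minimal primes: ensuring that $P'\cap\widehat{U(\mathcal{L})}_K = Q$ rather than something strictly larger, and checking that $\sigma\cdot\lambda$ genuinely remains special for every $\sigma\in\mathrm{Gal}(F/K)$ — the latter is essentially already handled in the proof of Theorem~\ref{Dix-maximal} (where the ideal generated by $\mathfrak{g}_F^{\sigma\cdot\lambda}$ is shown abelian), so it should transfer cleanly, but it is the one place where one must be careful that the special hypothesis is not lost under the Galois twist. Everything else is a routine application of Proposition~\ref{ind-centre}, Corollary~\ref{min-prime}, Lemma~\ref{transport-structure}, and Theorem~\ref{Dix-maximal}; in fact this corollary is barely more than the observation that Theorem~\ref{Dix-maximal} was already proved for general Galois $F$ inside its own final paragraph, and the statement here is just extracting that conclusion.
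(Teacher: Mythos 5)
Your proposal is correct and follows essentially the same route as the paper: apply Theorem \ref{Dix-maximal} over the base field $F$ (via Proposition \ref{ind-centre}) to get maximality of $J=\mathrm{Ann}_{\widehat{U(\mathcal{L})}_F}\widehat{D(\lambda)}_F$, then descend using Corollary \ref{min-prime} by contracting a minimal prime above $Q\otimes_K F$ and identifying it with a Galois conjugate $\sigma(J)$. The one place you overcomplicate is the maximality of the conjugates: you do not need $\sigma\cdot\lambda$ to remain special, since $\sigma$ is a ring automorphism of $\widehat{U(\mathcal{L})}_F$ fixing $\widehat{U(\mathcal{L})}_K$, so $\sigma(J)$ is automatically maximal and contracts to $J\cap\widehat{U(\mathcal{L})}_K=I(\lambda)$, which is exactly how the paper argues.
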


\begin{proof}

Using Theorem \ref{Dix-maximal}, we see that $P=$ Ann$_{\widehat{U(\mathcal{L})}_F}\widehat{D(\lambda)}$ is a maximal ideal of $\widehat{U(\mathcal{L})}_F$. So let us suppose that $Q$ is a maximal ideal of $\widehat{U(\mathcal{L})}_K$ with $I(\lambda)\subseteq Q$.\\

\noindent Choose a prime ideal $Q'$ of $\widehat{U(\mathcal{L})}_F$ which is minimal prime above $Q\otimes_K F$. Then since $I(\lambda)\otimes_K F\subseteq Q\otimes_K F\subseteq Q'$, it follows that $Q'$ contains a minimal prime above $I(\lambda)\otimes_K F$. But using Corollary \ref{min-prime}, this means that $\sigma(P)\subseteq Q'$ for some $\sigma\in Gal(F/K)$, and since $P$ is maximal, $\sigma(P)$ is maximal, which implies that $Q'=\sigma(P)$ is minimal prime above $I(\lambda)\otimes_K F$.

Therefore, $Q'\cap\widehat{U(\mathcal{L})}_K=\sigma(P)\cap\widehat{U(\mathcal{L})}_K=I(\lambda)$, but since $Q\subseteq Q'\cap\widehat{U(\mathcal{L})}_K$, it follows that $Q=I(\lambda)$ as required.\end{proof}

\subsection{Proof of Theorem \ref{B}}

We are now ready to prove Theorem \ref{B}, which now requires only the following result:

\begin{lemma}\label{metabelian}

Assume $\mathfrak{g}$ is nilpotent and \emph{metabelian} (i.e. $[\mathfrak{g},\mathfrak{g}]$ is abelian). Then any linear form $\lambda:\mathfrak{g}\to K$ is special.

\end{lemma}

\begin{proof}

We want to prove that for any linear form $\lambda:\mathfrak{g}\to K$, $\mathfrak{g}^{\lambda}$ is contained in an ideal $\mathfrak{a}$ of $\mathfrak{g}$ such that $\lambda([\mathfrak{a},\mathfrak{a}])=0$. Firstly, since $[\mathfrak{g},\mathfrak{g}]$ is an abelian ideal of $\mathfrak{g}$, using Lemma \ref{standard-polarisation} we see that there exists a polarisation $\mathfrak{b}$ of $\mathfrak{g}$ at $\lambda$ such that $[\mathfrak{g},\mathfrak{g}]\subseteq\mathfrak{b}$.

But since $\mathfrak{b}$ is a subalgebra of $\mathfrak{g}$ containing $[\mathfrak{g},\mathfrak{g}]$, it follows that $\mathfrak{b}$ is an ideal of $\mathfrak{g}$. Also, $\mathfrak{g}^{\lambda}\subseteq\mathfrak{b}$ by Lemma \ref{polarisation-properties}, and of course $\lambda([\mathfrak{b},\mathfrak{b}])=0$. Therefore $\lambda$ is special.\end{proof}

\vspace{0.1in}

\noindent\emph{Proof of Theorem \ref{B}.} As per Definition \ref{deformed-Dix-moeg}, we want to prove that for all prime ideals $P$ of $\widehat{U(\mathcal{L})}_K$, there exist $n$ sufficiently high such that $P\cap\widehat{U(\pi^n\mathcal{L})}_K$ is weakly rational if and only if it is primitive if and only if it is locally closed.\\

\noindent We already know using Proposition \ref{nullstellensatz} that if that if $P\cap\widehat{U(\pi^n\mathcal{L})}_K$ is locally closed, then it is primitive and weakly rational, and also note if $P\cap\widehat{U(\pi^N\mathcal{L})}_K$ is weakly rational for any $N$, then $P\cap\widehat{U(\pi^n\mathcal{L})}_K$ is weakly rational for all $n\geq N$. 

So we can assume that there exists such an $N\geq 0$, and we will prove that $P\cap\widehat{U(\pi^n\mathcal{L})}_K$ is locally closed, and in fact maximal, for sufficiently high $n\geq N$.\\

\noindent Since $P\cap\widehat{U(\pi^N\mathcal{L})}_K$ is weakly rational, it follows from Theorem \ref{A} that for all sufficiently high $n\geq N$, there exists a finite extension $F/K$ and a linear form $\lambda:\mathfrak{g}\to F$ such that $\lambda(\pi^n\mathcal{L})\subseteq\mathcal{O}_F$ and $P\cap\widehat{U(\pi^n\mathcal{L})}_K=$ Ann$_{\widehat{U(\pi^n\mathcal{L})}_K}\widehat{D(\lambda)}_F$. After extending $F$ if necessary, we may assume that $F/K$ is Galois. Since $\mathfrak{g}$ is metabelian, it is clear that $\mathfrak{g}\otimes_K F$ is also metabelian, so it follows from Lemma \ref{metabelian} that $\lambda$ is special.\\

\noindent Choose $n$ such that $\pi^n\mathcal{L}$ is a powerful Lie lattice, i.e. such that $\pi^n\in p\mathcal{O}$, which is true for all sufficiently high $n$. Then using Corollary \ref{Dix-maximal2} we see that Ann$_{\widehat{U(\pi^n\mathcal{L})}_K}\widehat{D(\lambda)}_F=P\cap\widehat{U(\pi^n\mathcal{L})}_K$ is a maximal ideal of $\widehat{U(\pi^n\mathcal{L})}_K$ as required.\\

\noindent Finally, it follows from Corollary \ref{completely-prime} that $\widehat{U(\pi^n\mathcal{L})}_K/$ Ann$_{\widehat{U(\pi^n\mathcal{L})}_K}\widehat{D(\lambda)}_F$ is a domain, and hence $\widehat{U(\pi^n\mathcal{L})}_K/P\cap\widehat{U(\pi^n\mathcal{L})}_K$ is a simple domain.\qed

\bibliographystyle{abbrv}

\end{document}